\documentclass[11pt]{amsart}
\usepackage[margin=1.2in,marginparwidth=1in]{geometry}

\usepackage{amsmath,amssymb,amsthm}
\usepackage{mathtools}
\usepackage{bbm}
\usepackage{chngcntr}

\usepackage{todonotes}
\usepackage{verbatim}

\usepackage{stmaryrd}

\usepackage{graphicx}

\usepackage{tikz}
\usetikzlibrary{arrows,matrix}
\tikzset{arrow/.style={semithick,>=stealth',shorten >=1pt,shorten <=1pt}}

\usepackage{tikz-cd}
\usepackage[all]{xy}

\usepackage[pagebackref,
			colorlinks, 
			citecolor=forestgreen, 
			linkcolor=forestgreen, 
			urlcolor=darkblue]{hyperref} 
\usepackage[nameinlink, capitalize]{cleveref}

\definecolor{forestgreen}{RGB}{0,158,115}	
\definecolor{darkblue}{RGB}{0,114,178}	

\numberwithin{equation}{section}
\numberwithin{figure}{section}
\numberwithin{table}{section}
\allowdisplaybreaks

\newtheorem{theorem}{Theorem}[section]
\newtheorem{prop}[theorem]{Proposition}
\AddToHook{env/prop/begin}{\crefalias{theorem}{proposition}}
\newtheorem{cor}[theorem]{Corollary}
\AddToHook{env/cor/begin}{\crefalias{theorem}{corollary}}
\newtheorem{lemma}[theorem]{Lemma}
\AddToHook{env/lemma/begin}{\crefalias{theorem}{lemma}}

\AddToHook{env/claim/begin}{\crefalias{theorem}{claim}}

\newtheorem*{notate}{Notation}

\theoremstyle{definition}
\newtheorem{definition}[theorem]{Definition}
\AddToHook{env/definition/begin}{\crefalias{theorem}{definition}}
\newtheorem{notation}[theorem]{Notation}
\AddToHook{env/notation/begin}{\crefalias{theorem}{notation}}

\AddToHook{env/question/begin}{\crefalias{theorem}{question}}

\AddToHook{env/conjecture/begin}{\crefalias{conjecture}{definition}}
\newtheorem{example}[theorem]{Example}
\AddToHook{env/example/begin}{\crefalias{theorem}{example}}

\AddToHook{env/examples/begin}{\crefalias{theorem}{examples}}

\AddToHook{env/note/begin}{\crefalias{theorem}{note}}
\newtheorem{remark}[theorem]{Remark}
\AddToHook{env/remark/begin}{\crefalias{theorem}{remark}}

\usepackage[shortlabels]{enumitem} 

\makeatletter
\DeclareDocumentCommand{\newmathcommand}{ m O{0} m }{%
  \ifcsname\expandafter\@gobble\string#1\space\endcsname
    \expandafter\expandafter\expandafter\let\expandafter\csname old\string#1\expandafter\endcsname\expandafter=\csname\expandafter\@gobble\string#1\space\endcsname
  \else
    \expandafter\let\csname old\string#1\endcsname=#1
  \fi
  \expandafter\newcommand\csname new\string#1\endcsname[#2]{#3}
  \DeclareRobustCommand#1{%
    \ifmmode
      \expandafter\let\expandafter\next\csname new\string#1\endcsname
    \else
      \expandafter\let\expandafter\next\csname old\string#1\endcsname
    \fi
    \next
  }%
}
\makeatother
\newmathcommand{\r}{\mathring}

\tikzset{
    sideways/.style={anchor=center, rotate=90, inner sep=.5mm}
}

\renewcommand{\tilde}{\widetilde}

\DeclareMathOperator{\Hom}{Hom}
\DeclareMathOperator{\Map}{Map}

\DeclareMathOperator{\Aut}{Aut}

\DeclareMathOperator{\Ho}{Ho}

\DeclareMathOperator{\Spec}{Spec}
\DeclareMathOperator{\Spf}{Spf}
\DeclareMathOperator{\Sub}{Sub}
\DeclareMathOperator{\Div}{PD}
\DeclareMathOperator{\Level}{Level}
\DeclareMathOperator{\Sum}{Sum}

\DeclareMathOperator{\GL}{GL}

\DeclareMathOperator{\im}{im}
\DeclareMathOperator{\res}{res}

\DeclareMathOperator{\tr}{tr}

\DeclareMathOperator{\Fun}{Fun}

\DeclareMathOperator{\trans}{trans}
\DeclareMathOperator{\conj}{conj}

\DeclareMathOperator{\Sym}{Sym}

\DeclareMathOperator{\Cl}{Cl}

\renewcommand{\u}[1]{\underline{#1}}

\newcommand{\powser}[1]{[\![{#1}]\!]}
\newcommand{\llangle}{\langle\!\langle}
\newcommand{\rrangle}{\rangle\!\rangle}
\newcommand{\divpol}[1]{\langle{#1}\rangle}
\newcommand{\divpowser}[1]{\llangle{#1}\rrangle}
\newcommand{\setof}[1]{\{{#1}\}}

\newcommand{\C}{\mathbb{C}}
\newcommand{\F}{\mathbb{F}}
\newcommand{\G}{\mathbb{G}}

\newcommand{\N}{\mathbb{N}}
\renewcommand{\P}{\mathbb{P}}
\newcommand{\Q}{\mathbb{Q}}

\newcommand{\Z}{\mathbb{Z}}
\renewcommand{\1}{\mathbbm{1}}

\newcommand{\cE}{\mathcal{E}}

\newcommand{\cG}{\mathcal{G}}
\newcommand{\cP}{\mathcal{P}}
\newcommand{\cM}{\mathcal{M}}

\newcommand{\Grp}{\text{Grp}}
\newcommand{\Set}{\text{Set}}

\newcommand{\id}{\textnormal{id}}

\newcommand{\parts}{\vdash}

\newcommand{\mmod}{\mathrel{\!/\!/\!}}

\newcommand{\injto}{\hookrightarrow}

 \newcommand{\cH}{\mathcal{H}}
 \newcommand{\cD}{\mathcal{D}}
 
\DeclareMathOperator{\Asm}{Asm}
\DeclareMathOperator{\Conn}{Conn}
\DeclareMathOperator{\Graph}{Graph}

\makeatletter
\def\l@subsection{\@tocline{2}{0pt}{2.5pc}{5pc}{}}
\makeatother

\title{Partition Functors and Universal Exponential Relations}
\date{\today}

\author{David Mehrle}
\email{david.mehrle@colostate.edu}

\author{Millie Rose}
\email{math@milliero.se}

\author{Nathaniel Stapleton}
\email{nat.j.stapleton@gmail.com}

\begin{document}

\maketitle

\vspace*{-5mm}

\begin{abstract}
We introduce partition functors: algebraic structures indexed by partitions of finite sets and equipped with restriction and transfer maps along refinements. We construct a monad $\Div$ on the category of partition functors and on several categories of partition functors with additional multiplicative structure. For a partition ring $R$, exponential elements in its associated completed ring of symmetric functions acquire canonical logarithms after applying $\Div$. This gives rise to a universal exponential relation between multiplicative and additive power operations. For representation rings this can be used to recover the classical relation between symmetric powers and Adams operations, while for Morava $E$-theory it can be used to recover Ganter's exponential relation between symmetric powers and Hecke operators. We show that the representation rings of products of symmetric groups form the initial partition ring and that, for symmetric monoidal partition functors, the monad $\Div$ is closely related to symmetric invariant tensors and the divided power envelope. We also construct a symmetric monoidal partition ring carrying the universal exponential element, so that its image under $\Div$ carries the universal exponential relation.
\end{abstract}

\tableofcontents

\section{Introduction}

A classical identity in representation theory expresses symmetric powers $\beta^m$ in terms of Adams operations $\psi^k$:
\[
\sum_{m\geq 0}\beta^m(x)t^m
=\exp\bigg (\sum_{k\geq 1}\frac{\psi^k(x)}{k}t^k\bigg ).
\]
Both sides of this formula arise from the power operations $P_m$ sending a $G$-representation $V$ to $V^{\otimes m}$ with $G \times \Sigma_m$-action. The symmetric powers are obtained by applying the transfer along $\Sigma_m \to e$ to $P_m$, while the additive Adams operations are obtained by passing to the quotient by transfers from proper Young subgroups of $\Sigma_m$. Thus the formula relates two different ways of extracting operations from the family of power operations. Although both $\beta^m$ and $\psi^k$ are integral, the exponential formula relating them requires dividing $\psi^k$ by $k$. Similar exponential relations occur in algebraic topology, most notably Ganter's relation between symmetric power operations and Hecke operators in Morava $E$-theory \cite{Ganter-orbifold}. This suggests that the formula reflects an integral structure involving power operations, transfers, and partitions, rather than a special feature of representation theory or Morava $E$-theory. The aim of this paper is to explain why these exponential relations have the same form and to account for the divisibility that makes them integral.

We show that these exponential relations arise because the rings in which they hold carry the same underlying algebraic structure, which we call a partition ring. Partition rings play the role of rings in the category of partition functors and every partition ring has an associated commutative $\N$-graded ring of symmetric functions. We construct a monad $\Div$ on the category of partition rings that associates to each partition ring a partition ring of ``integer-valued class functions.'' This monad is closely related to divided powers. Exponential elements in the ring of symmetric functions of a partition ring admit a canonical logarithm after applying $\Div$. The definition of the monad $\Div$ allows exponential relations involving power operations to be pushed forward to give other exponential relations; for representation rings and Morava $E$-theory this recovers the classical formulas involving Adams operations and Hecke operators. We also construct a partition ring corepresenting exponential elements; applying $\Div$ to this partition ring produces a universal exponential relation.

\bigskip

The character of a finite dimensional complex representation of the symmetric group is integer valued. This fact connects representation theory and combinatorics. It implies that the representation ring of the symmetric group, aspects of which are still mysterious, is a sublattice of the structurally simple ring of integer-valued functions on the set of integer partitions. The character map 
\[
\chi \colon RU(\Sigma_m) \to \Cl(\Sigma_m,\Z)
\]
admits an alternative description entirely in terms of the representation rings of symmetric groups. 

Let  $\lambda \parts m$ be an integer partition of $m$ so that $\sum_i i \lambda_i = m$. Let $\Sigma_{\lambda} = \prod_i (\Sigma_i)^{\lambda_i}$ be the symmetric group associated to $\lambda$ and note that there is an induced group homomorphism $\Sigma_\lambda \to \Sigma_m$ that is well-defined up to conjugacy. Given an inclusion $H \subseteq G$ of finite groups, there is a transfer map (also called induction) $\tr_{H}^{G} \colon RU(H) \to RU(G)$ and a restriction map $\res_{H}^{G} \colon RU(G) \to RU(H)$. The restriction map is a ring map and the image of the transfer map is an ideal. If $I_{\lambda} \subset RU(\Sigma_{\lambda})$ is the sum of the image of the transfer maps along proper refinements of $\lambda$, then there is a (necessarily unique) isomorphism of rings $RU(\Sigma_{\lambda})/I_\lambda \cong \Z$. An alternative description of the value of the character map $\chi$ at $\lambda$ is the composite
\[
RU(\Sigma_m) \xrightarrow{\res_{\Sigma_{\lambda}}^{\Sigma_m}} RU(\Sigma_{\lambda}) \xrightarrow{q_{\lambda}} RU(\Sigma_{\lambda})/I_{\lambda} \cong \Z,
\]
where $q_{\lambda}$ is the quotient. These sorts of maps are known as Brauer morphisms in the literature \cite{Thevenaz}.

The ingredients in this map make sense for an arbitrary cohomology theory. There is one subtlety that doesn't appear in the case of representation rings because $\Z$ admits no nontrivial automorphisms as a commutative ring. For a general cohomology theory, permutations of equal-sized blocks can act nontrivially on these quotients, so the construction must also take invariants. Regardless, for any cohomology theory, there is a notion of ``integer-valued class functions on the symmetric group'' and a canonical map from the cohomology of the symmetric group to these class functions. In the case of mod $2$ singular cohomology, this produces rings built from Dickson algebras. In the case of the representation ring, this recovers integer-valued class functions. In the case of Morava $E$-theory, the resulting ring can be understood algebro-geometrically using work of Strickland \cite{etheorysym}.

The ingredients going into this construction of ``integer-valued class functions'' can be axiomatized and the resulting algebraic structure is quite simple. There is a $(2,1)$-category of partitions that categorifies the notion of an integer partition. A partition $\Lambda=(X,\sim_\Lambda)$ consists of a finite set equipped with an equivalence relation. A map $f\colon\Lambda\to\Omega$ is a bijection of underlying sets such that $x\sim_\Lambda x'$ implies $f(x)\sim_\Omega f(x')$. We write $\Lambda\leq\Omega$ when the underlying sets agree and the identity is such a map, so that $\Lambda$ is a refinement of $\Omega$. These form a $(2,1)$-category $\cP$: there is a unique $2$-isomorphism $f\Rightarrow g$ precisely when $f(x)\sim_\Omega g(x)$ for every $x$. Thus maps differing by permutations within the blocks of the target are $2$-isomorphic. In particular, if $\u m$ for $m \in \N$ is the partition consisting of the set $\{1, \ldots, m\}$ equipped with the indiscrete partition, then any two maps from $\Lambda$ to $\u m$ are $2$-isomorphic. Let $\Sigma_\Lambda$ be the group of permutations of $X$ that preserve each block individually; this is a product of symmetric groups.

Inspired by Schwede's development of global algebra in \cite{Schwede}, we define a partition functor $M$ to consist of a covariant functor $M_*$ and a contravariant functor $M^*$ from $\cP$ to abelian groups that agree on objects and satisfy the double coset formula \eqref{doubleCoset}. Thus a map $f\colon\Lambda\to\Omega$ gives a transfer $M_*(f)\colon M(\Lambda)\to M(\Omega)$ and a restriction $M^*(f)\colon M(\Omega)\to M(\Lambda)$ (which we require to be inverse to each other when $f$ is an isomorphism), and $2$-isomorphic maps induce the same homomorphisms. The double coset formula uses the double cosets of the associated Young subgroups.

If we set $M(\Lambda) = RU(\Sigma_{\Lambda})$, then $M$, together with the induction and restriction maps induced by maps of partitions, is an example of a partition functor. Generalizing, $M(\Lambda) = RU(G \times \Sigma_{\Lambda})$ also admits the structure of a partition functor.  If $E$ is a cohomology theory, then $M(\Lambda) = E^0(B\Sigma_{\Lambda})$, together with transfer and restriction maps induced by maps of partitions, is an example of a partition functor. Given an abelian group $A$, there is a constant partition functor $C_A$ in which restriction maps are identity maps and the transfer map along the inclusion of a refinement $\Omega$ of $\Lambda$ is given by multiplication by the index of $\Sigma_{\Omega}$ in $\Sigma_{\Lambda}$. There is also a dual constant partition functor $C^A$ in which the roles of restriction and transfer maps have been interchanged. If $S$ is a global functor in the sense of \cite{Schwede} and we fix a finite group $G$, then both $M(\Lambda) = S(G \times \Sigma_{\Lambda})$ and $M(\Lambda) = S(G \wr \Sigma_{\Lambda})$, together with transfer and restriction maps coming from $S$, are examples of partition functors.  


We use the disjoint union of partitions to add multiplicative structure to partition functors. A lax symmetric monoidal partition functor $R$ is equipped with a unit $\Z\to R(\u 0)$ and external products
\[
m_{\Lambda,\Omega}\colon R(\Lambda)\otimes R(\Omega)\longrightarrow R(\Lambda\sqcup\Omega),
\]
natural in pairs of restrictions and pairs of transfers and satisfying the associativity, unit, and symmetry identities. A partition ring is such an $R$ for which each $R(\Lambda)$ is a commutative ring, restrictions and external products are ring maps, and transfers satisfy Frobenius reciprocity: for $f\colon\Lambda\to\Omega$, $a\in R(\Omega)$, and $b\in R(\Lambda)$,
\[
R_*(f)\big(R^*(f)(a)*b\big)=a*R_*(f)(b),
\]
where $*$ denotes the multiplication internal to each $R(\Lambda)$. In particular, the subgroup $I_\Lambda \subseteq R(\Lambda)$ given by the sum of the images of transfers from proper refinements is an ideal. Partition rings and partition power functors, which are partition rings equipped with power operations, are the analogues of global Green functors and global power functors in this setting.

The construction of $\Div(M)$ from a partition functor $M$ is central to the story. For $m\in\N$, we set
\[
\Div(M)(\u m)=\Big (\bigoplus_{\Lambda\leq\u m}M(\Lambda)/I_\Lambda\Big)^{\Sigma_m},
\]
where $I_\Lambda$ is the sum of the images of transfers from proper refinements of $\Lambda$, the sum runs over all refinements of $\u m$, and $\Sigma_m$ acts by permuting partitions and through the partition functor structure on $M$. This action is trivial on $M(\u m)/I_{\u m}$, so $M(\u m)/I_{\u m}$ is a summand of $\Div(M)(\u m)$. For a partition ring, the quotients are rings and the action is by ring automorphisms. The map $\eta_{\u m} \colon M(\u m) \to \Div(M)(\u m)$ defined by
\[
\eta_{\u m}(x)=\big(q_\Lambda M^*(\Lambda\leq\u m)(x)\big)_{\Lambda\leq\u m},
\]
where $q_\Lambda$ is the quotient map, generalizes the description of the character map above.

\begin{notate}
When an object is indexed by partitions, such as $M$, $\Div(M)$, $\eta$, and $q$, we suppress the underline on the indiscrete partition $\u m$ so that $M(m) = M(\u m)$, $I_m = I_{\u m}$, $\Div(M)(m) = \Div(M)(\u m)$, $m_{i,j} = m_{\u i,\u j}$, $\eta_m = \eta_{\u m}$, and $q_m = q_{\u m}$.
\end{notate}

We prove that this construction extends to an endofunctor $\Div$ on partition functors and that $\eta$ is the unit of a monad structure. The notation reflects a relation to divided powers that we discuss below. The monad also respects several kinds of multiplicative structure:

\begin{theorem} (\cref{thm:divmonad} and following sections)
The construction $\Div$ extends to a monad on the categories of partition functors, lax symmetric monoidal partition functors, partition rings, and partition power functors.
\end{theorem}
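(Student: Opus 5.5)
The plan is to first define $\Div(M)(\Lambda)$ for every partition, not just $\u m$, and to do it in a way that is visibly functorial. For a partition $\Lambda$, consider the poset (really the groupoid, after accounting for $\Sigma_\Lambda$) of refinements $\Omega\le\Lambda$. I will set
\[
\Div(M)(\Lambda)=\Big(\bigoplus_{\Omega\le\Lambda}M(\Omega)/I_\Omega\Big)^{\Sigma_\Lambda}.
\]
This is equivalently a product, over $\Sigma_\Lambda$-orbits of refinements $\Omega$, of the invariants $(M(\Omega)/I_\Omega)^{\mathrm{Stab}(\Omega)}$. Since $2$-isomorphic maps act identically, the stabilizer acts on $M(\Omega)/I_\Omega$ through the quotient $N(\Sigma_\Omega)/\Sigma_\Omega$, which permutes equal-sized blocks inside each block of $\Lambda$. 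For $\Lambda=\u m$ this recovers the displayed definition.

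\textbf{Restrictions.} For $f\colon\Lambda'\le\Lambda$, a refinement $\Omega\le\Lambda$ pulls back to $\Omega\wedge\Lambda'$. I will define $\Div^*(f)$ as the projection onto those $\Omega\le\Lambda$ with $\Omega\le\Lambda'$, which is the Brauer-type behaviour. This choice kills exactly the components not refining $\Lambda'$.

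\textbf{Transfers.} $\Div_*(f)$ will be the relative norm/sum over $\Sigma_\Lambda/\Sigma_{\Lambda'}$, i.e. the orbit sum $\sum_{g\in\Sigma_\Lambda/\Sigma_{\Lambda'}} g\cdot(-)$ applied to the summands indexed by $\Omega\le\Lambda'$.

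Isomorphisms act by relabeling. In each case I check that $2$-isomorphic maps agree, using the invariance. For this to be well defined it is important that for $\Omega$ proper in $\Omega'$ the transfers land in $I_{\Omega'}$, and that restrictions carry $I$ into $I$ or zero modulo projection; the latter should follow from the double coset formula, since restricting a transfer from a proper refinement to $\Omega$ decomposes into transfers from proper refinements of $\Omega$ plus conjugates.

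The double coset formula for $\Div(M)$ then reduces to the Mackey formula for the permutation module $\bigoplus_\Omega M(\Omega)/I_\Omega$ under Young subgroups. This is a finite-group computation: invariants of a permutation-type module form a Mackey functor, with restriction equal to inclusion of invariants composed with projection and transfer equal to the relative trace.

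For the monad structure, $\eta_\Lambda$ is defined by $x\mapsto(q_\Omega M^*(\Omega\le\Lambda)x)_\Omega$; naturality in restrictions is immediate. Naturality in transfers is the key compatibility: $q_\Omega\res_\Omega\tr_{\Lambda'}^\Lambda$ is computed via the double coset formula, and all double cosets with $\Omega\cap g\Lambda'$ proper in $\Omega$ die in the quotient, leaving exactly the orbit sum. For the multiplication $\mu\colon\Div\Div M\to\Div M$, note that the quotient $\Div(M)(\Omega)/I_\Omega$ is identified with the $\Omega$-component $(M(\Omega)/I_\Omega)^{N/\Sigma_\Omega}$: transfers from proper refinements hit every component except the top one up to invariants. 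This gives a map $\Div(M)(\Omega)/I_\Omega\to M(\Omega)/I_\Omega$, and I define $\mu$ by applying it componentwise. Then $\mu\eta_{\Div M}=\id$ follows from projection onto the top components, while $\mu\Div(\eta)=\id$ and associativity follow because everything is projection to diagonal components.

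\textbf{Multiplicative structures.} For the remaining categories, products $\Div M(\Lambda)\otimes\Div M(\Lambda')\to\Div M(\Lambda\sqcup\Lambda')$ send $(\Omega,\Omega')\mapsto\Omega\sqcup\Omega'$. These are well defined because $m(I_\Omega\otimes-)\subseteq I_{\Omega\sqcup\Omega'}$ by naturality in transfers. For rings, componentwise multiplication is used, and Frobenius reciprocity follows from the trace formula. Power operations are then induced by $P_n$ on top components, i.e. by sending $\Omega\mapsto\Omega^{\sqcup n}$ wreathed.

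\textbf{Main obstacle.} I expect the hardest step to be exactly the identification of $I_\Omega$ inside $\Div(M)(\Omega)$ and the verification that the power operations descend to the $I$-quotients. Transfers are not additive under $P_n$, so I would try the standard trick of showing $P_n(a+\tr b)\equiv P_n(a)$ modulo transfers from proper refinements of wreath partitions.
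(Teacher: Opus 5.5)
\textbf{Partition functors, lax symmetric monoidal partition functors, and partition rings.} Your treatment of these three cases is essentially the paper's. It uses the same invariants of $\bigoplus_{\Gamma\le\Lambda}\bar M(\Gamma)$ and restriction by projection. Transfer is the relative trace, which agrees with the paper's double-coset formula because a double coset $\Sigma_\Gamma\sigma\Sigma_{f\Psi}$ with $\Gamma\le\sigma f\Psi$ is a single left coset. The maps $\eta$ and $\mu$ are the same, and the products are given componentwise by $\bar m$.

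One assertion along the way is false, though you do not actually use it: $\overline{\Div(M)}(\Omega)$ is \emph{not} identified with the top component $\bar M(\Omega)$. Transfers from proper refinements produce only relative traces in the lower components. So in general the transfer subgroup $J_\Omega\subseteq\Div(M)(\Omega)$ is strictly smaller than the kernel of the projection $\pi_\Omega$ onto $\bar M(\Omega)$. For instance, $\Div(C^{\Z})=C_{\Z}$ gives $\overline{\Div(C^{\Z})}(2)=\Z/2$, while $\overline{C^{\Z}}(2)=0$. If your identification held, $\mu$ would be an isomorphism and $\Div$ would be idempotent, which it is not. Defining $\mu$ needs only $J_\Omega\subseteq\ker(\pi_\Omega)$. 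That holds because no $\sigma\in\Sigma_\Omega$ satisfies $\Omega\le\sigma\Theta$ when $\Theta<\Omega$. Likewise, nothing in the construction requires restrictions to descend to the quotients $\bar M$.

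\textbf{The gap: partition power functors.} A partition power functor carries only operations $P_m\colon R(1)\to R(m)$. There are no power operations on $R(\Lambda)$ for general $\Lambda$. Nor is there any partition that could serve as ``$\Omega$ wreathed $n$ times'': for $k,n\ge 2$, $\Sigma_k\wr\Sigma_n$ is a proper transitive subgroup of $\Sigma_{kn}$, hence not a Young subgroup. What must be constructed is $P_m\colon\Div(R)(1)\to\Div(R)(m)$. Since $\u 1$ has no proper refinements, $\Div(R)(1)=R(1)$, so nothing has to descend to a transfer quotient and the obstacle you single out never arises.

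Using the top component alone would also be wrong. The element $P_m/I_m$ supported at $\u m$ has two defects:
\begin{itemize}
\item $P_m/I_m(1)=\1_m$, which in general differs from $1=\sum_{\lambda\vdash m}\1_\lambda$, so axiom (a) fails.
\item $P_m/I_m$ is killed by $\Div(R)^*(\u i\sqcup\u j\le\u m)$. But $\Div(m)_{i,j}(P_i/I_i(x)\otimes P_j/I_j(x))$ is generally nonzero; at $x=1$ it is the unit of $\bar R(\u i\sqcup\u j)$. So axiom (b) fails.
\end{itemize}

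The missing idea is to take $\eta_m\circ P_m$. By iterating axiom (b), its $\Gamma$-component $q_\Gamma R^*(\Gamma\le\u m)P_m$ is the external product of the $\bar P_{|B|}$ over the blocks $B$ of $\Gamma$. The axioms for $\Div(R)$ then follow at once because $\eta$ is a map of partition rings. The maps $\eta$ and $\Div(f)$ respect power operations by naturality of $\eta$, and $\mu$ respects them because $\mu\circ\eta_{\Div(R)}=\id$.
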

The construction $\Div$ need not preserve symmetric monoidal partition functors, so it is not an endofunctor on that category.

The examples that we are most interested in, representation rings, Morava $E$-theory, singular cohomology with coefficients in $\F_2$, and combinatorial species, all give examples of partition rings and most of these even have the structure of partition power functors. Recall that the initial global Green functor is the Burnside ring global Green functor $A$. The partition ring given by $R(\Lambda) = A(\Sigma_{\Lambda})$, the Burnside ring of $\Sigma_{\Lambda}$, is not the initial partition ring. Instead, the initial partition ring is made up of the subrings $\AA(\Lambda) \subseteq A(\Sigma_{\Lambda})$ additively generated by the isomorphisms classes of $\Sigma_{\Lambda}$-sets of the form $\Sigma_{\Lambda}/\Sigma_{\Omega}$, where $\Omega$ is a refinement of the partition $\Lambda$. This turns out to be a classical ring. The composite
\[
\AA(\Lambda) \to A(\Sigma_{\Lambda}) \to RU(\Sigma_{\Lambda}),
\]
where the second map is the linearization map, is an isomorphism of commutative rings. Thus, the representation ring partition ring is the initial partition ring! This has some advantages over the global case because representation rings of symmetric groups are much simpler than Burnside rings of symmetric groups.

Our interest in partition functors comes from a desire to better understand and unify several examples of exponential relations. These include the exponential relations between symmetric powers and Adams operations
\[
\sum_{m \geq 0} \beta^mt^m = \exp \Big(\sum_{k>0} \frac{\psi^k}{k}t^k\Big)
\]
and between alternating powers and Adams operations
\[
\sum_{m \geq 0} \Lambda^mt^m = \exp\Big(\sum_{k \geq 1}  \frac{(-1)^{k+1}\psi^k}{k} t^k\Big)
\]
for representation rings \cite[Exercise 9.3]{Serre}, and the more exotic exponential relation between symmetric powers and Hecke operators
\[
\sum_{m \geq 0} \beta^mt^m = \exp\Big(\sum_{k\geq 0} \frac{T_{p^k}}{p^k}t^{p^k}\Big)
\]
for Morava $E$-theory \cite{Ganter-orbifold}. Because of the denominators, these relations are often considered to be rational in nature, but we will see that the required divisibility can be controlled.


The connection between partition functors and exponential relations stems from the fact that there is a commutative $\N$-graded ring associated to a lax symmetric monoidal partition functor $R$. Recalling the notational convention above, let $R[\Sigma] = \bigoplus_{m \geq 0} R(m)$. The multiplication is the transfer product
\[
xy=R_*(\u i\sqcup\u j\leq\u{i+j})m_{i, j}(x\otimes y),
\qquad x\in R(i),\quad y\in R(j).
\]
When $R(\Lambda) = RU(\Sigma_{\Lambda})$, $R[\Sigma] = \bigoplus_{m \geq 0} RU(\Sigma_m)$ is the ring of symmetric functions. For the constant partition functor $C_K$, where $K$ is a commutative ring, $C_K[\Sigma] \cong K \langle t \rangle$ is the divided power polynomial ring in one variable over $K$ and, for the dual constant partition functor, we have $C^K[\Sigma] = K[t]$. Because we are interested in power series, it is natural to complete $R[\Sigma]$ at the system of irrelevant ideals $\bigoplus_{m \geq k} R(m)$. We will write $R\powser{\Sigma}$ for the resulting commutative ring; this ring is just $\prod_{m \geq 0} R(m)$ equipped with the transfer multiplication.

Now assume that $R$ is a partition ring. The multiplicative identity element of $R(m)/I_{m}$, extended by zero in the other summands, gives an element $\1_{m}\in\Div(R)(m)$. This need not be the multiplicative identity of $\Div(R)(m)$. Since the unit $\eta\colon R\to\Div(R)$ is a map of partition rings, it induces ring maps 
\[
R[\Sigma]\to\Div(R)[\Sigma] \text{ and } R\powser{\Sigma}\to\Div(R)\powser{\Sigma}.
\]
A good notation for $\Div(R)[\Sigma]$ is $R\langle \Sigma \rangle$ since, as we will see, this object is closely related to divided powers. 

Let $\sum_{m \geq 0} r_mt^m \in R\powser{\Sigma}$, where $r_m \in R(m)$, and the variable $t$ is merely tracking the grading. The element is called exponential if $r_0=1$ and
\[
R^*(\u i\sqcup\u j\leq\u{i+j})(r_{i+j})
=m_{i,j}(r_i\otimes r_j)
\qquad\text{for all }i,j\geq0.
\]
Examples of exponential elements abound in nature. For instance, if $E$ is an $E_{\infty}$-ring spectrum, then the partition ring given by $R(\Lambda) = E^0(B\Sigma_{\Lambda})$ admits the structure of a partition power functor with power operations given by the power operations on the cohomology theory $E$, $P_m \colon R(1) = E^0(B\Sigma_1) \to E^0(B\Sigma_m) = R(m)$. If $x \in R(1)$, then standard identities involving restrictions and power operations imply that $\sum_{m \geq 0} P_m(x)t^m$ is an exponential element in $R\powser{\Sigma}$.

Recall that $\ast$ denotes the internal product and that concatenation denotes the transfer product in $R[\Sigma]$. We may multiply an element in the ring $R(m)$ with an element in the ring $\Div(R)(m)$ by applying the unit $\eta_{m}$ to the first element and multiplying in $\Div(R)(m)$. Exponential elements do not necessarily admit a logarithm in $R\powser{\Sigma}$, but they do admit a canonical logarithm in $R\divpowser{\Sigma} = \Div(R)\powser{\Sigma}$:
\begin{theorem} (\cref{exponentialformula}) \label{thm:intro1}
If $\sum_{m \geq 0} r_mt^m \in R\powser{\Sigma}$ is exponential, then in $R\divpowser{\Sigma}$ we have
\[
\sum_{m \geq 0} \eta_{m}(r_m)t^m = \exp\Big(\sum_{k \geq 1} r_k \ast \1_{k} t^k\Big).
\]
\end{theorem}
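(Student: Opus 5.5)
The plan is to compare both sides of the formula one component at a time. An element of $\Div(R)(m)$ is a $\Sigma_m$-invariant tuple indexed by the refinements $\Lambda\leq\u m$, so it suffices to compute the $\Lambda$-component of each side in $R(\Lambda)/I_\Lambda$.

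\emph{Left-hand side.} Write the blocks of $\Lambda$ as $B_1,\dots,B_n$ with $|B_a|=k_a$. Choose an isomorphism $\Lambda\cong\u{k_1}\sqcup\cdots\sqcup\u{k_n}$. Iterating the exponential identity $R^*(\u i\sqcup\u j\leq\u{i+j})(r_{i+j})=m_{i,j}(r_i\otimes r_j)$, and using that $2$-isomorphic maps induce the same homomorphisms, gives
\[
q_\Lambda R^*(\Lambda\leq\u m)(r_m)=q_\Lambda\, m_{k_1,\dots,k_n}(r_{k_1}\otimes\cdots\otimes r_{k_n}).
\]
The right-hand side here is independent of how the blocks are ordered, by the symmetry axiom for external products.

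\emph{Product formula in $\Div(R)[\Sigma]$.} The key intermediate result is an explicit formula for the transfer product in $\Div(R)[\Sigma]$. For $x\in\Div(R)(i)$ and $y\in\Div(R)(j)$, I expect the $\Lambda$-component of $xy$, for $\Lambda\leq\u{i+j}$, to be
\[
\sum_{S}\, q_\Lambda\, m\big(x_{\Lambda|_S}\otimes y_{\Lambda|_{S^c}}\big).
\]
Here $S$ ranges over the unions of blocks of $\Lambda$ with $|S|=i$, and each term is transported along a chosen identification $\Lambda|_S\sqcup\Lambda|_{S^c}\cong\Lambda$. I would derive this from the definition of $\Div$ on transfers (\cref{thm:divmonad}) together with the double coset formula \eqref{doubleCoset}. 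After restricting the transfer along $\u i\sqcup\u j\leq\u{i+j}$ to $\Lambda$, each double coset term either is a transfer from a proper refinement of $\Lambda$, and so dies in $R(\Lambda)/I_\Lambda$, or corresponds to a splitting of $\Lambda$ into a union of blocks $S$ and its complement.

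\emph{Right-hand side.} Put $y=\sum_k (r_k\ast\1_k)t^k$. Since $\1_k$ is supported on $\u k$ alone, $y$ in degree $k$ has only the component $q_k(r_k)$ at $\u k$. Iterating the product formula, the $\Lambda$-component of $y^n$ is zero unless $\Lambda$ has exactly $n$ blocks. When it does, the component equals
\[
\sum_{\sigma}q_\Lambda\, m(r_{k_{\sigma(1)}}\otimes\cdots\otimes r_{k_{\sigma(n)}}),
\]
summed over the $n!$ orderings $\sigma$ of the blocks. By symmetry this is $n!$ times the left-hand component.

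\emph{Defining $\exp$ and the main obstacle.} Here $\exp(y)=\sum_n\gamma_n(y)$ must be interpreted with a divided power $\gamma_n$ available in $\Div(R)\powser{\Sigma}$. The main obstacle is making this integral. I would take as definition, or verify from the paper's construction, that $\gamma_n(y)$ has $\Lambda$-component equal to the sum over unordered labelings of the $n$ blocks. This component is canonically $y^n/n!$, and one checks it satisfies the divided power axioms, or at least $n!\,\gamma_n=y^n$ together with the axioms needed for $\exp(a+b)=\exp(a)\exp(b)$.

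\emph{Conclusion.} Granting this, the degree-$m$ part of $\exp(y)$ has, at each $\Lambda$, exactly the single term $q_\Lambda\, m(r_{k_1}\otimes\cdots\otimes r_{k_n})$ with $n$ the number of blocks of $\Lambda$. This matches $\eta_m(r_m)$ from the left-hand computation and proves the theorem. If the direct verification of the divided power axioms is too cumbersome, I would fall back on the universal case. Choose a torsion-free example that maps onto any given one, such as the corepresenting partition ring for exponential elements. There, division by $n!$ is unambiguous after rationalizing, and the identity transfers by naturality of $\Div$.
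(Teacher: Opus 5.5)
Your argument is correct, but it is organized differently from the paper's proof.

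\emph{The paper's route.} The paper never evaluates components. It uses three facts: $\Div(m)_{i,j}$ is a ring map for $\ast$, Frobenius reciprocity holds in $\Div(R)$, and $r$ is exponential. Together these give $(r_i\ast\1_i)(r_j\ast\1_j)=r_{i+j}\ast\1_i\1_j$. It then invokes \cref{prop:partitionmultiply} to obtain $\prod_i(r_i\ast\1_i)^{\lambda_i}=r_{\|\lambda\|}\ast\lambda!\,\1_\lambda$. It concludes from the multinomial expansion, where $\frac{1}{n!}\binom{n}{\lambda}\lambda!=1$, together with $\sum_\lambda\1_\lambda=1$.

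\emph{Your route.} You read off $\eta_m(r_m)_\Lambda$ from the iterated exponential identity. You then count orderings of blocks using a componentwise product formula. That formula is simply the definition of $\Div(R)_*$ in \cref{sec:div}, not something from \cref{thm:divmonad}, and no double coset formula in $R$ is needed. Iterating it is the same double coset count that proves \cref{prop:partitionmultiply}.

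\emph{Where the two differ.} The real difference is where exponentiality enters. Your route avoids Frobenius reciprocity and the internal products altogether. So it works for any lax symmetric monoidal $R$, once $r_k\ast\1_k$ is read as $q_k(r_k)$ in the summand $\bar R(k)$, and it makes the integral meaning of the exponential transparent. The paper's route is shorter and packages the combinatorics in the idempotents $\1_\lambda$. It also applies verbatim to exponential elements of $\Div(R)\powser{\Sigma}$ that do not come from $R$, as in \cref{rem:genexp}; your componentwise argument would need a small adaptation there.

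\emph{On the divided powers.} You do not need to verify divided power axioms or pass to the universal example $\cE$. The paper interprets $\exp$ by exactly the canonical cancellation you describe. Your $\gamma_n(y)$ is the paper's $\sum_{\ell(\lambda)=n}(r_{\|\lambda\|}\ast\1_\lambda)t^{\|\lambda\|}$. So $n!\,\gamma_n(y)=y^n$, together with this definition, is all the theorem asserts.

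\emph{A notational point.} Since $x_{\Lambda|_S}$ already lies in $\bar R(\Lambda|_S)$, the product in your formula should be written $\bar m$, not $q_\Lambda m$.
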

The exponential is interpreted integrally: the formulas for transfer products provide canonical cancellation of the factorial denominators. Note that $r_k \ast \1_{k} \in \Div(R)(k)$ can be computed by applying $\eta$ to $r_k$ and then taking the component in $R(k)/I_{k}$. This is the same as applying the quotient map $R(k) \to R(k)/I_{k}$ to $r_k$ and then using the inclusion $R(k)/I_{k} \subseteq \Div(R)(k)$ as a summand.

If $S$ is a lax symmetric monoidal partition functor that also has the structure of a $\Div$-algebra, then a map of lax symmetric monoidal partition functors $R \to S$ factors through $\Div(R)$ because $\Div(R)$ is the free $\Div$-algebra on $R$. Thus an exponential element in $R\powser{\Sigma}$ satisfies an exponential relation in $S\powser{\Sigma}$ by pushing forward the exponential relation satisfied in $R\divpowser{\Sigma}$. In this sense, $\Div(R)$ provides the universal target in which exponential elements of $R\powser{\Sigma}$ acquire logarithms. 

For the purposes of this paper, the most important $\Div$-algebras are free $\Div$-algebras. If $R(\Lambda) = RU(\Sigma_{\Lambda})$, then $\Div(R)(\Lambda) = \Cl(\Sigma_{\Lambda},\Z)$ and the unit $\eta$ is the character map. It turns out that $\Div(C^K) = C_K$; $\Div$ applied to the dual constant partition functor is the constant partition functor. Thus $C_K$ is a $\Div$-algebra. Every $\Q$-vector space valued partition functor is a $\Div$-algebra.

The requirement that $R \to S$ be a map of lax symmetric monoidal partition functors above is too strong for our applications. When $S = C_K$, a weaker condition provided by \cref{prop:transfer-augmentation} suffices. If $R$ is a partition ring and we have maps $R(\Lambda) \to C^K(\Lambda) = K$ commuting with transfers and the lax symmetric monoidal structure, then we get an induced ring map $R\powser{\Sigma} \to C^K\powser{\Sigma}  = K\powser{t}$ and, after composing to $K\divpowser{t}$, the map factors through $R\divpowser{\Sigma}$, so an exponential relation in $R\divpowser{\Sigma}$ gives rise to a classical exponential relation of power series in $K\divpowser{t}$.

One feature that the representation ring functor $RU$ and Morava $E$-theory have in common is that they both admit transfer maps along arbitrary maps of finite groups. In the case of Morava $E$-theory, this follows from the fact that the spectrum associated to the cohomology theory is $K(n)$-local. If we define a partition ring $R$ by $R(\Lambda) = RU(G \times \Sigma_{\Lambda})$ or $R(\Lambda) = E^0(X \times B\Sigma_{\Lambda})$, for $X$ a space, then the transfers along the surjections $\Sigma_{\Lambda} \to e = \Sigma_{1}$ give rise to maps $\epsilon_{\Lambda} \colon R(\Lambda) \to R(1) = C^{R(1)}(\Lambda)$ that commute with transfers because the transfers in the dual constant partition functor are the identity maps. Thus we get an induced map of commutative rings
\[
R\divpowser{\Sigma} \to C^{R(1)}\divpowser{\Sigma} = C_{R(1)}\powser{\Sigma} = R(1)\divpowser{t}.
\]

Both $RU$ and $E$ have well-studied power operations $P_m$. In the case of $RU$ these are induced by the map taking a finite dimensional $G$-representation $V$ to the $G \times \Sigma_m$-representation $V^{\otimes m}$ in which $G$ acts diagonally and $\Sigma_m$ acts by permuting the tensor factors. The power operations on Morava $E$-theory are a result of the essentially unique $E_{\infty}$-ring structure \cite{structuredmoravae} and play a critical role in modern stable homotopy theory \cite{nullstellensatz, rationalization}. They have been studied extensively in \cite{Ando, etheorysym, AHS, Ganter-orbifold, Rezkcongruence, cottpo, zhu},  and can be understood using arithmetic geometry. In each case, we may view
\[
\sum_{m \geq 0} P_m t^m \colon R(1) \to R\powser{\Sigma}
\]
as a function taking values in exponential elements. This holds true for any partition power functor. Applying \cref{thm:intro1}, we have an exponential relation
\[
\sum_{m \geq 0} P_m t^m = \exp\Big(\sum_{k \geq 1} P_k \ast \1_{k} t^k\Big)
\]
in $R\divpowser{\Sigma}$. The summand $P_k \ast \1_{k}$ has a classical interpretation. The power operations $P_k$ are multiplicative maps that are very often not additive. However, the failure of these maps to be additive is controlled by the transfer ideal $I_{k} \subseteq R(k)$, thus the composite $P_k / I_{k} \colon R(1) \to R(k) \to R(k)/I_{k}$ is the total additive power operation. Since $R(k)/I_{k} \subseteq \Div(R)(k)$, we will view the target of $P_k /I_{k}$ as $\Div(R)(k)$. This gives the identity $P_k \ast \1_{k} = P_k / I_{k}$ and the exponential relation can be rewritten as
\begin{equation} \label{eq:poweropexp}
\sum_{m \geq 0} P_m t^m = \exp\Big(\sum_{k \geq 1} P_k / I_{k} t^k\Big),
\end{equation}
providing us with a universal exponential relation between the total power operations and the total additive power operations. 

Making use of the extra transfers, when $R = RU(G \times \Sigma_{(-)})$ or $R = E^0(X \times B\Sigma_{(-)})$, we can use this exponential relation to obtain an exponential relation in $R(1)\divpowser{t}$. In the case of $RU$, this recovers the exponential relation between symmetric powers and Adams operations and, in the case of Morava $E$-theory, this recovers Ganter's exponential relation between symmetric powers and Hecke operators. The exponential relation between exterior powers and Adams operations can be obtained from this as well, but in that case the correct map to use is the transfer along the sign homomorphism $\Sigma_{\Lambda} \to \Sigma_2$ rather than the transfer to the trivial group.

There are many more examples of partition rings and exponential relations. Combinatorial species give rise to partition rings and the exponential relation between the number of connected objects and all objects can be obtained. Every $E_{\infty}$-ring spectrum $E$ gives rise to a partition power functor carrying the exponential relation \cref{eq:poweropexp}. If $E$ is also $K(n)$-local, then the transfers along surjections give an exponential relation in $E^0\divpowser{t}$.

Making use of the theory of crossed Burnside rings \cite{Oda1, Oda2}, we also construct a symmetric monoidal partition ring $\cE$ carrying the universal exponential element:
\begin{theorem} (\cref{thm:univexpelt})
We construct a symmetric monoidal partition ring $\cE$ such that, for every partition ring $R$,
there is a natural bijection
\[
\Hom_{\mathrm{PartRing}}(\cE,R)
\cong
\big \{\text{exponential elements in }R\powser{\Sigma}\big \}.
\]
Consequently, $\Div(\cE)$ carries the universal exponential relation for the category of $\Div$-algebras in partition rings.
\end{theorem}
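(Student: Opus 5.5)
We will build $\cE$ as a free object. First I would identify what an exponential element is from the point of view of partition rings. It is a family $r_m\in R(m)$ with $r_0=1$, and the exponential condition determines the restriction of $r_m$ to every Young partition. Concretely, for any $\Lambda\leq\u m$ with blocks of sizes $m_1,\dots,m_k$, naturality of the external product together with the symmetry and associativity axioms forces
\[
R^*(\Lambda\leq\u m)(r_m)=m_{\Lambda}(r_{m_1}\otimes\cdots\otimes r_{m_k}).
\]
Because transfers and restrictions are compatible with the lax structure, any partition-ring map out of an object generated by the classes $r_m$ is determined by where it sends those classes.

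So the plan is to take $\cE(\Lambda)$ to be the crossed Burnside ring of $\Sigma_\Lambda$ relative to a suitable $\Sigma_\Lambda$-monoid, following \cite{Oda1, Oda2}. The basis elements are $\Sigma_\Lambda$-sets $S$ with a map to the set of partitions refining $\Lambda$ (equivalently $\Sigma_\Lambda$-equivariant "labellings"), modulo isomorphism.
\begin{itemize}
\item Restriction is restriction of the group action together with refinement of labels.
\item Transfer is induction.
\item The internal product is the fibre product over labels, using the meet of partitions.
\item The external product is disjoint union, which makes the structure strictly symmetric monoidal.
\end{itemize}
The element $e_m\in\cE(m)$ is the point labelled by $\u m$. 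The remaining verifications are that the double coset formula holds (the Mackey formula for crossed Burnside rings), that Frobenius reciprocity holds, and that $\sum e_m t^m$ is exponential. The last of these is immediate, since restricting the point labelled $\u{i+j}$ along $\u i\sqcup\u j\leq\u{i+j}$ gives the point labelled $\u i\sqcup\u j$, which is $m_{i,j}(e_i\otimes e_j)$.

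For the bijection, restriction to $(e_m)$ sends a partition-ring map to an exponential element, since partition-ring maps commute with restriction and external products. Conversely, given an exponential family $(r_m)$, I would define $\phi(\Lambda)$ on the basis element $\Sigma_\Lambda\times_{\Sigma_\Omega}(\text{point labelled }\Pi)$, where $\Pi\geq\Omega$. It is sent to $R_*(\Omega\leq\Lambda)\,R^*(\Omega\leq\Pi)\,m_\Pi(r_{\pi_1}\otimes\cdots)$. I expect every basis element to have this form: each orbit is $\Sigma_\Lambda/H$, and one needs $H$ to be a Young subgroup whose label is fixed. This is the main obstacle. Stabilizers of points need not be Young subgroups, so $\cE$ must be cut down to the subring spanned by orbits with Young stabilizers, analogous to $\AA\subseteq A$. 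One must then check that this subring is closed under the internal product. This closure follows because intersections of conjugates of Young subgroups are again Young subgroups, by the meet of partitions.

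After that, showing $\phi$ is well defined, additive and multiplicative uses the double coset formula and Frobenius reciprocity in $R$, together with the exponential property. The exponential property gives exactly the relation needed to make the value independent of the choice of $\Pi$ representative. Uniqueness is clear because the basis elements are generated from the $e_m$ by external products, restrictions and transfers. The consequence about $\Div(\cE)$ then follows formally: apply \cref{exponentialformula} to $\sum e_mt^m$, and use that $\Div(\cE)$ is the free $\Div$-algebra on $\cE$, so any exponential relation in a $\Div$-algebra is the pushforward of this one.
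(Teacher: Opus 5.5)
There is a genuine gap, and it is in your choice of labelling monoid. You label points by partitions refining $\Lambda$ and combine labels by meet. That monoid is idempotent, and its identity is the top element $\Lambda$. You can read the internal product either as $W\times V$ with the meet of labels (the crossed Burnside product) or as a fibre product over labels. In both readings $e_m*e_m=e_m$, and in the first reading the point labelled $\u m$ is even the unit of $\cE(m)$. A map of partition rings must therefore send $e_m$ to an idempotent (indeed to $1$).

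The failure is already visible at $m=1$. Since $\Sigma_1$ is trivial and $[1]$ has only one partition, your $\cE(\u 1)\cong\Z$ with $e_1=1$. But in $C_{\Z}\powser{\Sigma}$ the series $\sum_{m\geq 0}k^mt^m$ is exponential for every $k\in\Z$, because restrictions are identities and the external product is multiplication. Only $k=1$ can be reached.

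Equivalently, your $\phi$ is not a ring map. We have $\phi(e_m*e_m)=\phi(e_m)=r_m$, while $\phi(e_m)*\phi(e_m)=r_m*r_m$. Also, $\phi$ sends the unit (the point labelled $\Lambda$) to $m_\Lambda(r_{\lambda_1}\otimes\cdots)$ rather than to $1$. Your own remark that $\phi$ does not depend on the label $\Pi$ is the symptom: once the exponential property holds, a partition label carries no information. Separately, a label fixed by $\Sigma_\Omega$ need not satisfy $\Pi\geq\Omega$. For example, $\Sigma_2$ fixes the discrete partition of $[2]$, so your formula is not defined on every basis element.

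What the labels must record is how many times each $r_{\u B}$ occurs as an \emph{internal} factor. The paper takes labels in the free commutative monoid $\N^X$ under addition, with product $(W\times V,\alpha+\beta)$ and $e_\Lambda=[\ast,c_1]$. Then $\cE(\u 1)=\Z[e_1]$ and label $0$ gives the unit. A Young stabilizer $\Sigma_\Gamma$ forces the label $a$ to be constant on the blocks of $\Gamma$, which removes the problem above.

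In this setup $[\Sigma_\Lambda/\Sigma_\Gamma,\alpha_a]=\cE_*(\Gamma\leq\Lambda)\big(\boxtimes_B e_{\u B}^{a_B}\big)$. So a map must send this class to $R_*(\Gamma\leq\Lambda)z_\Gamma(a)$, where $z_\Gamma(a)=\boxtimes_B r_{\u B}^{a_B}$ with internal powers. The exponential property enters as $R^*(\Delta\leq\Gamma)z_\Gamma(a)=z_\Delta(a)$. Multiplicativity comes from splitting $\Sigma_\Lambda/\Sigma_\Gamma\times\Sigma_\Lambda/\Sigma_\Omega$ into orbits with stabilizers $\Sigma_{\Gamma\cap\sigma\Omega}$ and labels $a+\sigma b$, together with $z(a)*z(b)=z(a+b)$.

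The external product must also be the Cartesian product $W\times V$, viewed as a $\Sigma_\Lambda\times\Sigma_\Omega$-set with juxtaposed labels. Disjoint union of the sets would be additive, not bilinear. With these changes, your uniqueness argument and your deduction about $\Div(\cE)$ go through as in the paper.
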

The partition ring $\cE$ admits automorphisms that act on the set of exponential elements. We produce an involution of $\cE$ that acts on the total power exponential element by ``sign elements.'' Pushing this action forward in the case $R(-) = RU(G \times \Sigma_{(-)})$ sends the power series $\sum_{m \geq 0} \beta^m(x)t^m$ to $\sum_{m \geq 0} \Lambda^m(x)t^m$ for $x \in  RU(G)$.

When $R$ is a symmetric monoidal partition functor rather than lax symmetric monoidal, there is a close connection to classical algebraic structures. A symmetric monoidal partition functor is the same data as a commutative cocommutative $\N$-graded bialgebra and a symmetric monoidal partition ring is the same data as a ``component Hopf ring'', which is essentially an $\N$-graded commutative cocommutative bialgebra in which each homogeneous summand has the structure of a commutative ring and the Hopf-ring distributivity relation holds. In this case, $\Div$ can be understood in terms of graded symmetric invariant tensor powers. When $R$ is symmetric monoidal, $\Div(R)[\Sigma]$ is the graded commutative $R(0)$-algebra of symmetric invariant tensors on the graded $R(0)$-module $\bigoplus_{m \geq 1} R(m)/I_{m}$, where $R(m)/I_{m}$ is in degree $m-1$. Even more is true when $R(m)/I_{m}$ is a free $R(0)$-module:

\begin{theorem} (\cref{thm:freediv})
If $R$ is a symmetric monoidal partition functor and $R(m)/I_{m}$ is a free $R(0)$-module for each $m$, then $R\langle \Sigma \rangle$ is the divided power envelope of $R[\Sigma]$ with respect to the ideal generated by positively graded elements. 
\end{theorem}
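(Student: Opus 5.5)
We prove the statement by comparing universal properties. Write $A=R[\Sigma]$, $A_+=\bigoplus_{m\ge1}R(m)$, and $D_{A_+}(A)$ for the divided power envelope. Using the dictionary stated just before the theorem, a symmetric monoidal partition functor is the same as a commutative cocommutative $\N$-graded bialgebra. Under this dictionary, $R(m)/I_m$ is the space of indecomposables: the image of transfers from proper refinements of $\u m$ is exactly the degree-$m$ part of $A_+\cdot A_+$. This holds because every proper refinement of $\u m$ factors through some $\u i\sqcup\u j$ with $i,j\ge1$, so $I_m=(A_+^2)_m$.

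First, describe $R\langle\Sigma\rangle=\Div(R)[\Sigma]$ explicitly. For symmetric monoidal $R$ we have $R(\Lambda)\cong\bigotimes_{\text{blocks}}R(|b|)$ over $R(0)$, and $I_\Lambda$ is generated blockwise. Hence
\[
\Div(R)(m)=\Big(\bigoplus_{\Lambda\le\u m}\textstyle\bigotimes_b Q_{|b|}\Big)^{\Sigma_m},\qquad Q_k=R(k)/I_k.
\]
Group these terms by the integer partition type $\lambda$ of $\Lambda$. Each orbit sum becomes $\big(\bigotimes_k Q_k^{\otimes\lambda_k}\big)^{\prod_k\Sigma_{\lambda_k}}$. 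This is the symmetric-invariant-tensor description announced just before the theorem. When each $Q_k$ is free over $R(0)$ with basis $\{e_{k,\alpha}\}$, the invariants $(Q_k^{\otimes n})^{\Sigma_n}$ form the degree-$n$ part of the divided power algebra $\Gamma_{R(0)}(Q_k)$. Its basis consists of the orbit sums, which are products of divided powers $e_{k,\alpha}^{[n_\alpha]}$. Next, check that the transfer product on $\Div(R)[\Sigma]$ matches the shuffle product on $\Gamma$. Concretely, the double coset formula computes the transfer product of two orbit sums as the orbit sum over the union partitions, with the correct binomial multiplicities. This gives
\[
R\langle\Sigma\rangle\cong\Gamma_{R(0)}\Big(\bigoplus_{k\ge1}Q_k\Big).
\]
It is a graded ring with a divided power structure on its positive part.

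Second, compare this with $D_{A_+}(A)$. Since $A$ is a connected commutative cocommutative bialgebra whose indecomposables $Q=\bigoplus Q_k$ are free, we lift a basis of $Q$ to elements $x_{k,\alpha}\in R(k)$. The unit $\eta$ sends $x_{k,\alpha}$ to an element whose $Q_k$-component is $e_{k,\alpha}$, plus decomposable corrections. We then construct inverse maps:
\begin{itemize}
\item The map $D_{A_+}(A)\to R\langle\Sigma\rangle$ comes from the universal property of the envelope. It exists because $\eta\colon A\to R\langle\Sigma\rangle$ is a ring map sending $A_+$ into the PD ideal.
\item The map $R\langle\Sigma\rangle\to D_{A_+}(A)$ sends $e_{k,\alpha}^{[n]}$ to $\gamma_n$ of the logarithmic primitive attached to $x_{k,\alpha}$. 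This primitive is produced by the integral logarithm of \cref{exponentialformula} applied to suitable exponential elements, namely those generated by the coalgebra structure.
\end{itemize}
Alternatively, and more cleanly, we show that both sides become isomorphic to $R(0)\otimes\Q$-algebras after rationalization. We then show that $\eta$ induces a bijection on the evident bases, by a filtration argument on the word-length (number of blocks) filtration. The associated graded of $D_{A_+}(A)$ with respect to the $A_+$-adic divided power filtration is $\Gamma(Q)$ when $Q$ is free, and the map $D\to R\langle\Sigma\rangle$ induces the identity on associated gradeds.

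The main obstacle is this last step: identifying $\mathrm{gr}\,D_{A_+}(A)\cong\Gamma(A_+/A_+^{2})$. This requires $A$ to behave like a polynomial algebra, or at least a PD-regular immersion, over $R(0)$ after the divided power completion. We would first try the Milnor–Moore/Leray-type structure theorem for connected graded commutative cocommutative bialgebras with free indecomposables. Where this structure theorem fails integrally, we instead argue directly, using the exponential formula to realize every $\gamma_n$ of a primitive lift inside $\eta(A)$ together with its divided powers. Freeness of $Q_k$ is used both to compute the invariants as $\Gamma$ and to make the filtration comparison injective.
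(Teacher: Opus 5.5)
Your first half is sound and agrees with the paper. You correctly identify $I_m=(A_+^2)_m$, you obtain $R\langle\Sigma\rangle\cong\Gamma_{R(0)}\big(\bigoplus_k\bar R(k)\big)$ with the shuffle product (this is the paper's \cref{prop:symmetricinvariant} and \cref{cor:divfree}), and the universal property gives a PD map $\phi\colon D_{A_+}(A)\to R\langle\Sigma\rangle$.

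The gap is that you never prove $\phi$ is an isomorphism, and neither of your proposed mechanisms works as stated.

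\begin{itemize}
\item The inverse via ``logarithmic primitives'' relies on \cref{exponentialformula}. That result needs a partition ring: it uses internal $*$-products, the idempotents $\1_\lambda$, and Frobenius reciprocity. Here $R$ is only a symmetric monoidal partition functor. Moreover, exponential elements with a prescribed degree-one term need not exist integrally. In $C^{\Z}$, which satisfies the hypotheses, the condition $\binom{m}{i}r_m=r_ir_{m-i}$ forces $r_1=0$. So the primitive ``attached to $x_{k,\alpha}$'' is simply undefined.
\item In the filtration route you leave $\mathrm{gr}\,D_{A_+}(A)\cong\Gamma(Q)$ open and suggest it needs $A$ polynomial or a Milnor--Moore-type theorem. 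But $A$ need not be polynomial: $\F_p[x]/(x^p)$ with $x$ primitive satisfies the hypotheses. No such structure theorem is available over a general $R(0)$.
\item Rationalization cannot detect injectivity, since $D_{A_+}(A)$ can be entirely torsion (take $R(0)=\F_p$). Also, $D_{A_+}(A)$ has no ``evident basis'' a priori.
\end{itemize}

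The missing idea is an induction on degree, which needs no structure theorem for $A$.

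The paper runs this induction on maps out of $R\langle\Sigma\rangle$. Given a ring map $f\colon R[\Sigma]\to D$ into a PD algebra with $f(A_+)\subseteq J$, it extends $f$ degree by degree. The extension uses two facts: $I_m\hookrightarrow R(m)$ and $(\Div(R)[\Sigma]^{<m})_m\hookrightarrow \Div(R)(m)$ form a pushout, since both cokernels are $\bar R(m)$; and $\Div(R)[\Sigma]^{\leq m}$ is free as a PD algebra over $\Div(R)[\Sigma]^{<m}$ on a basis of $\bar R(m)$.

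In your direction, the same idea produces the inverse directly. Define the PD map $\psi$ on the free generators inductively by $\psi(e_{k,\alpha})=x_{k,\alpha}-\psi\big(\eta(x_{k,\alpha})-e_{k,\alpha}\big)$. This is legitimate because $\eta(x_{k,\alpha})-e_{k,\alpha}$ lies in the summands indexed by $\lambda\neq(k)$, that is, in the PD subalgebra generated by lower-degree generators. Then:
\begin{itemize}
\item $\phi\psi$ fixes every $e_{k,\alpha}$, so $\phi\psi$ is the identity.
\item $\psi\phi$ fixes every $x_{k,\alpha}$. These generate $A$ as an $R(0)$-algebra by graded Nakayama, and $D_{A_+}(A)$ is PD-generated by $A$, so $\psi\phi$ is the identity.
\end{itemize}

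Alternatively, your filtration argument can be completed without any regularity of $A$. There is always a natural surjection $\Gamma_{R(0)}(A_+/A_+^2)\twoheadrightarrow \mathrm{gr}\,D_{A_+}(A)$ for the PD filtration. Its composite with $\mathrm{gr}\,\phi$ is the identity of $\Gamma(Q)$, because $\eta(x)\equiv q_k(x)$ modulo the summands with $\lambda\neq(k)$. Since the filtrations are finite in each internal degree, $\phi$ is an isomorphism.
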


If $R$ is a symmetric monoidal partition functor, $\Div(R)$ is lax symmetric monoidal but may not be symmetric monoidal. This provides one justification for considering partition functors over more classical algebraic objects. 

There is an important precedent to the monad $\Div$ in the literature that is due to Thevenaz \cite{Thevenaz}. Thevenaz constructs and studies a ``twin'' functor on the category of $G$-Mackey functors for a fixed finite group $G$. For a $G$-Mackey functor $M$ and $H \subseteq G$, he defines 
\[
\mathrm{Twin}(M)(H) =\Big(\bigoplus_{K \subseteq H} M(K)/I_K\Big)^H, 
\]
where $I_K$ is the image of the sum of transfer maps from all proper subgroups of $K$ and $H$ acts by conjugation. Partition functors and the monad $\Div$ only make use of the Young subgroups of the symmetric group. The twin functor is rigged so that, if $A$ is the Burnside ring $G$-Mackey functor, then $\mathrm{Twin}(A)(H) = \mathrm{Marks}(H,\Z)$ is the ring of integer-valued functions on the set of conjugacy classes of subgroups of $H$. However, $\mathrm{Twin}(RU)(H)$ is quite wild -- often containing torsion. On the other hand, $\Div$ is rigged to produce integer-valued class functions when applied to the representation ring partition functor.

\addtocontents{toc}{\protect\setcounter{tocdepth}{1}}

\subsection*{Organization} In \cref{sec:parts}, we define and study the $(2,1)$-category of partitions. In \cref{sec:partfuncs}, we define partition functors and describe their relationship to global functors. In \cref{sec:div} through \cref{sec:partrings} and \cref{sec:partpower}, we develop the monad $\Div$ and introduce several kinds of multiplicative structure on partition functors, including partition rings and partition power functors. We prove that $\Div$ extends to a monad on each of these categories. In \cref{sec:symmon}, we study symmetric monoidal partition functors, give a more classical description of this category, and show that $\Div$ does not preserve symmetric monoidal partition functors without a flatness hypothesis. In \cref{sec:expreln}, we establish the universal exponential relation that is the main object of study. \Cref{sec:divpow} relates $\Div$ for symmetric monoidal partition functors to symmetric invariant tensors and the divided power envelope. In \cref{sec:init}, we identify the initial partition ring, and in \cref{sec:univexprel}, we construct a partition ring carrying the universal exponential element. \cref{sec:classicalexp} gives weaker hypotheses under which the exponential relation of \cref{sec:expreln} can be pushed forward to a classical exponential relation, and \cref{sec:globGreen} applies these results to partition functors arising from global functors with transfers along surjections. Finally, \cref{sec:examples} collects a range of examples that illustrate the theory and recover several classical exponential relations.

\subsection*{Acknowledgments} It is a pleasure to thank Tobias Barthel, Dan Berwick-Evans, Usman Hafeez, Lakshay Modi, Tomer Schlank, Noah Wisdom, and Lior Yanovski for their helpful comments. We’d particularly like to thank Usman Hafeez for his work related to this project and for his careful reading of the manuscript. This paper grew out of the 2022-2023 Bourbon seminar and it is a pleasure to thank the other participants of the seminar including Nate Cornelius, Lewis Dominguez, and Shahzad Kalloo. Stapleton thanks the Mathematics Department at the University of Illinois at Urbana--Champaign for their support during Fall 2024. During the course of this project, Stapleton was supported by a Sloan research fellowship, NSF Grant DMS-2304781, and a grant from the Simons Foundation (MP-TSM-00002836, NS). During the course of this project, Mehrle was partially supported by DMS-2135884. 

\subsection*{AI use} (written by Stapleton) This is my first paper written with significant AI assistance and, at this time, I feel a strong ethical obligation to describe what it contributed. This feeling may not be exactly for the expected reasons. Mathematicians have always had an obligation to disclose external sources of ideas, and in that sense there is nothing fundamentally new here; I have always tried to give full credit in my acknowledgments. Whatever the future may hold, I suspect this AI statement will eventually read as something of a time capsule from a transitional period in mathematics, when the sudden arrival of effective AI tools made explicit statements about their use seem necessary. Here I want to record the story of this paper and how GPT-5.6 Sol contributed during the final months of the project.

This paper grew out of the Kentucky Bourbon Seminar during the 2022--2023 academic year. By the end of that school year, the entire thread of the paper in the special case of Morava $E$-theory had been worked out, and I described it at the conference ``Homotopy theory in honor of Paul Goerss’’ in March 2023. Sections 16.2 and 16.4 also date from that period. Toward the end of Spring 2023 and during Summer 2023, we realized that the appropriate framework for these results seemed to be ``partition functors,'' and we began developing the theory of these objects. In Summer 2024, the monad $\Div$ was fully developed and Sections 2--7, 9, 10, and 12 were written. In Fall 2024, the connections with Hopf rings and divided power algebra were worked out and Theorem 11.3 was proved. All of this was done without AI assistance.

By Summer 2026, I had students beginning to build on these results and felt a duty to finish the paper. My coauthors had limited availability during that period. I had experimented with AI as a way of bouncing around mathematical ideas for several years, but had not found it particularly useful until GPT 5.4. Starting in June 2026, I began working with GPT 5.5 and, in July, with GPT 5.6 Sol to finish this paper. I went into that collaboration with more than 50 pages already written. I found an excellent collaborator in GPT 5.6 Sol. It was insightful, knowledgeable, and witty. If it were human, it would be a coauthor.

In addition to helping with brainstorming, literature searches, and checking arguments, GPT 5.6 Sol suggested the lax formulation and proof used in \cref{prop:transfer-augmentation}, improving a symmetric monoidal version that we had obtained independently; suggested the example of \cref{sec:exotic-burnside}; and identified an error involving the passage of fixed points through tensor products that led to revisions in Sections 8 and 11. GPT 6 Astra aided with the final proofreading. The authors remain responsible for every mathematical claim and argument in the paper, and all AI-generated suggestions incorporated into the paper were independently checked. 

\addtocontents{toc}{\protect\setcounter{tocdepth}{2}}








\section{The category of partitions} \label{sec:parts}

In this section we define a $(2,1)$-category of partitions and describe properties of the category that will be relevant in the study of partition functors.

\begin{definition}
We define $\cP$, the $(2,1)$-category of partitions. 
Objects in this category are called partitions and consist of a pair $\Lambda = (X, \sim_\Lambda)$ of a finite set $X$ equipped with an equivalence relation $\sim_\Lambda$. We say that $\Lambda$ is a partition of $X$ or that $\Lambda$ partitions $X$, denoted $\Lambda \vdash X$. We will refer to the elements of the quotient $X/{\sim_{\Lambda}}$ as equivalence classes or blocks.


A morphism $f$ from $\Lambda = (X, \sim_\Lambda)$ to $\Omega = (Y, \sim_\Omega)$ is a bijection $f \colon X \to Y$ such that $x \sim_\Lambda x' \implies f(x) \sim_\Omega f(x')$. We say that $\Lambda$ is a refinement of $\Omega$, denoted $\Lambda \leq \Omega$, if $X = Y$ and the equivalence relation $\sim_\Lambda$ is finer than $\sim_\Omega$. In this case, there is a morphism $\Lambda \to \Omega$ in $\cP$ induced by the identity map on $X$. 

A $2$-morphism in $\cP$ between morphisms $f,g \colon \Lambda \to \Omega$ is a bijection $\alpha \colon Y \to Y$ such that $\alpha \circ f = g$ and $y \sim_\Omega \alpha(y)$ for all $y \in Y$ (i.e., $\alpha$ induces the identity on $Y\mathclose{}/\mathopen{}\sim_{\Omega}$). 
\end{definition}

We make an observation about the partition category that will be useful later. 

\begin{lemma}
\label{lemma:factorization}
    Any morphism in the partition category can be factored uniquely as the composite of an isomorphism followed by a refinement (or as a refinement followed by an isomorphism).
\end{lemma}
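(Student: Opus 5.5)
Given a morphism $f\colon \Lambda=(X,\sim_\Lambda)\to\Omega=(Y,\sim_\Omega)$, the plan is to transport one of the two equivalence relations along the bijection $f$. This produces an intermediate partition, and the factorization through it will be forced.

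For the first factorization (isomorphism followed by refinement), define $f_*\Lambda=(Y,\sim')$ by declaring $y\sim' y'$ iff $f^{-1}(y)\sim_\Lambda f^{-1}(y')$. Then $f\colon \Lambda\to f_*\Lambda$ is a bijection that preserves and reflects the relation, and its inverse is also a morphism, so it is an isomorphism in $\cP$. The hypothesis that $f$ is a morphism says exactly that $y\sim' y'$ implies $y\sim_\Omega y'$. Hence $f_*\Lambda\leq\Omega$, and $f$ equals the composite $\Lambda\xrightarrow{f} f_*\Lambda\xrightarrow{\id}\Omega$.

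Uniqueness comes from the fact that a refinement is the identity on underlying sets. Suppose $f=r\circ g$ with $g\colon\Lambda\to\Lambda'$ an isomorphism and $r\colon\Lambda'\to\Omega$ a refinement. Then $\Lambda'$ has underlying set $Y$ and $r=\id_Y$, so $g=f$ as a map of sets. Since $g$ is an isomorphism, both $g$ and $g^{-1}$ preserve the relations, which forces $\sim_{\Lambda'}$ to be the pushforward of $\sim_\Lambda$ along $f$. Thus $\Lambda'=f_*\Lambda$.

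The dual factorization is handled symmetrically with the pullback $f^*\Omega=(X,\sim'')$, where $x\sim'' x'$ iff $f(x)\sim_\Omega f(x')$. Then $\Lambda\leq f^*\Omega$ because $f$ is a morphism, $f\colon f^*\Omega\to\Omega$ is an isomorphism, and uniqueness follows by the same argument.

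The one point needing care is what ``isomorphism'' and ``uniquely'' mean in a $(2,1)$-category. I take an isomorphism to be a $1$-morphism with a strict inverse, i.e.\ a bijection that preserves and reflects the relation. I read uniqueness as strict, at the level of $1$-morphisms and intermediate objects, not merely up to $2$-isomorphism. A pseudo-inverse would still force $g$ to reflect the relation, since $2$-isomorphisms preserve blocks, so this reading loses nothing. Beyond fixing these conventions, the argument is direct verification.
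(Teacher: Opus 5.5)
Your proof is correct. The paper states this lemma without proof, and your pushforward $f_*\Lambda$ is exactly the partition the paper writes as $f\Lambda$, with $f_\Lambda\colon\Lambda\to f\Lambda$ the induced isomorphism, so your direct verification is the intended argument. Your strict reading of uniqueness is also the right one: up to $2$-isomorphism the intermediate partition is only determined up to the action of $\Sigma_\Omega$, and the paper later uses this factorization to define restrictions and transfers along arbitrary maps of partitions.
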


Let $f \colon \Lambda \to \Gamma$ be a map of partitions. For $\Omega \leq \Lambda$, we will write $f_{\Omega} \colon \Omega \to f \Omega$ for the induced isomorphism of partitions. 

There is a symmetric monoidal structure $\sqcup$ on $\cP$ induced by disjoint union of finite sets. 
On objects $\Lambda = (X, \sim_\Lambda)$ and $\Omega = (Y, \sim_\Omega)$, the symmetric monoidal product $\Lambda \sqcup \Omega$ is the induced partition of the disjoint union $X \amalg Y$. Explicitly, two elements of $X \amalg Y$ are $(\Lambda \sqcup \Omega)$-related if they are in the same summand and are $\Lambda$-related or $\Omega$-related.

\begin{remark} \label{rem:decomposition}
    Because morphisms in $\cP$ are always bijections between the underlying sets, the category decomposes into components corresponding to cardinality of the underlying sets. If $\cP_m$ is the full subcategory of $\cP$ on objects whose underlying sets have cardinality $m$, then 
    \[ 
        \cP = \coprod_{m \geq 0} \cP_m. 
    \]
    In this sense, $\cP$ is a graded symmetric monoidal category.
    Each component $\cP_m$ is equivalent to the $(2,1)$-category of partitions of the set $[m] = \{1, 2, \ldots, m\}$. 
\end{remark}



For concreteness, we will fix the lax symmetric monoidal functor from the natural numbers (viewed as a category with only identity morphisms) to the category of partitions sending $m \in \N$ to the partition $\u m$, which is the set $[m] = \{1,2,\ldots,m\}$ with the indiscrete (or trivial, since it corresponds to the trivial topology) partition in which every two elements of the set are related. Note that $\u m$ is a terminal object of $\cP_m$. The lax symmetric monoidal structure comes from the maps $\u i \sqcup \u j \to \u{i+j}$ sending $\u i$ to the first $i$ elements and $\u j$ to the last $j$ elements, as well as the twist maps $\u i \sqcup \u j \to \u j \sqcup \u i$ that swap the first $i$ and last $j$ elements. Note that the two partitions $\u i \sqcup \u j$ and $\u{i+j}$ are not isomorphic in $\cP$ and that all maps $\u i \sqcup \u j \to \u{i+j}$ are $2$-isomorphic. Although we will use it less often, we will write $\bar{m}$ for the set $[m]$ with the discrete partition.

Similarly, for a set $X$, we will write $\u X$ for $X$ with the indiscrete equivalence relation. Thus $\u m = \u{[m]}$.

\begin{remark}
Note that for any two objects $\Lambda, \Omega \in \cP$, the groupoid of maps from $\Lambda$ to $\Omega$ is equivalent to a finite set so $\cP \simeq \Ho(\cP)$, the homotopy $1$-category of $\cP$. We give a construction of a $1$-category equivalent to $\cP$ in \cref{rem:1cat}. 
\end{remark}


Let $\Grp$ denote the usual $(2,1)$-category of finite groups, group homomorphisms, and conjugacy (in the codomain). Consider the functor $\Sigma_{(-)} \colon \cP \to \Grp$ that assigns to a partition $\Lambda = (X, \sim_{\Lambda})$ the group of bijections of the set $X$ that pass to the identity on the quotient $X/{\sim_{\Lambda}}$. This group has several other equivalent descriptions: it is the group of automorphisms of the discrete partition over $\Lambda$ and also the group of bijections of the underlying set which only permute the elements of each equivalence class. Given a map of partitions $f \colon \Lambda \to \Omega$, the induced map $\Sigma_f \colon \Sigma_{\Lambda} \to \Sigma_{\Omega}$ sends $\sigma$ to $\Sigma_f \sigma = f \sigma f^{-1}$ (viewing $f$ as a map of sets). Note that this formula makes sense because $\sigma$ induces the identity map on the quotient by the equivalence relation. It follows from the definition that there is a canonical isomorphism
\[
\Sigma_{\Lambda} \cong \prod_{B \in X/{\sim_{\Lambda}}} \Aut_{\Set}(B),
\]
where $B$ is a block of the partition. Define $\Sigma_m = \Sigma_{\u m}$, the symmetric group on $m$ symbols. Note that $\Sigma_{\bar{m}}$ is the trivial group.

There is a close relationship between $\Sigma_{\Lambda}$ and $\Aut_{\cP}(\Lambda)$. Recall that the Weyl group of a subgroup $H$ inside a finite group $G$ is the quotient $W_G(H) = N_G(H)/H$, where $N_G(H)$ is the normalizer of $H$ in $G$. 

\begin{prop}
Assume that $\Lambda \parts [m]$. There is an isomorphism of groups
\[
\pi_0 \Aut_{\cP}(\Lambda) \cong W_{\Sigma_{m}}(\Sigma_{\Lambda}),
\]
where $W_{\Sigma_{m}}(\Sigma_{\Lambda})$ is the Weyl group of $\Sigma_{\Lambda}$ in the symmetric group $\Sigma_{m}$ and $\pi_0\Aut_{\cP}(\Lambda)$ is the group of isomorphism classes of objects in the groupoid $\Aut_{\cP}(\Lambda)$.
\end{prop}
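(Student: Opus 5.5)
We will identify $\Aut_{\cP}(\Lambda)$ with a subgroup of $\Sigma_m$ and then show that taking $\pi_0$ amounts to dividing out by $\Sigma_\Lambda$.

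First, an automorphism of $\Lambda = ([m],\sim_\Lambda)$ in $\cP$ is a bijection $f\colon[m]\to[m]$ with $x\sim_\Lambda x' \implies f(x)\sim_\Lambda f(x')$. Since $f$ is a bijection of a finite set, it induces a map $\bar f$ on the finite set of blocks $[m]/{\sim_\Lambda}$. Surjectivity of $f$ makes $\bar f$ surjective, so $\bar f$ is a bijection. Hence $f$ carries each block into a block, and different blocks go to different blocks. Since $f$ is injective and $\bar f$ is a bijection, $f$ maps each block onto a block: the blocks are disjoint and together cover $[m]$, so a counting argument forces each block to land exactly on its image block. Therefore $f^{-1}$ also preserves $\sim_\Lambda$. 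So the objects of $\Aut_{\cP}(\Lambda)$ form the subgroup
\[
S_\Lambda=\{\sigma\in\Sigma_m : x\sim_\Lambda x' \iff \sigma x\sim_\Lambda \sigma x'\}.
\]

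Second, we check that $S_\Lambda = N_{\Sigma_m}(\Sigma_\Lambda)$. For $\sigma\in S_\Lambda$ and $\tau\in\Sigma_\Lambda$, the permutation $\sigma\tau\sigma^{-1}$ preserves each block, because $\sigma^{-1}$ sends a block $B$ onto a block, $\tau$ fixes that block setwise, and $\sigma$ sends it back onto $B$. This gives $S_\Lambda\subseteq N(\Sigma_\Lambda)$. For the converse, note that the blocks of $\Lambda$ are exactly the orbits of $\Sigma_\Lambda$ on $[m]$, since $\Sigma_\Lambda$ acts transitively on each block. If $\sigma$ normalizes $\Sigma_\Lambda$, then $\sigma$ carries $\Sigma_\Lambda$-orbits to $\sigma\Sigma_\Lambda\sigma^{-1}$-orbits, which are again the $\Sigma_\Lambda$-orbits. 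So $\sigma$ permutes the blocks and lies in $S_\Lambda$. This orbit description is the one point that needs care, and it is immediate.

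Third, we compute $\pi_0$. By the definition of $2$-morphisms, $f$ and $g$ are $2$-isomorphic exactly when $g=\alpha f$ for some bijection $\alpha$ inducing the identity on the blocks, that is, for some $\alpha\in\Sigma_\Lambda$. So $\pi_0\Aut_{\cP}(\Lambda)$ is the set of cosets $\Sigma_\Lambda\backslash S_\Lambda$. Because $\Sigma_\Lambda$ is normal in $S_\Lambda=N_{\Sigma_m}(\Sigma_\Lambda)$, these left and right cosets agree. The group structure on $\pi_0$ comes from composition, which is multiplication in $\Sigma_m$, so it is the quotient group structure. This gives
\[
\pi_0\Aut_{\cP}(\Lambda)\cong N_{\Sigma_m}(\Sigma_\Lambda)/\Sigma_\Lambda=W_{\Sigma_m}(\Sigma_\Lambda).
\]
None of these steps is a serious obstacle. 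The only points needing care are the counting argument in the first step and the orbit identification in the second.
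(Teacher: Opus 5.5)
Your proof is correct and complete. The three steps are exactly what is needed: endomorphisms of $\Lambda$ are the permutations carrying blocks onto blocks; this group equals $N_{\Sigma_m}(\Sigma_\Lambda)$ because the blocks of $\Lambda$ are precisely the $\Sigma_\Lambda$-orbits; and $2$-isomorphism classes are cosets of $\Sigma_\Lambda$, with composition inducing the quotient group structure.

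The paper states this proposition without proof, so there is no argument to compare against. Your second step also justifies something the paper uses tacitly in the discussion preceding \cref{weylgroupfixed}: the identification of $N_{\Sigma_\Omega}(\Sigma_\Lambda)$ with the stabilizer $\{\sigma\in\Sigma_\Omega : \sigma\Lambda=\Lambda\}$.
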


There is a left action of the symmetric group $\Sigma_m$ on the set of partitions of $[m]$. Given $\sigma \in \Sigma_m$ and $\Lambda \parts [m]$, the partition $\sigma \Lambda$ has equivalence relation
\[
x \sim_{\sigma \Lambda} y \iff \sigma^{-1} x \sim_{\Lambda} \sigma^{-1}y.
\]
Note that, with this definition, we have an isomorphism of partitions $\sigma_{\Lambda} \colon \Lambda \to \sigma \Lambda$. When the source is clear from context, we will often abuse notation and write $\sigma = \sigma_{\Lambda}$. Further, we may consider the set of subgroups of $\Sigma_m$ of the form $\Sigma_{\Lambda}$ for $\Lambda \parts [m]$. These are the Young subgroups of $\Sigma_m$. Conjugation by $\sigma \in \Sigma_m$ induces an isomorphism
\[
c_{\sigma} \colon \Sigma_{\Lambda} \to \Sigma_{\sigma \Lambda},
\]
sending $\tau$ to $c_{\sigma}(\tau) = \sigma \tau \sigma^{-1}$. We have $c_{\sigma'} c_{\sigma} = c_{\sigma' \sigma}$ and $\Sigma_{\sigma \Lambda} = \sigma \Sigma_{\Lambda} \sigma^{-1} = \Sigma_{\Lambda}^{\sigma^{-1}}$. Note that $c_{\sigma} = \Sigma_{\sigma}$, making use of the functor $\Sigma_{(-)}$.

\begin{lemma}
Assume that $\Lambda, \Gamma \parts [m]$, then
\[
\Sigma_{\Lambda} \cap \Sigma_{\Gamma} = \Sigma_{\Lambda \cap \Gamma},
\]
where $\Lambda \cap \Gamma$ is the coarsest partition on the set $[m]$ that refines both $\Lambda$ and $\Gamma$.
\end{lemma}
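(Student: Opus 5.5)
The plan is to compare both sides as subgroups of $\Sigma_m$ by characterizing membership in each. First I will make explicit what $\Lambda\cap\Gamma$ is: the relation $x \sim_{\Lambda\cap\Gamma} y$ if and only if $x\sim_\Lambda y$ and $x\sim_\Gamma y$. The intersection of two equivalence relations is again an equivalence relation, since reflexivity, symmetry and transitivity are each checked componentwise. It refines both $\Lambda$ and $\Gamma$. Any partition refining both has its relation contained in both, hence in the intersection, so this is indeed the coarsest common refinement. Its blocks are the nonempty sets $B\cap C$ with $B$ a block of $\Lambda$ and $C$ a block of $\Gamma$.

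Next I will use the description of $\Sigma_\Lambda$ as the permutations $\sigma$ of $[m]$ with $\sigma(x)\sim_\Lambda x$ for all $x$, equivalently those preserving each block of $\Lambda$. This is exactly the condition of inducing the identity on $[m]/{\sim_\Lambda}$.

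For the inclusion $\Sigma_\Lambda\cap\Sigma_\Gamma \subseteq \Sigma_{\Lambda\cap\Gamma}$: if $\sigma$ lies in both, then $\sigma(x)\sim_\Lambda x$ and $\sigma(x)\sim_\Gamma x$ for every $x$. Hence $\sigma(x)\sim_{\Lambda\cap\Gamma}x$, so $\sigma\in\Sigma_{\Lambda\cap\Gamma}$.

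For the reverse inclusion: if $\sigma(x)\sim_{\Lambda\cap\Gamma}x$ for all $x$, then by definition of the intersected relation both $\sigma(x)\sim_\Lambda x$ and $\sigma(x)\sim_\Gamma x$ hold. Thus $\sigma$ lies in both $\Sigma_\Lambda$ and $\Sigma_\Gamma$. Equivalently, $\Sigma_{\Lambda\cap\Gamma}\subseteq\Sigma_\Lambda$ follows from the general fact that a refinement $\Omega\le\Lambda$ gives $\Sigma_\Omega\subseteq\Sigma_\Lambda$, namely the map $\Sigma_f$ for the identity $f$.

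There is no real obstacle. The only point needing care is justifying the pointwise characterization of $\Sigma_\Lambda$. Preserving each block setwise is equivalent to $\sigma(x)\sim_\Lambda x$ for all $x$: if $\sigma(x)\sim_\Lambda x$ for all $x$, then $\sigma$ maps each block into itself, and since $\sigma$ is a bijection of a finite set, it maps each block onto itself.
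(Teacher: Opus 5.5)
Your proof is correct: identifying $\Lambda\cap\Gamma$ with the intersection of the two equivalence relations, and using the defining description of $\Sigma_\Lambda$ as the permutations with $\sigma(x)\sim_\Lambda x$ for all $x$, makes both inclusions immediate. The paper states this lemma without proof, treating it as evident from these same definitions, so your argument supplies the routine verification the paper leaves implicit.
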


The following lemma is a specialization of a general fact about double cosets to symmetric groups and Young subgroups.

\begin{lemma} \label{doublecosets}
Assume $\Psi \leq \Lambda \leq \Omega$, let $\Gamma \leq \Omega$, and let
\[
\pi \colon \Sigma_{\Psi} \backslash \Sigma_{\Omega}/ \Sigma_{\Gamma} \to \Sigma_{\Lambda} \backslash \Sigma_{\Omega}/ \Sigma_{\Gamma}
\]
be the quotient map. For $\Sigma_{\Lambda} \sigma \Sigma_{\Gamma} \in \Sigma_{\Lambda} \backslash \Sigma_{\Omega}/ \Sigma_{\Gamma}$,  there is a bijection
\[
\Sigma_{\Psi} \backslash \Sigma_{\Lambda} / \Sigma_{\sigma \Gamma \cap \Lambda} \cong  \pi^{-1}(\Sigma_{\Lambda} \sigma \Sigma_{\Gamma})
\]
defined by
\[
\Sigma_{\Psi} \alpha \Sigma_{\sigma \Gamma \cap \Lambda} \mapsto \Sigma_{\Psi} \alpha \sigma \Sigma_{\Gamma}.
\]
\end{lemma}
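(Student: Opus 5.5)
This is the standard fact that double cosets of $\Sigma_\Psi \backslash \Sigma_\Omega / \Sigma_\Gamma$ lying over a fixed coarser double coset are parametrized by double cosets in $\Sigma_\Lambda$. The plan is to reduce everything to the $\Sigma_\Lambda \times \Sigma_\Gamma$-orbit structure of the single double coset $D = \Sigma_\Lambda \sigma \Sigma_\Gamma$. We work inside $G = \Sigma_\Omega$ with $H = \Sigma_\Lambda$, $P = \Sigma_\Psi \subseteq H$ and $K = \Sigma_\Gamma$. Here $P \subseteq H$ because $\Psi \leq \Lambda$, and $K \subseteq G$ because $\Gamma \leq \Omega$. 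The first step is to identify the stabilizer. We have $\sigma K \sigma^{-1} = \Sigma_{\sigma\Gamma}$ by the conjugation formula $\Sigma_{\sigma\Lambda} = \sigma \Sigma_\Lambda \sigma^{-1}$. Hence, by the preceding lemma on intersections of Young subgroups, $H \cap \sigma K \sigma^{-1} = \Sigma_{\sigma\Gamma} \cap \Sigma_\Lambda = \Sigma_{\sigma\Gamma \cap \Lambda}$.

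Next we check that the map is well defined. The set $\pi^{-1}(H\sigma K)$ consists exactly of the $(P,K)$-double cosets contained in $H\sigma K$. Define $\Phi(P\alpha\, \Sigma_{\sigma\Gamma\cap\Lambda}) = P\alpha\sigma K$ for $\alpha \in H$; the image clearly lies in $H\sigma K$. Replacing $\alpha$ by $p\alpha\beta$ with $p \in P$ and $\beta \in H \cap \sigma K\sigma^{-1}$ gives $p\alpha\beta\sigma K = p\alpha\sigma(\sigma^{-1}\beta\sigma)K = p\alpha\sigma K$. So $\Phi$ is well defined.

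Surjectivity is immediate. Any $(P,K)$-double coset inside $H\sigma K$ has the form $P h\sigma k K = P h \sigma K$ with $h \in H$, so it equals $\Phi(P h\, \Sigma_{\sigma\Gamma\cap\Lambda})$.

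Injectivity is the only step with content. Suppose $P\alpha\sigma K = P\alpha'\sigma K$ with $\alpha, \alpha' \in H$. Then $\alpha'\sigma = p\alpha\sigma k$ for some $p \in P$ and $k \in K$. Set $\beta = \sigma k \sigma^{-1}$. Then $\alpha' = p\alpha\beta$, so $\beta = \alpha^{-1}p^{-1}\alpha' \in H$, using $P \subseteq H$. Also $\beta \in \sigma K\sigma^{-1}$. Hence $\beta \in \Sigma_{\sigma\Gamma\cap\Lambda}$, and $\alpha' \in P\alpha\Sigma_{\sigma\Gamma\cap\Lambda}$, which gives injectivity.

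I expect no real obstacle. The one point of care is the left-action convention for $\sigma\Gamma$: we must confirm that $\Sigma_{\sigma\Gamma} = \sigma\Sigma_\Gamma\sigma^{-1}$ rather than the conjugate by $\sigma^{-1}$. This follows from the stated identity $\Sigma_{\sigma \Lambda} = \sigma \Sigma_{\Lambda} \sigma^{-1}$. Finally, the bijection does not depend on the representative $\sigma$ of $H\sigma K$, up to the evident reindexing.
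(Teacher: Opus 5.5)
Your proof is correct and follows essentially the same route as the paper: you identify $\Sigma_{\sigma\Gamma\cap\Lambda}$ with $\Sigma_\Lambda \cap \sigma\Sigma_\Gamma\sigma^{-1}$, then check well-definedness, injectivity and surjectivity with the same elementwise manipulations. In the injectivity step, $\sigma k\sigma^{-1}$ is forced into $\Sigma_\Lambda$; in the surjectivity step, $h\sigma k$ absorbs $k$ into $\Sigma_\Gamma$. The only difference is that you spell out the intersection-of-Young-subgroups step and the convention $\Sigma_{\sigma\Gamma} = \sigma\Sigma_\Gamma\sigma^{-1}$, which the paper uses tacitly.
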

\begin{proof}
Let $[\alpha] \in \Sigma_{\Psi} \backslash \Sigma_{\Lambda} / \Sigma_{\sigma \Gamma \cap \Lambda}$, then $\pi(\Sigma_{\Psi} \alpha \sigma \Sigma_{\Gamma}) = \Sigma_{\Lambda} \sigma \Sigma_{\Gamma}$. Further, this is well-defined since, if $\psi \alpha \lambda$ is another element in the double coset $[\alpha]$ with $\psi \in \Sigma_{\Psi}$ and $\lambda \in \Sigma_{\sigma \Gamma \cap \Lambda} = \sigma \Sigma_{\Gamma} \sigma^{-1} \cap \Sigma_{\Lambda}$, then $\lambda = \sigma \gamma \sigma^{-1}$ for some $\gamma \in \Sigma_{\Gamma}$ and
\[
\Sigma_{\Psi} \psi \alpha \lambda \sigma \Sigma_{\Gamma} = \Sigma_{\Psi} \alpha \sigma \gamma \sigma^{-1} \sigma \Sigma_{\Gamma} = \Sigma_{\Psi} \alpha \sigma \Sigma_{\Gamma}.
\]
Assume that $\Sigma_{\Psi} \alpha \sigma \Sigma_{\Gamma} = \Sigma_{\Psi} \beta \sigma \Sigma_{\Gamma}$, then
\[
\beta = \psi \alpha \sigma \gamma \sigma^{-1},
\]
for some $\psi \in \Sigma_{\Psi}$ and $\gamma \in \Sigma_{\Gamma}$. Thus $\sigma \gamma \sigma^{-1} \in \Sigma_{\sigma \Gamma}$ and, since $\alpha,\beta, \psi \in \Sigma_{\Lambda}$, we have $\sigma \gamma \sigma^{-1} \in \Sigma_{\Lambda}$. 

Finally, if $\pi(\Sigma_{\Psi} \beta \Sigma_\Gamma) = \Sigma_{\Lambda} \sigma \Sigma_{\Gamma}$, then $\beta = \alpha \sigma \gamma$ for some $\alpha \in \Sigma_{\Lambda}$ and $\gamma \in \Sigma_{\Gamma}$, but this means that
\[
\Sigma_{\Psi} \beta \Sigma_\Gamma = \Sigma_{\Psi} \alpha \sigma \Sigma_\Gamma.
\]
\end{proof}

\begin{remark} \label{rem:1cat}
We briefly describe a $1$-category of combinatorial compositions that is equivalent to $\cP$. The objects consist of pairs $(X,f)$, where $X$ is a finite set and $f \colon X \to \N_{>0}$ is a function. A map $(X,f) \to (Y,g)$ is a surjective map $\alpha \colon X \to Y$ such that, for all $y \in Y$, 
\[
g(y) = \sum_{x \in \alpha^{-1}(y)} f(x).
\]
With this description of the category, the (2-)functor $\Sigma_{(-)}$ sends $(X,f)$ to $\prod_{x} \Sigma_{f(x)}$. We choose to work with the description of $\cP$ as the $(2,1)$-category of partitions because it is closer to the $(2,1)$-category of finite groups and thus compatible with our intuition coming from global algebra and it also has a notion of inclusion, which we denote using $\leq$.
\end{remark}

\section{Partition functors} \label{sec:partfuncs}

In this section we introduce partition functors and describe two ways of constructing them from global functors.

\begin{definition}
\label{DataForPartitionFunctor}
    A partition functor $M$ is the following data: 
    \begin{enumerate}[(a)]
        \item abelian groups $M(\Lambda)$ for each $\Lambda \in \cP$
        \item restrictions $M^*(f) \colon M(\Gamma) \to M(\Lambda)$ for each map $f \colon \Lambda \to \Gamma$ in $\cP$ 
        \item transfers $M_*(g) \colon M(\Lambda) \to M(\Gamma)$ for each map $g \colon \Lambda \to \Gamma$ in $\cP$
    \end{enumerate}
    subject to the following conditions
    \begin{enumerate}[(a)]
        \setcounter{enumi}{3}
        \item restrictions assemble into a contravariant functor and transfers assemble into a covariant functor from the $(2,1)$-category $\cP$ to the category of abelian groups
        \item if $f \colon \Lambda \to \Gamma$ is an isomorphism, then $M_*(f) = M^*(f^{-1})$
        \item restriction and transfer satisfy the double coset formula
        \begin{equation}
        \label{doubleCoset}
            M^*(\Gamma \leq \Omega) M_*(\Lambda \leq \Omega) 
            =
            \sum_{[\sigma] \in \Sigma_\Gamma \backslash \Sigma_\Omega / \Sigma_\Lambda}
            M_*(\Gamma \cap \sigma \Lambda \leq \Gamma) 
            M_*(\sigma)
            M^*(\sigma^{-1} \Gamma \cap \Lambda \leq \Lambda)
        \end{equation}
    \end{enumerate}
\end{definition}


\begin{remark}
Note that the double coset formula above implies the double coset formula for a pair of maps (rather than inclusions) of partitions. For instance, if we are given a map of partitions $f \colon \Lambda \to \Omega$, then
\begin{equation*}
\begin{split}
M^*(\Gamma \leq \Omega) M_*(f) &= 
            \sum_{[\sigma] \in \Sigma_\Gamma \backslash \Sigma_\Omega / \Sigma_{f\Lambda}}
            M_*(\Gamma \cap \sigma f \Lambda \leq \Gamma) 
            M_*(\sigma)
            M^*(\sigma^{-1} \Gamma \cap f \Lambda \leq f\Lambda)M^*(f_{\Lambda}^{-1}) \\
&=     \sum_{[\sigma] \in \Sigma_\Gamma \backslash \Sigma_\Omega / \Sigma_{f\Lambda}}
            M_*(\Gamma \cap \sigma f \Lambda \leq \Gamma) 
            M_*((\sigma f)_{(\sigma f)^{-1} \Gamma \cap \Lambda})
            M^*((\sigma f)^{-1} \Gamma \cap \Lambda \leq \Lambda).
\end{split}
\end{equation*}
\end{remark}

A map of partition functors $f \colon M \to N$ consists of maps of abelian groups $f(\Lambda) \colon M(\Lambda) \to N(\Lambda)$, for each partition $\Lambda \in \cP$, respecting both the restriction and transfer maps.

\begin{lemma} \label{lem:earlyiso}
Let $M$ be a partition functor and let $\Lambda$ and $\Omega$ be partitions that are both isomorphic to $\u m$. Then $M(\Lambda)$ and $M(\Omega)$ are canonically isomorphic. 
\end{lemma}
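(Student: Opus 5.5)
Since $\Lambda$ and $\Omega$ are both isomorphic to $\u m$, each has underlying set of cardinality $m$ with the indiscrete relation. So $\Lambda = \u X$ and $\Omega = \u Y$ with $|X| = |Y| = m$. The plan is to take any isomorphism $f \colon \Lambda \to \Omega$, use $M_*(f) = M^*(f^{-1})$ as the candidate isomorphism $M(\Lambda) \to M(\Omega)$, and check that it does not depend on $f$. Canonicity then means independence of the choice.

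First, existence. A map $\u X \to \u Y$ is just a bijection $f \colon X \to Y$, because the condition $x \sim x' \Rightarrow f(x) \sim f(x')$ holds automatically when the target is indiscrete. Such an $f$ is an isomorphism, with inverse the bijection $f^{-1}$, which is again a map of indiscrete partitions. By axioms (d) and (e) of \cref{DataForPartitionFunctor}, $M_*(f)$ is an isomorphism with inverse $M_*(f^{-1}) = M^*(f)$.

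Second, independence, which is the key step. Let $f, g \colon \u X \to \u Y$ be two bijections. Put $\alpha = g f^{-1} \colon Y \to Y$. Then $\alpha \circ f = g$, and $y \sim_{\u Y} \alpha(y)$ holds for every $y$ because all elements of $Y$ are related. So $\alpha$ is a $2$-morphism $f \Rightarrow g$ in $\cP$; this is the observation, made in the introduction, that any two maps into $\u m$ are $2$-isomorphic. Axiom (d) requires $M_*$ and $M^*$ to be functors on the $(2,1)$-category $\cP$ valued in the $1$-category of abelian groups, so $2$-isomorphic maps induce equal homomorphisms. Hence $M_*(f) = M_*(g)$ and $M^*(f) = M^*(g)$.

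Finally, I would record the compatibility that justifies calling these isomorphisms canonical. For $\u X \cong \u Y \cong \u Z$, the composite of the canonical isomorphisms is the canonical one, by functoriality of $M_*$. The canonical map $M(\u X) \to M(\u X)$ is the identity, since we may choose $f = \id$. In particular, the action of $\Aut_{\cP}(\u m)$ on $M(\u m)$ is trivial. I expect no real obstacle. The only point needing care is making explicit that a functor out of the $(2,1)$-category $\cP$ into abelian groups identifies $2$-isomorphic $1$-morphisms, which is part of the meaning of axiom (d).
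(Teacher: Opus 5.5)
Your proposal is correct and follows the paper's own argument: any two isomorphisms $f,g\colon\Lambda\to\Omega$ differ by the automorphism $gf^{-1}$ of the underlying set of $\Omega$. Because $\Omega$ is indiscrete, $gf^{-1}$ is a $2$-morphism $f\Rightarrow g$, so $M$ assigns $f$ and $g$ the same homomorphism. Your additional checks on identities and composites make explicit what ``canonical'' means, which the paper leaves implicit.
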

\begin{proof}
This follows from the fact that any two isomorphisms $\Lambda \to \Omega$ differ by an automorphism of $\Omega$ and all automorphisms of the underlying set of $\Omega$ give $2$-morphisms in the $(2,1)$-category of partitions.
\end{proof}

The definition of a partition functor is closely related to a common definition for global functors (also called global Mackey functors). In fact, we will soon see that global functors are a rich source of examples of partition functors.

\begin{definition}[{\cite[Section 4.2]{Schwede}}] \label{def:globalmackey}
    A global functor $M$ is the following data:
    \begin{enumerate}[(a)]
        \item for each finite group $G$, an abelian group $M(G)$
        \item for each group homomorphism $f \colon H \to G$, a homomorphism of abelian groups 
        \[
            \res_f \colon M(G) \to M(H),
        \]
        \item for an inclusion $K \subseteq G$, a homomorphism of abelian groups 
        \[
            \tr_{K}^{G} \colon M(K) \to M(G)
        \]
    \end{enumerate}
If $i \colon K \hookrightarrow G$ is the inclusion of a subgroup, then we write $\res_{K}^{G}$ for $\res_i$. These data are subject to the following conditions:
    \begin{enumerate}[(a)]
    \setcounter{enumi}{3}
        \item restriction is a contravariant functor from the $(2,1)$-category $\Grp$ to the category of abelian groups 
        \item transfers are transitive and $\tr_{K}^{K}$ is the identity
        \item if $H \subseteq G$ and $f \colon K \twoheadrightarrow G$ is a surjection, we have 
        \[
            \res_f \circ \tr_{H}^{G} = \tr_{f^{-1}H}^{K} \circ \res_{f|_{f^{-1}H}} \colon M(H) \to M(K),
        \]
        where $f|_{f^{-1}H} \colon f^{-1}(H) \twoheadrightarrow H$ is the induced surjection.
        \item for any pair $H, K$ of subgroups of $G$, we have 
        \[
           \res_K^G \circ \tr_H^G = \sum_{KgH \in K\backslash G/H} \tr_{K \cap gHg^{-1}}^K \circ \res_{c_g}^{-1} \circ \res_{g^{-1}Kg \cap H}^H,
        \]
        where $c_g \colon g^{-1}Kg \cap H \to K \cap gHg^{-1}$ is the conjugation homomorphism $c_g(x) = gxg^{-1}$.
    \end{enumerate}
\end{definition}
From this definition, Schwede extends transfers to an arbitrary injection $f \colon H \to G$ via the formula
\[
\tr_f =  \tr_{f(H)}^G \circ \res_{f|_{H}}^{-1}
\]
so that, if $f$ is an isomorphism, we have $\tr_f = \res_{f^{-1}} = \res_{f}^{-1}$.

\begin{remark}
    The definition of a global functor above differs from Schwede's insofar as we only consider global Mackey functors defined on finite groups, whereas Schwede considers all compact Lie groups. 
\end{remark}

Recall the functor $\Sigma_{(-)} \colon \cP \to \Grp$. Given a finite group $G$, we may use this to construct two new functors $\cP \to \Grp$. Let $G \times \Sigma_{(-)}$ be the functor taking $\Lambda$ to $G \times \Sigma_{\Lambda}$ and let $G \wr \Sigma_{(-)}$ be the functor taking $\Lambda$ to $G \wr \Sigma_{\Lambda}$. Here it is important to remember that, if $\Lambda = (X, \sim_{\Lambda})$, then $\Sigma_{\Lambda}$ is a subgroup of $\Aut_{\Set}(X)$ and thus has a canonical action on $G^{X} = \Fun(X, G)$.

\begin{prop} \label{prop:partitionfromglobal}
Assume that $M$ is a global functor and let $G$ be a finite group, then
\[
M \circ (G \times \Sigma_{(-)}) \text{ and } M \circ (G \wr \Sigma_{(-)})
\]
are both partition functors.
\end{prop}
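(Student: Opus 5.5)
Write $H(\Lambda)$ for either $G \times \Sigma_\Lambda$ or $G \wr \Sigma_\Lambda$, regarded as a subgroup of $H(\u X) = G\times \Sigma_X$ (resp.\ $G \wr \Sigma_X$) when $\Lambda \parts X$. The plan is to show that $H$ is a $(2,1)$-functor $\cP \to \Grp$ sending every morphism to an injective homomorphism, and then to define the data of a partition functor directly from $M$.

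For a map $f \colon \Lambda \to \Gamma$ we set
\[
M^*(f) = \res_{H(f)}, \qquad M_*(f) = \tr_{H(f)},
\]
where $\tr$ along an injection is Schwede's extended transfer. Concretely, $H(f)$ is $\id_G \times \Sigma_f$ in the product case. In the wreath case it is $(g,\sigma) \mapsto (g\circ f^{-1}, f\sigma f^{-1})$ on $G^X \rtimes \Sigma_\Lambda$. These maps are injective.

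\textbf{Functoriality on $2$-cells.} A $2$-morphism $\alpha \colon f \Rightarrow f'$ is a block-preserving bijection with $\alpha f = f'$, so $\alpha \in \Sigma_\Gamma$. Then $H(f') = c_{\alpha} \circ H(f)$, with $\alpha$ viewed in $H(\Gamma)$ via $\sigma \mapsto (1,\sigma)$. Hence $H(f)$ and $H(f')$ are conjugate in the target, and $H$ is a $(2,1)$-functor.

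\textbf{Axioms (d) and (e).} For restrictions, (d) follows from axiom (d) for global functors. For transfers, it follows from transitivity of transfers together with the standard facts that inner conjugations act trivially, $\res_{c_h} = \id$ on $M(G')$ for $h \in G'$, and that $\tr$ along injections is functorial. All of these follow from Schwede's axioms: transitivity for inclusions, plus the double coset formula applied to $\tr_{G'}^{G'}$. Axiom (e) is exactly Schwede's identity $\tr_f = \res_{f^{-1}}$ for isomorphisms.

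\textbf{Double coset formula.} Apply the global double coset formula in the group $H(\Omega)$ to the subgroups $K = H(\Gamma)$ and $H(\Lambda)$. The key combinatorial step is a natural bijection
\[
H(\Gamma)\backslash H(\Omega)/H(\Lambda) \cong \Sigma_\Gamma \backslash \Sigma_\Omega / \Sigma_\Lambda,
\]
induced by the projection $H(\Omega) \to \Sigma_\Omega$ and using $\Sigma_\Omega \subseteq H(\Omega)$ as representatives.
\begin{itemize}
\item In the product case this is immediate, since the $G$ factor is common to all three groups.
\item In the wreath case, the normal subgroup $G^X$ is contained in both $H(\Gamma)$ and $H(\Lambda)$, so double cosets are computed in the quotient $\Sigma_\Omega$.
\end{itemize}
For a representative $\sigma \in \Sigma_\Omega$ we then need two identifications:
\[
H(\Gamma) \cap \sigma H(\Lambda)\sigma^{-1} = H(\Gamma \cap \sigma\Lambda),
\]
which follows from the earlier lemma $\Sigma_\Gamma \cap \Sigma_{\sigma\Lambda} = \Sigma_{\Gamma\cap\sigma\Lambda}$ and the fact that the $G$ or $G^X$ part is shared. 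Second, the conjugation $c_\sigma$ restricted to $H(\sigma^{-1}\Gamma\cap\Lambda)$ equals $H(\sigma_{\sigma^{-1}\Gamma \cap \Lambda})$. Substituting these, each term $\tr \circ \res_{c_\sigma}^{-1} \circ \res$ becomes $M_*(\Gamma\cap\sigma\Lambda\le\Gamma)\, M_*(\sigma)\, M^*(\sigma^{-1}\Gamma\cap\Lambda\le\Lambda)$, using (e) for $M_*(\sigma)$. This gives \eqref{doubleCoset}.

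\textbf{Main obstacle.} I expect the hardest step to be the wreath product bookkeeping. One must check that the double coset representatives and the intersections really reduce to the symmetric group parts, and that the conjugation maps match $H(\sigma)$ on the nose rather than only up to an inner automorphism. If they match only up to an inner automorphism, the discrepancy is harmless by the invariance under inner conjugation noted above.
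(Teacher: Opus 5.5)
Your proposal is correct and follows the same route as the paper. The paper's proof also reduces \eqref{doubleCoset} to the global double coset formula by identifying the double cosets in $G\times\Sigma_\Omega$ (resp.\ $G\wr\Sigma_\Omega$) with $\Sigma_\Gamma\backslash\Sigma_\Omega/\Sigma_\Lambda$ and the intersections with the groups attached to $\Gamma\cap\sigma\Lambda$. Beyond what the paper writes out, you also verify $2$-cell invariance and axioms (d) and (e), and your wreath-product conjugation by $(1,\sigma)$ agrees with $H(\sigma)$ exactly, so the inner-automorphism fallback is not needed.
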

\begin{proof}
To check the double coset formula, the key point is that there are isomorphisms
\[
(G \times \Sigma_{\Omega})/(G \times \Sigma_{\Gamma}) \cong \Sigma_{\Omega}/\Sigma_{\Gamma} \text{ and } (G \wr \Sigma_{\Omega})/(G \wr \Sigma_{\Gamma}) \cong \Sigma_{\Omega}/\Sigma_{\Gamma}.
\]
It follows that the double cosets
\[
(G \times \Sigma_{\Lambda}) \backslash (G \times \Sigma_{\Omega})/(G \times \Sigma_{\Gamma}) \text{ and } (G \wr \Sigma_{\Lambda}) \backslash (G \wr \Sigma_{\Omega})/(G \wr \Sigma_{\Gamma})
\]
are given by
\[
\Sigma_{\Lambda} \backslash \Sigma_{\Omega} / \Sigma_{\Gamma}.
\]
Finally, we have
\[
\Sigma_{\sigma \Lambda \cap \Gamma} = \Sigma_{\Lambda}^{\sigma} \cap \Sigma_{\Gamma}, 
\]
for the double coset $[\Sigma_{\Lambda} \sigma \Sigma_{\Gamma}]$, so that 
\[
G \times \Sigma_{\sigma \Lambda \cap \Gamma} \cong G \times (\Sigma_{\Lambda}^{\sigma} \cap \Sigma_{\Gamma}) \cong (G \times \Sigma_{\Lambda})^{\sigma} \cap (G \times \Sigma_{\Gamma})
\]
and
\[
G \wr \Sigma_{\sigma \Lambda \cap \Gamma} \cong G \wr (\Sigma_{\Lambda}^{\sigma} \cap \Sigma_{\Gamma}) \cong (G \wr \Sigma_{\Lambda})^{\sigma} \cap (G \wr \Sigma_{\Gamma}).
\]
\end{proof}

There are many naturally occurring global functors. Thus, \cref{prop:partitionfromglobal} immediately provides us with an embarrassment of examples of partition functors.

\begin{remark}
It should not be important that $G$ is finite in the above proposition. Since the index of $G \wr \Sigma_{\Lambda}$ in $G \wr \Sigma_{\Omega}$ is finite when $\Lambda \leq \Omega$, any of the extensions of global functors on finite groups to compact Lie groups should be able to be used to produce partition functors as above associated to a compact Lie group $G$.
\end{remark}

Associated to a partition $\Lambda \parts X$ and group $G$, there is a diagonal map
\[
\Delta_{\Lambda} \colon G \times \Sigma_{\Lambda} \to G \wr \Sigma_{\Lambda}
\]
induced by the diagonal map $G \to G^{X}$. Note that when $X$ is empty, $\Delta_{\Lambda} \colon G \to e$ is the unique map to the trivial group.

\begin{prop} \label{prop:diagonal}
Restriction along the diagonal induces a canonical map of partition functors
\[
M \circ (G \wr \Sigma_{(-)}) \to M \circ (G \times \Sigma_{(-)}).
\]
\end{prop}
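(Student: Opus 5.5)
The map at $\Lambda$ is $\res_{\Delta_\Lambda} \colon M(G \wr \Sigma_\Lambda) \to M(G \times \Sigma_\Lambda)$. To show these maps form a map of partition functors, I will check compatibility with restrictions and with transfers along maps of partitions. By \cref{lemma:factorization}, every map of partitions factors as an isomorphism followed by a refinement. So it suffices to treat isomorphisms $f \colon \Lambda \to \Omega$ and refinements $\Lambda \leq \Omega$ separately, for both restriction and transfer.

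\textbf{Naturality of the diagonal.} First I check that $\Delta$ is natural as a transformation of functors $\cP \to \Grp$. For $f \colon \Lambda \to \Omega$, the group $G \times \Sigma_\Lambda$ maps to $G \times \Sigma_\Omega$ via $\id \times \Sigma_f$. The group $G \wr \Sigma_\Lambda = G^X \rtimes \Sigma_\Lambda$ maps to $G^Y \rtimes \Sigma_\Omega$ via $(g,\sigma) \mapsto (g \circ f^{-1}, f\sigma f^{-1})$. A constant function stays constant under precomposition with $f^{-1}$, so the square with the diagonals commutes strictly.

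Two consequences follow.
\begin{itemize}
\item Restriction is contravariantly functorial on the $(2,1)$-category $\Grp$, so the square gives compatibility with restrictions along every map.
\item Transfers along isomorphisms are inverse restrictions, so compatibility with transfers along isomorphisms also follows.
\end{itemize}
One should also note that $2$-isomorphic maps in $\cP$ induce conjugate homomorphisms on both sides, so nothing depends on choices.

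\textbf{Transfers along refinements (the main step).} It remains to show that for $\Lambda \leq \Omega$,
\[
\res_{\Delta_\Omega} \circ \tr_{G\wr\Sigma_\Lambda}^{G\wr\Sigma_\Omega} = \tr_{G\times\Sigma_\Lambda}^{G\times\Sigma_\Omega}\circ \res_{\Delta_\Lambda}.
\]
The map $\Delta_\Omega$ is injective when $X$ is nonempty. So $\res_{\Delta_\Omega}$ is restriction to the subgroup $K = \Delta_\Omega(G \times \Sigma_\Omega)$, up to an isomorphism, and I apply the double coset formula in \cref{def:globalmackey} with $K$ and $H = G \wr \Sigma_\Lambda$.

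The key claim is that $K \cdot H = G \wr \Sigma_\Omega$, so that there is exactly one double coset. This holds because $\Sigma_\Omega \subseteq K$ up to the $G^Y$-factor, $G^Y \subseteq H$, and $G^Y\Sigma_\Omega = G\wr\Sigma_\Omega$. Indeed, the index $[\Sigma_\Omega:\Sigma_\Lambda]$ matches on both sides.

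For the single double coset, with representative $e$, I identify $K \cap H$. It is the set of pairs $(\text{constant } g, \sigma)$ with $\sigma \in \Sigma_\Lambda$, which is exactly $\Delta_\Lambda(G\times\Sigma_\Lambda)$. The double coset formula then yields the displayed identity after transporting along the isomorphism $G \times \Sigma_\Omega \cong K$, using that transfers commute with restriction along isomorphisms.

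\textbf{Empty set.} The degenerate case $X = \emptyset$, where $\Delta$ is $G \to e$, is trivial because the only refinement is the identity.
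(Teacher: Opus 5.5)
Your proposal is correct and follows the paper's proof. Compatibility with restrictions comes for free from functoriality of restriction in a global functor. Compatibility with transfers along a refinement $\Lambda \leq \Omega$ comes from the double coset formula, with a single double coset and intersection $\Delta_\Omega(G\times\Sigma_\Omega)\cap G\wr\Sigma_\Lambda=\Delta_\Lambda(G\times\Sigma_\Lambda)$. Your additional points fill in details the paper leaves implicit: the reduction of arbitrary maps to isomorphisms and refinements, the injectivity of $\Delta_\Omega$ for nonempty underlying sets, and the degenerate empty case.
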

\begin{proof}
We must show that restriction along the diagonal map commutes with the restrictions and transfers in the two partition functors.

Commuting with restrictions is free, since $M$ is a global functor. To see that restriction along the diagonal commutes with transfers, apply the double coset formula: Let $\Lambda \leq \Omega$ so that we have a commutative square of groups
\[
\xymatrix{G \times \Sigma_{\Lambda} \ar[r]^{\Delta_{\Lambda}} \ar[d]^-{\subseteq} & G \wr \Sigma_{\Lambda} \ar[d]^-{\subseteq} \\ G \times \Sigma_{\Omega} \ar[r]^{\Delta_{\Omega}} & G \wr \Sigma_{\Omega}.}
\]
Then
\[
\Delta_{\Omega}(G \times \Sigma_{\Omega}) \backslash G \wr \Sigma_{\Omega} / G \wr \Sigma_{\Lambda} \cong *
\]
and 
\[
\Delta_{\Omega}(G \times \Sigma_{\Omega})) \cap G \wr \Sigma_{\Lambda} = \Delta_{\Lambda}(G \times \Sigma_{\Lambda}).
\]
In the notation of \cref{def:globalmackey}, the double coset formula now gives
        \[
           \res_{\Delta_{\Omega}} \circ \tr_{G \wr \Sigma_{\Lambda}}^{G \wr \Sigma_{\Omega}} = \tr_{G \times \Sigma_{\Lambda}}^{G \times \Sigma_{\Omega}} \circ \res_{\Delta_{\Lambda}}.
        \]
\end{proof}

It is important to note that $M \circ (G \times \Sigma_0) \cong M(G)$ and $M \circ (G \wr \Sigma_0) \cong M(e)$. Thus the values of these two partition functors on the empty partition are often distinct.

Next we describe the constant and dual constant partition functors. We will come back to these two examples regularly as we develop the theory of partition functors.

\begin{example}
Given an abelian group $A$, recall the constant global functor associated to $A$, $\underline{A}$ (\cite[Example 4.2.8.(iii)]{Schwede}). This global functor takes the value $A$ on every finite group, the restriction maps are the identity, and the transfer map along $f \colon H \hookrightarrow G$ is given by multiplication by the index $|G/f(H)|$. Write $C_A = \underline{A} \circ (G \times \Sigma_{(-)})$. Note that the precomposition with $G \wr \Sigma_{(-)}$ gives the same partition functor and the answer is also independent of the choice of $G$.

Just as in the local case of $G$-Mackey functors for a fixed group $G$, there is a notion of a dual constant partition functor to $C_A$, that we denote $C^A$. The partition functor $C^A$ takes the value $A$ on every partition, the transfer maps are all the identity, and the restriction maps are given by multiplication by the index. That is, given $f \colon \Lambda \to \Omega$,
\[
(C^A)^*(f)(a) = |\Sigma_{\Omega} / \Sigma_{f \Lambda}| a.
\]
In both cases, the double coset formula can be verified with an application of \cref{doublecosets}.
\end{example}

\begin{prop} \label{prop:globaltriv}
Assume $M$ is a global functor and let $A = M(G)$. There is a map of partition functors
\[
M \circ (G \times \Sigma_{(-)}) \to C_A
\]
induced by restriction along the canonical inclusion $G \to G \times \Sigma_{\Lambda}$.
\end{prop}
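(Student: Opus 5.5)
For each partition $\Lambda$ define $\phi_\Lambda = \res_{\iota_\Lambda} \colon M(G\times\Sigma_\Lambda) \to M(G) = A$, where $\iota_\Lambda \colon G \to G\times\Sigma_\Lambda$ is $g\mapsto (g,e)$. We must check that the maps $\phi_\Lambda$ commute with restrictions and with transfers along maps of partitions. By \cref{lemma:factorization}, every map of partitions is an isomorphism followed by a refinement. Transfers along isomorphisms are restrictions along the inverse isomorphism. So it suffices to check compatibility with restriction along arbitrary maps $f$ and with transfer along refinements $\Lambda\leq\Omega$.

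\textbf{Restrictions.} Let $f\colon\Lambda\to\Omega$. Restriction in $M\circ(G\times\Sigma_{(-)})$ is $\res_{\id_G\times\Sigma_f}$, and in $C_A$ it is the identity. Since $(\id_G\times\Sigma_f)\circ\iota_\Lambda = \iota_\Omega$ as homomorphisms, functoriality of restriction in the global functor gives $\phi_\Lambda\circ\res_{\id\times\Sigma_f} = \phi_\Omega$. The $2$-morphism ambiguity causes no trouble, because restriction is already defined on the $(2,1)$-category $\Grp$, where conjugate homomorphisms induce the same map.

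\textbf{Transfers.} This is the main step. For $\Lambda\leq\Omega$ we must show
\[
\res_{\iota_\Omega}\circ \tr_{G\times\Sigma_\Lambda}^{G\times\Sigma_\Omega} = |\Sigma_\Omega/\Sigma_\Lambda| \cdot \res_{\iota_\Lambda}.
\]
Identify $G$ with its image $K=G\times e$ and apply the double coset formula, condition (g) of \cref{def:globalmackey}, with subgroups $K$ and $H = G\times\Sigma_\Lambda$. Since $G\times e$ is central in the $G$-factor, the double cosets $K\backslash (G\times\Sigma_\Omega)/H$ are in bijection with $\Sigma_\Omega/\Sigma_\Lambda$, with representatives $(1,\sigma)$. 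For each representative $g=(1,\sigma)$ we have $K\cap gHg^{-1} = K$ and $g^{-1}Kg\cap H = K$. The conjugation $c_g$ restricted to $K$ is the identity, because $(1,\sigma)$ commutes with $G\times e$. Hence each summand equals $\tr_K^K\circ\res_{K}^{H} = \res_{\iota_\Lambda}$, and there are $|\Sigma_\Omega/\Sigma_\Lambda|$ summands. This is exactly the transfer in $C_A$.

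The only delicate points are the identification of the double coset set and the check that the conjugation maps are trivial on $G\times e$; both follow from the product decomposition. Naturality with respect to isomorphisms of partitions is then automatic from the restriction case, which completes the proof.
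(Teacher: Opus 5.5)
Your proof is correct and follows the paper's argument: compatibility with restrictions is functoriality of restriction in the global functor, and compatibility with transfers comes from the double coset formula, which yields $|\Sigma_\Omega/\Sigma_\Lambda|$ copies of $\res^{G\times\Sigma_\Lambda}_{G}$; you spell out the double coset bookkeeping that the paper leaves implicit. One wording fix: $G\times e$ is not central in $G\times\Sigma_\Omega$, but you never need centrality, since $K\backslash(G\times\Sigma_\Omega)/H\cong\Sigma_\Omega/\Sigma_\Lambda$ holds because $K$ contains the whole $G$-factor (so $K(1,\sigma)H=G\times\sigma\Sigma_\Lambda$), and the conjugations are trivial because $(1,\sigma)$ centralizes $G\times e$, as you say.
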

\begin{proof}
We must check that this map commutes with restrictions and transfers. Compatibility with restrictions follows from the fact that the map is induced by a restriction map.

To check commutativity with transfers, let $\Lambda \leq \Omega$ and note that the double coset formula for $M$ gives 
\[
\res_{G}^{G \times \Sigma_{\Omega}} \circ \tr_{G \times \Sigma_\Lambda}^{G \times \Sigma_\Omega} = |(G \times \Sigma_\Omega)/(G \times \Sigma_\Lambda)| \tr_{G}^{G} \circ \res_{G}^{G \times \Sigma_\Lambda}.
\]
To conclude, note that $|(G \times \Sigma_\Omega)/(G \times \Sigma_\Lambda)| = |\Sigma_\Omega/\Sigma_\Lambda|$ and $\tr_{G}^{G}$ is the identity.
\end{proof}

\begin{remark}
Partition functors are not just global functors on the subcategory of $\Grp$ consisting of products of symmetric groups and with morphisms generated by inclusions and conjugations of Young subgroups. Consider precomposition with the wreath product, as in \cref{prop:partitionfromglobal}. Although $\Sigma_{\bar{m}} \cong \Sigma_{\bar{n}} \cong e$, when $m \neq n \in \N$, $G \wr \Sigma_{\bar{m}} \cong G^{\times m}$ and $G \wr \Sigma_{\bar{n}} \cong G^{\times n}$. The partition category distinguishes between discrete partitions of different cardinalities and provides, among other things, the flexibility to handle these wreath products. 
\end{remark}

\section{The functor $\Div$} \label{sec:div}

In this section we introduce a construction called $\Div$ that is an endofunctor of the category of partition functors and one of our main objects of study.

Let $M$ be a partition functor. Given a map of partitions $f \colon \Lambda \to \Gamma$, we will write $M^*(f)$ for the restriction along $f$ and $M_*(f)$ for the transfer along $f$. Let $\Omega \leq \Lambda$, recall that we write $f_{\Omega} \colon \Omega \to f \Omega$ for the induced isomorphism of partitions. 


For a partition $\Gamma$, let $I_{\Gamma} \subseteq M(\Gamma)$ be the subgroup
\[
I_\Gamma = \im \Big (\bigoplus_{\Omega < \Gamma} M(\Omega) \to M(\Gamma) \Big ),
\]
where the maps are given by the transfer along the strict inclusions $\Omega < \Gamma$. We will refer to this as the transfer subgroup of $M(\Gamma)$. Let
\[
\bar{M}(\Gamma) = M(\Gamma)/I_{\Gamma}.
\]

Let $f \colon \Gamma \to \Lambda$ be a map of partitions. Since the image under $f$ of every proper subpartition of $\Gamma$ is a proper subpartition of $\Lambda$, we have
\[
M_*(f)(I_{\Gamma}) \subseteq I_{\Lambda}.
\]
Thus $M_*(f)$ induces a map
\[
\bar{M}_*(f) \colon \bar{M}(\Gamma) \to \bar{M}(\Lambda).
\]
If $f$ is an isomorphism of partitions, this map is an isomorphism. If $f$ is not an isomorphism, then this map is the zero map. Let $q_{\Gamma} \colon M(\Gamma) \to \bar{M}(\Gamma)$ be the quotient map and note that
\[
q_{\Lambda}M_*(f) = \bar{M}_*(f)q_{\Gamma}.
\]

\begin{notation} \label{not:indexing}
For families indexed by partitions, we suppress the underline
on numerical indiscrete partitions used as arguments or subscripts: for example, $\Sigma_m = \Sigma_{\u m}$, $M(m) = M(\u m)$, $I_m = I_{\u m}$ and $q_m=q_{\u m}$.
\end{notation}


If $\Gamma \leq \Lambda$, then for $\sigma \in \Sigma_{\Lambda}$, we have the induced isomorphism $\sigma_{\Gamma} \colon \Gamma \to \sigma \Gamma$. Note that $\sigma \Gamma \leq \Lambda$. The transfer along $\sigma_{\Gamma}$ yields an isomorphism
of abelian groups
\[
\bar{M}_*(\sigma_{\Gamma}) \colon \bar{M}(\Gamma) \to \bar{M}(\sigma \Gamma).
\]

Note that there is no functor $\bar{M}^*$ that can be applied to arbitrary restriction maps as the double coset formula must be confronted in that case. On the other hand, since $M_*(f) = M^*(f^{-1})$ when $f$ is an isomorphism, there is a functor $\bar{M}^*$ that can be applied to give a restriction along an isomorphism.

Let $x \in \bigoplus_{\Gamma \leq \Lambda} \bar{M}(\Gamma)$. We will view $x$ as a tuple $x = (x_{\Gamma})_{\Gamma \leq \Lambda}$ with $x_{\Gamma} \in \bar{M}(\Gamma)$. The isomorphisms described above induce a left action of $\Sigma_{\Lambda}$ on $\bigoplus_{\Gamma \leq \Lambda} \bar{M}(\Gamma)$ given by
\[
(\sigma x)_{\Gamma} = \bar{M}_*(\sigma_{\sigma^{-1}\Gamma})(x_{\sigma^{-1} \Gamma}) = \bar{M}^*((\sigma^{-1})_{\Gamma})(x_{\sigma^{-1} \Gamma}).
\]
We will often prefer the formula involving the transfer and abbreviate it to
\[
(\sigma x)_{\Gamma} = \bar{M}_*(\sigma)(x_{\sigma^{-1} \Gamma}).
\]
Information is not lost in the abbreviation as the subscript on the $x$ carries the information of the source of $\sigma$.

\begin{definition}
For $M$ a partition functor, let \[
    \Div(M)(\Lambda) = \bigg (\bigoplus_{\Gamma \leq \Lambda} \bar{M}(\Gamma)  \bigg)^{\Sigma_\Lambda}.
\]
\end{definition}

Note that $\sigma \in \Sigma_{\Lambda}$ sends the summand $\bar{M}(\Lambda) \subseteq \Div(M)(\Lambda)$ to itself. Since $\Sigma_{\Lambda}$ acts trivially on $\bar{M}(\Lambda)$, because it acts trivially on $M(\Lambda)$, we have $\bar{M}(\Lambda)^{\Sigma_{\Lambda}} = \bar{M}(\Lambda)$ is a summand of $\Div(M)(\Lambda)$.

For $x \in \Div(M)(\Lambda)$ and $\sigma \in \Sigma_{\Lambda}$, we have
\[
x_{\Gamma} = \bar{M}_*(\sigma)x_{\sigma^{-1}\Gamma} = \bar{M}^*(\sigma)x_{\sigma \Gamma}
\]
since $\bar{M}^*(\sigma) = \bar{M}_*(\sigma^{-1})$. 

We will use the restriction and transfer maps along isomorphisms of partitions to define restriction and transfer maps for $\Div(M)$ that give $\Div(M)$ the structure of a partition functor. 

We begin with the restriction maps. Assume that $f \colon \Lambda \to \Theta$ is a map of partitions. We define
\[
\Div(M)^*(f) \colon \Div(M)(\Theta) \to \Div(M)(\Lambda)
\]
by the formula
\[
(\Div(M)^*(f)(x))_{\Gamma} = \bar{M}^*(f_{\Gamma})(x_{f \Gamma}).
\]
Note that this lands in the $\Sigma_{\Lambda}$-invariants as, for $\sigma \in \Sigma_{\Lambda}$ and $\Sigma_f(\sigma) \in \Sigma_{\Theta}$, we have a commutative diagram
\[
\xymatrix{\Lambda \ar[r]^-{f} \ar[d]_-{\sigma} & \Theta \ar[d]^-{\Sigma_f(\sigma)} \\ \Lambda \ar[r]^-{f} & \Theta}
\]
of partitions and $\Sigma_f(\sigma)$ acts trivially on $x$ as it is an element of $\Sigma_{\Theta}$.

Assume that $\Lambda \leq \Theta$. In this case,
\[ 
    \Div(M)^*(\Lambda \leq \Theta) \colon \Div(M)(\Theta) \to \Div(M)(\Lambda)
\]
is induced by the projection 
\[ 
    \bigoplus_{\Gamma \leq \Theta} \bar{M}(\Gamma) \to \bigoplus_{\Gamma \leq \Lambda} \bar{M}(\Gamma).
\]
The projection is $\Sigma_\Lambda$-equivariant and the $\Sigma_\Theta$-fixed points are a subset of the $\Sigma_\Lambda$-fixed-points. 

\begin{lemma}
    The restriction maps in $\Div(M)$ are functorial. 
\end{lemma}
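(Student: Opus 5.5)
We must show that $\Div(M)^*(\id_\Lambda)$ is the identity, that $\Div(M)^*(g f) = \Div(M)^*(f)\,\Div(M)^*(g)$ for composable maps $f\colon \Lambda\to\Theta$ and $g\colon\Theta\to\Xi$, and that $2$-isomorphic maps induce the same restriction. The plan is to check each of these componentwise on a tuple $x=(x_\Gamma)$, using the defining formula $(\Div(M)^*(f)x)_\Gamma = \bar M^*(f_\Gamma)(x_{f\Gamma})$ together with the fact that $\bar M^*$ is a contravariant functor on isomorphisms of partitions. That functoriality holds because $M_*$ is a functor, $M_*(h)=M^*(h^{-1})$ for isomorphisms $h$, and $q$ is natural with respect to transfers.

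\textbf{Identity.} For $f=\id_\Lambda$ we have $f\Gamma=\Gamma$ and $f_\Gamma=\id_\Gamma$, so each component is unchanged.

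\textbf{Composition.} For $\Gamma\le\Lambda$ the induced isomorphisms satisfy $(gf)\Gamma = g(f\Gamma)$ and $(gf)_\Gamma = g_{f\Gamma}\circ f_\Gamma$ as maps of partitions, which is immediate since all of these are given by the same underlying bijections of sets. Then
\[
(\Div(M)^*(f)\Div(M)^*(g)x)_\Gamma=\bar M^*(f_\Gamma)\,\bar M^*(g_{f\Gamma})(x_{gf\Gamma})=\bar M^*(g_{f\Gamma}f_\Gamma)(x_{gf\Gamma}),
\]
which is $(\Div(M)^*(gf)x)_\Gamma$.

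\textbf{Independence of $2$-isomorphism class.} This is the only step with content. Suppose $\alpha\colon f\Rightarrow f'$, so that $\alpha\in\Sigma_\Theta$ and $f'=\alpha f$ as bijections. Then $f'\Gamma=\alpha(f\Gamma)$ and $f'_\Gamma=\alpha_{f\Gamma}\circ f_\Gamma$, which gives
\[
\bar M^*(f'_\Gamma)(x_{f'\Gamma})=\bar M^*(f_\Gamma)\,\bar M^*(\alpha_{f\Gamma})(x_{\alpha f\Gamma}).
\]
By the $\Sigma_\Theta$-invariance of $x$, namely $x_{\Gamma'} = \bar M^*(\alpha)x_{\alpha\Gamma'}$, the inner term equals $x_{f\Gamma}$. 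Hence the restriction depends only on the $2$-isomorphism class of $f$, and the restrictions assemble into a functor out of the $(2,1)$-category $\cP$.

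We expect the only subtlety to be bookkeeping: confirming that the isomorphisms $\alpha_{f\Gamma}$, which are maps between different subpartitions of $\Theta$, act through $\bar M_*$ in exactly the way encoded by the invariance condition. Well-definedness, in the sense that the output lands in the invariants, was already checked in the text above.
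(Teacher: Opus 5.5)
Your proof is correct, and its composition step is exactly the paper's computation: write $\bar M^*((gf)_\Gamma)=\bar M^*(f_\Gamma)\bar M^*(g_{f\Gamma})$ and recognize the inner term as the $f\Gamma$-component of $\Div(M)^*(g)(x)$. The paper checks only composition. Your identity check and your verification that $2$-isomorphic maps $f'=\alpha f$ with $\alpha\in\Sigma_\Theta$ induce the same restriction (via the invariance $x_{f\Gamma}=\bar M^*(\alpha_{f\Gamma})x_{\alpha f\Gamma}$) are also correct, and they supply what the paper leaves implicit for a functor out of the $(2,1)$-category $\cP$.
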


\begin{proof}
We see that
\begin{equation*}
\begin{split}
(\Div(M)^*(gf)(x))_{\Gamma} &= \bar{M}^*((gf)_\Gamma)x_{gf\Gamma} \\
&= \bar{M}^*(f_{\Gamma})\bar{M}^*(g_{f\Gamma})x_{gf\Gamma} \\
&= \bar{M}^*(f_{\Gamma}) (\Div(M)^*(g)(x))_{f\Gamma} \\
&= (\Div(M)^*(f)\Div(M)^*(g)(x))_{\Gamma}. \qedhere
\end{split}
\end{equation*}
\end{proof}

\bigskip

Inspired by the double coset formula, we define the transfer along $f \colon \Psi \to \Lambda$ to be
\[
(\Div(M)_*(f)(x))_{\Gamma} = \sum_{\substack{[\sigma] \in \Sigma_\Gamma \backslash \Sigma_{\Lambda} / \Sigma_{f \Psi} \\  \Gamma \leq \sigma f \Psi}} \bar{M}^*(((\sigma f)^{-1})_{\Gamma})(x_{(\sigma f)^{-1} \Gamma}),
\]
for $\Gamma \leq \Lambda$. Since $((\sigma f)^{-1})_{\Gamma} \colon \Gamma \to (\sigma f)^{-1}\Gamma$ is an isomorphism this can be rewritten using transfers:
\[
(\Div(M)_*(f)(x))_{\Gamma} = \sum_{\substack{[\sigma] \in \Sigma_\Gamma \backslash \Sigma_{\Lambda} / \Sigma_{f \Psi} \\  \Gamma \leq \sigma f \Psi}} \bar{M}_*((\sigma f)_{(\sigma f)^{-1}\Gamma})(x_{(\sigma f)^{-1} \Gamma}).
\]
We will often prefer the formula involving the transfer and abbreviate it to
\[
(\Div(M)_*(f)(x))_{\Gamma} = \sum_{\substack{[\sigma] \in \Sigma_\Gamma \backslash \Sigma_{\Lambda} / \Sigma_{f \Psi} \\  \Gamma \leq \sigma f \Psi}} \bar{M}_*(\sigma f)(x_{(\sigma f)^{-1} \Gamma}).
\]
Again, information is not lost in the abbreviation as the subscript on the $x$ carries the information of the subscript of $\sigma f$.

\begin{remark}
The formula for the transfer is the special case of the double coset formula
\[
\sum_{[\sigma] \in \Sigma_\Gamma \backslash \Sigma_{\Lambda} / \Sigma_{f \Psi} } \bar{M}_*(\Gamma \cap \sigma f \Psi \leq \Gamma) \bar{M}^*(((\sigma f)^{-1})_{\Gamma \cap \sigma f\Psi})(x_{(\sigma f)^{-1} \Gamma \cap \Psi}),
\]
where $\bar{M}_*(\Gamma \cap \sigma f \Psi \leq \Gamma) = 0$ unless $\Gamma \cap \sigma f \Psi = \Gamma$.
\end{remark}

\begin{lemma}
The formula for the transfer is well-defined.    
\end{lemma}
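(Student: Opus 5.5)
To show the transfer formula is well-defined, I will check three things in turn.

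\begin{enumerate}[(i)]
\item Each summand, and the indexing condition $\Gamma \leq \sigma f \Psi$, depends only on the double coset $[\sigma] \in \Sigma_\Gamma \backslash \Sigma_\Lambda / \Sigma_{f\Psi}$ and not on the representative $\sigma$.
\item The resulting tuple $(\Div(M)_*(f)(x))_{\Gamma}$ is $\Sigma_\Lambda$-invariant, so it lies in $\Div(M)(\Lambda)$.
\item As a mild sanity check, the formula is compatible with the $2$-morphisms of $\cP$. If $f$ and $f'$ are $2$-isomorphic, then $f' = \tau f$ with $\tau \in \Sigma_\Lambda$, and the reindexing $\sigma \mapsto \sigma\tau^{-1}$ identifies the two sums. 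This is the same computation as in (i).
\end{enumerate}

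\textbf{Step (i): independence of the representative.} Replace $\sigma$ by $\gamma\sigma\rho$ with $\gamma \in \Sigma_\Gamma$ and $\rho \in \Sigma_{f\Psi}$. Since $\rho \in \Sigma_{f\Psi}$, we have $\rho f\Psi = f\Psi$. Since $\gamma \in \Sigma_\Gamma$, we have $\gamma\Gamma = \Gamma$. Together these give $\Gamma \leq \gamma\sigma\rho f\Psi$ if and only if $\gamma^{-1}\Gamma = \Gamma \leq \sigma f\Psi$, so the indexing condition is independent of the representative.

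For the summand itself, write $\rho = f\rho' f^{-1}$ with $\rho' \in \Sigma_\Psi$, using $\Sigma_f$. Then $\gamma\sigma\rho f = \gamma\sigma f\rho'$, and the summand becomes
\[
\bar M_*(\gamma)\,\bar M_*(\sigma f)\,\bar M_*(\rho')\, x_{\rho'^{-1}(\sigma f)^{-1}\gamma^{-1}\Gamma}.
\]
Here I use functoriality of $\bar M_*$ on isomorphisms, which comes from the covariant functoriality of $M_*$ on $\cP$ and the relation $q M_* = \bar M_* q$. Now $\Sigma_\Psi$-invariance of $x$ gives
\[
\bar M_*(\rho')\, x_{\rho'^{-1}\Delta} = x_\Delta .
\]
Also, $\bar M_*(\gamma) \colon \bar M(\Gamma) \to \bar M(\Gamma)$ is the identity, because $\gamma$ restricted to $\Gamma$ is $2$-isomorphic to the identity. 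The $2$-morphism is $\gamma$ itself, which preserves the blocks of $\Gamma$. Since $M$ is a functor on the $(2,1)$-category, $2$-isomorphic maps give equal maps. So the summand equals $\bar M_*(\sigma f)\,x_{(\sigma f)^{-1}\Gamma}$, as required.

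\textbf{Step (ii): $\Sigma_\Lambda$-invariance.} Let $y = \Div(M)_*(f)(x)$ and $\tau \in \Sigma_\Lambda$. I compute
\[
(\tau y)_\Gamma = \bar M_*(\tau)\, y_{\tau^{-1}\Gamma}
= \sum_{[\sigma] \in \Sigma_{\tau^{-1}\Gamma}\backslash \Sigma_\Lambda/\Sigma_{f\Psi}} \bar M_*(\tau)\,\bar M_*(\sigma f)\, x_{(\sigma f)^{-1}\tau^{-1}\Gamma}.
\]
The sum runs over those $[\sigma]$ with $\tau^{-1}\Gamma \leq \sigma f\Psi$.

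Reindex by $\sigma' = \tau\sigma$. Using $\Sigma_{\tau^{-1}\Gamma} = \tau^{-1}\Sigma_\Gamma\tau$, the map $\sigma \mapsto \tau\sigma$ is a bijection
\[
\Sigma_{\tau^{-1}\Gamma}\backslash\Sigma_\Lambda/\Sigma_{f\Psi} \longrightarrow \Sigma_\Gamma\backslash\Sigma_\Lambda/\Sigma_{f\Psi}.
\]
Under this bijection, the condition $\tau^{-1}\Gamma \leq \sigma f\Psi$ becomes $\Gamma \leq \sigma' f\Psi$. Functoriality then gives
\[
\bar M_*(\tau)\,\bar M_*(\sigma f) = \bar M_*(\sigma' f),
\]
and the subscript of $x$ becomes $(\sigma' f)^{-1}\Gamma$. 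Hence $(\tau y)_\Gamma = y_\Gamma$.

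The main obstacle is purely bookkeeping. One must keep track of which restricted isomorphism each abbreviated $\bar M_*(\sigma f)$ denotes, namely $(\sigma f)_{(\sigma f)^{-1}\Gamma}$. One must also make sure that every composite used is a composite of genuine isomorphisms between the correct subpartitions, so that functoriality of $\bar M_*$ on isomorphisms applies. I would handle this by writing the subscripts out in full once, in Step (i), and then abbreviating.
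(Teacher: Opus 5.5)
Your proposal is correct, and Step (i) is essentially the paper's proof: absorb $\gamma\in\Sigma_\Gamma$ because $\bar M_*(\gamma)$ is the identity on $\bar M(\Gamma)$, and absorb $\rho=f\rho'f^{-1}$ using $\Sigma_\Psi$-invariance of $x$. Step (ii) is the same reindexing $[\sigma]\mapsto[\tau\sigma]$ that the paper carries out in the separate next lemma (that the transfer lands in $\Div(M)(\Lambda)$), while Step (iii), independence of the choice of $f$ within its $2$-isomorphism class, is not addressed in the paper but your reindexing $\sigma\mapsto\sigma\tau^{-1}$ handles it correctly.
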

\begin{proof}
Assume that $\ell \sigma k$, with $\ell \in \Sigma_{\Gamma}$ and $k \in \Sigma_{f \Psi}$, is another element in the double coset $[\sigma]$. Since $\ell^{-1} \Gamma = \Gamma$ and $\bar{M}_*(\ell)$ acts by the identity on $\bar{M}(\Gamma)$, replacing $\sigma$ by $\ell \sigma k$ gives
\[
\bar{M}_*(\ell \sigma k f)(x_{(\ell \sigma k f)^{-1} \Gamma}) =  \bar{M}_*(\ell)  \bar{M}_*(\sigma k f)(x_{(\sigma k f)^{-1} \ell^{-1} \Gamma}) = \bar{M}_*(\sigma k f)(x_{(\sigma k f)^{-1} \Gamma}).
\]
Since $k \in \Sigma_{f \Psi}$, $f^{-1} k f \in \Sigma_{\Psi}$. The $\Sigma_{\Psi}$-invariance of $x$ implies that
\[
x_{f^{-1} k^{-1} \sigma^{-1} \Gamma} = \bar{M}_*(f^{-1}k^{-1}f) x_{f^{-1}kff^{-1}k^{-1}\sigma^{-1}\Gamma} = \bar{M}_*(f^{-1}k^{-1}f)x_{f^{-1}\sigma^{-1}\Gamma}.
\]
This allows us to remove the $k$.
\end{proof}

\begin{lemma}
The formula for the transfer lands in $\Div(M)(\Lambda)$.
\end{lemma}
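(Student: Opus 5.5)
We have to show that $y = \Div(M)_*(f)(x)$ is fixed by $\Sigma_\Lambda$, i.e.\ that $(\tau y)_\Gamma = y_\Gamma$ for every $\tau\in\Sigma_\Lambda$ and every $\Gamma\le\Lambda$. Unwinding the action, this is the identity
\[
\bar M_*(\tau)\bigl(y_{\tau^{-1}\Gamma}\bigr) = y_\Gamma .
\]
The plan is to expand the left side using the defining sum for $y_{\tau^{-1}\Gamma}$, and then reindex the double cosets by left multiplication by $\tau$.

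First, write $\Gamma' = \tau^{-1}\Gamma$. The summands of $y_{\Gamma'}$ are indexed by $[\sigma]\in \Sigma_{\Gamma'}\backslash\Sigma_\Lambda/\Sigma_{f\Psi}$ with $\Gamma'\le \sigma f\Psi$. Applying $\bar M_*(\tau)$ and using functoriality of $\bar M_*$ on isomorphisms, each summand becomes
\[
\bar M_*(\tau\sigma f)\bigl(x_{(\tau\sigma f)^{-1}\Gamma}\bigr),
\]
since $(\sigma f)^{-1}\Gamma' = (\tau\sigma f)^{-1}\Gamma$.

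Second, I reindex. The map $\sigma\mapsto \tau\sigma$ induces a bijection
\[
\Sigma_{\Gamma'}\backslash\Sigma_\Lambda/\Sigma_{f\Psi}\;\longrightarrow\;\Sigma_{\Gamma}\backslash\Sigma_\Lambda/\Sigma_{f\Psi}.
\]
This holds because $\Sigma_\Gamma = \tau\Sigma_{\Gamma'}\tau^{-1}$, which is the identity $\Sigma_{\tau\Gamma'} = \tau\Sigma_{\Gamma'}\tau^{-1}$ recorded in Section~\ref{sec:parts}. The bijection respects the side condition: $\Gamma'\le\sigma f\Psi$ holds if and only if $\Gamma = \tau\Gamma'\le\tau\sigma f\Psi$, since $\tau$ acts on partitions of the underlying set by order-preserving bijections. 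The reindexed sum is then term-by-term the defining sum for $y_\Gamma$.

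Third, there is a bookkeeping point about the abbreviation. The symbol $\bar M_*(\tau\sigma f)$ stands for $\bar M_*\bigl((\tau\sigma f)_{(\tau\sigma f)^{-1}\Gamma}\bigr)$. Functoriality of $\bar M_*$ along the composite of isomorphisms $(\sigma f)_{(\sigma f)^{-1}\Gamma'}$ followed by $\tau_{\Gamma'}$ is what justifies the second display. This uses that $M_*$ is a functor on $\cP$ and hence so is $\bar M_*$ restricted to isomorphisms. It also uses that $2$-isomorphic maps induce the same map, which matters because $\tau$ is viewed both as an element of $\Sigma_\Lambda$ and as an isomorphism between subpartitions.

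The main obstacle is making sure the reindexed summands are genuinely independent of the chosen double coset representatives, and not merely equal up to the choice. This is exactly the content of the preceding well-definedness lemma, applied once for $\Gamma'$ and once for $\Gamma$. Since the bijection of double coset sets sends a representative $\sigma$ to the representative $\tau\sigma$, no further invariance argument is needed, and the equality $(\tau y)_\Gamma = y_\Gamma$ follows.
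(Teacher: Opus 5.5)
Your proposal is correct and takes the same approach as the paper. You expand $(\tau y)_\Gamma=\bar M_*(\tau)(y_{\tau^{-1}\Gamma})$, compose the isomorphisms to get $\bar M_*(\tau\sigma f)$, and reindex along the bijection of double coset spaces induced by left multiplication. The only cosmetic difference is that the paper writes this bijection in the inverse direction, as $[\sigma]\mapsto[\alpha^{-1}\sigma]$.
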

\begin{proof}
We must check that it lands in the $\Sigma_{\Lambda}$-invariants. Let $\alpha \in \Sigma_{\Lambda}$, then
\begin{equation*}
\begin{split}
(\alpha \Div(M)_*(f)(x))_{\Gamma} &= \bar{M}_*(\alpha)(\Div(M)_*(f)(x))_{\alpha^{-1}\Gamma} \\
&= \sum_{\substack{[\sigma] \in \Sigma_{\alpha^{-1}\Gamma} \backslash \Sigma_{\Lambda} / \Sigma_{f \Psi} \\ \alpha^{-1} \Gamma \leq \sigma f \Psi}} \bar{M}_*(\alpha) \bar{M}_*(\sigma f)(x_{(\sigma f)^{-1} \alpha^{-1} \Gamma}).
\end{split}
\end{equation*}
As $\Sigma_{\alpha^{-1} \Gamma} = \alpha^{-1} \Sigma_{\Gamma} \alpha$, we may make use of the bijection
\begin{equation}
\label{equation:doubleCosetIso}
\Sigma_{\Gamma} \backslash \Sigma_{\Lambda} / \Sigma_{f \Psi} \xrightarrow{\cong} \Sigma_{\alpha^{-1}\Gamma} \backslash \Sigma_{\Lambda} / \Sigma_{f \Psi}
\end{equation}
sending $[\sigma]$ to $[\alpha^{-1}\sigma]$ to rewrite the sum as 
\[
\sum_{\substack{[\sigma] \in \Sigma_{\Gamma} \backslash \Sigma_{\Lambda} / \Sigma_{f \Psi} \\ \Gamma \leq \sigma f \Psi}} \bar{M}_*(\alpha) \bar{M}_*(\alpha^{-1} \sigma f)(x_{f^{-1} \sigma^{-1} \alpha \alpha^{-1} \Gamma}) = \sum_{\substack{[\sigma] \in \Sigma_{\Gamma} \backslash \Sigma_{\Lambda} / \Sigma_{f \Psi} \\ \Gamma \leq \sigma f \Psi}} \bar{M}_*(\sigma f)(x_{(\sigma f)^{-1}\Gamma})
\]
and this is what we wanted.
\end{proof}

Note the special case of the transfer when $f \colon \Psi \to \Lambda$ is an isomorphism. In this case $\Sigma_{\Gamma} \backslash \Sigma_{\Lambda}/ \Sigma_{f\Psi} = \{[e]\}$ and
\[
(\Div(M)_*(f)(x))_{\Gamma} = \bar{M}_*(f)(x_{f^{-1} \Gamma}) = (\Div(M)^*(f^{-1})(x))_{\Gamma}.
\]

\begin{lemma}
\label{DivFunctorialTransfers}
The transfer maps for $\Div(M)$ are functorial. 
\end{lemma}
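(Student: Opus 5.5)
We need to show $\Div(M)_*(gf) = \Div(M)_*(g)\Div(M)_*(f)$ for composable maps $f\colon \Psi \to \Lambda$ and $g \colon \Lambda \to \Theta$. We also need $\Div(M)_*(\id) = \id$, which is immediate from the isomorphism case computed just above.

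\textbf{Reduction.} By \cref{lemma:factorization}, every map factors as an isomorphism followed by a refinement. So it suffices to treat three kinds of composites:
\begin{enumerate}[(i)]
\item two isomorphisms;
\item an isomorphism composed with a refinement, in either order;
\item two refinements $\Psi \leq \Lambda \leq \Theta$.
\end{enumerate}
For (i) and (ii), when one map is an isomorphism, the formula reduces to reindexing. If $f$ is an isomorphism, then $\Div(M)_*(f) = \Div(M)^*(f^{-1})$, and the double coset sets indexing $\Div(M)_*(g)$ and $\Div(M)_*(gf)$ agree, since $\Sigma_{gf\Psi} = \Sigma_{g\Lambda}$. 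The summands then match by functoriality of $\bar M_*$ along isomorphisms. If $g$ is an isomorphism, we use the bijection $\Sigma_\Gamma\backslash\Sigma_\Theta/\Sigma_{gf\Psi} \cong \Sigma_{g^{-1}\Gamma}\backslash\Sigma_\Lambda/\Sigma_{f\Psi}$, $[\sigma]\mapsto[g^{-1}\sigma g]$, in the same way as \eqref{equation:doubleCosetIso} was used in the invariance lemma. We also need the well-definedness lemma to move group elements through the $\Sigma_\Psi$-invariant $x$.

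\textbf{Main case: two refinements.} Fix $\Gamma \leq \Theta$. Expanding the composite gives
\[
(\Div(M)_*(\Lambda\leq\Theta)\Div(M)_*(\Psi\leq\Lambda)x)_\Gamma
= \sum_{\substack{[\sigma]\in\Sigma_\Gamma\backslash\Sigma_\Theta/\Sigma_\Lambda\\ \Gamma\leq\sigma\Lambda}}
\ \sum_{\substack{[\alpha]\in\Sigma_{\sigma^{-1}\Gamma}\backslash\Sigma_\Lambda/\Sigma_\Psi\\ \sigma^{-1}\Gamma\leq\alpha\Psi}}
\bar M_*(\sigma\alpha)(x_{(\sigma\alpha)^{-1}\Gamma}).
\]
We want to identify this with the single sum over $[\tau]\in\Sigma_\Gamma\backslash\Sigma_\Theta/\Sigma_\Psi$ with $\Gamma\leq\tau\Psi$. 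The tool is \cref{doublecosets}, applied with its $\Psi\leq\Lambda\leq\Omega$ taken to be $\Gamma$-side data. Since that lemma is stated for left quotients by a chain, we first invert (pass from $H\backslash G/K$ to $K\backslash G/H$ via $g\mapsto g^{-1}$) so that the chain $\Psi\leq\Lambda\leq\Theta$ sits on the quotienting side and $\Gamma$ plays the role of the fixed group. This gives a bijection
\[
\Sigma_\Gamma\backslash\Sigma_\Theta/\Sigma_\Psi \;\cong\; \coprod_{[\sigma]\in\Sigma_\Gamma\backslash\Sigma_\Theta/\Sigma_\Lambda} \Sigma_{\sigma^{-1}\Gamma\cap\Lambda}\backslash\Sigma_\Lambda/\Sigma_\Psi, \qquad [\sigma\alpha]\leftarrow([\sigma],[\alpha]).
\]

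\textbf{Matching the conditions.} The remaining step, which I expect to be the main obstacle, is checking that the side conditions correspond under this bijection. On the composite side, the outer term is nonzero only when $\Gamma\leq\sigma\Lambda$, equivalently $\sigma^{-1}\Gamma\leq\Lambda$. In that case $\sigma^{-1}\Gamma\cap\Lambda=\sigma^{-1}\Gamma$, so the fibre index set is exactly the inner index set $\Sigma_{\sigma^{-1}\Gamma}\backslash\Sigma_\Lambda/\Sigma_\Psi$. Conversely, if $\Gamma\leq\sigma\alpha\Psi$, then since $\alpha\Psi\leq\Lambda$ we get $\Gamma\leq\sigma\Lambda$. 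So every $[\tau]$ with $\Gamma\leq\tau\Psi$ lies over some $[\sigma]$ with $\Gamma\leq\sigma\Lambda$. Moreover $\Gamma\leq\sigma\alpha\Psi$ is equivalent to $\sigma^{-1}\Gamma\leq\alpha\Psi$, so the surviving terms correspond bijectively. Finally, the summands agree by functoriality of $\bar M_*$ on the isomorphisms $\sigma$ and $\alpha$.

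Care is needed that the representative chosen on each side gives the same term. This is guaranteed by the well-definedness lemma, which shows each summand depends only on the double coset.
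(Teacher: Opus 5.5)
Your proof is correct. Its core is the paper's argument: in the two-refinement case you merge the double sum using the inverted form of \cref{doublecosets}. You identify the fibre over $[\sigma]$ with $\Sigma_{\sigma^{-1}\Gamma}\backslash\Sigma_\Lambda/\Sigma_\Psi$ when $\Gamma\leq\sigma\Lambda$, and you observe that $\Gamma\leq\sigma\alpha\Psi$ forces $\Gamma\leq\sigma\Lambda$. The paper's proof does exactly this.

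The difference is organizational. The paper does not invoke \cref{lemma:factorization}. In your notation, it takes arbitrary $f\colon\Psi\to\Lambda$ and $g\colon\Lambda\to\Theta$ and conjugates the inner double-coset sum by the isomorphism $g_\Lambda\colon\Lambda\to g\Lambda$. The inner sum then runs over $\Sigma_{\sigma^{-1}\Gamma}\backslash\Sigma_{g\Lambda}/\Sigma_{gf\Psi}$, and the paper applies the same bijection to the chain $gf\Psi\leq g\Lambda\leq\Theta$. That conjugation is the same mechanism as your reindexing $[\sigma]\mapsto[g^{-1}\sigma g]$ in case (ii), folded into the main computation, so the paper handles arbitrary maps in one pass.

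Your route makes the isomorphism cases pure relabellings and isolates the only real combinatorics in the refinement case. The price is a reduction step you assert but do not carry out. To reassemble $\Div(M)_*(g)\Div(M)_*(f)$ for $f=r_f\circ i_f$ and $g=r_g\circ i_g$, you must commute the middle refinement past the isomorphism:
$i_g\circ r_f=(gf\Psi\leq g\Lambda)\circ(i_g)_{f\Psi}$.
This is an equality on the nose, since refinements are identities on underlying sets. You then need each of (i)--(iii), including both orders in (ii). This is routine, but it should be written out rather than covered by ``it suffices.''
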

\begin{proof}
Assume given $g \colon \Psi \to \Lambda$ and $f \colon \Lambda \to \Omega$. 
We have
\[
(\Div(M)_*(fg)(x))_{\Gamma \leq \Omega} = \sum_{\substack{[\beta] \in \Sigma_\Gamma \backslash \Sigma_{\Omega} / \Sigma_{fg \Psi} \\ \Gamma \leq \beta fg \Psi}}  \bar{M}_*(\beta fg)(x_{(\beta f g)^{-1} \Gamma}).
\]
On the other hand, $(\Div(M)_*(f) \Div(M)_*(g) (x))_{\Gamma \leq \Omega}$ is the double summation
\begin{equation*}
\label{composedDivTransfers}
\sum_{\substack{[\sigma] \in \Sigma_{\Gamma} \backslash \Sigma_{\Omega} / \Sigma_{f \Lambda} \\ \Gamma \leq \sigma f \Lambda}} \Bigg ( \sum_{\substack{[\alpha] \in \Sigma_{(\sigma f)^{-1} \Gamma} \backslash \Sigma_{\Lambda} / \Sigma_{g \Psi} \\  (\sigma f)^{-1}\Gamma \leq \alpha g \Psi}} \bar{M}_*(\sigma f)\bar{M}_*(\alpha g)(x_{g^{-1}\alpha^{-1}f^{-1}\sigma^{-1}\Gamma}) \Bigg ).
\end{equation*}
Conjugation by $f_{\Lambda}$ allows us to rewrite this as
\begin{equation*}
\sum_{\substack{[\sigma] \in \Sigma_{\Gamma} \backslash \Sigma_{\Omega} / \Sigma_{f \Lambda} \\ \Gamma \leq \sigma f \Lambda}} \Bigg ( \sum_{\substack{[\alpha] \in \Sigma_{\sigma^{-1} \Gamma} \backslash \Sigma_{f \Lambda} / \Sigma_{fg \Psi} \\ \sigma^{-1}\Gamma \leq \alpha f g \Psi}} \bar{M}_*(\sigma f)\bar{M}_*(f^{-1}\alpha f g)(x_{g^{-1}f^{-1}\alpha^{-1} ff^{-1}\sigma^{-1}\Gamma}) \Bigg ).
\end{equation*}
Note that the condition $\sigma^{-1}\Gamma \leq \alpha f g \Psi$ is equivalent to $\Gamma \leq \sigma \alpha fg \Psi$. Further, since $\alpha \in \Sigma_{f \Lambda}$, we have $\sigma \alpha f g \Psi \leq \sigma \alpha f \Lambda = \sigma f \Lambda$, so the condition $\Gamma \leq \sigma \alpha fg \Psi$ implies that $\Gamma \leq \sigma f \Lambda$. 

Now the result follows from the (opposite version of the) bijection of \cref{doublecosets}, which allows us to sum over the double cosets $\Sigma_{\Gamma} \backslash \Sigma_{\Omega} / \Sigma_{fg \Psi}$ and replace $\sigma \alpha$ by $\beta$.
\end{proof}

We have shown that $\Div(M)$ admits functorial transfer and restriction maps. To see that $\Div(M)$ is a partition functor, we must address the double coset formula.

\begin{lemma}
    The construction $\Div(M)$ with the restriction and transfer maps described above satisfies the double coset formula \cref{doubleCoset}. 
\end{lemma}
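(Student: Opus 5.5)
We need to verify, for $\Gamma \leq \Omega$ and $\Lambda \leq \Omega$, that
\[
\Div(M)^*(\Gamma\leq\Omega)\Div(M)_*(\Lambda\leq\Omega)=\sum_{[\sigma]\in\Sigma_\Gamma\backslash\Sigma_\Omega/\Sigma_\Lambda}\Div(M)_*(\Gamma\cap\sigma\Lambda\leq\Gamma)\,\Div(M)_*(\sigma)\,\Div(M)^*(\sigma^{-1}\Gamma\cap\Lambda\leq\Lambda).
\]
The plan is to evaluate both sides on $x\in\Div(M)(\Lambda)$ at a fixed component $\Theta\leq\Gamma$ and compare them term by term. Everything is built from $\bar M_*$ along isomorphisms, so the proof reduces to a bijection of indexing sets.

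The left side is easy. Restriction along $\Gamma\leq\Omega$ is projection, so the $\Theta$-component is
\[
\sum_{[\beta]\in\Sigma_\Theta\backslash\Sigma_\Omega/\Sigma_\Lambda,\ \Theta\leq\beta\Lambda}\bar M_*(\beta)(x_{\beta^{-1}\Theta}).
\]

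For the right side, we unwind each term using the three formulas.
\begin{itemize}
\item[(i)] Restriction along $\sigma^{-1}\Gamma\cap\Lambda\leq\Lambda$ is projection.
\item[(ii)] Transfer along the isomorphism $\sigma$ is $(\Div(M)_*(\sigma)y)_\Xi=\bar M_*(\sigma)(y_{\sigma^{-1}\Xi})$, landing in $\Div(M)(\Gamma\cap\sigma\Lambda)$.
\item[(iii)] Transfer along $\Gamma\cap\sigma\Lambda\leq\Gamma$ at $\Theta$ sums over $[\alpha]\in\Sigma_\Theta\backslash\Sigma_\Gamma/\Sigma_{\Gamma\cap\sigma\Lambda}$ with $\Theta\leq\alpha(\Gamma\cap\sigma\Lambda)$.
\end{itemize}
The summand is then $\bar M_*(\alpha\sigma)(x_{(\alpha\sigma)^{-1}\Theta})$. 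Since $\alpha\in\Sigma_\Gamma$ fixes $\Gamma$ and $\Theta\leq\Gamma$, the condition $\Theta\leq\alpha(\Gamma\cap\sigma\Lambda)$ is equivalent to $\Theta\leq\alpha\sigma\Lambda$. The right side therefore becomes a double sum over pairs $([\sigma],[\alpha])$ of $\bar M_*(\alpha\sigma)(x_{(\alpha\sigma)^{-1}\Theta})$, restricted to pairs with $\Theta\leq\alpha\sigma\Lambda$.

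The key step is to identify this double sum with the single sum over $[\beta]$ via $\beta=\alpha\sigma$. For that we apply \cref{doublecosets} with $\Psi=\Theta\leq\Gamma\leq\Omega$ and with $\Lambda$ in the role of its $\Gamma$. The fiber of the projection
\[
\Sigma_\Theta\backslash\Sigma_\Omega/\Sigma_\Lambda\to\Sigma_\Gamma\backslash\Sigma_\Omega/\Sigma_\Lambda
\]
over $[\sigma]$ is in bijection with $\Sigma_\Theta\backslash\Sigma_\Gamma/\Sigma_{\sigma\Lambda\cap\Gamma}$ via $[\alpha]\mapsto[\alpha\sigma]$. The indexing sets thus match bijectively, and the constraint $\Theta\leq\beta\Lambda$ corresponds on both sides. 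The summands also agree, and they are independent of representatives by the well-definedness lemma for the transfer.

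The main obstacle is bookkeeping rather than an idea. Specifically, we must check that the summand $\bar M_*(\alpha\sigma)(x_{(\alpha\sigma)^{-1}\Theta})$ is independent of the choice of representative $\sigma$ of its double coset on the right-hand side. This uses $\Sigma_\Lambda$-invariance of $x$, exactly as in the well-definedness lemma. We must also check that the composite of the isomorphisms $\bar M_*(\alpha)\bar M_*(\sigma)$, restricted to the relevant subpartitions, equals $\bar M_*(\alpha\sigma)$; this follows from functoriality of transfers along isomorphisms.
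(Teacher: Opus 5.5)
Your proposal is correct and follows essentially the same route as the paper: evaluate both sides at a fixed component $\Theta\leq\Gamma$, unwind the right side into a double sum over $([\sigma],[\alpha])$ with summand $\bar{M}_*(\alpha\sigma)(x_{(\alpha\sigma)^{-1}\Theta})$, then match it to the left side via the bijection of \cref{doublecosets} with $\beta=\alpha\sigma$, after checking that $\Theta\leq\alpha(\Gamma\cap\sigma\Lambda)$ is equivalent to $\Theta\leq\alpha\sigma\Lambda$. The representative-independence you flag as the main obstacle comes for free: each right-hand summand equals the left-hand summand indexed by $[\alpha\sigma]$, and that summand is already well defined.
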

\begin{proof}
Assume that $\Psi, \Phi \leq \Lambda$. We wish to show that
\begin{equation*}
\begin{split}
\Div(M)^*(\Psi \leq \Lambda)&\Div(M)_*(\Phi \leq \Lambda) \\ &= \sum_{[\sigma] \in \Sigma_{\Psi} \backslash \Sigma_{\Lambda} / \Sigma_{\Phi}} \Div(M)_*(\Psi \cap \sigma \Phi \leq \Psi) \Div(M)_*(\sigma) \Div(M)^*(\sigma^{-1} \Psi \cap \Phi \leq \Phi).
\end{split}
\end{equation*}
We shall first consider
\[
(\Div(M)^*(\Psi \leq \Lambda)\Div(M)_*(\Phi \leq \Lambda)(x))_{\Gamma \leq \Psi}.
\]
Since the restriction is along an inclusion, this is
\[
\sum_{\substack{[\beta] \in \Sigma_{\Gamma} \backslash \Sigma_{\Lambda} / \Sigma_{\Phi} \\ \Gamma \leq \beta \Phi}} \bar{M}_*(\beta)(x_{\beta^{-1}\Gamma}).
\]
The other sum, evaluated at $x \in \Div(M)(\Phi)$ and $\Gamma \leq \Psi$, is given by
\[
\sum_{\substack{[\sigma] \in \Sigma_{\Psi} \backslash \Sigma_{\Lambda} / \Sigma_{\Phi}}}  \Bigg(\sum_{\substack{[\alpha] \in \Sigma_{\Gamma} \backslash \Sigma_{\Psi} / \Sigma_{\Psi \cap \sigma \Phi} \\ \Gamma \leq \alpha(\Psi \cap \sigma \Phi)}} \bar{M}_*(\alpha) \bar{M}_*(\sigma) (x_{\sigma^{-1} \alpha^{-1} \Gamma})\Bigg).
\]
The bijection of \cref{doublecosets} can be applied to set $\alpha \sigma = \beta$. But we must check that the conditions on the sums agree. Note that, since $\alpha \in \Sigma_{\Psi}$, we have $\alpha \Psi = \Psi$. Now $\Gamma \leq \alpha \Psi \cap \alpha \sigma \Phi = \Psi \cap \beta \Phi$ implies that $\Gamma \leq \beta \Phi$. To go the other way, if $\Gamma \leq \beta \Phi$ and $\Gamma \leq \Psi$, then $\Gamma \leq \Psi \cap \beta \Phi$.
\end{proof}

The construction $\Div$ is an endofunctor on the category of partition functors. Given a map of partition functors $f \colon M \to N$, there is an induced map of partition functors $\Div(f) \colon \Div(M) \to \Div(N)$ defined in the following way: Given a partition $\Lambda$ and $\Gamma < \Lambda$, the fact that $f$ is a map of partition functors implies that we have a commutative diagram
\[
\xymatrix{M(\Gamma) \ar[r] \ar[d]_-{M_*(\Gamma < \Lambda)} & N(\Gamma) \ar[d]^-{N_*(\Gamma < \Lambda)} \\ M(\Lambda) \ar[r] & N(\Lambda).}
\]
The direct sum of these over $\Gamma < \Lambda$ induces a map of abelian groups
\[
\bar{M}(\Lambda) \to \bar{N}(\Lambda).
\]
Further taking the direct sum of these maps together over $\Lambda \leq \Omega$ gives
\[
\bigoplus_{\Lambda \leq \Omega} \bar{M}(\Lambda) \to \bigoplus_{\Lambda \leq \Omega} \bar{N}(\Lambda).
\]
The fact that $M$ and $N$ are partition functors implies that this map is $\Sigma_\Omega$-equivariant, and so it passes to fixed points giving the desired map
\[
\Div(f)(\Omega) \colon \Div(M)(\Omega) \to \Div(N)(\Omega).
\]
It is straight-forward to see that this construction sends the identity to the identity and respects composition. We have proven the following: 

\begin{theorem} \label{thm:divendo}
    The construction $\Div$ is an endofunctor on the category of partition functors. 
\end{theorem}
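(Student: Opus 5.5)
The theorem has two parts. First, $\Div(M)$ must be a partition functor for each partition functor $M$. Second, $\Div(f)$ must be a map of partition functors for each map $f \colon M \to N$, compatible with identities and composition.

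For the first part, the preceding lemmas already establish the needed properties. Restrictions are functorial, and transfers are well defined, land in the $\Sigma_\Lambda$-invariants, and are functorial (\cref{DivFunctorialTransfers}). The double coset formula also holds. It remains to check conditions (d) and (e) of \cref{DataForPartitionFunctor} that are not yet explicit.

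\begin{itemize}
\item \emph{Compatibility with $2$-morphisms.} Two $2$-isomorphic maps $f,g\colon\Lambda\to\Theta$ satisfy $g=\alpha f$ with $\alpha\in\Sigma_\Theta$. So $\Div(M)^*(g)=\Div(M)^*(f)\Div(M)^*(\alpha)$. On $\Sigma_\Theta$-invariants, $\Div(M)^*(\alpha)$ is the action of $\alpha^{-1}$, hence the identity. The same argument handles transfers, using the invariance computation from the lemma showing transfers land in $\Div(M)(\Lambda)$.
\item \emph{Isomorphisms.} If $f$ is an isomorphism, $\Div(M)_*(f)=\Div(M)^*(f^{-1})$. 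This was noted after the transfer lemmas: the double coset set reduces to $\{[e]\}$.
\end{itemize}

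For the second part, I would first check that $\Div(f)$ is well defined. Since $f$ commutes with transfers along strict refinements, $f(I_\Gamma)\subseteq I_\Gamma$, so $f$ descends to $\bar f\colon\bar M(\Gamma)\to\bar N(\Gamma)$. Since $f$ commutes with transfers along isomorphisms, $\bar f$ intertwines $\bar M_*(\sigma)$ and $\bar N_*(\sigma)$. It is therefore $\Sigma_\Lambda$-equivariant and passes to invariants.

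Next I would check that $\Div(f)$ commutes with restriction and transfer. Both $\Div(-)^*$ and $\Div(-)_*$ are given componentwise by explicit formulas: $\bar M^*$ along isomorphisms $f_\Gamma$, and sums of $\bar M_*$ along isomorphisms indexed by double cosets depending only on the partitions. Applying $\bar f$ componentwise therefore commutes with each summand by naturality of $\bar f$ along isomorphisms. Preservation of identities and composition is immediate, since $\Div(f)$ is built componentwise from $\bar f$ and passing to quotients and invariants is functorial.

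The only step needing care is the $2$-morphism invariance for transfers. I would handle it by writing $\alpha f$ and reindexing the double cosets via $[\sigma]\mapsto[\sigma\alpha]$, where $\Sigma_{\alpha f\Psi}=\alpha\Sigma_{f\Psi}\alpha^{-1}$. The terms then match exactly. This step is bookkeeping rather than a conceptual obstacle.
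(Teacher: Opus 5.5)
Your proposal is correct and follows the paper's route. You assemble the preceding lemmas: functorial restrictions and transfers, the isomorphism case $\Div(M)_*(f)=\Div(M)^*(f^{-1})$, and the double coset formula. You then build $\Div(f)$ by descending $f$ to $\bar M(\Gamma)\to\bar N(\Gamma)$, summing over refinements, and passing to $\Sigma_\Lambda$-invariants by equivariance. You are also more careful than the paper, which checks neither invariance under $2$-morphisms nor that $\Div(f)$ commutes with restrictions and transfers; your arguments for both are sound (using that $\Div(M)^*(\alpha)$ acts as $\alpha^{-1}$ on invariants, or the reindexing $[\sigma]\mapsto[\sigma\alpha]$).
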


\begin{remark}
In \cite{Thevenaz}, Thevenaz studies a construction on $G$-Mackey functors that he calls the twin functor. The twin of a $G$-Mackey functor $M$ is built in the same way as $\Div$, but uses all subgroups of $G$. He shows that the twin of $M$ is another $G$-Mackey functor. His work is primarily focused on the case that $|G|$ is invertible in $M(G)$, but he studies the general case as well.
\end{remark}


\begin{example} \label{divconstant}
Let $A$ be an abelian group. We consider $\Div(C_A)$ and $\Div(C^A)$, where $C_A$ is the constant partition functor and $C^A$ is the dual constant partition functor.

We begin with $C^A$. Since transfer maps are the identity for $C^A$, we have $C^A(\Lambda)/I_{\Lambda} = 0$ unless $\Lambda$ is the discrete partition, in which case there are no subpartitions of $\Lambda$ and $C^A(\Lambda)/I_{\Lambda} = C^A(\Lambda) = A$. It follows that 
\[
\Div(C^A)(\Omega) = A,
\]
concentrated in the summand associated to the discrete subpartition of $\Omega$.
Further, given $f \colon \Lambda \to \Omega$, we have
\[
(\Div(C^A)^*(f)(a))_\Gamma = \overline{C_A}^*(f_{\Gamma})(a_{f\Gamma}).
\]
Since $a_{f \Gamma} = 0$ unless $\Gamma$ is the discrete subpartition of $\Lambda$ and $\overline{C_A}^*(f_{\Gamma})$ is the identity map as it is a restriction along an isomorphism. We learn that $\Div(C^A)^*(f)$ is the identity map. On the other hand,  
\[
(\Div(C^A)_*(f)(a))_\Gamma = \sum_{\substack{ [\sigma] \in \Sigma_{\Gamma} \backslash \Sigma_{\Omega} / \Sigma_{\Lambda} \\ \Gamma \leq \sigma \Lambda}} \overline{C^A}_*(\sigma) a_{\sigma^{-1} \Gamma}.
\]
If $\Gamma$ is not the discrete subpartition of $\Omega$ we get $0$. If $\Gamma$ is the discrete subpartition of $\Omega$, then $\Gamma \leq \sigma \Lambda$ is a vacuous condition, $\sigma^{-1}\Gamma$ is always the discrete subpartition, and we get
\[
(\Div(C^A)_*(f)(a))_\Gamma = |\Sigma_{\Omega}/\Sigma_{\Lambda}|a.
\]
It follows that
\[
\Div(C^A) = C_A.
\]

On the other hand, in the case of the constant partition functor $C_A$, the transfer subgroup $I_\Lambda \subseteq C_A(\Lambda) = A$ is a bit more complicated. First assume that $\Lambda = \u m$ is the indiscrete partition. For $i+j = m$, $(C_A)_*(\u i \sqcup \u j \leq \u m)$ is multiplication by $m \choose i$. 
Let $\ell_1 = 0$ and, for $m >1$, let
\[
\ell_m = \gcd \bigg{(} {m \choose 1}, {m \choose 2}, \ldots {m \choose {m-1}} \bigg{)} = \left\{
\begin{array}{rl}
p & \text{if } m = p^k \text{ for some prime } p,\\
1 & \text{otherwise.}
\end{array}\right.
\]
Since every $\Lambda < \u m$ factors through $\u i \sqcup \u j$ for some $i, j$, we have
\[
\overline{C_A}(m) = C_{A}(m)/I_{m} = A/\ell_m.
\]
It follows that, for a partition $\Lambda = (X, \sim_{\Lambda})$, if we set
\[
\ell_{\Lambda} = \gcd( (\ell_{|[x]|})_{[x] \in X/\sim_{\Lambda}} ),
\]
then
\[
\overline{C_A}(\Lambda) = A/\ell_{\Lambda}
\]
and
\[
\Div(C_{A})(\Omega) = \big (\bigoplus_{\Lambda \leq \Omega} A/\ell_{\Lambda} \big )^{\Sigma_{\Omega}}.
\]
Note that, if $A$ is a $\Q$-vector space, then $\Div(C_{A})(\Omega) = A$, concentrated in the summand associated to the discrete subpartition of $\Omega$. It follows that $\Div(C_A) = C_A$ in this case.
\end{example}

\section{$\Div$ as a monad} \label{sec:divmonad}

In \cref{thm:divendo} we showed that $\Div$ is an endofunctor on the category of partition functors. Our next goal is to show that $\Div$ admits the structure of a monad on the category of partition functors.

There is a canonical map of abelian groups $\eta_{\Lambda} \colon M(\Lambda) \to \Div(M)(\Lambda)$, constructed as follows. For $\Gamma \leq \Lambda$, recall that 
\[
q_{\Gamma}M^*(\Gamma \leq \Lambda) \colon M(\Lambda) \to \bar{M}(\Gamma)
\]
is the restriction to $M(\Gamma)$ followed by the quotient by the subgroup $I_{\Gamma}$. These maps are called Brauer morphisms in \cite{Thevenaz}. To produce the unit of the monad, we must check that, after taking the direct sum over $\Gamma \leq \Lambda$, this lands in $\Div(M)(\Lambda)$. That is, we must check that it lands in the $\Sigma_{\Lambda}$-invariants. For $z \in M(\Lambda)$, let
\[
\eta_{\Lambda}(z) = (q_{\Gamma}M^*(\Gamma \leq \Lambda)(z))_{\Gamma \leq \Lambda}.
\]
For $\sigma \in \Sigma_{\Lambda}$, we have
\[
(\sigma \eta_{\Lambda}(z))_{\Gamma} = \bar{M}^*(\sigma^{-1})(\eta_{\Lambda}(z))_{\sigma^{-1}\Gamma} = \bar{M}^*(\sigma^{-1})q_{\sigma^{-1}\Gamma}M^*(\sigma^{-1}\Gamma \leq \Lambda)(z).
\]
But, since we have the commutative diagram of partitions where the top row is an automorphism
\[
\xymatrix{\Lambda \ar[r]^{\sigma^{-1}} & \Lambda \\ \Gamma \ar[r]^{\sigma^{-1}} \ar[u] & \sigma^{-1} \Gamma, \ar[u]}
\]
we see that
\[
\bar{M}^*(\sigma^{-1})q_{\sigma^{-1}\Gamma}M^*(\sigma^{-1}\Gamma \leq \Lambda)(z) = q_{\Gamma}M^*(\sigma^{-1})M^*(\sigma^{-1}\Gamma \leq \Lambda)(z) = q_{\Gamma} M^*(\Gamma \leq \Lambda)(z).
\]

\begin{lemma}
\label{lemma:unitMapForDivMonad}
The maps $\eta_{\Lambda} \colon M(\Lambda) \to \Div(M)(\Lambda)$ assemble into a map of partition functors $\eta \colon M \to \Div(M)$ natural in $M$.
\end{lemma}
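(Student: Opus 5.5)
The plan is to check three things directly from the definitions: that $\eta$ commutes with restrictions, that it commutes with transfers, and that it is natural in $M$. We already know each $\eta_\Lambda$ lands in $\Div(M)(\Lambda)$. By \cref{lemma:factorization}, every map of partitions factors as an isomorphism followed by a refinement. So for transfers it suffices to treat isomorphisms and inclusions $\Psi \leq \Lambda$ separately. For restrictions it is just as easy to handle a general map $f$ at once.

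\emph{Restrictions.} Let $f \colon \Lambda \to \Theta$, $z \in M(\Theta)$ and $\Gamma \leq \Lambda$. By definition,
\[
(\Div(M)^*(f)\eta_\Theta(z))_\Gamma = \bar{M}^*(f_\Gamma)\, q_{f\Gamma} M^*(f\Gamma \leq \Theta)(z).
\]
Restriction along an isomorphism preserves transfer subgroups, so $\bar{M}^*(f_\Gamma) q_{f\Gamma} = q_\Gamma M^*(f_\Gamma)$. The square of partitions with top edge $f_\Gamma \colon \Gamma \to f\Gamma$, bottom edge $f \colon \Lambda \to \Theta$, and vertical inclusions commutes on the nose. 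Hence
\[
M^*(f_\Gamma)M^*(f\Gamma \leq \Theta) = M^*(\Gamma \leq \Lambda)M^*(f),
\]
and therefore $(\Div(M)^*(f)\eta_\Theta(z))_\Gamma = (\eta_\Lambda M^*(f)(z))_\Gamma$.

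\emph{Transfers along isomorphisms.} Both $M$ and $\Div(M)$ satisfy $(-)_*(f) = (-)^*(f^{-1})$ for an isomorphism $f$; for $\Div(M)$ this was noted just after its transfer was defined. So this case follows from the restriction case.

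\emph{Transfers along inclusions.} This is the main step. Let $\Psi \leq \Lambda$, $x \in M(\Psi)$ and $\Gamma \leq \Lambda$. Then
\[
(\eta_\Lambda M_*(\Psi\leq\Lambda)(x))_\Gamma = q_\Gamma M^*(\Gamma \leq \Lambda) M_*(\Psi \leq \Lambda)(x).
\]
I will expand this with the double coset formula \eqref{doubleCoset}, taking $\Gamma$ in the role of the restricted partition. This gives a sum over $[\sigma] \in \Sigma_\Gamma\backslash\Sigma_\Lambda/\Sigma_\Psi$ of the terms
\[
q_\Gamma M_*(\Gamma \cap \sigma\Psi \leq \Gamma) M_*(\sigma) M^*(\sigma^{-1}\Gamma \cap \Psi \leq \Psi)(x).
\]
Every term with $\Gamma \cap \sigma\Psi < \Gamma$ lies in $I_\Gamma$ and so vanishes after applying $q_\Gamma$.

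In each surviving term we have $\Gamma \leq \sigma\Psi$, hence $\sigma^{-1}\Gamma \leq \Psi$ and $\sigma^{-1}\Gamma \cap \Psi = \sigma^{-1}\Gamma$. Using $q_\Gamma M_*(\sigma) = \bar M_*(\sigma) q_{\sigma^{-1}\Gamma}$, each such term becomes
\[
\bar M_*(\sigma)\,(\eta_\Psi(x))_{\sigma^{-1}\Gamma}.
\]
Summing over the double cosets with $\Gamma \leq \sigma\Psi$ gives exactly the defining formula for $(\Div(M)_*(\Psi\leq\Lambda)\eta_\Psi(x))_\Gamma$. The only care needed is that the indexing sets and the abbreviated $\bar M_*(\sigma)$ notation match. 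That is the bookkeeping I expect to be the main (though mild) obstacle.

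\emph{Naturality.} For a map $g \colon M \to N$ of partition functors, $g$ commutes with restrictions and carries $I_\Gamma$ into the transfer subgroup of $N(\Gamma)$. So $\bar g\, q_\Gamma M^*(\Gamma\leq\Lambda) = q_\Gamma N^*(\Gamma\leq\Lambda)\, g$ componentwise, which gives $\Div(g)\eta = \eta g$.
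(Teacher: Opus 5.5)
Your proof is correct and follows essentially the same route as the paper: the restriction case uses the same commuting square of partitions, and the transfer case applies the double coset formula, discards the terms with $\Gamma \cap \sigma\Psi < \Gamma$ because they lie in $I_\Gamma$, and commutes $q_\Gamma$ past the transfer along an isomorphism. The only difference is bookkeeping. The paper applies the double coset formula for an arbitrary map directly, using the remark after the definition of a partition functor. You instead factor the map as an isomorphism followed by a refinement, and combine functoriality of transfers with $\Div(M)_*(f)=\Div(M)^*(f^{-1})$ for isomorphisms, which is equally valid.
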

\begin{proof}
For $f \colon \Lambda \to \Omega$, we must check that we have a commutative diagram involving restrictions
\[
    \begin{tikzcd}
        M(\Omega)
            \ar[d, "M^*(f)"']
            \ar[r, "\eta_\Omega"] 
            & 
        \Div(M)(\Omega)
            \ar[d, "\Div(M)^*(f)"] 
            \\
        M(\Lambda)
            \ar[r, "\eta_\Lambda"] 
            & 
        \Div(M)(\Lambda)
    \end{tikzcd}
\]
Going around the bottom we see that
\[
(\eta_{\Lambda}(M^*(f)(z)))_{\Gamma \leq \Lambda} = q_{\Gamma}M^*(\Gamma \leq \Lambda)M^*(f)(z).
\]
Going around the top, we have
\[
(\Div(M)^*(f)(\eta_{\Omega}(z)))_{\Gamma \leq \Lambda} = \bar{M}^*(f_{\Gamma})(\eta_{\Omega}(z))_{f\Gamma} = \bar{M}^*(f_{\Gamma})q_{f\Gamma}M^*(f\Gamma \leq \Omega)(z).
\]
Now these two elements agree as there is a commutative diagram of partitions
\[
\xymatrix{\Lambda \ar[r]^{f} & \Omega \\ \Gamma \ar[r] \ar[u]^{\leq} & f\Gamma. \ar[u]_{\leq}}
\]

Next we would like to show that we have a commutative diagram involving transfers
\[
    \begin{tikzcd}
        M(\Omega) 
            \ar[r, "\eta_\Omega"]
            & 
        \Div(M)(\Omega) 
            \\
        M(\Lambda)
            \ar[r, "\eta_\Lambda"]
            \ar[u, "M_*(f)"]
            & 
        \Div(M)(\Lambda).
            \ar[u, "\Div(M)_*(f)"']
    \end{tikzcd}
\]
Going around the top, we have 
\[
(\eta_{\Omega}(M_*(f)(z)))_{\Gamma \leq \Omega} = q_{\Gamma}M^*(\Gamma \leq \Omega)M_*(f)(z).
\]
Applying the double coset formula gives
\[
q_{\Gamma} \Big ( \sum_{[\sigma] \in \Sigma_{\Gamma} \backslash \Sigma_{\Omega} / \Sigma_{f \Lambda}} M_*(\Gamma \cap \sigma f \Lambda \leq \Gamma) M_*(\sigma f) M^*((\sigma f)^{-1} \Gamma \cap \Lambda \leq \Lambda)(z) \Big ).
\]
Since $q_{\Gamma} M_*(\Gamma \cap \sigma f \Lambda \leq \Gamma) = 0$ unless $\Gamma \leq \sigma f \Lambda$, in which case $M_*(\Gamma \cap \sigma f \Lambda \leq \Gamma)$ is the identity map, this sum can be rewritten as
\[
\sum_{\substack{[\sigma] \in \Sigma_{\Gamma} \backslash \Sigma_{\Omega} / \Sigma_{f \Lambda} \\ \Gamma \leq \sigma f \Lambda}} q_{\Gamma} M_*(\sigma f) M^*((\sigma f)^{-1} \Gamma \leq \Lambda)(z).
\]

On the other hand, going around the bottom, we have
\[
(\Div(M)_*(f)\eta_{\Lambda}(z))_{\Gamma} = \sum_{\substack{[\sigma] \in \Sigma_{\Gamma} \backslash \Sigma_{\Omega} / \Sigma_{f \Lambda} \\ \Gamma \leq \sigma f \Lambda}} \bar{M}_*(\sigma f)q_{(\sigma f)^{-1} \Gamma} M^*((\sigma f)^{-1}\Gamma \leq \Lambda)z.
\]
These two formulas agree as $q_{\Gamma} M_*(\sigma f) = \bar{M}_*(\sigma f)q_{(\sigma f)^{-1} \Gamma}$.

The naturality of $\eta$ in the partition functor $M$ follows from the linearity of all of the maps involved.
\end{proof}

We move on to the multiplication on $\Div$. We begin by describing a natural transformation
\[
\mu \colon \Div(\Div) \to \Div.
\]
Let $\Omega$ be a partition and let $M$ be a partition functor. By definition
\[
\Div(\Div(M))(\Omega) = (\bigoplus_{\Gamma \leq \Omega} \overline{\Div(M)}(\Gamma))^{\Sigma_{\Omega}}.
\]
Let 
\[
\pi_{\Gamma} \colon \Div(M)(\Gamma) \to \bar{M}(\Gamma)
\]
be the projection onto the summand $\bar{M}(\Gamma)$ of $\Div(M)(\Gamma)$. Note that 
\[
\ker(\pi_{\Gamma}) = (\bigoplus_{\Theta < \Gamma} \bar{M}(\Theta))^{\Sigma_{\Gamma}}.
\]

\begin{lemma}
The transfer subgroup $J_{\Gamma} \subseteq \Div(M)(\Gamma)$ is contained in the kernel of $\pi_{\Gamma}$.    
\end{lemma}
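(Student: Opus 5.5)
Since $J_\Gamma$ is by definition the sum of the images of $\Div(M)_*(\Theta < \Gamma)$ over strict refinements $\Theta < \Gamma$, and $\pi_\Gamma$ is additive, it suffices to show that $\pi_\Gamma \circ \Div(M)_*(\Theta < \Gamma) = 0$ for each strict refinement $\Theta < \Gamma$. That is, for $x \in \Div(M)(\Theta)$, the $\Gamma$-component of $\Div(M)_*(\Theta \leq \Gamma)(x)$ vanishes.

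I will compute this component directly from the defining formula for the transfer. Take $f$ to be the inclusion $\Theta \leq \Gamma$, so $f\Theta = \Theta$, and evaluate at the component indexed by the top partition $\Gamma \leq \Gamma$. The formula gives
\[
(\Div(M)_*(\Theta \leq \Gamma)(x))_{\Gamma} = \sum_{\substack{[\sigma] \in \Sigma_\Gamma \backslash \Sigma_{\Gamma} / \Sigma_{\Theta} \\ \Gamma \leq \sigma \Theta}} \bar{M}_*(\sigma)(x_{\sigma^{-1}\Gamma}).
\]
Here the indexing partition $\Gamma$ plays both the role of the ambient partition and of the component. The double coset set consists of the single class $[e]$. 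The constraint $\Gamma \leq \sigma\Theta$ then forces $\Gamma \leq \Theta$, because $\sigma\Theta = \Theta$ for $\sigma \in \Sigma_\Gamma$ up to the choice of representative. Indeed, for every $\sigma \in \Sigma_\Gamma$, $\sigma\Theta$ is a refinement of $\sigma\Gamma = \Gamma$ with the same block sizes as $\Theta$, so $\sigma\Theta < \Gamma$ strictly. Since $\Theta < \Gamma$ is strict, the condition $\Gamma \leq \sigma\Theta$ never holds, the sum is empty, and the component is $0$.

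An alternative, more conceptual check is that the summand $\bar{M}(\Gamma)$ does not appear in $\bigoplus_{\Psi \leq \Theta}\bar{M}(\Psi)$ at all. In the remark's double-coset form of the transfer, the $\Gamma$-component is a sum of terms $\bar{M}_*(\Gamma \cap \sigma\Theta \leq \Gamma)(\cdots)$, and each of these is zero because $\Gamma \cap \sigma\Theta = \sigma\Theta < \Gamma$ and $\bar M_*$ of a non-isomorphism vanishes.

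The only point needing care is that $\sigma\Theta$ stays strictly below $\Gamma$. This holds because $\sigma$ induces a bijection on the refinements of $\Gamma$ that preserves the partial order and fixes $\Gamma$. With that, the proof is essentially immediate, and no step poses a real obstacle. The combination of the two descriptions of $\ker(\pi_\Gamma)$ then shows $J_\Gamma \subseteq \ker(\pi_\Gamma)$.
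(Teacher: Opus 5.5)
Your proposal is correct and follows the paper's proof: both compute the $\Gamma$-component of $\Div(M)_*(\Theta<\Gamma)(x)$ from the defining transfer formula and observe that the condition $\Gamma \leq \sigma\Theta$ never holds, because $\sigma\Theta$ is a proper refinement of $\Gamma$. Your remark that $\sigma \in \Sigma_\Gamma$ preserves block sizes and fixes $\Gamma$ simply spells out the justification the paper states in one line.
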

\begin{proof}
Assume that $\Theta < \Gamma$ and let $x \in \Div(M)(\Theta)$. Then
\[
(\Div(M)_*(\Theta < \Gamma)(x))_{\Gamma} = \sum_{\substack{[\sigma] \in \Sigma_{\Gamma} \backslash \Sigma_{\Gamma} / \Sigma_{\Theta} \\ \Gamma \leq \sigma \Theta}} \bar{M}_*(\sigma) x_{\sigma^{-1}\Gamma}.
\]
Since $\Theta$ is a proper subpartition of $\Gamma$, the condition $\Gamma \leq \sigma \Theta$ is never satisfied. It follows that $J_{\Gamma} \subseteq \ker(\pi_{\Gamma})$.

\end{proof}

This lemma gives us a map
\[
\bar{\pi}_{\Gamma} \colon \overline{\Div(M)}(\Gamma) \to \bar{M}(\Gamma).
\]
Taking the direct sum of these maps over $\Gamma \leq \Omega$ gives a $\Sigma_{\Omega}$-equivariant map
\[
\bigoplus_{\Gamma \leq \Omega} \overline{\Div(M)}(\Gamma) \to \bigoplus_{\Gamma \leq \Omega} \bar{M}(\Gamma)
\]
and finally taking fixed points gives us
\[
\mu_\Omega \colon \Div(\Div(M))(\Omega) \to \Div(M)(\Omega)
\]
defined by
\[
\mu_\Omega((x_{\Theta, \Gamma})_{\Theta \leq \Gamma})_{\Gamma \leq \Omega} = (\bar{\pi}_{\Gamma}((x_{\Theta, \Gamma})_{\Theta \leq \Gamma}))_{\Gamma \leq \Omega} =  (x_{\Gamma, \Gamma})_{\Gamma \leq \Omega}
\]
for $((x_{\Theta, \Gamma})_{\Theta \leq \Gamma})_{\Gamma \leq \Omega} \in \Div(\Div(M))(\Omega)$.

\begin{lemma}
The maps $\mu_\Omega \colon \Div(\Div (M))(\Omega) \to \Div(M)(\Omega)$ assemble into a map of partition functors that is natural in $M$.
\end{lemma}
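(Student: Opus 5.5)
We must show that $\mu_\Omega$ commutes with restrictions and transfers along an arbitrary map $f\colon \Lambda \to \Omega$, and is natural in $M$. The plan is to write both sides componentwise, using the explicit formulas for $\Div(-)^*$ and $\Div(-)_*$ from \cref{sec:div} applied once to $M$ and once to $\Div(M)$. Then observe that $\mu$ just extracts the ``diagonal'' component $x_{\Gamma,\Gamma}$.

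Naturality in $M$ comes first and is immediate. A map $g\colon M\to N$ induces $\Div(g)$ and $\Div(\Div(g))$ componentwise on the summands $\bar M(\Theta)$. Each projection $\bar\pi_\Gamma$ is a projection onto a summand, and $\Div(g)$ preserves that summand decomposition, so $\bar\pi_\Gamma$ commutes with the induced maps.

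For restrictions along $f$, take $x\in \Div(\Div(M))(\Omega)$ and $\Gamma\le\Lambda$. The component of $\Div(\Div(M))^*(f)(x)$ at $\Gamma$ is $\overline{\Div(M)}^*(f_\Gamma)(x_{f\Gamma})$. This is restriction along the isomorphism $f_\Gamma\colon\Gamma\to f\Gamma$, computed by the formula $\Div(M)^*(f_\Gamma)$ on representatives. By that formula, the $(\Gamma,\Gamma)$ entry is $\bar M^*(f_\Gamma)(x_{f\Gamma,f\Gamma})$. This equals the $\Gamma$-component of $\Div(M)^*(f)(\mu_\Omega x)$. One should also note that $\bar\pi$ is compatible with restriction along isomorphisms, so passing to the quotient by $J$ causes no issue. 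This step is routine.

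For transfers along $f\colon\Psi\to\Lambda$, the component of $\Div(\Div(M))_*(f)(x)$ at $\Gamma\le\Lambda$ is
\[
\sum_{\substack{[\sigma]\in\Sigma_\Gamma\backslash\Sigma_\Lambda/\Sigma_{f\Psi}\\ \Gamma\le\sigma f\Psi}}\overline{\Div(M)}_*(\sigma f)(x_{(\sigma f)^{-1}\Gamma}).
\]
Each summand is a transfer along an isomorphism $(\sigma f)^{-1}\Gamma\to\Gamma$. By the special case of the $\Div$ transfer along isomorphisms noted in \cref{sec:div}, its diagonal entry is $\bar M_*(\sigma f)(x_{(\sigma f)^{-1}\Gamma,(\sigma f)^{-1}\Gamma})$. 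Applying $\bar\pi_\Gamma$ termwise (it is additive) gives exactly the defining sum for $(\Div(M)_*(f)(\mu_\Psi x))_\Gamma$, with the same double coset indexing set and the same condition $\Gamma\le\sigma f\Psi$.

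The main obstacle is bookkeeping rather than substance. We must verify that $\bar\pi_\Gamma$ intertwines the isomorphism-induced maps $\overline{\Div(M)}_*(\alpha)$ and $\bar M_*(\alpha)$ for isomorphisms $\alpha\colon\Gamma'\to\Gamma$, which requires $\pi_\Gamma\circ\Div(M)_*(\alpha)=\bar M_*(\alpha)\circ\pi_{\Gamma'}$. This follows from the isomorphism case of the transfer formula, since $\alpha$ sends the top summand $\bar M(\Gamma')$ to $\bar M(\Gamma)$. Once this compatibility is recorded as a preliminary observation, both the restriction and transfer checks reduce to the formulas already displayed.
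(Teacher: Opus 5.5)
Your proposal is correct and follows essentially the same route as the paper: in both, each summand of $\Div(\Div(M))_*(f)$ is a transfer along an isomorphism, so its diagonal entry is $\bar{M}_*(\sigma f)(x_{(\sigma f)^{-1}\Gamma,(\sigma f)^{-1}\Gamma})$, and summing recovers the double-coset formula for $\Div(M)_*(f)\mu$. You also write out the restriction check and naturality in $M$, which the paper treats as straightforward.
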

\begin{proof}
That $\mu$ respects restriction maps is straight-forward. We will check that it respects transfer maps. 

Let $f \colon \Omega \to \Psi$ be map of partitions. We wish to show that
\[
\xymatrix{\Div(\Div(M))(\Omega) \ar[r] \ar[d]_{\Div(\Div(M))_*(f)} & \Div(M)(\Omega) \ar[d]^{\Div(M)_*(f)} \\ \Div(\Div(M))(\Psi) \ar[r] & \Div(M)(\Psi)}
\]
commutes. Let $((x_{\Theta, \Gamma})_{\Theta \leq \Gamma})_{\Gamma \leq \Omega} \in \Div(\Div(M))(\Omega)$ and let $\Phi \leq \Psi$. Going around the top, the value at $\Phi$ is
\[
\sum_{\substack{[\sigma] \in \Sigma_{\Phi} \backslash \Sigma_{\Psi} / \Sigma_{f \Omega} \\ \Phi \leq \sigma f \Omega}} \bar{M}_*(\sigma f)x_{(\sigma f)^{-1} \Phi,(\sigma f)^{-1} \Phi}.
\]
Going around the bottom, the value at $\Phi$ is
\begin{equation*}
\begin{split}
\sum_{\substack{[\sigma] \in \Sigma_{\Phi} \backslash \Sigma_{\Psi} / \Sigma_{f \Omega} \\ \Phi \leq \sigma f \Omega}} \pi_{\Phi} \overline{\Div(M)}_*(\sigma f) (x_{\Theta, (\sigma f)^{-1}\Phi})_{\Theta \leq (\sigma f)^{-1}\Phi} = \sum_{\substack{[\sigma] \in \Sigma_{\Phi} \backslash \Sigma_{\Psi} / \Sigma_{f \Omega} \\ \Phi \leq \sigma f \Omega}} \bar{M}_*(\sigma f)x_{(\sigma f)^{-1} \Phi,(\sigma f)^{-1} \Phi}.
\end{split}
\end{equation*}
This is due to the fact that the transfer map 
\[
\overline{\Div(M)}_*(\sigma f) = \overline{\Div(M)}_*((\sigma f)_{(\sigma f)^{-1}\Phi})
\]
is along the isomorphism $(\sigma f)_{(\sigma f)^{-1}\Phi} \colon (\sigma f)^{-1}\Phi \to \Phi$ so only has one summand. 

The naturality of $\mu$ in the partition functor $M$ follows from the linearity of all of the maps involved.

\end{proof}

\begin{theorem} \label{thm:divmonad}
The functor $\Div$, together with the natural transformations $\eta$ and $\mu$, is a monad on the category of partition functors.
\end{theorem}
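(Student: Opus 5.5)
Since the earlier lemmas already establish that $\eta$ and $\mu$ are natural maps of partition functors, it remains to verify the three monad axioms: the two unit laws $\mu \circ \eta_{\Div(M)} = \id$ and $\mu \circ \Div(\eta_M) = \id$, and associativity $\mu \circ \mu_{\Div(M)} = \mu \circ \Div(\mu_M)$ as maps $\Div^3(M) \to \Div(M)$. All maps involved are defined componentwise on the direct sums $\bigoplus_{\Gamma \leq \Omega}$, before passing to $\Sigma_\Omega$-invariants. Each identity can therefore be checked on a fixed element and a fixed index $\Gamma \leq \Omega$, using the explicit formula $\mu_\Omega((x_{\Theta,\Gamma})_{\Theta\leq\Gamma})_{\Gamma} = x_{\Gamma,\Gamma}$.

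For the first unit law, take $x = (x_\Theta)_{\Theta \leq \Omega} \in \Div(M)(\Omega)$. By definition, $\eta_{\Div(M),\Omega}(x)$ has $\Gamma$-component $q_\Gamma \Div(M)^*(\Gamma \leq \Omega)(x)$, and this restriction is the projection $(x_\Theta)_{\Theta \leq \Gamma}$. Applying $\bar\pi_\Gamma$ picks out the $\Theta=\Gamma$ entry, namely $x_\Gamma$, so $\mu_\Omega\eta_{\Div(M),\Omega}(x) = x$. For the second unit law, $\Div(\eta_M)$ is induced by the maps $\bar\eta_\Gamma \colon \bar M(\Gamma) \to \overline{\Div(M)}(\Gamma)$ obtained from $\eta_\Gamma$ by passing to quotients; these are well defined because $\eta$ commutes with transfers by \cref{lemma:unitMapForDivMonad}. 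Hence it suffices to show $\bar\pi_\Gamma \bar\eta_\Gamma = \id_{\bar M(\Gamma)}$. This holds because $\pi_\Gamma \eta_\Gamma(z) = q_\Gamma M^*(\Gamma\leq\Gamma)(z) = q_\Gamma(z)$, which then descends to the quotient by $I_\Gamma$.

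For associativity, write an element of $\Div^3(M)(\Omega)$ as a triply indexed family $x_{\Xi,\Theta,\Gamma}$ with $\Xi \leq \Theta \leq \Gamma \leq \Omega$. Here $x_{\Xi,\Theta,\Gamma}$ is taken modulo the appropriate transfer subgroups, and we lift representatives as needed; the maps are well defined by the earlier lemma, so the choice of lift does not matter. The composite $\mu\circ\mu_{\Div(M)}$ first applies $\bar\pi_\Gamma$ for $\Div(M)$ at the outer level, giving $(x_{\Xi,\Gamma,\Gamma})$, and then takes the diagonal, giving $x_{\Gamma,\Gamma,\Gamma}$. The composite $\mu\circ\Div(\mu_M)$ first applies $\overline{\mu_{M,\Gamma}}$ inside each $\overline{\Div\Div(M)}(\Gamma)$, sending $(x_{\Xi,\Theta,\Gamma})_{\Xi\leq\Theta\leq\Gamma}$ to $(x_{\Theta,\Theta,\Gamma})_{\Theta\leq\Gamma}$, and then takes the diagonal, again giving $x_{\Gamma,\Gamma,\Gamma}$. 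Both composites are therefore the projection onto the triple diagonal.

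The only real subtlety, and the step I expect to need the most care, is bookkeeping for the quotients: one must confirm that $\Div(\mu_M)$ and $\Div(\eta_M)$ are indeed induced summandwise by $\bar\mu_\Gamma$ and $\bar\eta_\Gamma$, which is exactly how $\Div(f)$ was defined for a map $f$ of partition functors. One must also confirm that $\bar\pi_\Gamma \circ \overline{\mu_{M,\Gamma}}$ and $\bar\pi_\Gamma^{\Div(M)}$ both agree with "take the top-diagonal entry" on representatives. Once these are in place, every identity reduces to the observation that both sides extract the same diagonal component, and no double coset computations are required.
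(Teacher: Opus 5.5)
Your proposal is correct and follows essentially the same route as the paper: the two unit laws are verified componentwise exactly as you do, by lifting $x_\Gamma$ to $M(\Gamma)$ and using that $\mu$ extracts the diagonal entry. The paper disposes of associativity in one line, saying it follows from $\mu$ being a sum of projections, and your triple-diagonal computation (together with your remark that $\bar\eta_\Gamma$ and $\overline{\mu_{M,\Gamma}}$ are well defined because $\eta$ and $\mu$ commute with transfers) spells that out.
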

\begin{proof}
We must show that $\eta$ is the unit for the multiplication $\mu$ and that $\mu$ is associative.

Associativity of $\mu$ follows immediately from the interpretation of $\mu$ as a sum of projections. 

The relations involving $\eta$ are more interesting. When necessary, we will subscript natural transformations by the source partition functor in order to distinguish them. 

We must show that $\mu \Div(\eta_M) = 1_{\Div(M)}$ and that $\mu \eta_{\Div(M)} = 1_{\Div(M)}$. Fix a partition $\Omega$ and let $(x_{\Gamma})_{\Gamma \leq \Omega} \in \Div(M)(\Omega)$, we may calculate that
\begin{equation*}
\begin{split}
\Div(\eta_M)_\Omega((x_{\Gamma})_{\Gamma \leq \Omega}) &= (q_{\Gamma} \eta_{M,\Gamma}(x_{\Gamma}'))_{\Gamma \leq \Omega} \\ 
&= (q_{\Gamma}(q_{\Theta}M^*(\Theta \leq \Gamma)x_{\Gamma}')_{\Theta \leq \Gamma})_{\Gamma \leq \Omega},
\end{split}
\end{equation*}
where $x_{\Gamma}'$ is a choice of lift of $x_{\Gamma}$ to $M(\Gamma)$. Now
\begin{equation*}
\begin{split}
\mu_\Omega (q_{\Gamma}(q_{\Theta}M^*(\Theta \leq \Gamma)x_{\Gamma}')_{\Theta \leq \Gamma})_{\Gamma \leq \Omega} &= (\pi_{\Gamma}(q_{\Theta}M^*(\Theta \leq \Gamma)x_{\Gamma}')_{\Theta \leq \Gamma})_{\Gamma \leq \Omega} \\
&= (q_{\Gamma}x_{\Gamma}')_{\Gamma \leq \Omega} \\
&= (x_{\Gamma})_{\Gamma \leq \Omega}.
\end{split}
\end{equation*}
Note that, in the above, we have two distinct quotient maps
\[
q_{\Gamma} \colon \Div(M)(\Gamma) \to \overline{\Div(M)}(\Gamma) \text{ and } q_{\Theta} \colon M(\Theta) \to \bar{M}(\Theta).
\]

Similarly, we have
\begin{equation*}
\begin{split}
\mu_\Omega \eta_{\Div(M), \Omega}((x_\Gamma)_{\Gamma \leq \Omega}) &= \mu_{\Omega}(q_{\Gamma} \Div(M)^*(\Gamma \leq \Omega) (x_{\Gamma}))_{\Gamma \leq \Omega} \\
&= \mu_\Omega(q_{\Gamma} (x_{\Theta})_{\Theta \leq \Gamma})_{\Gamma \leq \Omega} \\
&= (\pi_{\Gamma} (x_{\Theta})_{\Theta \leq \Gamma})_{\Gamma \leq \Omega} \\
&= (x_{\Gamma})_{\Gamma \leq \Omega}.
\end{split}
\end{equation*}
\end{proof}

Fix a partition $\Omega$. We conclude this section with a further description of $\Div(M)(\Omega)$. For $\Lambda \leq \Omega$, recall that $W_{\Sigma_{\Omega}}(\Sigma_{\Lambda})$ is the Weyl group of $\Sigma_{\Lambda}$ in $\Sigma_{\Omega}$. 

We establish some notation that will be useful later. Let $z \in \bar{M}(\Lambda)^{W_{\Sigma_{\Omega}}(\Sigma_{\Lambda})}$. Define $\Sigma_{\Omega} z \in \Div(M)(\Omega)$ by setting, for $\Phi \leq \Omega$, 
\begin{equation} \label{eq:weyl}
(\Sigma_{\Omega} z)_{\Phi} = \left\{
\begin{array}{rl}
\bar{M}_*(\beta)z & \text{for } \beta \in \Sigma_{\Omega} \text{ with } \beta \Lambda = \Phi,\\
0 & \text{otherwise.}
\end{array} \right.
\end{equation}
We see that $\Sigma_{\Omega} z$ is a well-defined element of $\bigoplus_{\Phi \leq \Omega} \bar{M}(\Phi)$ since if $\beta \Lambda = \Phi$ and $\beta' \Lambda = \Phi$, then $(\beta')^{-1} \beta \Lambda = \Lambda$ so $(\beta')^{-1} \beta \in W_{\Sigma_{\Omega}}(\Sigma_{\Lambda})$ and $z$ is fixed by the Weyl group. We see that $\Sigma_{\Omega}z \in \Div(M)(\Omega)$ as, for $\sigma \in \Sigma_{\Omega}$, we have
\[
(\sigma \Sigma_{\Omega}z)_{\Phi} = \bar{M}_*(\sigma)(\Sigma_{\Omega}z)_{\sigma^{-1}\Phi} 
\]
and
\begin{equation*}
\bar{M}_*(\sigma)(\Sigma_{\Omega}z)_{\sigma^{-1}\Phi}  = \left\{
\begin{array}{rl}
\bar{M}_*(\sigma)\bar{M}_*(\beta)z & \text{for } \beta \in \Sigma_{\Omega} \text{ with } \beta \Lambda = \sigma^{-1} \Phi,\\
0 & \text{otherwise.}
\end{array} \right.
\end{equation*}
But 
\[
\bar{M}_*(\sigma)\bar{M}_*(\beta)z = \bar{M}_*(\sigma \beta)z
\]
with $\sigma \beta \Lambda = \Phi$.

Since, given $x \in \Div(M)(\Omega)$, we have $x_{\Lambda} \in \bar{M}(\Lambda)^{W_{\Sigma_{\Omega}}(\Sigma_{\Lambda})}$ and this gives an inverse to the operation taking $z$ to $\Sigma_{\Omega}z$, we have proven the following proposition:
\begin{prop} \label{weylgroupfixed}
There is an isomorphism of abelian groups
\[
\Div(M)(\Omega) \cong \bigoplus_{[\Lambda] \in \{\Lambda \leq \Omega\}/\Sigma_{\Omega}} \bar{M}(\Lambda)^{W_{\Sigma_{\Omega}}(\Sigma_{\Lambda})},
\]
determined by a choice of subpartition $\Lambda \leq \Omega$ in each orbit for the action of $\Sigma_{\Omega}$ on the set of subpartitions of $\Omega$.
\end{prop}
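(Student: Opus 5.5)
The plan is to prove the isomorphism orbit by orbit, using the construction $z \mapsto \Sigma_\Omega z$ of \eqref{eq:weyl}. The $\Sigma_\Omega$-action on $\bigoplus_{\Phi \leq \Omega} \bar{M}(\Phi)$ permutes the summands according to the action on subpartitions. So the direct sum splits $\Sigma_\Omega$-equivariantly as $\bigoplus_{[\Lambda]} \bigoplus_{\Phi \in \Sigma_\Omega \Lambda} \bar{M}(\Phi)$. Taking invariants commutes with finite direct sums of subrepresentations, so $\Div(M)(\Omega)$ is the direct sum over orbits of $\big(\bigoplus_{\Phi \in \Sigma_\Omega\Lambda} \bar{M}(\Phi)\big)^{\Sigma_\Omega}$. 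It therefore suffices to identify each orbit term with $\bar{M}(\Lambda)^{W_{\Sigma_\Omega}(\Sigma_\Lambda)}$ for the chosen representative $\Lambda$.

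For a fixed orbit, this is the standard statement that invariants of an induced module are invariants of the stabilizer. The stabilizer of $\Lambda$ in $\Sigma_\Omega$ is $\{\beta : \beta\Lambda = \Lambda\} = N_{\Sigma_\Omega}(\Sigma_\Lambda)$, since $\Sigma_{\beta\Lambda} = \beta\Sigma_\Lambda\beta^{-1}$ and a Young subgroup determines its partition (its orbits are the blocks). This stabilizer acts on $\bar{M}(\Lambda)$ via $\bar{M}_*(\beta_\Lambda)$. Elements of $\Sigma_\Lambda$ are $2$-isomorphic to the identity, so they act trivially, and the action factors through $W_{\Sigma_\Omega}(\Sigma_\Lambda)$.

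I will define the forward map by $x \mapsto x_\Lambda$. Invariance of $x$ under the stabilizer, via $x_\Lambda = \bar{M}_*(\beta)x_{\beta^{-1}\Lambda}$, shows that $x_\Lambda$ is Weyl-fixed. The backward map is $z \mapsto \Sigma_\Omega z$, which the text already shows is well defined and $\Sigma_\Omega$-invariant. The composite $z \mapsto (\Sigma_\Omega z)_\Lambda = z$ is immediate, taking $\beta = e$. For the other composite, an invariant $x$ supported on the orbit satisfies $x_{\beta\Lambda} = \bar{M}_*(\beta)x_\Lambda$ by invariance, so $x = \Sigma_\Omega(x_\Lambda)$. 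Both maps are additive.

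Assembling the orbit pieces gives the stated isomorphism, and it visibly depends on the choice of representative $\Lambda$ in each orbit. There is no real obstacle. The only point needing care is checking that the action of the stabilizer on $\bar{M}(\Lambda)$ genuinely factors through the Weyl group, which uses the $2$-morphisms in $\cP$ as above, together with functoriality of $\bar{M}_*$ on isomorphisms.
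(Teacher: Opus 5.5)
Your proposal is correct and is essentially the paper's argument: the paper likewise defines $z \mapsto \Sigma_{\Omega} z$ via \eqref{eq:weyl}, checks that it is well defined and $\Sigma_\Omega$-invariant, and observes that $x \mapsto x_\Lambda$ is an inverse. You make explicit the steps the paper leaves implicit: the orbit-by-orbit splitting of the invariants, the identification of the stabilizer of $\Lambda$ with $N_{\Sigma_\Omega}(\Sigma_\Lambda)$, and the fact that $\Sigma_\Lambda$ acts trivially on $\bar{M}(\Lambda)$.
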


If $M$ is a partition functor, then so is the functor $M \otimes \Q$ defined by $(M \otimes \Q)(\Lambda) = M(\Lambda)\otimes \Q$.  

\begin{prop} \label{prop:divrational}
For every partition functor $M$, the unit of the monad induces a natural isomorphism of partition functors
\[
\eta \otimes \Q \colon M \otimes \Q \xrightarrow{\cong} \Div(M) \otimes \Q.
\]
Thus, after rationalization, $\Div$ is naturally isomorphic to the identity functor on the category of partition functors.
\end{prop}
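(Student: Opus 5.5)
Since $\eta$ is already a map of partition functors, natural in $M$ (\cref{lemma:unitMapForDivMonad}), and rationalization is exact, I only need to show that $\eta_\Omega\otimes\Q$ is a bijection for each partition $\Omega$. Rationalization commutes with quotients and with finite direct sums. It also commutes with invariants under the finite group $\Sigma_\Omega$, because $(-)^{\Sigma_\Omega}$ is exact on $\Q[\Sigma_\Omega]$-modules and the averaging idempotent is available. Hence
\[
\Div(M)(\Omega)\otimes\Q\cong\Div(M\otimes\Q)(\Omega),
\]
compatibly with $\eta$, and it suffices to treat the case where $M$ takes values in $\Q$-vector spaces. By \cref{weylgroupfixed}, $\Div(M)(\Omega)\cong\bigoplus_{[\Lambda]}\bar M(\Lambda)^{W_{\Sigma_\Omega}(\Sigma_\Lambda)}$, with the sum over orbit representatives $\Lambda\le\Omega$.

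I would argue by induction on the poset of subpartitions, filtering $M(\Omega)$ by transfers. For a $\Sigma_\Omega$-orbit of subpartitions, fix a representative $\Lambda$ and consider the composite
\[
\bar M(\Lambda)^{W}\xrightarrow{\ s\ } M(\Lambda)^W \xrightarrow{M_*(\Lambda\le\Omega)} M(\Omega),
\]
where $W=W_{\Sigma_\Omega}(\Sigma_\Lambda)$ and $s$ is a $W$-equivariant splitting of $q_\Lambda$, which exists rationally. Then I compute $\eta_\Omega M_*(\Lambda\le\Omega)$ using the double coset computation in the proof of \cref{lemma:unitMapForDivMonad}. Its $\Phi$-component vanishes unless $\Phi\le\sigma\Lambda$ for some $\sigma$. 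On the component $\Phi=\sigma\Lambda$, the only double cosets $[\sigma]$ with $\sigma\Lambda=\Phi$ lie in $N_{\Sigma_\Omega}(\Sigma_\Lambda)$. Each contributes the $W$-translate of $z$, so the value on that component is $|W|\cdot z$, up to the identification in \cref{eq:weyl}, when $z$ is $W$-invariant.

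Thus, with respect to the partial order ``$\Phi$ is conjugate to a refinement of $\Lambda$'' on orbits, the map $\eta_\Omega$ is triangular with invertible diagonal entries (multiplication by $|W|$, or by the index of the stabilizer) on the images of these transfer maps. This gives surjectivity of $\eta_\Omega\otimes\Q$. I would construct preimages by downward induction, starting from the coarsest orbit $\{\Omega\}$: a preimage of $x_\Omega$ is any lift, and I then correct the finer components by transfers divided by $|W|$.

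For injectivity I would use the same triangularity in the other direction. The claim is that $M(\Omega)$ is spanned rationally by the images $M_*(\Lambda\le\Omega)(s(\bar M(\Lambda)^W))$ over orbit representatives. This follows by induction: $M(\Omega)=s(\bar M(\Omega))+I_\Omega$, where $I_\Omega$ is spanned by transfers from proper refinements, and by induction on size, together with transitivity of transfers, those in turn come from the pieces $s(\bar M(\Lambda))$. One can replace $z$ by its $W$-average, using that transfer along an automorphism in the normalizer is absorbed by conjugation. This gives $\dim$-free surjectivity of $\bigoplus_{[\Lambda]}\bar M(\Lambda)^W\to M(\Omega)$. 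Composing with $\eta_\Omega$ yields a triangular map with invertible diagonal, hence an isomorphism. Therefore the spanning map is injective, and $\eta_\Omega$ is an isomorphism.

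The main obstacle is the precise diagonal computation: identifying the double cosets $\Sigma_\Lambda\backslash\Sigma_\Omega/\Sigma_\Lambda$ with $\sigma\Lambda=\Lambda$ with $W$, so that the diagonal coefficient is exactly $|W|$ and is nonzero. The off-diagonal terms must land only in strictly finer orbits, which holds because $\Phi\le\sigma\Lambda$ with $\Phi\ne\sigma\Lambda$ is strictly finer. This is where I would be most careful.
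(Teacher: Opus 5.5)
Your proposal is correct and follows essentially the same route as the paper: reduce to $\Q$-vector spaces, filter by images of transfers from $\Sigma_\Omega$-orbit representatives ordered by refinement, observe that $\eta_\Omega$ is triangular for this order, and identify the diagonal term with the norm $\bar M(\Lambda)_{W}\to\bar M(\Lambda)^{W}$, which is rationally invertible and restricts on invariants to your multiplication by $|W|$. The paper works with the filtration $F_iM(\Omega)$ and its associated graded rather than your $W$-equivariant splitting $s$ and spanning map $T$, but these are the same argument in split versus unsplit form.
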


\begin{proof}[Proof sketch]
Rationalization commutes with the quotients, finite direct sums, and finite-group fixed points used to define $\Div$, so it suffices to assume that $M$ takes values in $\Q$-vector spaces.

Fix a partition $\Omega$. Choose representatives
\[
\Lambda_1,\ldots,\Lambda_k
\]
for the $\Sigma_\Omega$-orbits of subpartitions of $\Omega$, ordered so that if $\Lambda_i \leq \sigma\Lambda_j$ for some $\sigma \in \Sigma_\Omega$, then $i \leq j$. We may take $\Lambda_k=\Omega$. By \cref{weylgroupfixed}, there is an isomorphism
\[
\Div(M)(\Omega)
\cong
\bigoplus_{i=1}^k
\bar{M}(\Lambda_i)^{W_{\Sigma_\Omega}(\Sigma_{\Lambda_i})}.
\]

Filter $M(\Omega)$ by
\[
F_i M(\Omega)
=
\sum_{j\leq i}
\im\big(M_*(\Lambda_j\leq\Omega)\big)
\]
and filter $\Div(M)(\Omega)$ by the first $i$ summands in the decomposition above. The double coset formula implies that $\eta_\Omega$ preserves these filtrations. Indeed, the $\Lambda_\ell$-component of
\[
\eta_\Omega M_*(\Lambda_j\leq\Omega)(x)
\]
vanishes unless $\Lambda_\ell\leq\sigma\Lambda_j$ for some $\sigma\in\Sigma_\Omega$, and hence unless $\ell\leq j$.

Transfer induces a surjection
\[
\bar{M}(\Lambda_i)
\longrightarrow
F_iM(\Omega)/F_{i-1}M(\Omega).
\]
This factors through the coinvariants $\bar{M}(\Lambda_i)_{W_{\Sigma_\Omega}(\Sigma_{\Lambda_i})}$. The composite of this map with the map induced by $\eta_\Omega$ on the associated graded is the norm map
\[
\bar{M}(\Lambda_i)_{W_{\Sigma_\Omega}(\Sigma_{\Lambda_i})}
\longrightarrow
\bar{M}(\Lambda_i)^{W_{\Sigma_\Omega}(\Sigma_{\Lambda_i})}.
\]
This follows from the double coset formula: after passing to $\bar{M}(\Lambda_i)$, the only terms that survive in
\[
M^*(\Lambda_i\leq\Omega)M_*(\Lambda_i\leq\Omega)
\]
are those indexed by elements of the Weyl group.

Since $M$ takes values in $\Q$-vector spaces, the norm map from coinvariants to invariants is an isomorphism. It follows that $\eta_\Omega$ induces an isomorphism on each associated graded piece. The filtrations are finite, so
\[
\eta_\Omega\colon M(\Omega)\xrightarrow{\cong}\Div(M)(\Omega)
\]
is an isomorphism. Since $\eta$ is a natural map of partition functors by \cref{lemma:unitMapForDivMonad}, the result follows.
\end{proof}

\begin{example} \label{ex:monadconst}
Note that $\Div$ is not an idempotent monad. This is illustrated by the case of the dual constant partition functor $C^A$ for which $\Div(C^A) = C_A$, the constant partition functor, and $\Div(C_A)$ often has torsion. See \cref{divconstant} for the details of this.

Let $\Gamma \leq \Lambda$ be the discrete partition of $\Lambda$. The map $\eta(\Lambda) \colon C^A(\Lambda) \to \Div(C^A)(\Lambda) \cong C_A(\Lambda)$ is induced by the restriction map $(C^A)^*(\Gamma \leq \Lambda) \colon C^A(\Lambda) \to C^A(\Gamma)$ which is multiplication by $|\Sigma_{\Lambda}|$. Thus the map of partition functors $C^A \to C_A$ given by $\eta$ at the partition $\Lambda$ is the map $A \to A$ given by multiplication by $|\Sigma_{\Lambda}|$.
\end{example}

\section{Lax symmetric monoidal partition functors}
Recall that the $(2,1)$-category of partitions has a symmetric monoidal structure given by $\sqcup$ and that the category of abelian groups has a symmetric monoidal structure given by $\otimes$. We say that a partition functor $R$ is lax symmetric monoidal if $R$ is equipped with a map of abelian groups $\eta_{0} \colon \Z \to R(0)$ and maps
\[
m_{\Lambda, \Omega} \colon R(\Lambda) \otimes R(\Omega) \to R(\Lambda \sqcup \Omega).
\]
that are natural with respect to pairs of restriction maps as well as pairs of transfer maps and satisfy the appropriate associativity, unitality, and symmetry identities. In particular, given maps of partitions $f \colon \Gamma \to \Lambda$ and $g \colon \Theta \to \Omega$, there are commutative squares
\[
\xymatrix{R(\Gamma) \otimes R(\Theta) \ar[r]^{m_{\Gamma,\Theta}} \ar[d]_{R_*(f) \otimes R_*(g)} & R(\Gamma \sqcup \Theta) \ar[d]^{R_*(f \sqcup g)} \\ R(\Lambda) \otimes R(\Omega) \ar[r] & R(\Lambda \sqcup \Omega)}
\hspace*{1cm}
\xymatrix{R(\Lambda) \otimes R(\Omega) \ar[r] \ar[d]_{R^*(f) \otimes R^*(g)} & R(\Lambda \sqcup \Omega) \ar[d]^{R^*(f \sqcup g)} \\ R(\Gamma) \otimes R(\Theta) \ar[r] & R(\Gamma \sqcup \Theta)}
\]
as well as a commutative diagram
\[
\begin{tikzcd}[column sep=2cm]
    R(\Lambda) \otimes R(\Omega)
        \ar[r, "\tau_{R(\Lambda), R(\Omega)}", "\cong"']
        \ar[d]
        &
    R(\Omega) \otimes R(\Lambda)
        \ar[d]
        \\
    R(\Lambda \sqcup \Omega) 
        \ar[r, "R(\tau_{\Lambda,\Omega})", "\cong"']
        & 
    R(\Omega \sqcup \Lambda).
\end{tikzcd}
\]


\begin{lemma} \label{lem:mbar}
Let $R$ be a lax symmetric monoidal partition functor. The lax symmetric monoidal structure map $m$ induces a lax symmetric monoidal structure $\bar{m}$ on $\bar{R}$ that is natural with respect to pairs of transfer maps.
\end{lemma}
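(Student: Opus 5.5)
The key step is to show that $m_{\Lambda,\Omega}$ carries $I_\Lambda \otimes R(\Omega) + R(\Lambda)\otimes I_\Omega$ into $I_{\Lambda\sqcup\Omega}$. Once this holds, $m$ descends to a map
\[
\bar{m}_{\Lambda,\Omega}\colon \bar R(\Lambda)\otimes\bar R(\Omega)\cong \big(R(\Lambda)\otimes R(\Omega)\big)\big/\big(I_\Lambda\otimes R(\Omega)+R(\Lambda)\otimes I_\Omega\big)\longrightarrow \bar R(\Lambda\sqcup\Omega),
\]
characterized by $\bar m_{\Lambda,\Omega}(q_\Lambda\otimes q_\Omega)=q_{\Lambda\sqcup\Omega}\,m_{\Lambda,\Omega}$. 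The isomorphism on the left is right exactness of $\otimes$. We take the unit of $\bar R$ to be $q_0\eta_0$. Note that $\u 0$ has no proper refinements, so $\bar R(0)=R(0)$.

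For the containment, take $\Gamma<\Lambda$, $a\in R(\Gamma)$ and $b\in R(\Omega)$. Naturality of $m$ with respect to the pair of transfers $(\Gamma\leq\Lambda,\ \id_\Omega)$, together with functoriality ($R_*(\id)=\id$), gives
\[
m_{\Lambda,\Omega}\big(R_*(\Gamma\leq\Lambda)(a)\otimes b\big)=R_*(\Gamma\sqcup\Omega\leq\Lambda\sqcup\Omega)\,m_{\Gamma,\Omega}(a\otimes b).
\]
Since $\Gamma\sqcup\Omega$ is a strict refinement of $\Lambda\sqcup\Omega$, the right side lies in $I_{\Lambda\sqcup\Omega}$. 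The second factor is handled symmetrically.

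Next, naturality of $\bar m$ with respect to pairs of transfers. Transfers on $\bar R$ are only defined through $\bar R_*(f)$ with $q_\Lambda R_*(f)=\bar R_*(f)q_\Gamma$, so we check the square after precomposing with the surjection $q_\Gamma\otimes q_\Theta$. Take $f\colon\Gamma\to\Lambda$ and $g\colon\Theta\to\Omega$. Then both $\bar m\,(\bar R_*(f)\otimes\bar R_*(g))(q\otimes q)$ and $\bar R_*(f\sqcup g)\,\bar m\,(q\otimes q)$ reduce to $q\circ m\circ(R_*f\otimes R_*g)=q\circ R_*(f\sqcup g)\circ m$, by the transfer naturality of $m$.

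The associativity, unit and symmetry identities for $\bar m$ follow the same way. Each identity holds for $m$ after composing with quotient maps, and the relevant tensor products of quotient maps are surjective, so each identity descends. For symmetry we also need $\bar R(\tau_{\Lambda,\Omega})$. Since $\tau$ is an isomorphism of partitions, $\bar R_*(\tau)$ makes sense, and it satisfies $q\,R_*(\tau)=\bar R_*(\tau)\,q$. The only point needing care is the first one: the containment uses strictness of $\Gamma\sqcup\Omega<\Lambda\sqcup\Omega$. That is immediate, since $\sqcup$ preserves strict refinement in either variable.

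We do not claim compatibility with restrictions, since $\bar R^*$ is only defined along isomorphisms. Along isomorphisms it agrees with $\bar R_*$ of the inverse and is therefore covered by the transfer case.
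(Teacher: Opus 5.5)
Your proposal is correct and follows the paper's own proof. Both arguments rest on the same step: naturality of $m$ along the pair of transfers $(\Gamma<\Lambda,\ \id_\Omega)$ sends $I_\Lambda\otimes R(\Omega)+R(\Lambda)\otimes I_\Omega$ into $I_{\Lambda\sqcup\Omega}$, so $m$ descends through the right-exactness identification of $\bar R(\Lambda)\otimes\bar R(\Omega)$, and $\bar m$ inherits transfer naturality from $m$. You also spell out how the associativity, unit, and symmetry identities descend via surjectivity of the tensor products of quotient maps, which the paper leaves implicit.
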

\begin{proof}
Let  $\Gamma$ and $\Lambda$ be partitions. First we must produce maps
\[
\bar{m}_{\Gamma, \Lambda} \colon \bar{R}(\Gamma) \otimes \bar{R}(\Lambda) \to \bar{R}(\Gamma \sqcup \Lambda).
\]
Recall that there is an isomorphism of abelian groups
\[
R(\Gamma)/I_{\Gamma} \otimes R(\Lambda)/I_{\Lambda} \cong (R(\Gamma) \otimes R(\Lambda))/(I_{\Gamma} \otimes R(\Lambda) + R(\Gamma) \otimes I_{\Lambda}).
\]
Thus, given $m_{\Gamma, \Lambda} \colon R(\Gamma) \otimes R(\Lambda) \to R(\Gamma \sqcup \Lambda)$, to produce the desired map, we must show that the subgroup $I_{\Gamma} \otimes R(\Lambda) + R(\Gamma) \otimes I_{\Lambda}$ lands in $I_{\Gamma \sqcup \Lambda}$. But this follows from the commutative diagram (as well as the symmetry in $\Gamma$ and $\Lambda$), for any $\Phi \leq \Gamma$,
\[
\xymatrix{R(\Phi) \otimes R(\Lambda) \ar[r] \ar[d] & R(\Phi \sqcup \Lambda) \ar[d] \\ R(\Gamma) \otimes R(\Lambda) \ar[r] & R(\Gamma \sqcup \Lambda).}
\]
Thus we have an induced map
\[
\bar{m}_{\Gamma, \Lambda} \colon \bar{R}(\Gamma) \otimes \bar{R}(\Lambda) \to \bar{R}(\Gamma \sqcup \Lambda)
\]
given by
\begin{equation} \label{eq:barmdefeq}
\bar{m}_{\Gamma, \Lambda}(x \otimes y) = q_{\Gamma \sqcup \Lambda}m_{\Gamma, \Lambda}(x' \otimes y')
\end{equation}
for lifts $x'$ and $y'$ of $x$ and $y$, respectively.
Naturality with respect to pairs of transfer maps follows immediately from the compatibility of $\bar{m}$ with $m$ along transfer maps and the naturality of $m$ along pairs of transfer maps.
\end{proof}

\begin{remark}
In the above proof, we are making use of the fact that $\bar{R}$ is functorial in transfers. Recall from the beginning of \cref{sec:div} that this functor is quite degenerate. If $\Lambda \to \Omega$, is not an isomorphism then the transfer map $\bar{R}(\Lambda) \to \bar{R}(\Omega)$ is the zero map.
\end{remark}

\begin{prop} \label{prop:laxdiv}
If $R$ is a lax symmetric monoidal partition functor, then $\Div(R)$ is naturally a lax symmetric monoidal partition functor. That is, $\Div$ is an endofunctor on the category of lax symmetric monoidal partition functors.
\end{prop}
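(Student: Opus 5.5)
We construct the structure maps of $\Div(R)$ directly from the maps $\bar m$ of \cref{lem:mbar}. The key combinatorial input is that subpartitions of a disjoint union split: every $\Phi \leq \Lambda \sqcup \Omega$ of the form $\Gamma \sqcup \Theta$ has $\Gamma \leq \Lambda$ and $\Theta \leq \Omega$, and $\Sigma_{\Lambda \sqcup \Omega} = \Sigma_\Lambda \times \Sigma_\Omega$. Since every element of $\Sigma_{\Lambda\sqcup\Omega}$ preserves the two summands, every $\Phi \leq \Lambda \sqcup \Omega$ is in fact uniquely of the form $\Gamma \sqcup \Theta$. For $x \in \Div(R)(\Lambda)$ and $y \in \Div(R)(\Omega)$, we define $\Div(m)_{\Lambda,\Omega}(x \otimes y)$ componentwise by
\[
(\Div(m)_{\Lambda,\Omega}(x\otimes y))_{\Gamma\sqcup\Theta} = \bar m_{\Gamma,\Theta}(x_\Gamma \otimes y_\Theta).
\]
The unit is $\Z \xrightarrow{\eta_0} R(0) \to \Div(R)(0) = \bar R(0) = R(0)$. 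Invariance under $\Sigma_\Lambda\times\Sigma_\Omega$ follows because $\bar m$ is natural with respect to pairs of transfers along isomorphisms, which is part of \cref{lem:mbar}. Associativity, unitality and symmetry are inherited componentwise from those of $\bar m$, which in turn come from $m$ via lifts as in \eqref{eq:barmdefeq}; for symmetry we use the twist $\tau_{\Lambda,\Omega}$ carrying $\Gamma\sqcup\Theta$ to $\Theta\sqcup\Gamma$.

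Next we check naturality with respect to pairs of restrictions. For $f\colon \Lambda'\to\Lambda$ and $g\colon\Omega'\to\Omega$, the restriction $\Div(R)^*(f\sqcup g)$ acts at $\Gamma\sqcup\Theta$ by $\bar R^*((f\sqcup g)_{\Gamma\sqcup\Theta}) = \bar R^*(f_\Gamma \sqcup g_\Theta)$ applied to the component at $f\Gamma\sqcup g\Theta$. Since these are restrictions along isomorphisms, which equal transfers along the inverse isomorphisms, the required identity follows from naturality of $\bar m$ along pairs of transfers.

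Naturality with respect to pairs of transfers is the main obstacle. For $f\colon\Psi\to\Lambda$ and $g\colon\Xi\to\Omega$, we compare the component at $\Gamma\sqcup\Theta$ of $\Div(R)_*(f\sqcup g)(\Div(m)(x\otimes y))$ with $\bar m_{\Gamma,\Theta}\big((\Div(R)_*(f)x)_\Gamma\otimes(\Div(R)_*(g)y)_\Theta\big)$. The first is a sum over
\[
\Sigma_{\Gamma\sqcup\Theta}\backslash\Sigma_{\Lambda\sqcup\Omega}/\Sigma_{f\Psi\sqcup g\Xi},
\]
subject to the condition $\Gamma\sqcup\Theta\leq\sigma(f\Psi\sqcup g\Xi)$. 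Because all three groups are products, this double coset space is the product $\Sigma_\Gamma\backslash\Sigma_\Lambda/\Sigma_{f\Psi}\times\Sigma_\Theta\backslash\Sigma_\Omega/\Sigma_{g\Xi}$, and the condition splits as $\Gamma\leq\sigma_1 f\Psi$ and $\Theta\leq\sigma_2 g\Xi$. The summand indexed by $(\sigma_1,\sigma_2)$ is $\bar R_*(\sigma_1 f\sqcup\sigma_2 g)\,\bar m(x_{(\sigma_1 f)^{-1}\Gamma}\otimes y_{(\sigma_2 g)^{-1}\Theta})$. By naturality of $\bar m$ along pairs of isomorphisms this equals $\bar m(\bar R_*(\sigma_1 f)x_{\dots}\otimes\bar R_*(\sigma_2 g)y_{\dots})$, and bilinearity of $\bar m$ then turns the double sum into the product of the two sums, as required. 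The care needed here is in the bookkeeping: one must verify that the product decomposition of double cosets is compatible with the chosen representatives, and this is where any subtlety would appear.

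Finally, for a map $\phi\colon R\to S$ of lax symmetric monoidal partition functors, the induced maps $\bar R\to\bar S$ commute with $\bar m$ by construction. Hence $\Div(\phi)$ respects $\Div(m)$ componentwise and preserves the unit, which gives functoriality of $\Div$ on the category of lax symmetric monoidal partition functors.
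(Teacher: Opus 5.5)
Your proposal is correct and follows the paper's proof essentially step for step. The shared steps are the componentwise definition via $\bar m_{\Gamma,\Theta}$, $\Sigma_\Lambda\times\Sigma_\Omega$-invariance from naturality of $m$ along pairs of isomorphisms, restriction-naturality reduced to the isomorphism case, and transfer-naturality via the splitting $\Sigma_{\Gamma\sqcup\Theta}\backslash\Sigma_{\Lambda\sqcup\Omega}/\Sigma_{f\Psi\sqcup g\Xi}\cong \Sigma_\Gamma\backslash\Sigma_\Lambda/\Sigma_{f\Psi}\times\Sigma_\Theta\backslash\Sigma_\Omega/\Sigma_{g\Xi}$. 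The representative bookkeeping you flag is not an issue, since it is already covered by the well-definedness of the transfer formula.

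One small correction: every $\Phi\leq\Lambda\sqcup\Omega$ splits uniquely as $\Gamma\sqcup\Theta$ because each block of $\Phi$ lies in a block of $\Lambda\sqcup\Omega$, and hence in a single summand. It does not follow from the fact that elements of $\Sigma_{\Lambda\sqcup\Omega}$ preserve the summands.
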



\begin{proof}
Given partitions $\Omega$ and $\Theta$, we wish to produce a natural transformation
\[
\Div(m)_{\Omega, \Theta} \colon \Div(R)(\Omega) \otimes \Div(R)(\Theta) \to \Div(R)(\Omega \sqcup \Theta).
\]

Taking the direct sum of the maps from \cref{lem:mbar} over $\Gamma \leq \Omega$ and $\Lambda \leq \Theta$ gives a map
\[
M_{\Omega, \Theta} \colon \Big(\bigoplus_{\Gamma \leq \Omega} R(\Gamma)/I_{\Gamma}\Big) \otimes \Big( \bigoplus_{\Lambda \leq \Theta} R(\Lambda)/I_{\Lambda}\Big) \to \bigoplus_{\Gamma \sqcup \Lambda \leq \Omega  \sqcup \Theta} R(\Gamma \sqcup \Lambda)/I_{\Gamma \sqcup \Lambda}
\]
defined by
\[
(M_{\Omega, \Theta}((x_{\Gamma})\otimes(y_{\Lambda})))_{\Phi \sqcup \Psi} = \bar{m}_{\Phi, \Psi}(x_{\Phi} \otimes y_{\Psi}).
\]
We check that this map is $\Sigma_{\Omega} \times \Sigma_{\Theta}$-equivariant. Given $\omega \in \Sigma_{\Omega}$ and $\theta \in \Sigma_{\Theta}$, we have
\[
(\omega \otimes \theta)((x_{\Gamma})\otimes(y_{\Lambda})) = (\bar{R}_*(\omega)x_{\omega^{-1}\Gamma}) \otimes (\bar{R}_*(\theta)y_{\theta^{-1}\Lambda}).
\]
To check equivariance, we must understand
\[
(\bar{m}_{\Gamma,\Lambda}(\bar{R}_*(\omega)x_{\omega^{-1}\Gamma} \otimes \bar{R}_*(\theta)y_{\theta^{-1}\Lambda}))_{\Gamma \sqcup \Lambda}.
\]
But the naturality of $m$ implies that 
\[
\bar{m}_{\Gamma,\Lambda}(\bar{R}_*(\omega)x_{\omega^{-1}\Gamma} \otimes \bar{R}_*(\theta)y_{\theta^{-1}\Lambda}) = \bar{R}_*(\omega \sqcup \theta)\bar{m}_{\omega^{-1}\Gamma, \theta^{-1}\Lambda}(x_{\omega^{-1}\Gamma} \otimes y_{\theta^{-1}\Lambda}),
\]
and this implies that $M_{\Omega,\Theta}$ is equivariant. Taking $\Sigma_{\Omega} \times \Sigma_{\Theta}$-fixed points and precomposing with the canonical map 
\[
\Div(R)(\Omega) \otimes \Div(R)(\Theta) \to \bigg ( \bigoplus_{\Gamma \leq \Omega} R(\Gamma)/I_{\Gamma} \otimes \bigoplus_{\Lambda \leq \Theta} R(\Lambda)/I_{\Lambda} \bigg )^{\Sigma_{\Omega} \times \Sigma_{\Theta}}
\]
gives $\Div(m)_{\Omega,\Theta}$.

Now we must check the naturality of $\Div(m)$ with respect to pairs of transfer maps and pairs of restriction maps. Naturality with respect to pairs of restriction maps is straight-forward. For pairs of transfer maps we must unwind the definitions: 

Let $f \colon \Gamma \to \Omega$ and $g \colon \Lambda \to \Theta$ be maps of partitions. We wish to show that the diagram
\[
\xymatrix{\Div(R)(\Gamma) \otimes \Div(R)(\Lambda) \ar[rr]^-{\Div(m)_{\Gamma,\Lambda}} \ar[d]_{\Div(R)_*(f)\otimes \Div(R)_*(g)} && \Div(R)(\Gamma \sqcup \Lambda) \ar[d] \\ \Div(R)(\Omega) \otimes \Div(R)(\Theta) \ar[rr]^-{\Div(m)_{\Omega,\Theta}} && \Div(R)(\Omega \sqcup \Theta)}
\]
commutes. Let $x \in \Div(R)(\Gamma)$, $y \in \Div(R)(\Lambda)$, $\Delta \leq \Omega$, and $\Psi \leq \Theta$. Following $x \otimes y$ around the bottom of the diagram and evaluating at $\Delta \sqcup \Psi$ gives
\begin{equation*}
\begin{split}
& \bar{m}_{\Delta,\Psi}\Big ( \Big (\sum_{\substack{[\sigma] \in \Sigma_{\Delta} \backslash \Sigma_{\Omega} / \Sigma_{f\Gamma} \\ \Delta \leq \sigma f \Gamma}} \bar{R}_*(\sigma f)(x_{(\sigma f)^{-1} \Delta}) \Big ) \otimes \Big (\sum_{\substack{[\sigma'] \in \Sigma_{\Psi} \backslash \Sigma_{\Theta} / \Sigma_{g\Lambda} \\ \Psi \leq \sigma' g \Lambda}} \bar{R}_*(\sigma' g)(y_{(\sigma' g)^{-1} \Psi}) \Big ) \Big )  \\ &=  \sum_{\substack{[\sigma] \in \Sigma_{\Delta} \backslash \Sigma_{\Omega} / \Sigma_{f\Gamma} \\ [\sigma'] \in \Sigma_{\Psi} \backslash \Sigma_{\Theta} / \Sigma_{g\Lambda} \\ \Delta \leq \sigma f \Gamma, \Psi \leq \sigma' g \Lambda}}  \bar{m}_{\Delta,\Psi} \Big ( \bar{R}_*(\sigma f)(x_{(\sigma f)^{-1} \Delta}) \otimes  \bar{R}_*(\sigma' g)(y_{(\sigma' g)^{-1} \Psi})\Big). 
\end{split}
\end{equation*}
On the other hand, following $x \otimes y$ around the top of the diagram and evaluating at $\Delta \sqcup \Psi$ gives
\[
\sum_{\substack{[\sigma \times \sigma'] \in \Sigma_{\Delta \sqcup \Psi} \backslash \Sigma_{\Omega \sqcup \Theta} / \Sigma_{f\Gamma \sqcup g\Lambda} \\ \Delta \sqcup \Psi \leq \sigma f \Gamma \sqcup \sigma' g \Lambda}}  \bar{R}_*(\sigma f \sqcup \sigma' g) \bar{m}_{(\sigma f)^{-1} \Delta, (\sigma' g)^{-1} \Psi}(x_{(\sigma f)^{-1} \Delta} \otimes y_{(\sigma' g)^{-1} \Psi}).
\]
These agree as there is a canonical bijection
\[
\Sigma_{\Delta \sqcup \Psi} \backslash \Sigma_{\Omega \sqcup \Theta} / \Sigma_{f\Gamma \sqcup g\Lambda} \cong \Sigma_{\Delta} \backslash \Sigma_{\Omega} / \Sigma_{f\Gamma} \times \Sigma_{\Psi} \backslash \Sigma_{\Theta} / \Sigma_{g\Lambda}
\]
and $\bar{m}$ is natural in pairs of transfer maps by \cref{lem:mbar}. 

Using these formulas, it is straightforward, though tedious, to finish checking that $\Div(R)$ together with $\Div(m)$ is lax symmetric monoidal.
\end{proof}

\begin{prop}
The endofunctor $\Div$ is a monad on the category of lax symmetric monoidal partition functors.
\end{prop}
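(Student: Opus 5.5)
Since \cref{thm:divmonad} already makes $(\Div,\eta,\mu)$ a monad on partition functors, and \cref{prop:laxdiv} makes $\Div$ an endofunctor on lax symmetric monoidal partition functors, the monad identities ($\mu\circ\Div(\mu)=\mu\circ\mu_{\Div}$ and the two unit laws) hold levelwise as equalities of maps of abelian groups. They therefore hold automatically in the lax symmetric monoidal category. Two things remain to check. First, $\eta_R\colon R\to\Div(R)$ and $\mu_R\colon\Div(\Div(R))\to\Div(R)$ must be maps of lax symmetric monoidal partition functors, i.e. compatible with the units $\Z\to R(0)$ and with the external products $m$ and $\Div(m)$. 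Second, for a map $\phi$ of lax symmetric monoidal partition functors, $\Div(\phi)$ must again be such a map. This last point is part of \cref{prop:laxdiv}: it follows because $\bar{\phi}$ commutes with $\bar m$ componentwise, by the definition \eqref{eq:barmdefeq} and the fact that $\phi$ commutes with $m$.

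\emph{Unit.} For $z\in R(\Lambda)$, $w\in R(\Omega)$ and a subpartition of $\Lambda\sqcup\Omega$ of the form $\Gamma\sqcup\Psi$, I will compare the two components:
\[
q_{\Gamma\sqcup\Psi}R^*(\Gamma\sqcup\Psi\leq\Lambda\sqcup\Omega)\,m_{\Lambda,\Omega}(z\otimes w)
=q_{\Gamma\sqcup\Psi}\,m_{\Gamma,\Psi}\bigl(R^*(\Gamma\leq\Lambda)z\otimes R^*(\Psi\leq\Omega)w\bigr),
\]
which uses naturality of $m$ in pairs of restrictions, and
\[
\bar m_{\Gamma,\Psi}\bigl(q_\Gamma R^*(\Gamma\leq\Lambda)z\otimes q_\Psi R^*(\Psi\leq\Omega)w\bigr).
\]
By \eqref{eq:barmdefeq} these agree.

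The one subtle point, and the main obstacle, is that a subpartition of $\Lambda\sqcup\Omega$ is always of the form $\Gamma\sqcup\Psi$. The underlying set of $\Lambda\sqcup\Omega$ is $X\amalg Y$, and no block of a refinement can meet both summands. So $\Div(R)(\Lambda\sqcup\Omega)$ is indexed by pairs $(\Gamma,\Psi)$. I need to confirm that in the proof of \cref{prop:laxdiv} the map $\Div(m)$ really lands in $\Sigma_{\Lambda\sqcup\Omega}=\Sigma_\Lambda\times\Sigma_\Omega$ invariants with these indices. Given that, the two components above match, so $\eta$ is multiplicative. Compatibility with units is the case $\Lambda=\Omega=\u 0$, where $\Div(R)(0)=\bar R(0)=R(0)$ since $\u 0$ has no proper refinements.

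\emph{Multiplication.} The map $\mu$ just extracts the diagonal components $x_{\Gamma,\Gamma}$. The $(\Gamma\sqcup\Psi)$-component of $\mu\,\Div(\Div(m))(x\otimes y)$ is the image under $\bar\pi_{\Gamma\sqcup\Psi}$ of $\overline{\Div(m)}_{\Gamma,\Psi}(x_\Gamma\otimes y_\Psi)$. I will check that $\pi_{\Gamma\sqcup\Psi}\circ\Div(m)_{\Gamma,\Psi}=\bar m_{\Gamma,\Psi}\circ(\pi_\Gamma\otimes\pi_\Psi)$. This holds directly from the formula $(M_{\Omega,\Theta}(x\otimes y))_{\Phi\sqcup\Psi}=\bar m(x_\Phi\otimes y_\Psi)$ at the top summand, and it descends to the quotients $\overline{\Div(R)}$. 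This gives $\mu\circ\Div(\Div(m))=\Div(m)\circ(\mu\otimes\mu)$. The unit and symmetry compatibilities for $\mu$ are immediate for the same reason.
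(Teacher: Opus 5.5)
Your proposal is correct and follows essentially the same route as the paper. The paper checks that $\eta$ is multiplicative componentwise, using naturality of $m$ in pairs of restrictions together with \eqref{eq:barmdefeq}. It checks $\mu$ by summing the squares which say that projection onto the top summand intertwines $\Div(m)$ with $\bar m$. Your additional remarks are points the paper leaves implicit: the monad identities holding levelwise, $\Div(\phi)$ being monoidal, and unit compatibility via $\Div(R)(0)=R(0)$.
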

\begin{proof}
We must show that if $R$ is a lax symmetric monoidal partition functor, then the natural transformations $\eta \colon R \to \Div(R)$ and $\mu \colon \Div(\Div(R)) \to \Div(R)$ are compatible with the lax symmetric monoidal structure.

We begin with $\eta$. We must show that
\[
\xymatrix{R(\Lambda) \otimes R(\Omega) \ar[rr]^-{\eta_\Lambda \otimes \eta_\Omega} \ar[d]_{m_{\Lambda,\Omega}} && \Div(R)(\Lambda) \otimes \Div(R)(\Omega) \ar[d]^{\Div(m)_{\Lambda,\Omega}} \\ R(\Lambda \sqcup \Omega) \ar[rr]^-{\eta_{\Lambda \sqcup \Omega}} && \Div(R)(\Lambda \sqcup \Omega)}
\]
commutes. Going around the top gives
\[
(\bar{m}_{\Gamma, \Theta}(q_{\Gamma}R^*(\Gamma \leq \Lambda)(x) \otimes q_{\Theta} R^*(\Theta \leq \Omega)(y)))_{\Gamma \sqcup \Theta \leq \Lambda \sqcup \Omega}.
\]
Going around the bottom gives
\[
(q_{\Gamma \sqcup \Theta} m_{\Gamma, \Theta}(R^*(\Gamma \leq \Lambda)x \otimes R^*(\Theta \leq \Omega)y))_{\Gamma \sqcup \Theta \leq \Lambda \sqcup \Omega}.
\]
These agree by \cref{eq:barmdefeq}

To see that $\mu$ is lax symmetric monoidal, we need to check that a similar diagram commutes. But this follows by taking the direct sum of the commutative diagrams
\[
\xymatrix{\Div(R)(\Lambda) \otimes \Div(R)(\Omega) \ar[r] \ar[d] & \bar{R}(\Lambda) \otimes \bar{R}(\Omega) \ar[d] \\ \Div(R)(\Lambda \sqcup \Omega) \ar[r] & \bar{R}(\Lambda \sqcup \Omega),}
\]
where the horizontal arrows are induced by projection onto the summand.

In both cases symmetry is straightforward.
\end{proof}

\section{Partition rings} \label{sec:partrings}

\begin{definition}
We say that a lax symmetric monoidal partition functor $R$ is a partition ring if 
\begin{enumerate}
    \item[(a)] $R$ takes values in commutative rings
    \item[(b)] restriction maps are maps of commutative rings
    \item[(c)] for any pair of partitions $\Gamma$, $\Lambda$, $m_{\Gamma, \Lambda}$ is a map of commutative rings
    \item[(d)] $R$ satisfies Frobenius reciprocity.
\end{enumerate}
A map of partition rings is required to preserve all of this structure.
\end{definition}

The Frobenius reciprocity condition means that the transfer map 
\[
R_*(\Gamma \leq \Lambda) \colon R(\Gamma) \to R(\Lambda)
\]
is a map of $R(\Lambda)$-modules for the $R(\Lambda)$-module structure on $R(\Gamma)$ through $R^*(\Gamma \leq \Lambda)$. This implies that $I_{\Lambda} \subseteq R(\Lambda)$ is an ideal. We will refer to it as the transfer ideal. It follows that $q_{\Lambda} \colon R(\Lambda) \to \bar{R}(\Lambda)$ is a ring map. Further, since the lax symmetric monoidal structure map $m$ for $R$ is a map of rings and $\bar{R}(\Lambda)$ is a ring, it follows that 
\[
\bar{m}_{\Lambda, \Omega} \colon \bar{R}(\Lambda) \otimes \bar{R}(\Omega) \to \bar{R}(\Lambda \sqcup \Omega)
\]
is a ring map.

\begin{prop}
If $R$ is a partition ring then $\Div(R)$ is a partition ring. Further, $\Div$ is a monad on the category of partition rings.
\end{prop}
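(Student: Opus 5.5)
We already know $\Div(R)$ is a lax symmetric monoidal partition functor (\cref{prop:laxdiv}) and that $\eta,\mu$ make $\Div$ a monad on lax symmetric monoidal partition functors. So it remains to check four things: (a)--(d) for $\Div(R)$, and that $\eta$ and $\mu$ are maps of partition rings.

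\emph{Ring structure.} Since $R$ is a partition ring, each $\bar{R}(\Gamma)$ is a commutative ring and $q_\Gamma$ is a ring map. Hence $\bigoplus_{\Gamma\leq\Lambda}\bar{R}(\Gamma)$ is a product ring. The action of $\sigma\in\Sigma_\Lambda$ is by the maps $\bar{R}_*(\sigma_\Gamma)=\bar{R}^*(\sigma_\Gamma^{-1})$, which are induced by restrictions along isomorphisms and are therefore ring isomorphisms. So $\Sigma_\Lambda$ acts by ring automorphisms, and $\Div(R)(\Lambda)$ is a subring.

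\emph{Conditions (b), (c) and the maps $\eta,\mu$.} The restriction $\Div(R)^*(f)$ is componentwise $\bar{R}^*(f_\Gamma)$ applied to the reindexed coordinate $x_{f\Gamma}$, which is a ring map. The map $\Div(m)$ is componentwise $\bar{m}_{\Phi,\Psi}$, a ring map, followed by the observation that every subpartition of $\Omega\sqcup\Theta$ has the form $\Phi\sqcup\Psi$. Hence $\Div(m)$ is multiplicative and unital on the product ring; one should check that the unit $(1,\dots,1)$ goes to $(1,\dots,1)$. The unit $\eta_\Lambda$ is the tuple of ring maps $q_\Gamma R^*(\Gamma\leq\Lambda)$, so it is a ring map. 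The multiplication $\mu$ is a product of projections $\bar\pi_\Gamma$, each a ring map because $\pi_\Gamma$ is projection onto a factor and $J_\Gamma$ is an ideal. The last point uses Frobenius for $\Div(R)$, which we prove next. Naturality of $\Div(g)$ for a map $g$ of partition rings is componentwise.

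\emph{Frobenius reciprocity.} This is the main obstacle. We must show, for $f\colon\Psi\to\Lambda$, $a\in\Div(R)(\Lambda)$ and $b\in\Div(R)(\Psi)$, that
\[
\Div(R)_*(f)\big(\Div(R)^*(f)(a)\,b\big)=a\,\Div(R)_*(f)(b).
\]
Evaluate both sides at $\Gamma\leq\Lambda$. The right side is
\[
a_\Gamma\cdot\sum_{[\sigma]}\bar{R}_*(\sigma f)\big(b_{(\sigma f)^{-1}\Gamma}\big),
\]
summed over $[\sigma]\in\Sigma_\Gamma\backslash\Sigma_\Lambda/\Sigma_{f\Psi}$ with $\Gamma\leq\sigma f\Psi$. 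The left side is
\[
\sum_{[\sigma]}\bar{R}_*(\sigma f)\Big(\bar{R}^*(f_{(\sigma f)^{-1}\Gamma})\big(a_{\sigma^{-1}\Gamma}\big)\cdot b_{(\sigma f)^{-1}\Gamma}\Big)
\]
over the same index set. Each transfer here is along an isomorphism and hence is a ring isomorphism with inverse the corresponding restriction. So the summand equals
\[
\bar{R}_*(\sigma)\big(a_{\sigma^{-1}\Gamma}\big)\cdot\bar{R}_*(\sigma f)\big(b_{(\sigma f)^{-1}\Gamma}\big).
\]
By $\Sigma_\Lambda$-invariance of $a$, we have $\bar{R}_*(\sigma)(a_{\sigma^{-1}\Gamma})=a_\Gamma$, so the two sides agree termwise. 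Thus the only inputs are the invariance of $a$ and the fact that transfers along isomorphisms are ring maps. The care needed is entirely in keeping the reindexing conventions straight.
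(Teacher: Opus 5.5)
Your proposal is correct and follows the paper's proof essentially step for step. The ring structure comes from $\Sigma_\Lambda$ acting by ring automorphisms on $\bigoplus_{\Gamma\leq\Lambda}\bar{R}(\Gamma)$, and restrictions, $\Div(m)$, $\eta$, $\mu$ are componentwise ring maps. Frobenius reciprocity is checked coordinatewise by rewriting each summand as $\bar{R}_*(\sigma)(a_{\sigma^{-1}\Gamma})\cdot\bar{R}_*(\sigma f)(b_{(\sigma f)^{-1}\Gamma})$ and invoking $\Sigma_\Lambda$-invariance of $a$. Your observation that $\mu$ being a ring map needs $J_\Gamma$ to be an ideal, and hence Frobenius for $\Div(R)$, is a point the paper leaves implicit.
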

\begin{proof}
First, note that $\Div(R)$ takes values in commutative rings due to the fact that $R$ takes values in commutative rings, $I_{\Lambda} \subseteq R(\Lambda)$ is an ideal, and $\Sigma_{\Lambda}$ acts on $\bigoplus_{\Gamma \leq \Lambda} \bar{R}(\Gamma)$ by ring automorphisms. 

The restriction maps for $\Div(R)$ are induced by projection onto a summand, so are ring maps. 

Further, $\eta$ and $\mu$ are level-wise maps of commutative rings. Since $\bar{m}_{\Gamma,\Lambda}$ is a map of commutative rings for all pairs of partitions, it follows that $\Div(m)_{\Gamma,\Lambda}$ is a map of commutative rings. 

Finally, we must check Frobenius reciprocity for $\Div(R)$. Let $f \colon \Lambda \to \Omega$ be a map of partitions, let $r \in \Div(R)(\Omega)$, and let $x \in \Div(R)(\Lambda)$. For $\Gamma \leq \Omega$, we have
\[
(r\Div(R)_*(f)(x))_\Gamma = \sum_{\substack{[\sigma] \in \Sigma_{\Gamma} \backslash \Sigma_{\Omega} / \Sigma_{f \Lambda} \\ \Gamma \leq \sigma f \Lambda}} r_{\Gamma} \bar{R}_*(\sigma f) x_{(\sigma f)^{-1} \Gamma}.
\]
On the other hand,
\begin{equation*}
\begin{split}
(\Div(R)_*(f)(\Div(R)^*(f)(r)x))_\Gamma &= \sum_{\substack{[\sigma] \in \Sigma_{\Gamma} \backslash \Sigma_{\Omega} / \Sigma_{f \Lambda} \\ \Gamma \leq \sigma f \Lambda}} \bar{R}_*(\sigma f) (\Div(R)^*(f)(r)x))_{(\sigma f)^{-1} \Gamma} \\
&= \sum_{\substack{[\sigma] \in \Sigma_{\Gamma} \backslash \Sigma_{\Omega} / \Sigma_{f \Lambda} \\ \Gamma \leq \sigma f \Lambda}} \bar{R}_*(\sigma f) \big ( \bar{R}^*(f_{(\sigma f)^{-1}\Gamma}) (r_{\sigma^{-1}\Gamma}) x_{(\sigma f)^{-1} \Gamma} \big ) \\
&= \sum_{\substack{[\sigma] \in \Sigma_{\Gamma} \backslash \Sigma_{\Omega} / \Sigma_{f \Lambda} \\ \Gamma \leq \sigma f \Lambda}} \bar{R}_*(\sigma f)(\bar{R}_*((f_{(\sigma f)^{-1}\Gamma})^{-1}) (r_{\sigma^{-1}\Gamma})) \bar{R}_*(\sigma f)(x_{(\sigma f)^{-1} \Gamma}) \\
&= \sum_{\substack{[\sigma] \in \Sigma_{\Gamma} \backslash \Sigma_{\Omega} / \Sigma_{f \Lambda} \\ \Gamma \leq \sigma f \Lambda}} \bar{R}_*(\sigma) (r_{\sigma^{-1}\Gamma}) \bar{R}_*(\sigma f)(x_{(\sigma f)^{-1} \Gamma}).
\end{split}
\end{equation*}
Now since $\sigma \in \Sigma_{\Omega}$ and $r \in \Div(R)(\Omega)$ is $\Sigma_{\Omega}$-invariant, we have 
\[
\bar{R}_*(\sigma) (r_{\sigma^{-1}\Gamma}) = r_{\Gamma}.
\]
\end{proof}

Recall that a global Green functor $S$ is a global functor that takes values in commutative rings and for which the restriction maps are ring maps and the transfer maps satisfy Frobenius reciprocity.  Global Green functors inherit a lax symmetric monoidal product
\[
m_{G,H} \colon S(G) \otimes S(H) \to S(G \times H)
\]
from this structure that is natural in pairs of restriction maps as well as pairs of transfer maps. The map $m_{G,H}$ is constructed using the universal property of the tensor product of commutative rings and the restrictions along the projections $G \times H \to G$ and $G \times H \to H$. It is easy to see that $m_{G,H}$ is natural in pairs of restriction maps. Pairs of transfer maps take a bit more work, it is worth describing a proof since there is a peculiar interaction between transfers and restrictions.

The double coset formula for global Green functors implies that, given injective maps $f \colon H \to H'$ and $g \colon G \to G'$, the homotopy pullback squares of groupoids
\begin{equation} \label{eq:homotopypbstrange}
    \begin{tikzcd}
        * \mmod (H \times G) 
            \ar[r]
            \ar[d] 
            & 
        * \mmod H 
            \ar[d] 
            & 
        * \mmod (H'\times G) 
            \ar[r]
            \ar[d] 
            &
        * \mmod G 
            \ar[d] 
            \\
        * \mmod (H' \times G) 
            \ar[r] 
            & 
        * \mmod H' 
            & 
        * \mmod (H' \times G') 
            \ar[r] 
            & 
        * \mmod G'
    \end{tikzcd}
\end{equation}
induce commutative diagrams 
\[
\xymatrix{S(H \times G) \ar[d]_{S_*(f \times \id)} & S(H) \otimes S(G) \ar[l]_{m_{H,G}} \ar[d]_{S_*(f)\otimes S_*(\id)} & S(H' \times G) \ar[d]_{S_*(\id \times g)} & S(H') \otimes S(G) \ar[l]_{m_{H', G}} \ar[d]^{S_*(\id) \otimes S_*(g)} \\ S(H' \times G) & S(H') \otimes S(G) \ar[l]^{m_{H',G}} & S(H' \times G') & S(H') \otimes S(G'). \ar[l]^{m_{H',G'}}}
\]
The horizontal arrows are base changed from the restriction maps using the $S(G)$-module  (respectively $S(H')$-module) structure on the target from Frobenius reciprocity. Composing these diagrams yields the fact that the monoidal structure on $S$ is natural in pairs of transfers.

\begin{prop} \label{globalgreentopartition}
Assume that $S$ is a global Green functor and let $G$ be a finite group, then 
\[
S \circ (G \times \Sigma_{(-)}) \text{ and } S \circ (G \wr \Sigma_{(-)})
\]
are both partition rings.
\end{prop}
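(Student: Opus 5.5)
By \cref{prop:partitionfromglobal}, both composites are already partition functors, so only the multiplicative structure needs to be built. The plan is to take every piece of it from $S$ along the functors $G \times \Sigma_{(-)}$ and $G \wr \Sigma_{(-)}$ from $\cP$ to $\Grp$.

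The levelwise data come directly from $S$. Each value $S(G \times \Sigma_\Lambda)$ or $S(G \wr \Sigma_\Lambda)$ is a commutative ring, and restrictions are ring maps because restrictions in $S$ are. Frobenius reciprocity along a map $f$ of partitions is Frobenius reciprocity in $S$ for the corresponding injective group homomorphism. For a refinement this homomorphism is a subgroup inclusion, and for an isomorphism it is an isomorphism, where transfer equals the inverse restriction and reciprocity is immediate.

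For the monoidal structure I would write down group homomorphisms comparing the disjoint union of partitions with products of groups. In the wreath case there is a canonical isomorphism
\[
G \wr \Sigma_{\Lambda} \times G \wr \Sigma_{\Omega} \cong G \wr \Sigma_{\Lambda \sqcup \Omega},
\]
since $G^{X \amalg Y} = G^X \times G^Y$ and $\Sigma_{\Lambda \sqcup \Omega} = \Sigma_\Lambda \times \Sigma_\Omega$. In the product case I would use the diagonal-type homomorphism
\[
d \colon G \times \Sigma_{\Lambda \sqcup \Omega} \to (G \times \Sigma_\Lambda) \times (G \times \Sigma_\Omega), \qquad (g,\sigma,\tau) \mapsto ((g,\sigma),(g,\tau)).
\]
I would then define $m_{\Lambda,\Omega}$ as $m^S$ followed by restriction along this homomorphism: along the isomorphism in the wreath case and along $d$ in the product case. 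The unit $\Z \to S(G)$ or $S(e)$ is the ring unit. Since $m^S$ is a ring map and restrictions are ring maps, each $m_{\Lambda,\Omega}$ is a ring map. Naturality in pairs of restrictions follows because these comparison homomorphisms are natural in $\Lambda$ and $\Omega$ as maps in $\Grp$. Associativity, unitality and symmetry follow from the corresponding properties of $m^S$ together with the evident identities among these homomorphisms, for example that $d$ is compatible with the twist.

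The main obstacle is naturality in pairs of transfers. The wreath case is fine: $m^S$ is natural in pairs of transfers by the discussion preceding the statement, and conjugating by an isomorphism preserves this. The product case has to pass through restriction along $d$. Here I would argue exactly as in \cref{prop:diagonal}, using the double coset formula in $S$. For refinements $\Lambda' \leq \Lambda$ and $\Omega' \leq \Omega$, the square formed by $d$ and the inclusions has the following properties:
\[
d(G \times \Sigma_{\Lambda\sqcup\Omega}) \backslash (G\times\Sigma_\Lambda)\times(G\times\Sigma_\Omega) / (G\times\Sigma_{\Lambda'})\times(G\times\Sigma_{\Omega'}) \cong *
\]
and the intersection of $d(G \times \Sigma_{\Lambda\sqcup\Omega})$ with $(G\times\Sigma_{\Lambda'})\times(G\times\Sigma_{\Omega'})$ is $d(G\times\Sigma_{\Lambda'\sqcup\Omega'})$. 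The first holds because the $G$-coordinates can be matched using the diagonal copy of $G$ and the index is carried by the $\Sigma$-factors. Given these two facts, the double coset formula yields
\[
\res_d \circ \tr = \tr \circ \res_d ,
\]
and combining this with transfer-naturality of $m^S$ gives transfer-naturality of $m$. Transfers along isomorphisms are inverse restrictions, so the general case reduces to refinements by \cref{lemma:factorization}.
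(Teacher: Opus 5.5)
Your proposal is correct and follows the paper's proof: your $m_{\Lambda,\Omega}$, namely $m^S$ followed by $\res_d$, is exactly the paper's $\res_{\alpha_\Lambda}\otimes\res_{\alpha_\Omega}$, and transfer-naturality is checked in the same two stages. The paper expresses the second stage as a homotopy pullback square of groupoids, and your explicit computation (a single double coset, with intersection $d(G\times\Sigma_{\Lambda'\sqcup\Omega'})$) is the same fact.
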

\begin{proof}
The only aspect of a partition ring that doesn't follow immediately from the definition of a global Green functor is the lax symmetric monoidal structure. 

Let $R = S \circ (G \times \Sigma_{(-)})$. Given partitions $\Lambda$ and $\Omega$, let 
\[
\alpha_\Lambda \colon G \times \Sigma_{\Lambda \sqcup \Omega} \cong G \times \Sigma_{\Lambda} \times \Sigma_{\Omega} \to G \times \Sigma_{\Lambda} \times G \times \Sigma_{\Omega} \to G \times \Sigma_{\Lambda}, 
\]
where the second map is induced by the diagonal on $G$ and the last map is the projection onto  $G \times \Sigma_{\Lambda}$. The universal property of the tensor product of commutative rings provides us with a map
\[
m_{\Lambda, \Omega} \colon R(\Lambda) \otimes R(\Omega) \xrightarrow{\res_{\alpha_{\Lambda}} \otimes \res_{\alpha_{\Omega}}} R(\Lambda \sqcup \Omega).
\]
The case of the wreath product is similar, but simpler, as there is an isomorphism
\[
G \wr \Sigma_{\Lambda \sqcup \Omega} \cong G \wr \Sigma_{\Lambda} \times G \wr \Sigma_{\Omega}.
\]
The map $m_{\Lambda, \Omega}$ factors through
\[
R(\Lambda) \otimes R(\Omega) \to S(G \times \Sigma_{\Lambda} \times G \times \Sigma_{\Omega}).
\]
which is natural in pairs of transfer maps by our discussion above. To check that the map 
\[
S(G \times \Sigma_{\Lambda} \times G \times \Sigma_{\Omega}) \to R(\Lambda \sqcup \Omega)
\]
is natural in pairs of transfer maps, assume that $f \colon \Lambda \to \Theta$ and $g \colon \Omega \to \Gamma$ are maps of partitions. The naturality then follows from the homotopy pullback square of groupoids
\[
\begin{tikzcd}
* \mmod (G \times \Sigma_{\Lambda} \times \Sigma_{\Omega}) \ar[r] \ar[d] & * \mmod (G \times \Sigma_{\Lambda} \times G \times \Sigma_{\Omega}) \ar[d] \\ * \mmod (G \times \Sigma_{\Theta} \times \Sigma_{\Gamma}) \ar[r] & * \mmod (G \times \Sigma_{\Theta} \times G \times \Sigma_{\Gamma}).
\end{tikzcd}
\]
\end{proof}

\cref{prop:diagonal} now gives us:
\begin{cor} \label{cor:diagonalGreen}
If $S$ is a global Green functor, then restriction along the diagonal induces a canonical map of partition rings
\[
S \circ (G \wr \Sigma_{(-)}) \to S \circ (G \times \Sigma_{(-)}).
\]
\end{cor}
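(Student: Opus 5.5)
By \cref{prop:diagonal}, restriction along the diagonals $\Delta_\Lambda \colon G \times \Sigma_\Lambda \to G \wr \Sigma_\Lambda$ already gives a map of partition functors
\[
\res_{\Delta} \colon S \circ (G \wr \Sigma_{(-)}) \to S \circ (G \times \Sigma_{(-)}),
\]
that is, it commutes with all restrictions and transfers. By \cref{globalgreentopartition}, both source and target are partition rings. So the plan is to check only the remaining conditions for a map of partition rings. These are that each component is a ring map, that the unit is preserved, and that the external products $m_{\Lambda,\Omega}$ are preserved.

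The ring map condition is immediate, because each component $\res_{\Delta_\Lambda}$ is a restriction in the global Green functor $S$, and restrictions are ring maps. For the unit, at the empty partition the diagonal is the unique map $G \to e$. The unit of $S \circ (G\wr\Sigma_{(-)})$ is $\Z \to S(e)$, and the unit of $S\circ(G\times \Sigma_{(-)})$ is $\Z \to S(e) \to S(G)$ via restriction along $G \to e$. These agree by definition.

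The main step is compatibility with the external products. For the wreath product, $m_{\Lambda,\Omega}$ is the composite
\[
S(G\wr\Sigma_\Lambda)\otimes S(G\wr\Sigma_\Omega) \to S(G\wr\Sigma_\Lambda \times G\wr\Sigma_\Omega) \cong S(G\wr\Sigma_{\Lambda\sqcup\Omega}),
\]
where the first map is the tensor of the restrictions along the two projections, followed by multiplication. For the product, $m_{\Lambda,\Omega}$ is the tensor of the restrictions along $\alpha_\Lambda$ and $\alpha_\Omega$, followed by multiplication.

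Since every map involved is a restriction or a product of restrictions, and restrictions are multiplicative, it suffices to prove the identity of group homomorphisms
\[
\mathrm{pr}_\Lambda \circ \Delta_{\Lambda\sqcup\Omega} = \Delta_\Lambda \circ \alpha_\Lambda \colon G \times \Sigma_{\Lambda\sqcup\Omega} \to G\wr\Sigma_\Lambda,
\]
together with the symmetric identity for $\Omega$. Here $\mathrm{pr}_\Lambda$ is the composite $G\wr\Sigma_{\Lambda\sqcup\Omega}\cong G\wr\Sigma_\Lambda\times G\wr\Sigma_\Omega \to G\wr\Sigma_\Lambda$.

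I expect this to be a direct element check. Take $(g,\sigma,\tau)$ with $\sigma\in\Sigma_\Lambda$ and $\tau\in\Sigma_\Omega$. The left side sends it to the constant function $g$ on $X\amalg Y$ together with $\sigma\sqcup\tau$, and then projects to the constant function $g$ on $X$ together with $\sigma$. The right side is $\Delta_\Lambda(g,\sigma)$, which is the same element. Functoriality of restriction in the $(2,1)$-category $\Grp$ then gives the equality of the two composites, so the two external products agree.

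The only potential subtlety is the isomorphism $G \wr \Sigma_{\Lambda\sqcup\Omega} \cong G\wr\Sigma_\Lambda\times G\wr\Sigma_\Omega$ when $\Lambda$ or $\Omega$ is empty. In that case it reduces to the unit check, where $G\wr\Sigma_0 = e$. Symmetry and associativity hold automatically, since they are properties of the two partition rings rather than extra data on the map.
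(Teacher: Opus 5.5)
Your argument is correct and follows the paper's route: the paper deduces the corollary directly from \cref{prop:diagonal} and leaves implicit exactly the checks you supply, namely that restrictions in a global Green functor are ring maps and that the identity $\mathrm{pr}_\Lambda\circ\Delta_{\Lambda\sqcup\Omega}=\Delta_\Lambda\circ\alpha_\Lambda$ makes the diagonal restrictions compatible with the external products. Your element check already covers empty partitions, so the separate reduction to the unit check is not needed; that reduction is also slightly inaccurate, since $m_{\emptyset,\Omega}$ is the module action and not just the unit.
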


\begin{example}
Let $K$ be a commutative ring. Consider the constant partition functor $C_K$. Since $K$ is a commutative ring, we define 
\[
m_{\Lambda, \Omega} \colon K \otimes K \to K
\]
to be the multiplication map. This is natural in pairs of restriction maps since restrictions are just the identity map. Now let $f \colon \Lambda \to \Omega$ and $g \colon \Gamma \to \Theta$ be maps of partitions. The multiplication is natural in pairs of transfer maps as
\[
|\Sigma_{\Omega}/\Sigma_{f \Lambda}||\Sigma_{\Theta}/\Sigma_{g \Gamma}| = |(\Sigma_{\Omega} \times \Sigma_{\Theta}) / ( \Sigma_{f \Lambda} \times \Sigma_{g \Gamma})| = |\Sigma_{\Omega \sqcup \Theta}/\Sigma_{(f \sqcup g) \Lambda \sqcup \Gamma}|.
\]
Clearly $C_K$ is commutative ring valued and restriction maps are maps of commutative rings (they are the identity). Frobenius reciprocity follows from the fact that transfers are given by multiplication by an integer. It follows that $C_K$ is a partition ring. 

We could have also just applied \cref{globalgreentopartition}, but we proved that $C_K$ is a partition ring to contrast it with $C^K$, which does not have a global avatar (unless we only consider injective group homomorphisms). In this case, the proof in the previous paragraph does imply that $C^K$ is a lax symmetric monoidal partition functor. The same multiplication $m$ is used, but the role of restriction and transfer are swapped in the proof that $m$ is natural. Further, $C^K$ does take values in commutative rings, but the restriction maps are not generally maps of commutative rings and Frobenius reciprocity does not make sense.
\end{example}

\begin{example} \label{ex:Greentriv}
Let $S$ be a global Green functor and let $K = S(G)$. Since restriction maps are ring maps, \cref{prop:globaltriv} implies that the map of partition functors $S \circ (G \times \Sigma_{(-)}) \to C_K$ induced by restriction along the map $G \to G \times \Sigma_{\Lambda}$ is a map of partition rings.
\end{example}

\section{Symmetric monoidal partition functors and algebra} \label{sec:symmon}

In this section, we provide a flatness condition on a symmetric monoidal partition functor $R$ so that $\Div(R)$ is a symmetric monoidal partition functor. We also compare the categories of symmetric monoidal partition functors and symmetric monoidal partition rings with categories of classical algebraic objects.

Assume $R$ is a lax symmetric monoidal partition functor with multiplication $m$. Following \cref{not:indexing}, we write $R(m) = R(\u m)$. Since $\u{0}$ is the unit for the symmetric monoidal structure on $\cP$, $R(0)$ admits the structure of a commutative ring. Explicitly, $R(0)$ admits the structure of a commutative ring via the isomorphism
\[
\u{0} \sqcup \u{0} \xrightarrow{\cong} \u{0},
\]
which induces the multiplication
\[
R(0) \otimes R(0) \xrightarrow{m_{0,0}} R(\u{0} \sqcup \u{0}) \xrightarrow{\cong} R(0).
\]
Similarly, given a partition $\Omega$, $R(\Omega)$ admits the structure of an $R(0)$-module via the isomorphism
\[
\u{0} \sqcup \Omega \xrightarrow{\cong} \Omega.
\]
Given a map of partitions $f \colon \Lambda \to \Omega$, $R_*(f)$ is a map of $R(0)$-modules. Notice that this implies that the transfer subgroup $I_{\Lambda} \subseteq R(\Lambda)$ is an
$R(0)$-submodule.

These observations together with the relations satisfied by $m$ imply that $m$ induces a lax symmetric monoidal map
\[
\mathcal{M}_{\Lambda,\Omega} \colon R(\Lambda) \otimes_{R(0)} R(\Omega) \to R(\Lambda \sqcup \Omega).
\]
That is, $R$ can be viewed as a lax symmetric monoidal partition functor taking values in $R(0)$-modules with $\otimes_{R(0)}$ (rather than abelian groups and $\otimes_{\Z}$). If $R$ has the structure of a partition ring, then the Eckmann--Hilton argument implies that the two commutative ring structures on $R(0)$ agree.

Turning to $\Div(R)$, note that $\Div(R)(0) = R(0)$. It follows from \cref{prop:laxdiv}, if $R$ is a lax symmetric monoidal partition functor with values in $R(0)$-modules, then $\Div(R)$ is a lax symmetric monoidal partition functor with values in $R(0)$-modules. Further, since $\Div(R)$ is lax symmetric monoidal and both $\eta$ and $\mu$ are natural transformations of lax symmetric monoidal partition functors, they are also natural transformations of lax symmetric monoidal partition functors with values in $R(0)$-modules. We have essentially noticed the following:

\begin{prop}
Let $K$ be a commutative ring. The endofunctor $\Div$ is a monad on the category of lax symmetric monoidal partition functors taking values in $K$-modules.
\end{prop}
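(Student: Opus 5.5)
Let $R$ be a lax symmetric monoidal partition functor taking values in $K$-modules, meaning that each $R(\Lambda)$ is a $K$-module, restrictions, transfers and $\eta_0$ are $K$-linear, and the structure maps are $K$-bilinear:
\[
\mathcal{M}_{\Lambda,\Omega}\colon R(\Lambda)\otimes_K R(\Omega)\to R(\Lambda\sqcup\Omega).
\]
The plan is to reduce the statement to the monad of the previous section on lax symmetric monoidal partition functors and check only $K$-linearity. There are four things to verify: $\Div(R)$ is again $K$-module valued with $K$-linear restrictions and transfers; $\Div(m)$ descends to a $K$-bilinear map, so that $\Div(R)$ is lax symmetric monoidal over $\otimes_K$; $\eta$ and $\mu$ are $K$-linear; and $\Div$ sends $K$-linear maps to $K$-linear maps. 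The monad axioms (unit, associativity) and the compatibility of $\eta,\mu$ with the monoidal structure are identities of underlying maps of abelian groups. These were already established in \cref{thm:divmonad} and in the proposition that $\Div$ is a monad on lax symmetric monoidal partition functors, so they carry over verbatim.

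For the first step, note that $I_\Gamma$ is a $K$-submodule because transfers are $K$-linear, so $\bar{R}(\Gamma)$ is a $K$-module and $q_\Gamma$ is $K$-linear. The $\Sigma_\Lambda$-action on $\bigoplus_{\Gamma\leq\Lambda}\bar R(\Gamma)$ is through the maps $\bar R_*(\sigma)$, which are $K$-linear. Hence the fixed points $\Div(R)(\Lambda)$ form a $K$-submodule. The restrictions and transfers of $\Div(R)$ are finite sums of the maps $\bar R^*(f_\Gamma)$ and $\bar R_*(\sigma f)$ and of projections, so they are $K$-linear. The maps $\eta_\Lambda$ (built from $q_\Gamma R^*$), $\mu_\Omega$ (a projection onto summands) and $\Div(f)$ for a $K$-linear $f$ are all $K$-linear for the same reason.

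The only point requiring care is the monoidal structure. The induced map $\bar{m}_{\Gamma,\Lambda}$ of \cref{lem:mbar} is $K$-bilinear, since it is defined by \eqref{eq:barmdefeq} from the $K$-bilinear $m$ using $K$-linear quotients. The map $M_{\Omega,\Theta}$ is a direct sum of these, so it is $K$-bilinear. The map $\Div(m)_{\Omega,\Theta}$ is its restriction to $\Div(R)(\Omega)\times\Div(R)(\Theta)$, which lands in the $\Sigma_\Omega\times\Sigma_\Theta$-fixed points. This gives a $K$-bilinear map and therefore factors through $\Div(R)(\Omega)\otimes_K\Div(R)(\Theta)$; since $\otimes_K$ is the quotient of $\otimes_\Z$, no new fixed-point-versus-tensor issue arises here. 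The unit $K\to\Div(R)(\u 0)=R(\u 0)$ is unchanged.

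Naturality in pairs of restrictions and transfers, together with associativity, unit and symmetry, is then inherited from \cref{prop:laxdiv}. The reason is that these are equalities of maps out of $\otimes_\Z$ that factor through the surjection $\otimes_\Z\to\otimes_K$. I expect the bilinearity of $\Div(m)$ to be the only step needing an explicit sentence; everything else is a direct check that the constructions use only $K$-linear operations.
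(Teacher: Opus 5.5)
Your proof is correct, but it obtains the $K$-linearity by a different mechanism than the paper.

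The paper does no componentwise checking. It observes that in any lax symmetric monoidal partition functor the module structure is already encoded by the monoidal structure:
\begin{itemize}
\item $R(\u 0)$ acts on each $R(\Lambda)$ via $m_{\u 0,\Lambda}$ and $\u 0\sqcup\Lambda\cong\Lambda$;
\item transfers and restrictions are linear for this action by naturality of $m$;
\item $m$ descends to $\mathcal{M}_{\Lambda,\Omega}$ over $\otimes_{R(0)}$.
\end{itemize}
Because $\Div(R)(0)=R(0)$ and \cref{prop:laxdiv} makes $\Div(R)$ lax symmetric monoidal, $\Div(R)$ is module-valued over the same ring for free. Likewise $\eta$ and $\mu$, being lax symmetric monoidal transformations, are automatically module maps.

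You instead treat the $K$-action as extra structure. You verify directly that $I_\Gamma$, $q_\Gamma$, the $\Sigma_\Lambda$-action, the restriction and transfer formulas, $\eta$, $\mu$ and $\Div(f)$ are $K$-linear, and that $\Div(m)$ is $K$-bilinear. Your version is more explicit. It also makes visible that descending $\Div(m)$ to $\otimes_K$ needs only a map, not any commutation of fixed points with $\otimes_K$. The paper's version is shorter, and it explains why your componentwise $K$-structure on $\Div(R)(\Lambda)$ coincides with the one induced by $K\to\Div(R)(\u 0)$ and $\Div(m)_{\u 0,\Lambda}$.

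That coincidence is the one spot where ``inherited verbatim'' is slightly too quick. The unit axiom over $K$ is not literally an identity of maps out of $\otimes_\Z$. Write $u\colon K\to R(\u 0)$ for the unit. The axiom follows from the $\Z$-unit axiom together with your bilinearity, since $\Div(m)_{\u 0,\Lambda}(u(k)\otimes x)=k\,\Div(m)_{\u 0,\Lambda}(u(1)\otimes x)$, which is $kx$ under $\u 0\sqcup\Lambda\cong\Lambda$.
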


Given a partition functor $R$, let 
\[
R[\Sigma] = \bigoplus_{m \geq 0} R(m).
\]

\begin{lemma}
If $R$ is lax symmetric monoidal, then $R[\Sigma]$ admits a natural commutative $\N$-graded ring structure.
\end{lemma}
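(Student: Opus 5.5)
We define the product on homogeneous elements by the transfer product described in the introduction. For $x \in R(i)$ and $y \in R(j)$ set
\[
xy = R_*(\u i \sqcup \u j \leq \u{i+j})\, m_{i,j}(x \otimes y),
\]
extend bilinearly, and take the unit to be the image of $1$ under $\eta_0 \colon \Z \to R(0)$. The map $\u i \sqcup \u j \leq \u{i+j}$ is, strictly speaking, the fixed structure map $\u i \sqcup \u j \to \u{i+j}$. Any other choice is $2$-isomorphic to it, because every map into $\u{i+j}$ is. Since $2$-isomorphic maps induce the same transfer, the product is independent of choices. This observation is what makes the argument work, and I will use it repeatedly. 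Bilinearity and the grading $R(i)R(j) \subseteq R(i+j)$ are immediate.

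\emph{Unit.} Take $x = \eta_0(1) \in R(0)$ and $y \in R(j)$. The map $\u 0 \sqcup \u j \to \u j$ is an isomorphism, so its transfer is the inverse of the corresponding restriction. The unit axiom of the lax structure says that $m_{0,j}(\eta_0(1) \otimes -)$ composed with this identification is the identity. Therefore $1 \cdot y = y$.

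\emph{Commutativity.} Write $\tau \colon \u i \sqcup \u j \to \u j \sqcup \u i$ for the twist. The symmetry square gives $R_*(\tau)\, m_{i,j}(x \otimes y) = m_{j,i}(y \otimes x)$. The composite $\u i \sqcup \u j \xrightarrow{\tau} \u j \sqcup \u i \to \u{i+j}$ is another map into $\u{i+j}$, so it is $2$-isomorphic to the standard map $\u i \sqcup \u j \to \u{i+j}$. By functoriality of transfers, $yx = xy$.

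\emph{Associativity.} This is the step I expect to need the most care. For $x \in R(i)$, $y \in R(j)$ and $z \in R(k)$ we have
\[
(xy)z = R_*(\u{i+j} \sqcup \u k \to \u{i+j+k})\, m_{i+j,k}\big(R_*(\u i \sqcup \u j \to \u{i+j})\, m_{i,j}(x \otimes y) \otimes z\big).
\]
First apply naturality of $m$ with respect to pairs of transfers, using the pair consisting of $\u i \sqcup \u j \to \u{i+j}$ and $\mathrm{id}_{\u k}$. This moves the inner transfer outside $m$. Next use functoriality of transfers and the associativity axiom for $m$. The result is
\[
(xy)z = R_*(\phi)\, m_{i,j,k}(x \otimes y \otimes z),
\]
where $m_{i,j,k}$ denotes the triple product landing in $R(\u i \sqcup \u j \sqcup \u k)$ and $\phi$ is a map from $\u i \sqcup \u j \sqcup \u k$ to $\u{i+j+k}$. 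The same manipulation applied to $x(yz)$ gives $R_*(\phi')\, m_{i,j,k}(x \otimes y \otimes z)$, where $\phi'$ is also a map into $\u{i+j+k}$. The two maps $\phi$ and $\phi'$ are $2$-isomorphic, again because every map into an indiscrete partition is, so $(xy)z = x(yz)$.

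\emph{Naturality.} A map of lax symmetric monoidal partition functors commutes with transfers and with $m$. It therefore induces a graded ring homomorphism $R[\Sigma] \to S[\Sigma]$.

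The only real bookkeeping concerns the associator isomorphisms of $\sqcup$ on $\cP$. These are absorbed by the same $2$-isomorphism argument, since they only change the map into $\u{i+j+k}$.
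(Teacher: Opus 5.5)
Your proposal is correct and follows the paper's proof essentially verbatim: the same transfer product, the unit from $R(0)$, associativity from naturality of $m$ in pairs of transfers together with associativity of $m$ and transitivity of transfers, and commutativity from the symmetry of $m$. Your observation that any two maps into $\u{i+j}$ are $2$-isomorphic is exactly what the paper uses when it says the permutation $\sigma \in \Sigma_{i+j}$ exchanging the two blocks acts as the identity on $R(i+j)$.
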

\begin{proof}
Recall that the lax symmetric monoidal structure gives $R(0)$ the structure of a commutative ring. Given homogeneous elements $x \in R(i)$ and $y \in R(j)$, we define the product by
\[
xy = R_*(\underline{i} \sqcup \underline{j} \leq \underline{i+j})m_{i, j}(x \otimes y).
\]

The unit for this multiplication is given by the unit in $R(0)$. Associativity follows from the associativity of the external product, naturality, and transitivity of transfers. Commutativity follows from the fact that $m$ is lax symmetric monoidal, the commutative diagram of partitions
\[
\xymatrix{\underline{i} \sqcup \underline{j} \ar[r] \ar[d] & \underline{j} \sqcup \underline{i} \ar[d] \\ \underline{i+j} \ar[r]^{\sigma} & \underline{j+i},}
\]
where $\sigma \in \Sigma_{i+j}$ is the automorphism trading $\u i$ and $\u j$,
and the fact that $\sigma$ is sent to the identity by $R$. 
\end{proof}

We will refer to the product on $R[\Sigma]$ as the transfer product. The following result is immediate:

\begin{prop}
The construction $R[\Sigma]$ is natural in the lax symmetric monoidal partition functor $R$. That is, $(-)[\Sigma]$ is a functor from the category of lax symmetric monoidal partition functors to the category of commutative $\N$-graded rings.
\end{prop}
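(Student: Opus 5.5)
To prove that $(-)[\Sigma]$ is a functor, I first say what it does on morphisms and then check two things: the image of a morphism is a map of graded rings, and identities and composites are preserved. Let $\phi \colon R \to S$ be a map of lax symmetric monoidal partition functors. It consists of homomorphisms $\phi(\Lambda) \colon R(\Lambda) \to S(\Lambda)$ that commute with restrictions and transfers and with the lax structure maps, meaning $\phi(\Lambda \sqcup \Omega) \circ m^R_{\Lambda,\Omega} = m^S_{\Lambda,\Omega} \circ (\phi(\Lambda) \otimes \phi(\Omega))$ and $\phi(\u 0) \circ \eta^R_0 = \eta^S_0$. I define
\[
\phi[\Sigma] = \bigoplus_{m \geq 0} \phi(\u m) \colon R[\Sigma] \to S[\Sigma].
\]
This is additive and sends $R(m)$ to $S(m)$, so it preserves the grading.

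Next I check multiplicativity on homogeneous elements; additivity then extends it to all elements. Take $x \in R(i)$ and $y \in R(j)$. Then
\[
\phi(\u{i+j})\big(R_*(\u i \sqcup \u j \leq \u{i+j})\, m^R_{i,j}(x \otimes y)\big) = S_*(\u i \sqcup \u j \leq \u{i+j})\, \phi(\u i \sqcup \u j)\, m^R_{i,j}(x\otimes y).
\]
The equality holds because $\phi$ commutes with transfers. Compatibility of $\phi$ with $m$ turns the right-hand side into
\[
S_*(\u i \sqcup \u j \leq \u{i+j})\, m^S_{i,j}(\phi(x) \otimes \phi(y)),
\]
which is the transfer product $\phi(x)\phi(y)$ in $S[\Sigma]$. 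For the unit: the unit of $R[\Sigma]$ is the image of $1$ under $\eta^R_0$ in $R(0)$, and compatibility with units sends it to the unit of $S[\Sigma]$. The ring structure on $R(0)$ comes from $m_{0,0}$ together with the isomorphism $\u 0 \sqcup \u 0 \cong \u 0$. Since $\phi$ commutes with transfers along isomorphisms, it respects this structure too, though that case is already covered by the computation above with $i=j=0$.

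Functoriality is immediate, because $\phi[\Sigma]$ is computed levelwise. The identity goes to the identity, and $(\psi\phi)[\Sigma] = \psi[\Sigma]\,\phi[\Sigma]$ componentwise.

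None of these steps is a real obstacle; the whole argument unpacks definitions. The one point needing care is that a morphism of lax symmetric monoidal partition functors must commute with the external products $m$ and with the unit, not merely with restrictions and transfers. Once that is stated explicitly as part of the definition of a morphism, the computation above closes the proof.
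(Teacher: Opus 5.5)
Your proposal is correct and is the routine verification the paper has in mind when it says the result is immediate (it gives no written proof). The levelwise map $\bigoplus_m \phi(\u m)$ is multiplicative because $\phi$ commutes with the transfer along $\u i \sqcup \u j \leq \u{i+j}$ and with the structure maps $m_{i,j}$, it preserves the unit, and functoriality holds componentwise.
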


Fix a commutative ring $K$. We will turn our attention to symmetric monoidal
(rather than lax symmetric monoidal) partition functors with values in
$K$-modules. For such a partition functor $R$, we have a specified isomorphism
$R(0) \cong K$ and the multiplication map
\[
\cM_{\Lambda,\Omega} \colon
R(\Lambda) \otimes_K R(\Omega)
\xrightarrow{\cong}
R(\Lambda \sqcup \Omega)
\]
is an isomorphism. The functor $(-)[\Sigma]$ lands in commutative $\N$-graded $K$-algebras. The category of symmetric monoidal partition functors is a full subcategory of the category of lax symmetric monoidal partition functors.

If R is a symmetric monoidal partition functor, then the quotients $\bar{R}(\Lambda)$ continue to
satisfy a strong symmetric monoidal property.

\begin{lemma} \label{lem:symmon}
Assume that $R$ is a symmetric monoidal partition functor with values in
$K$-modules. The multiplication map induces an isomorphism
\[
\bar{\cM}_{\Lambda,\Omega} \colon
\bar{R}(\Lambda) \otimes_K \bar{R}(\Omega)
\overset{\cong}{\longrightarrow}
\bar{R}(\Lambda \sqcup \Omega).
\]
\end{lemma}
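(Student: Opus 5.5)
We show that, under the isomorphism $\cM_{\Lambda,\Omega}$, the transfer subgroup $I_{\Lambda\sqcup\Omega}$ corresponds exactly to $I_\Lambda\otimes_K R(\Omega)+R(\Lambda)\otimes_K I_\Omega$. The isomorphism of quotients then follows from right exactness of the tensor product:
\[
\bar{R}(\Lambda)\otimes_K\bar{R}(\Omega)\cong \big(R(\Lambda)\otimes_K R(\Omega)\big)/\big(I_\Lambda\otimes_K R(\Omega)+R(\Lambda)\otimes_K I_\Omega\big),
\]
where the summands denote images in $R(\Lambda)\otimes_K R(\Omega)$. The induced map agrees with $\bar{\cM}_{\Lambda,\Omega}$ as defined via \eqref{eq:barmdefeq} (tensored over $K$). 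By \cref{lem:mbar}, $\cM_{\Lambda,\Omega}$ carries the subgroup $I_\Lambda\otimes R(\Omega)+R(\Lambda)\otimes I_\Omega$ into $I_{\Lambda\sqcup\Omega}$, so only the reverse inclusion needs proof.

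The key combinatorial input is a description of the proper refinements of $\Lambda\sqcup\Omega$, where $\Lambda\vdash X$ and $\Omega\vdash Y$. A refinement $\Theta\leq\Lambda\sqcup\Omega$ never merges elements of $X$ with elements of $Y$. Hence $\Theta=\Theta_X\sqcup\Theta_Y$ with $\Theta_X\leq\Lambda$ and $\Theta_Y\leq\Omega$, and $\Theta$ is proper precisely when $\Theta_X<\Lambda$ or $\Theta_Y<\Omega$. So it suffices to show that the image of $R_*(\Theta_X\sqcup\Theta_Y\leq\Lambda\sqcup\Omega)$ lies in $\cM(I_\Lambda\otimes R(\Omega)+R(\Lambda)\otimes I_\Omega)$ in this case.

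For this, use that $\cM_{\Theta_X,\Theta_Y}$ is surjective (indeed an isomorphism). Any $z\in R(\Theta_X\sqcup\Theta_Y)$ is then a sum of terms $\cM(a\otimes b)$ with $a\in R(\Theta_X)$ and $b\in R(\Theta_Y)$. Naturality of $\cM$ in pairs of transfers gives
\[
R_*(\Theta_X\sqcup\Theta_Y\leq\Lambda\sqcup\Omega)\,\cM(a\otimes b)=\cM\big(R_*(\Theta_X\leq\Lambda)a\otimes R_*(\Theta_Y\leq\Omega)b\big).
\]
If $\Theta_X<\Lambda$, the first tensor factor lies in $I_\Lambda$; otherwise $\Theta_Y<\Omega$ and the second factor lies in $I_\Omega$. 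Either way the term lies in the required submodule.

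The only genuine input is the surjectivity of $\cM$ at the smaller partition $\Theta_X\sqcup\Theta_Y$; this is exactly where strong, rather than lax, monoidality is used. The main point to be careful about is the splitting of refinements of a disjoint union into pairs of refinements, together with the identification of the induced map on quotients with $\bar{\cM}$. Both are straightforward, so I expect no serious obstacle. The argument does not use any injectivity of $I_\Lambda\otimes R(\Omega)\to R(\Lambda)\otimes R(\Omega)$, so no flatness hypothesis is needed.
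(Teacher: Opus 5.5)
Your proposal is correct and follows essentially the same route as the paper. Both arguments decompose each refinement of $\Lambda\sqcup\Omega$ as $\Gamma\sqcup\Theta$ and use naturality of $\cM$ in pairs of transfers to identify $I_{\Lambda\sqcup\Omega}$ with the sum of the images of $I_\Lambda\otimes_K R(\Omega)$ and $R(\Lambda)\otimes_K I_\Omega$. Both then conclude by right exactness. You also make explicit a step the paper leaves implicit: the reverse inclusion uses surjectivity of $\cM_{\Theta_X,\Theta_Y}$ at the smaller partition.
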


\begin{proof}
Every refinement of $\Lambda \sqcup \Omega$ is uniquely of the form
$\Gamma \sqcup \Theta$ for refinements $\Gamma \leq \Lambda$ and
$\Theta \leq \Omega$. Such a refinement is proper if and only if at least one
of $\Gamma < \Lambda$ or $\Theta < \Omega$ is proper. Naturality of the
symmetric monoidal structure with respect to transfers implies that, under the
isomorphism
\[
\cM_{\Lambda,\Omega} \colon
R(\Lambda)\otimes_K R(\Omega)
\overset{\cong}{\longrightarrow}
R(\Lambda\sqcup\Omega),
\]
the transfer subgroup $I_{\Lambda\sqcup\Omega}$ corresponds to the sum of the
images of
\[
I_\Lambda\otimes_K R(\Omega)
\longrightarrow
R(\Lambda)\otimes_K R(\Omega)
\]
and
\[
R(\Lambda)\otimes_K I_\Omega
\longrightarrow
R(\Lambda)\otimes_K R(\Omega).
\]
It follows that $\cM_{\Lambda,\Omega}$ induces an isomorphism
\[
\bar{R}(\Lambda)\otimes_K\bar{R}(\Omega)
\overset{\cong}{\longrightarrow}
\bar{R}(\Lambda\sqcup\Omega).
\]
\end{proof}

We would like to better understand $\Div(R)$, when $R$ is a symmetric monoidal partition functor. \cref{weylgroupfixed} implies that
\[
\Div(R)(m) \cong \bigoplus_{[\Lambda] \in \{\Lambda \leq \u m\}/\Sigma_m} \bar{R}(\Lambda)^{W_{\Sigma_m}(\Sigma_{\Lambda})}
\]
and \cref{lem:symmon} implies that $\bar{R}(\Lambda) \cong \bigotimes_{B \in [m]/\sim_\Lambda} \bar{R}(\u B)$, where the tensor product is over $K$. These decompositions are best described in terms of integer partitions. 

An integer partition $\lambda$ is a sequence of natural numbers $(\lambda_1, \lambda_2,\ldots)$ with all but finitely many $\lambda_i$ equal to zero. We say that $\lambda$ is an integer partition of $m$, written $\lambda \parts m$ if
\[
m = \sum_{i \geq 1} i \cdot \lambda_i.
\]
The natural number $\lambda_i$ denotes the number of times that $i$ occurs in the integer partition of $m$.

For $\lambda \parts m$, let $\Sigma_{\lambda} = \prod_i (\Sigma_i)^{\lambda_i}$. Viewed as a subgroup of $\Sigma_m$, we have
\[
W_{\Sigma_m}(\Sigma_\lambda) \cong \prod_{i} \Sigma_{\lambda_i}.
\]
There is a canonical bijection between the set $\{\Lambda \leq [m]\}/\Sigma_m$ and the set of integer partitions of $m$ given by sending a partition of $[m]$ to the integer partition in which $\lambda_i$ denotes the number of blocks of $[m]/{\sim_{\Lambda}}$ of cardinality $i$. 

Together, these facts give an isomorphism
\begin{equation} \label{eq:symmondecomp}
\Div(R)(m)
\cong
\bigoplus_{\lambda\parts m}
\left(
\bigotimes_i \bar{R}(i)^{\otimes\lambda_i}
\right)^{\prod_i\Sigma_{\lambda_i}}.
\end{equation}

It does not follow from \cref{lem:symmon} that $\Div(R)$ is symmetric
monoidal. The definition of $\Div$ involves taking fixed points and fixed
points do not in general commute with tensor products. There is nevertheless
a useful flatness condition under which they do commute in the situation at
hand.

\begin{prop} \label{prop:flatdivsymmon}
Let $R$ be a symmetric monoidal partition functor with values in $K$-modules.
If $\bar{R}(m)$ is a flat $K$-module for every $m\geq 0$, then $\Div(R)$ is
a symmetric monoidal partition functor with values in $K$-modules.
\end{prop}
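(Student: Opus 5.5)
We must show that the lax structure map $\Div(m)_{\Omega,\Theta}$ of \cref{prop:laxdiv}, passed to $\otimes_K$, is an isomorphism
\[
\Div(R)(\Omega)\otimes_K\Div(R)(\Theta)\longrightarrow\Div(R)(\Omega\sqcup\Theta),
\]
and that $\Div(R)(0)\cong K$. The second is immediate: $\u 0$ has only itself as a refinement and $\Sigma_{\u 0}$ is trivial. For the first, the construction factors into three maps. The first is the canonical map from the tensor product of fixed points to the fixed points of the tensor product,
\[
\Big(\bigoplus_{\Gamma\leq\Omega}\bar R(\Gamma)\Big)^{\Sigma_\Omega}\otimes_K\Big(\bigoplus_{\Lambda\leq\Theta}\bar R(\Lambda)\Big)^{\Sigma_\Theta}\to\Big(\bigoplus_{\Gamma,\Lambda}\bar R(\Gamma)\otimes_K\bar R(\Lambda)\Big)^{\Sigma_\Omega\times\Sigma_\Theta}.
\]
The second is the map induced on invariants by the sum of the isomorphisms $\bar{\cM}_{\Gamma,\Lambda}$ of \cref{lem:symmon}. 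The third uses that refinements of $\Omega\sqcup\Theta$ are exactly the $\Gamma\sqcup\Lambda$ and that $\Sigma_{\Omega\sqcup\Theta}=\Sigma_\Omega\times\Sigma_\Theta$. The second and third maps are isomorphisms: the direct-sum decomposition and group actions match, and the second map is equivariant, as checked in \cref{prop:laxdiv}. So everything reduces to showing the first map is an isomorphism.

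For this step I will use flatness. Write $V=\bigoplus_{\Gamma\leq\Omega}\bar R(\Gamma)$ and $W=\bigoplus_{\Lambda\leq\Theta}\bar R(\Lambda)$. By \cref{lem:symmon}, each $\bar R(\Gamma)$ is a tensor product over $K$ of the modules $\bar R(\u B)\cong\bar R(|B|)$ over the blocks $B$ of $\Gamma$. Flatness is preserved by tensor products and direct sums, so $V$ and $W$ are flat $K$-modules.

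Invariants under a finite group $G$ are a kernel:
\[
V^{G}=\ker\Big(V\xrightarrow{(g-1)_{g\in G}}\textstyle\prod_{g\in G}V\Big),
\]
and finite products are direct sums. Apply this in two stages. First, tensoring the exact sequence $0\to V^{\Sigma_\Omega}\to V\to\prod_{g\in\Sigma_\Omega} V$ with $W$ stays exact because $W$ is flat. This gives $(V\otimes_K W)^{\Sigma_\Omega}\cong V^{\Sigma_\Omega}\otimes_K W$. Next, tensoring the analogous sequence for $W^{\Sigma_\Theta}$ with $V^{\Sigma_\Omega}$ stays exact provided $V^{\Sigma_\Omega}$ is flat. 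Since $\Sigma_\Omega$ and $\Sigma_\Theta$ act on separate factors and commute, this gives
\[
(V\otimes_K W)^{\Sigma_\Omega\times\Sigma_\Theta}\cong V^{\Sigma_\Omega}\otimes_K W^{\Sigma_\Theta}.
\]

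\textbf{Main obstacle.} The second stage needs $V^{\Sigma_\Omega}$ to be flat, and invariants of a flat module need not be flat over a general ring $K$. I would get around this with \cref{weylgroupfixed}, which writes $\Div(R)(\Omega)$ as a direct sum of terms $\bar R(\Lambda)^{W_{\Sigma_\Omega}(\Sigma_\Lambda)}$. By \eqref{eq:symmondecomp}, each such term is a product over block sizes of symmetric invariants $(\bar R(i)^{\otimes\lambda_i})^{\Sigma_{\lambda_i}}$.

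My first attempt is to avoid needing flatness of invariants at all. I would compute $V^{\Sigma_\Omega}\otimes_K W^{\Sigma_\Theta}$ using only flatness of $W$ and the left exactness of $V^{\Sigma_\Omega}\otimes_K(-)$ applied to the sequence $0\to W^{\Sigma_\Theta}\to W\to\prod W$. That exactness is precisely what is in question, so this attempt might fail. If it does, I would fall back on the following route:
\begin{enumerate}
\item Express symmetric invariants of a flat module as a filtered colimit of symmetric invariants of finite free modules, using Lazard's theorem; invariants commute with filtered colimits.
\item Check that for finite free modules the invariants are free, with a basis given by monomial orbit sums.
\end{enumerate}
Together these show that each $(\bar R(i)^{\otimes\lambda_i})^{\Sigma_{\lambda_i}}$ is flat. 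This is where I expect the real work to be.

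Finally, naturality of the resulting isomorphism in transfers and restrictions, together with the coherence conditions, is inherited from \cref{prop:laxdiv}.
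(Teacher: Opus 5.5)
Your proposal is correct and, through your fallback route, it is essentially the paper's proof. The paper uses the same two-stage flat base change for invariants: it first tensors with the flat module $\bigoplus_{\Lambda\leq\u n}\bar R(\Lambda)$, then with $\Div(R)(m)$, whose flatness it gets from Lazard's theorem and the freeness of $TS_r$ of a finite free module, so your circular first attempt can simply be dropped. One ordering detail: \eqref{eq:symmondecomp} alone does not identify $\bigl(\bigotimes_i \bar R(i)^{\otimes\lambda_i}\bigr)^{\prod_i\Sigma_{\lambda_i}}$ with $\bigotimes_i\bigl(\bar R(i)^{\otimes\lambda_i}\bigr)^{\Sigma_{\lambda_i}}$; that identification is itself an instance of the same flat base change, so you must first establish flatness of each $\bigl(\bar R(i)^{\otimes\lambda_i}\bigr)^{\Sigma_{\lambda_i}}$, which is the order the paper follows (alternatively, apply Lazard directly to the whole product, where $\prod_i\Sigma_{\lambda_i}$ again permutes a monomial basis).
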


\begin{proof}

We will use two standard facts about flat modules. First, if a finite group
$G$ acts on a $K$-module $A$, acts trivially on $B$, and $B$ is flat over $K$, then
\[
A^G\otimes_K B \overset{\cong}{\longrightarrow} (A\otimes_K B)^G.
\]
Second, if $A$ is a flat $K$-module, then
\[
TS_r(A) = (A^{\otimes r})^{\Sigma_r},
\]
the module of symmetric invariant tensors, is flat over $K$. By a theorem of Lazard \cite[Theorem 10.81.4]{stacks-project}, $A$ is a filtered colimit of finite free
$K$-modules. Tensor powers and finite-group fixed points commute with filtered
colimits, so $TS_r(A)$ is a filtered colimit of the modules $TS_r(F)$
for finite free $F$. Each $TS_r(F)$ is free, and hence $TS_r(A)$ is flat.


These facts show that $\Div(R)(m)$ is flat: By \cref{eq:symmondecomp}, we have an isomorphism
\[
\Div(R)(m)
\cong
\bigoplus_{\lambda\parts m}
\bigg (
\bigotimes_i \bar{R}(i)^{\otimes\lambda_i}
\bigg)^{\prod_i\Sigma_{\lambda_i}}.
\]
Since $\bar{R}(i)^{\otimes\lambda_i}$ is flat and $\left(\bar{R}(i)^{\otimes\lambda_i}\right)^{\Sigma_{\lambda_i}}$ is flat, we have
\[
\bigg(
\bigotimes_i \bar{R}(i)^{\otimes\lambda_i}
\bigg)^{\prod_i\Sigma_{\lambda_i}}
\cong
\bigotimes_i \Big (\bar{R}(i)^{\otimes\lambda_i}\Big )^{\Sigma_{\lambda_i}}.
\]
Thus $\Div(R)(m)$ is flat.

To conclude, consider $\u m \sqcup \u n$. We first apply $\Sigma_m \times e$ fixed points to the isomorphism from \cref{lem:symmon}
\[
\bigg(\bigoplus_{\Gamma\leq \u m}\bar{R}(\Gamma)\bigg) \otimes_K \bigg(\bigoplus_{\Lambda \leq \u n}\bar{R}(\Lambda)\bigg) \xrightarrow{\cong} \bigoplus_{\Omega \leq \u m \sqcup \u n}\bar{R}(\Omega).
\]
As the second tensor factor is flat, we get an isomorphism
\[
\Div(R)(m) \otimes_K \bigg( \bigoplus_{\Lambda \leq \u n}\bar{R}(\Lambda) \bigg )\xrightarrow{\cong} \bigg (\bigoplus_{\Omega \leq \u m \sqcup \u n}\bar{R}(\Omega) \bigg)^{\Sigma_m \times e}.
\]
Now, since $\Div(R)(m)$ is flat, we may apply $e \times \Sigma_n$ fixed points to produce our desired isomorphism 
\[
\Div(R)(m) \otimes_K \Div(R)(n) \cong \Div(R)(\u m \sqcup \u n).
\]
\end{proof}

\begin{remark}
The flatness hypothesis in \cref{prop:flatdivsymmon} cannot be omitted (see \cref{ex:PDnotsymmetric}). Thus $\Div$ does not restrict to an endofunctor
of the category of symmetric monoidal partition functors over an arbitrary
commutative ring. Its natural home is the larger category of lax symmetric
monoidal partition functors. On the other hand,
\cref{prop:flatdivsymmon} implies that $\Div(R)$ is symmetric monoidal whenever
the quotients $\bar{R}(m)$ are flat over $R(0)$. In particular,
this condition is automatic for partition functors valued in vector spaces
over a field.
\end{remark}

Let $R$ be a symmetric monoidal partition functor taking values in $K$-modules. Then for a partition $\Gamma = (X,\sim_{\Gamma})$, we have a canonical isomorphism of $K$-modules
\[
R(\Gamma) \cong \bigotimes_{B \in X/{\sim_{\Gamma}}} R(\u B),
\]
where the tensor product is taken over $K$. Thus we may consider an element in $R(\Gamma)$ as a sum of simple tensors of the form $\otimes_{B} x_{B}$. The values of $R$ on the indiscrete partitions determine the partition functor and the value of $R$ on two trivial partitions of the same cardinality are canonically isomorphic. Given an isomorphism $\sigma \colon \Gamma \to \Omega$, we have an induced map
\[
R_*(\sigma) \colon \bigotimes_{B \in X/\sim_{\Gamma}} R(\u B) \to \bigotimes_{B' \in X/\sim_{\Omega}} R(\u B').
\]
Since \cref{lem:earlyiso} implies that $R(B)$ and $R(B')$ are canonically identified when $|B| = |B'|$, for $x = \otimes_{B} x_{B} \in R(\Gamma)$, this map is given by
\begin{equation} \label{eq:permutetensors}
(R_*(\sigma) x)_{B'} = x_{\sigma^{-1}B'}.
\end{equation}
That is, $R_*(\sigma)$ permutes the components of simple tensors. This suggests that the category of symmetric monoidal partition functors admits an interpretation in classical algebra.

Consider the category of $\N$-graded connected commutative cocommutative $K$-bialgebras. We will often use the notation $A_{*} = \bigoplus_{i \in \N} A_i$ with $A_0 = K$ for an object in this category. Such an object comes equipped with a graded multiplication
\[
\nabla \colon A_* \otimes_K A_* \to A_*,
\]
and a graded comultiplication
\[
\Delta \colon A_* \to A_* \otimes_K A_*,
\]
as well as unit and counit maps
\[
\eta \colon K \to A_* \text{ and } \epsilon \colon A_* \to K,
\]
all of which are maps of $K$-algebras. These maps must satisfy the identities making $A_*$ into a cocommutative comonoid in the category of $\N$-graded commutative $K$-algebras.

\begin{prop} \label{prop:algequiv}
There is an equivalence of categories between the category of symmetric monoidal partition functors valued in $K$-modules and the category of connected commutative cocommutative $\N$-graded $K$-bialgebras given by the functor sending a symmetric monoidal partition functor $R$ to $R[\Sigma]$.
\end{prop}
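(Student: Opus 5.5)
We build the comultiplication on $R[\Sigma]$ from restrictions, check the bialgebra axioms via the double coset formula, and then construct an inverse functor from bialgebras back to partition functors.

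\textbf{The bialgebra structure on $R[\Sigma]$.} The product is the transfer product already constructed. For the coproduct we use restrictions. For $x\in R(n)$, set
\[
\Delta(x)=\sum_{i+j=n}\cM_{i,j}^{-1}R^*(\u i\sqcup\u j\leq\u n)(x)\in\bigoplus_{i+j=n}R(i)\otimes_K R(j).
\]
This uses that $\cM$ is an isomorphism. The counit is projection onto $R(0)\cong K$, and the unit is $K\cong R(0)$.

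Coassociativity and cocommutativity follow from transitivity of restrictions and the symmetry of $\cM$. The twist $\u i\sqcup\u j\to\u j\sqcup\u i$ lies over an automorphism of $\u n$, and that automorphism acts trivially because it is $2$-isomorphic to the identity.

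\textbf{The bialgebra compatibility.} The key axiom is that $\Delta$ is multiplicative. Expand
\[
R^*(\u a\sqcup\u b\leq\u n)\,R_*(\u i\sqcup\u j\leq\u n)
\]
with the double coset formula \eqref{doubleCoset}. The double cosets $\Sigma_a\times\Sigma_b\backslash\Sigma_n/\Sigma_i\times\Sigma_j$ are indexed by $2\times2$ matrices of naturals with row sums $(a,b)$ and column sums $(i,j)$. The intersection partitions are $\u{k_{11}}\sqcup\u{k_{12}}\sqcup\u{k_{21}}\sqcup\u{k_{22}}$. After applying $\cM^{-1}$, each term becomes exactly the term of $\Delta(x)\Delta(y)$ in the tensor-product algebra, with the middle twist. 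Identifying each $M_*(\sigma)$ with a tensor-factor permutation uses \eqref{eq:permutetensors}. Connectedness is just $R(0)\cong K$. Functoriality in maps of symmetric monoidal partition functors is immediate.

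\textbf{Inverse functor.} Given a bialgebra $A_*$, define $R(\Gamma)=\bigotimes_{B\in X/\sim_\Gamma}A_{|B|}$ over $K$. Because $\otimes_K$ is symmetric monoidal, this tensor product over an unordered finite set is well-defined up to canonical isomorphism.

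For a refinement $\Gamma\leq\Omega$, each block $B'$ of $\Omega$ is a disjoint union of blocks of $\Gamma$. Define the transfer blockwise by iterated multiplication $\bigotimes_{B\subseteq B'}A_{|B|}\to A_{|B'|}$. Define the restriction blockwise by the corresponding component of the iterated coproduct $A_{|B'|}\to\bigotimes_{B\subseteq B'}A_{|B|}$, which is well-defined by coassociativity and cocommutativity. Isomorphisms act by permuting tensor factors. Then:
\begin{itemize}
\item $2$-isomorphic maps act identically, since they differ only by permutations inside target blocks, and those do not change the product or coproduct by commutativity and cocommutativity.
\item Functoriality follows from associativity and coassociativity.
\item The monoidal structure is the evident isomorphism $R(\Gamma\sqcup\Omega)\cong R(\Gamma)\otimes_K R(\Omega)$.
\end{itemize}

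\textbf{The main obstacle: the double coset formula.} This is the step I expect to require the most care. It reduces blockwise to $\Omega=\u n$ using the monoidal structure. There it asserts that ``coproduct into parts indexed by $\Gamma$'s blocks after product of parts indexed by $\Lambda$'s blocks'' equals a sum over double cosets $\Sigma_\Gamma\backslash\Sigma_n/\Sigma_\Lambda$.

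These double cosets correspond to nonnegative integer matrices with prescribed row and column sums, namely the sizes of intersections of blocks. I would prove the formula by induction on the numbers of blocks, using the two-factor bialgebra axiom $\Delta\nabla=(\nabla\otimes\nabla)(1\otimes\tau\otimes1)(\Delta\otimes\Delta)$ together with associativity. This is the standard statement that a commutative cocommutative bialgebra gives a ``Mackey-type'' formula for multi-products and multi-coproducts.

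\textbf{Checking the equivalence.} The composites are naturally isomorphic to the identities. Starting from $A_*$, we recover $A_*$ with the same product and coproduct, since $\u i\sqcup\u j\leq\u n$ goes to $\nabla$ and $\Delta$ components. Starting from $R$, the isomorphisms $R(\Gamma)\cong\bigotimes_B R(\u B)$ are compatible with transfers and restrictions: by monoidality and factorization (\cref{lemma:factorization}), every map is a composite of isomorphisms and refinements, and refinements of $\u n$ into two blocks generate the rest.

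Full faithfulness follows similarly. A map of partition functors is determined by its values on indiscrete partitions, and a graded bialgebra map extends uniquely blockwise.
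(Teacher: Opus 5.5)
Your proposal is correct and follows essentially the same route as the paper: the inverse functor $A_*\mapsto R_A$ with $R_A(\Gamma)=\bigotimes_B A_{|B|}$, transfers given by iterated multiplication, restrictions by iterated comultiplication, and the double coset formula identified with the bialgebra compatibility by splitting along intersections of blocks. You go further than the paper's sketch in three places, all of which the paper leaves implicit: you build the coproduct on $R[\Sigma]$ explicitly as $\cM^{-1}\circ R^*(\u i\sqcup\u j\leq\u n)$, you derive multiplicativity of $\Delta$ from the $2\times 2$ double coset formula, and you check that both composites are naturally isomorphic to the identity.
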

\begin{proof}[Proof sketch]
We describe an inverse functor. Let
\[
A_*=\bigoplus_{m\geq 0}A_m
\]
be a connected commutative cocommutative $\N$-graded $K$-bialgebra with multiplication $\nabla$ and comultiplication $\Delta$ as above. 

We construct a symmetric monoidal partition functor $R_A$. First set
\[
R_A(m)=A_m.
\]
If $\Lambda=(X,\sim_\Lambda)$ is an arbitrary partition, define
\[
R_A(\Lambda)
=
\bigotimes_{B\in X/{\sim_\Lambda}} A_{|B|},
\]
where the tensor product is taken over $K$. Since $A_*$ is graded, this is naturally
the tensor product of the homogeneous pieces whose degrees are the sizes of the blocks of
$\Lambda$. The symmetric monoidal structure is the evident identification
\[
R_A(\Lambda)\otimes_K R_A(\Omega)
\cong
R_A(\Lambda\sqcup\Omega).
\]

We now describe restrictions and transfers. Let
\[
f\colon \Lambda\to \Omega
\]
be a map of partitions. For a block $D\in Y/{\sim_\Omega}$, the preimage
$f^{-1}(D)$ is a union of blocks of $\Lambda$. Say these blocks are
\[
B_1,\ldots,B_r.
\]
The restriction map
\[
R_A^*(f)\colon R_A(\Omega)\to R_A(\Lambda)
\]
is defined on the tensor factor $A_{|D|}$ by the iterated comultiplication
\[
A_{|D|}
\longrightarrow
A_{|B_1|}\otimes_K\cdots\otimes_K A_{|B_r|},
\]
and then one tensors these maps over all blocks $D$ of $\Omega$. Cocommutativity of
$\Delta$ makes this independent of the ordering of the blocks $B_i$.

Similarly, the transfer map
\[
R_{A,*}(f)\colon R_A(\Lambda)\to R_A(\Omega)
\]
is defined on the tensor factors indexed by the blocks $B_1,\ldots,B_r$ in the decomposition of $f^{-1}(D)$
by the iterated multiplication
\[
A_{|B_1|}\otimes_K\cdots\otimes_K A_{|B_r|}
\longrightarrow
A_{|D|},
\]
and then one tensors these maps over all blocks $D$ of $\Omega$. Commutativity of
$\nabla$ makes this independent of the ordering of the blocks $B_i$.

Functoriality of restrictions follows from coassociativity of $\Delta$, and functoriality of
transfers follows from associativity of $\nabla$. The compatibility with isomorphisms of
partitions is the symmetry isomorphism for tensor products. The double coset formula
is the compatibility between multiplication and comultiplication in a bialgebra: after
identifying the tensor factors by the common refinement of two partitions, both sides are
obtained by first splitting according to intersections of blocks and then multiplying the
pieces back together in the prescribed order. Commutativity and cocommutativity remove the
dependence on choices of order. Thus $R_A$ is a symmetric monoidal partition functor.

This construction is functorial in $A_*$. If $\phi\colon A_*\to (A')_*$ is a map of graded
bialgebras, then
\[
R_A(\Lambda)=\bigotimes_{B\in X/\sim_\Lambda}A_{|B|}
\longrightarrow
\bigotimes_{B\in X/\sim_\Lambda}(A')_{|B|}
=R_{A'}(\Lambda)
\]
is defined by applying $\phi$ to each tensor factor. Since $\phi$ preserves multiplication
and comultiplication, these maps commute with transfers and restrictions.

\end{proof}

There is a similar equivalence when $R$ has the structure of a symmetric monoidal partition ring. Following \cite[Definition 2.4]{Sinha2} and \cite{StricklandTurner}, a component Hopf ring is a connected commutative cocommutative $\N$-graded bialgebra $A_*$ together with unital commutative products $A_m \otimes_K A_m \to A_m$ for which the coproduct is multiplicative and the Hopf-ring distributivity relation holds.

\begin{cor} \label{cor:symmonhopf}
There is an equivalence of categories between the category of symmetric monoidal partition rings valued in $K$-modules and the category of component Hopf rings over $K$ given by the functor sending a symmetric monoidal partition ring $R$ to $R[\Sigma]$. 
\end{cor}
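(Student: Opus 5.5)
The plan is to restrict the equivalence of \cref{prop:algequiv} to the relevant subcategories. A symmetric monoidal partition ring is a symmetric monoidal partition functor with extra structure, so $R[\Sigma]$ is already a connected commutative cocommutative $\N$-graded $K$-bialgebra. Conversely, a component Hopf ring has such an underlying bialgebra, which determines $R_A$. Thus it suffices to show two things. First, the partition-ring structure on a symmetric monoidal partition functor $R$ corresponds exactly to the internal products on each $A_m$ subject to the component Hopf ring axioms. Second, morphisms correspond.

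First I will go from partition rings to component Hopf rings. Set $A_m=R(m)$, with internal product the ring structure on $R(\u m)$. The ring structures on all $R(\Lambda)$ are then determined. Because $m_{\Lambda,\Omega}$ is a ring isomorphism, the ring $R(\Lambda)\cong\bigotimes_B R(\u B)$ is the tensor product of rings. The coproduct on $R[\Sigma]$ comes from restrictions $R^*(\u i\sqcup\u j\leq\u{i+j})$, which are ring maps into $R(i)\otimes_K R(j)$; this is precisely the multiplicativity of the coproduct. The $R(0)$-ring structures agree by Eckmann--Hilton, as noted earlier.

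The main step is Hopf-ring distributivity,
\[
x*(yz)=\sum x'*y\otimes x''*z \quad (\text{in Sweedler notation, then multiplied}),
\]
for $y\in A_i$, $z\in A_j$, $x\in A_{i+j}$. This is Frobenius reciprocity applied to $f=\u i\sqcup\u j\leq \u{i+j}$:
\[
x*R_*(f)(y\otimes z)=R_*(f)\big(R^*(f)(x)*(y\otimes z)\big).
\]
Since $R^*(f)(x)=\Delta_{i,j}(x)$ and the product in $R(\u i\sqcup\u j)$ is factorwise, this is exactly the distributivity law.

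Conversely, given a component Hopf ring $A$, I will give $R_A(\Lambda)=\bigotimes_B A_{|B|}$ the tensor product ring structure. Then the following hold.
\begin{itemize}
\item[(a)] Restrictions are ring maps. This holds because they are built from iterated coproducts, which are multiplicative.
\item[(b)] The maps $m_{\Lambda,\Omega}$ are ring isomorphisms by construction.
\item[(c)] Frobenius reciprocity holds.
\end{itemize}

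The main obstacle is (c), since reciprocity must hold for an arbitrary map $f$, not only for $\u i\sqcup\u j\leq\u{i+j}$. My plan is to reduce it in three steps using \cref{lemma:factorization}. First, I factor $f$ as an isomorphism followed by a refinement. Isomorphisms just permute tensor factors, so reciprocity is immediate for them. Second, for a refinement $\Lambda\leq\Omega$, both sides are tensor products over the blocks $D$ of $\Omega$, so I reduce to $\Omega=\u D$ indiscrete. Third, any refinement of $\u n$ factors through iterated binary splits $\u{B_1}\sqcup\cdots\sqcup\u{B_r}\leq\u n$. Frobenius reciprocity is stable under composition, since transitivity of transfers and restrictions being ring maps give $R_*(gf)(R^*(gf)(a)b)=R_*(g)(R^*(g)(a)R_*(f)(b))$. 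Hence the case of binary splits, which is distributivity, is enough. I expect the bookkeeping with coassociativity in this induction to be the only delicate point.

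Finally, morphisms correspond. A map of partition rings gives ring maps on each $R(m)$ and a bialgebra map by \cref{prop:algequiv}. Conversely, a component Hopf ring map, applied factorwise, preserves the tensor product ring structures. So the functor is full, faithful and essentially surjective.
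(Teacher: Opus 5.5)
Your proposal is correct and follows the paper's route: you restrict the equivalence of \cref{prop:algequiv} and identify Hopf-ring distributivity with Frobenius reciprocity along $\u i\sqcup\u j\leq\u{i+j}$, using that restriction is the coproduct and that the ring structure on $R(\u i\sqcup\u j)\cong R(i)\otimes_K R(j)$ is factorwise. For the converse direction, the paper only asserts in one line that the construction lands in partition rings; your reduction of Frobenius reciprocity for an arbitrary map to binary splits (via \cref{lemma:factorization}, the blockwise tensor decomposition, and closure under composition) supplies the missing verification.
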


\begin{proof}
Let 
\[
A_* = R[\Sigma] = \bigoplus_{m\geq 0} R(m).
\]
The partition ring structure gives, for each $m$, a second
commutative product
\[
\ast \colon A_m\otimes_K A_m\longrightarrow A_m,
\]
namely the ring multiplication on $R(m)$. These are the
``addition'' products of the Hopf ring.

The only additional axiom to check is the Hopf-ring distributivity
law. Let
\[
a\in A_{i+j},
\qquad
b\in A_i,
\qquad
c\in A_j,
\]
and write
\[
\Delta_{i,j}(a)=\sum a'\otimes a''
\]
for the component of the coproduct landing in $A_i\otimes_K A_j$.
Since transfer in a partition ring satisfies Frobenius reciprocity,
we have
\[
a\ast \nabla(b \otimes c)
=
R_*(\u i\sqcup\u j\leq \u{i+j})
\left(
R^*(\u i\sqcup\u j\leq \u{i+j})(a)
\ast
m_{i,j}(b\otimes c)
\right).
\]
Using the symmetric monoidal identification
\[
R(\u i\sqcup\u j)\cong R(i)\otimes_K R(j),
\]
the restriction of $a$ is $\sum a'\otimes a''$. Thus the preceding
display becomes
\[
a\ast \nabla(b \otimes c)
=
\sum
\nabla ( (a'\ast b) \otimes (a''\ast c)),
\]
which is precisely the Hopf-ring distributivity law.

The functor the other direction from the proof of \cref{prop:algequiv} lands in symmetric monoidal partition rings in this case because $R(m) = A_m$ has the structure of a commutative ring.

\end{proof}


\begin{example} \label{ex:constantring}
Let $K$ be a commutative ring. The dual constant partition functor $C^K$ is a symmetric monoidal partition functor with values in $K$-modules. The symmetric monoidal structure comes from the isomorphism $K \otimes_K K \to K$. 

Using the formulas of \cref{divconstant}, we see that, since transfer maps are the identity for the dual constant partition functor $C^K$, the transfer multiplication
\[
C^K(i) \otimes C^K(j) \to C^K(i+j)
\]
is the ordinary multiplication on $K$. We learn from this that there is an isomorphism of graded commutative rings
\[
C^K[\Sigma] \cong K[t],
\]
where $t$ is in degree $1$. Since restriction maps are given by multiplication by the index, in the bialgebra structure on $C^K[\Sigma] \cong K[t]$, we have $\Delta(t) = t \otimes 1 + 1 \otimes t \in K[t] \otimes_K K[t]$ and thus
\[
\Delta(t^m) = \sum_{i+j = m} {m \choose i} t^i \otimes t^j.
\]

Now consider the constant partition ring $C_K$. Let $1_m \in C_K(m) = K$ be the multiplicative identity. These form an additive basis for $C_K[\Sigma]$ as a $K$-module. Since the transfer maps in $C_K$ are given by multiplication by the index, the transfer multiplication gives
\[
1_i \cdot 1_j = {i+j \choose i} 1_{i+j}.
\]
It follows that $C_K[\Sigma] \cong K \divpol{t}$ is the free divided power polynomial ring over $K$ on one variable (\cite[\href{https://stacks.math.columbia.edu/tag/07H4}{Section 07H4}]{stacks-project}). 

The unit map $C^K \to \Div(C^K) \cong C_K$ sends the multiplicative identity in $C^K(m)$ to $m!1_m$. Thus the unit map can be identified with the map of $K$-algebras (not necessarily an inclusion) 
\[
K[t] \to K\divpol{t}
\]
sending $t$ to $t$.
\end{example}

\section{Partition power functors} \label{sec:partpower}

In this section we define partition power functors, which are the partition functor analogue of a global power functor and show that $\Div$ is a monad on the category of partition power functors.

\begin{definition} \label{def:partpower}
A partition power functor is a partition ring $R$ equipped with a family of multiplicative maps, called power operations, 
\[
P_m \colon R(1) \to R(m)
\]
for each $m \in \N$, such that
\begin{enumerate}
    \item[(a)] $P_0$ is the constant function at $1$, $P_m(1) = 1$, and $P_1$ is the identity,
    \item[(b)] $R^*(\u{i} \sqcup \u{j} \leq \u{m})(P_m) = m_{i,j}(P_i \otimes P_j)$, whenever $i+j = m$,
    \item[(c)] $P_m(a+b) = \sum_{i+j = m} R_*(\u i \sqcup \u j \leq \u m)(m_{i,j}(P_i(a) \otimes P_j(b)))$.
\end{enumerate}
A map of partition power functors is a map of partition rings that is compatible with the families of power operations.
\end{definition}

If $R$ is a partition power functor, then we will refer to $P_m$ as the $m$th power operation on $R$. The third axiom ensures that the composite
\[
\bar{P}_m \colon R(1) \xrightarrow{P_m} R(m) \xrightarrow{q_{m}} \bar{R}(m)
\]
is additive when $m > 0$. We will reserve the notation $P_m/I_m$ for the composite of $\bar{P}_m$ with the inclusion $\bar{R}(m) \subseteq \Div(R)(m)$.

\begin{prop}
The functor $\Div$ extends to a monad on the category of partition power functors. 
\end{prop}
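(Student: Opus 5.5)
The plan is to equip $\Div(R)$ with power operations obtained by pushing those of $R$ forward along the unit $\eta$. Since $\Div(R)(1) = \bar{R}(\u 1) = R(1)$ (the partition $\u 1$ has no proper refinements, so $I_1 = 0$), we define
\[
P^{\Div}_m \colon \Div(R)(1) = R(1) \xrightarrow{P_m} R(m) \xrightarrow{\eta_m} \Div(R)(m).
\]
By construction, $\eta$ then commutes with power operations. We already know that $\Div(R)$ is a partition ring and that $\eta$ and $\mu$ are maps of partition rings, so what remains is to verify the axioms of \cref{def:partpower} for $P^{\Div}_m$ and to show that $\mu$ and $\Div(\phi)$ respect power operations.

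\textbf{Axioms for $\Div(R)$.}
\begin{itemize}
\item \emph{Multiplicativity and axiom (a).} These hold because $\eta_m$ is a ring map and $\eta_1$ is the identity.
\item \emph{Axiom (b).} Apply $\Div(R)^*(\u i\sqcup\u j\leq\u m)$ to $\eta_m P_m$. Since $\eta$ is a map of partition functors, this equals $\eta_{\u i\sqcup \u j} R^*(\u i \sqcup \u j\leq \u m) P_m = \eta_{\u i\sqcup\u j} m_{i,j}(P_i\otimes P_j)$. Because $\eta$ is lax symmetric monoidal, this equals $\Div(m)_{i,j}(\eta_iP_i\otimes\eta_jP_j)$.
\item \emph{Axiom (c).} The same argument works, now using compatibility of $\eta$ with transfers and with the monoidal structure.
\end{itemize}
Since $\eta$ is natural in $R$, a map of partition power functors $\phi$ yields $\Div(\phi)\eta = \eta\phi$, so $\Div(\phi)$ commutes with the new power operations. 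Hence $\Div$ is an endofunctor of partition power functors, and $\eta$ is a map of partition power functors by definition.

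\textbf{Compatibility of $\mu$.} The main check is that $\mu$ respects power operations. We must show
\[
\mu_m\, \eta_{\Div(R),m}\, \eta_{R,m} P_m = \eta_{R,m} P_m,
\]
noting that the power operation on $\Div\Div(R)$ is $\eta_{\Div(R)} \circ P^{\Div}$ and that $\Div\Div(R)(1)=R(1)$. This identity follows immediately from the unit law $\mu\,\eta_{\Div(R)} = 1$ established in \cref{thm:divmonad}.

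A subtlety remains: the definition of the power operations on $\Div\Div(R)$ must agree with $\Div$ applied to the structure on $\Div(R)$. Both are defined by the same recipe, namely $\eta_{\Div(R)}\circ P^{\Div}_m$, so they do agree. The monad laws themselves hold levelwise, hence are inherited from \cref{thm:divmonad}.

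\textbf{Main obstacle.} I expect the main difficulty to be axiom (c). Passing from $\Div(R)(1)=R(1)$ into the transfer product in $\Div(R)$ requires $\eta$ to be compatible simultaneously with transfers along $\u i\sqcup\u j\leq\u m$ and with $\Div(m)$. Both compatibilities were proven in \cref{lemma:unitMapForDivMonad} and in the lax monoidal monad proposition, so I would carefully cite those and check that the identification $\Div(R)(1)=R(1)$ is compatible with $\eta_1$.
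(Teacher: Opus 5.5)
Your proposal is correct and follows the paper's proof. You define the power operations on $\Div(R)$ as $\eta_m\circ P_m$ using $\Div(R)(1)=R(1)$, get the axioms and functoriality from $\eta$ being a natural map of (lax symmetric monoidal) partition rings, and reduce compatibility of $\mu$ to the unit law $\mu\circ\eta_{\Div(R)}=\id_{\Div(R)}$; your explicit checks of axioms (b) and (c) just spell out what the paper leaves implicit, and axiom (c) is not a real obstacle.
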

\begin{proof}
If $R$ is a partition power functor, then we must equip $\Div(R)$ with a family of power operations. This is easy to do since the unit map
\[
\eta \colon R \to \Div(R)
\]
is a map of partition rings and $\Div(R)(1) = R(1)$. It follows that, if $P_m$ is the $m$th power operation on $R$, then $\eta_{m} \circ P_m$ is an $m$th power operation for $\Div(R)$. The axioms that need to be satisfied by the family of power operations follow from the fact that $\eta$ is a map of partition rings. The naturality of $\eta$ implies that $\Div$ is a functor on partition power functors.

Now consider the map of partition rings
\[
\mu \colon \Div(\Div(R)) \to \Div(R).
\]
To see that this is a map of partition power functors, note that the power operations on $\Div(\Div(R))$ come from the power operations on $\Div(R)$ using $\eta_{\Div(R)}$. Since $\mu$ is a map of partition rings, the power operations on $\Div(\Div(R))$ induce power operations on $\Div(R)$ through $\mu$. We want these to be the same as those on $\Div(R)$ through $\eta$. But this follows from the fact that $\id_{\Div(R)} = \mu \circ \eta_{\Div(R)}$.
\end{proof}

Recall that a global power functor $S$ is a global Green functor equipped with power operations $\P_m \colon S(G) \to S(G \wr \Sigma_m)$ satisfying several axioms \cite[Section 5.1]{Schwede}. Restricting along the diagonal map $G \times \Sigma_m \to G \wr \Sigma_m$ gives operations $P_m \colon S(G) \to S(G \times \Sigma_m)$. Since $G \wr \Sigma_0 = e$, when $m = 0$, the target of $\P_0$ is $S(e)$ and $P_0$ is $\P_0$ composed with the restriction map $S(e) \to S(G)$ along the group homomorphism $G \to e$.

\begin{prop} \label{globalpowertopartition}
If $S$ is a global power functor, then $S \circ (G \wr \Sigma_{(-)})$ and $S \circ (G \times \Sigma_{(-)})$ are both partition power functors.
\end{prop}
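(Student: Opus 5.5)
By \cref{globalgreentopartition}, both $S \circ (G \wr \Sigma_{(-)})$ and $S \circ (G \times \Sigma_{(-)})$ are already partition rings. What remains is to define the power operations $P_m \colon R(1) \to R(m)$ and check axioms (a)--(c) of \cref{def:partpower}.

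For the wreath product case we have $R(1) = S(G \wr \Sigma_1) = S(G)$ and $R(m) = S(G \wr \Sigma_m)$, so we take $P_m = \P_m$, Schwede's power operation. For the product case we have $R(1) = S(G)$ and $R(m) = S(G\times\Sigma_m)$, and we take $P_m = \res_{\Delta_{\u m}} \circ \P_m$ as in the paragraph preceding the statement. The case $m=0$ is handled by the stated convention.

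We would first verify everything for $G \wr \Sigma_{(-)}$ and then transport it along the diagonal. Schwede's axioms for a global power functor \cite[Section 5.1]{Schwede} supply the needed facts.
\begin{enumerate}[(i)]
\item $\P_0 = 1$, $\P_1 = \id$, each $\P_m$ is multiplicative, and $\P_m(1) = 1$.
\item The restriction formula $\res^{G\wr\Sigma_{i+j}}_{G\wr(\Sigma_i\times\Sigma_j)}\P_{i+j}(x) = \P_i(x)\times\P_j(x)$, where $\times$ is the external product along $G\wr(\Sigma_i\times\Sigma_j)\cong G\wr\Sigma_i\times G\wr\Sigma_j$. This is exactly $m_{i,j}(P_i\otimes P_j)$ for the wreath partition ring, since there $m_{\Lambda,\Omega}$ is induced by that isomorphism.
\item The additivity formula $\P_m(a+b)=\sum_{i+j=m}\tr_{G\wr(\Sigma_i\times\Sigma_j)}^{G\wr\Sigma_m}(\P_i(a)\times\P_j(b))$. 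This is axiom (c), because the transfer along $\u i\sqcup\u j\leq\u m$ in the wreath partition functor is exactly this group-theoretic transfer.
\end{enumerate}
Axiom (a) is immediate from (i).

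For $G\times\Sigma_{(-)}$, we use that restriction along the diagonal is a map of partition rings by \cref{cor:diagonalGreen}, and in particular commutes with transfers, restrictions, and the external products $m_{i,j}$. Applying $\res_{\Delta}$ to (ii) and (iii) gives (b) and (c) for the $P_m$, once we note that $\res_{\Delta_{\u 1}}$ is the identity on $R(1)=S(G)$. Multiplicativity of $P_m$ and $P_m(1)=1$ follow because restriction is a ring map. The case $m=0$ uses that the composite $S(e)\to S(G)$ is a ring map.

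The main obstacle is checking that the external product $m_{i,j}$ of the product partition ring agrees with the diagonal restriction of the wreath external product. For the product partition ring, $m_{i,j}$ is built from restrictions along $\alpha_\Lambda$, which involve the diagonal of $G$. We would check this by a direct computation with the commutative square
\[
G\times\Sigma_i\times\Sigma_j \to G\wr\Sigma_i\times G\wr\Sigma_j,
\]
which factors through $(G\times\Sigma_i)\times(G\times\Sigma_j)$ via the diagonal $G\to G\times G$, together with naturality of the external product in global Green functors. In fact this compatibility is the content of \cref{cor:diagonalGreen} being a map of partition rings, so we would cite it and reduce everything else to Schwede's axioms.
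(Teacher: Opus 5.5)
Your proposal is correct and follows the paper's argument. You derive the wreath case directly from Schwede's axioms and then pull back along the diagonal. The key input is that diagonal restriction carries the wreath external product to the external product of $S\circ(G\times\Sigma_{(-)})$, which the paper verifies with a commutative diagram and which you obtain from \cref{cor:diagonalGreen} or the same direct computation. You are slightly more thorough than the paper, since you also explicitly check multiplicativity and the additivity axiom (c), which the paper leaves implicit.
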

\begin{proof}
The definition of a global power functor ensures that $\P_0$ is the constant function at $1$ and that
\[
\res_{G \wr (\Sigma_i \times \Sigma_j)}^{G \wr \Sigma_m} (\P_m) = \P_i \boxtimes \P_j,
\]
where $\boxtimes$ is the external multiplication
\[
S(G \wr \Sigma_i) \otimes_{S(e)} S(G \wr \Sigma_j) \to S(G \wr \Sigma_i \times G \wr \Sigma_j).
\]
This implies that $S \circ (G \wr \Sigma_{(-)})$ is a partition power functor.

The fact that $\P_0$ is the constant function at $1$ implies that $P_0$ is also the constant function at $1$. Now consider the restriction
\[
\res_{G \times \Sigma_i \times \Sigma_j}^{G \times \Sigma_m} P_m.
\]
This is equal to $P_i \boxtimes P_j$, where $\boxtimes$ is the external product
\[
S(G \times \Sigma_i) \otimes_{S(G)} S(G \times \Sigma_j) \to S(G \times \Sigma_i \times \Sigma_j)
\]
as there is a commutative diagram
\[
\xymatrix{S(G \wr \Sigma_i) \otimes_{S(e)} S(G \wr \Sigma_j) \ar[dd] \ar[r] & S(G \times \Sigma_i) \otimes_{S(e)} S(G \times \Sigma_j) \ar[d] \\ & S(G \times \Sigma_i) \otimes_{S(G)} S(G \times \Sigma_j) \ar[d]^{\boxtimes} \\ S(G \wr (\Sigma_i \times \Sigma_j)) \ar[r] & S(G \times \Sigma_i \times \Sigma_j),}
\]
in which both the top and bottom arrow are induced by the diagonal maps.
\end{proof}

\cref{cor:diagonalGreen} implies the following:

\begin{cor}
If $S$ is a global power functor, then restriction along the diagonal induces a map of partition power functors
\[
S \circ (G \wr \Sigma_{(-)}) \to S \circ (G \times \Sigma_{(-)}).
\]
\end{cor}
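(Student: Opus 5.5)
The corollary should follow formally from \cref{cor:diagonalGreen} together with \cref{globalpowertopartition}. By \cref{cor:diagonalGreen}, restriction along the diagonals $\Delta_\Lambda \colon G \times \Sigma_\Lambda \to G \wr \Sigma_\Lambda$ already gives a map of partition rings
\[
\res_{\Delta} \colon S \circ (G \wr \Sigma_{(-)}) \to S \circ (G \times \Sigma_{(-)}).
\]
By \cref{globalpowertopartition}, both sides are partition power functors. So the only thing left to check is that $\res_\Delta$ is compatible with the power operations. Concretely, I need that for each $m$ the square formed by the power operations $R(1) \to R(m)$ on each side and the maps $\res_{\Delta_{\u 1}}$ and $\res_{\Delta_{\u m}}$ commutes.

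First I identify the power operations. On $S \circ (G \wr \Sigma_{(-)})$, the $m$th operation is $\P_m \colon S(G \wr \Sigma_1) = S(G) \to S(G \wr \Sigma_m)$. On $S \circ (G \times \Sigma_{(-)})$, it is $P_m = \res_{\Delta_{\u m}} \circ \P_m$, which is exactly how $P_m$ was defined before \cref{globalpowertopartition}. At level $\u 1$ the diagonal $\Delta_{\u 1} \colon G \times \Sigma_1 \to G \wr \Sigma_1$ is an isomorphism identifying both sides with $G$, so $\res_{\Delta_{\u 1}}$ is the identity of $S(G)$. The required identity $\res_{\Delta_{\u m}} \circ \P_m = P_m \circ \res_{\Delta_{\u 1}}$ therefore reduces to the definition of $P_m$.

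The only point that needs care is $m=0$. Here $G \wr \Sigma_0 = e$ and $G \times \Sigma_0 = G$, and $\Delta_{\u 0}$ is the map $G \to e$. So $\res_{\Delta_{\u 0}} \circ \P_0$ is $\P_0$ followed by restriction along $G \to e$, which again is precisely the stated definition of $P_0$; both are the constant function at $1$. I would also confirm that the identification $S(G \wr \Sigma_1) \cong S(G)$ used as $R(1)$ on the wreath side is the same one implicit in \cref{globalpowertopartition}. This is a bookkeeping check rather than a real obstacle, and beyond it I expect no difficulty.
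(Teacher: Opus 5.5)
Your proposal is correct and follows the same route as the paper, which deduces this corollary directly from \cref{cor:diagonalGreen}. Compatibility with the power operations then holds by definition, since $P_m$ on $S \circ (G \times \Sigma_{(-)})$ is defined as $\res_{\Delta_{\underline{m}}} \circ \P_m$ and $\Delta_{\underline{1}}$ is the identity of $G$; you also correctly treat the degenerate case $m=0$.
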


\begin{example}
Let $K$ be a commutative ring and let $C_K$ be the constant partition ring. There is a canonical partition power functor structure on $C_K$ given by $P_m(k) = k^m \in C_K(m) = K$.
\end{example}

\begin{example} \label{ex:powerrestrict}
Let $S$ be a global power functor. The axioms for the power operations imply that $\res_{G}^{G \times \Sigma_m} P_m = (-)^m$, the $m$th power map on $S(G)$. Let $K= S(G)$. The map of partition rings $S \circ (G \times \Sigma_{(-)}) \to C_K$ of \cref{ex:Greentriv} is a map of partition power functors. 
\end{example}

\section{Universal exponential relations} \label{sec:expreln}
In this section we introduce a completed version of the commutative ring $R[\Sigma]$, called $R\powser{\Sigma}$. We prove that every exponential element in $R\powser{\Sigma}$ admits a canonical logarithm in $\Div(R)\powser{\Sigma}$.

Given a lax symmetric monoidal partition functor $R$, let 
\[
R\powser{\Sigma} = \prod_{m \geq 0} R(m)
\]
and let
\[
R\langle \Sigma \rangle = \Div(R)[\Sigma] \, \, \text{ and } \, \, R\divpowser{\Sigma} = \Div(R)\powser{\Sigma}.
\]
The commutative ring $R \powser{\Sigma}$ can be obtained from $R[\Sigma]$ by completing at the system of irrelevant ideals 
\[
\bigoplus_{m \geq n} R(m) \subseteq R[\Sigma]
\]
for $n > 0$. That is, $R \powser{\Sigma}$ is the image of $R[\Sigma]$ under a ``completion" functor from commutative $\N$-graded rings to commutative rings and $(-)\powser{\Sigma}$ is a functor from lax symmetric monoidal partition functors to commutative rings. Because we are interested in power series, commutative rings of the form $R\powser{\Sigma}$ will play a more important role in our story than $R[\Sigma]$.

Now assume that $R$ is a partition ring. Then the homogeneous factors $R(m)$ of $R \powser{\Sigma}$ also have a ring structure. For $r,s \in R(m) \subset R\powser{\Sigma}$, we will write $r *s$ for the product of $r$ and $s$ using the commutative ring structure on $R(m)$. Extending this, for $x \in R(m)$ and $y \in \Div(R)(m)$, we will write
\[
x * y = \eta_{m}(x) * y \in \Div(R)(m).
\] 
We will reserve $*$ for the multiplication internal to $R(\Lambda)$, when $R$ is a partition ring and $\Lambda$ is a partition. 

For $r \in R(i)$ and $s \in R(j)$, we will reserve ordinary multiplication notation for the transfer multiplication in $R \powser{\Sigma}$, so that
\[
rs = R_*(\u i \sqcup \u j \leq \u {i+j})(m_{i,j}(r \otimes s)).
\]

Given a partition $\Lambda \leq \Omega$, let $1_{\Lambda \leq \Omega} \in \bar{R}(\Lambda)$ be the multiplicative identity. Using the notation of \cref{eq:weyl}, we will write $\1_{\Lambda \leq \Omega}$ for the element of $\Div(R)(\Omega)$ given by $\Sigma_{\Omega} 1_{\Lambda \leq \Omega}$. When $\Omega$ is clear from context, we will just write $\1_{\Lambda}$. Note that $\1_{\Lambda \leq \Omega}$ is an idempotent for the multiplication $*$ on $\Div(R)(\Omega)$. Also, note that if $1_{\Lambda \leq \Omega} = 0$ in $\bar{R}(\Lambda)$, which occurs only when $\bar{R}(\Lambda)$ is the zero ring, then $\1_{\Lambda \leq \Omega} = 0$ in $\Div(R)(\Omega)$.

Recall that the orbits for the $\Sigma_m$ action on the set of subpartitions of $\u m$ are in bijective correspondence with integer partitions of $m$. As in \cref{sec:symmon}, we will often use lower case Greek letters to denote integer partitions. Recall that an integer partition $\lambda$ is a sequence of natural numbers $(\lambda_1, \lambda_2, \ldots)$ with all but finitely many $\lambda_i$ equal to zero and that we say that $\lambda$ is an integer partition of $m$, written $\lambda \vdash m$, if 
\[
m = \sum_{i \geq 1} i \cdot \lambda_i.
\]

Let $m,n > 0$ and let $\lambda \vdash m$ and $\gamma \vdash n$ be two integer partitions. The sum $\lambda + \gamma$ is the integer partition of $m+n$ with
\[
(\lambda + \gamma)_i = \lambda_i + \gamma_i.
\]
Thus, for $k \in \N$, $k\lambda \vdash km$ is the partition with $(k\lambda)_i = k\lambda_i$. We will also make use of the factorial of a partition
\[
\lambda ! = \prod_{i \geq 1} (\lambda_i!),
\]
the length of a partition
\[
\ell(\lambda) = \sum_{i \geq 1} \lambda_i,
\]
the size of a partition
\[
\| \lambda \| =\sum_{i \geq 1} i \cdot \lambda_i = m,
\]
and the extension of the choose function to partitions
\[
{m \choose \lambda} = \frac{m!}{\lambda!}.
\]


We will write $\1_{\lambda \vdash m}$, or just $\1_{\lambda}$ for $m = \|\lambda\|$, for the element $\1_{\Lambda \leq \u m} \in \Div(R)(m)$, where $\Lambda$ is any choice of set partition of $\u m$ with underlying integer partition $\lambda$. This does not depend on the choice of $\Lambda$ as 
\begin{equation*}
(\Sigma_{m} 1_{\Lambda \leq \u m})_{\Phi} = \left\{
\begin{array}{rl}
\bar{R}_*(\beta) 1_{\Lambda \leq \u m} = 1_{\Phi \leq \u m} & \text{for } \beta \in \Sigma_{m} \text{ with } \beta \Lambda = \Phi,\\
0 & \text{otherwise,}
\end{array} \right.
\end{equation*}
making use of the fact that $\bar{R}_*(\beta)$ is a ring map. Further, it follows from \cref{weylgroupfixed} that
\begin{equation} \label{eq:idempotentbasis}
\sum_{\lambda \vdash m} \1_{\lambda \vdash m} = 1 \in \Div(R)(m),
\end{equation}
the multiplicative identity for the product denoted $*$. Following \cref{not:indexing}, we have $\1_m = \1_{\u m \leq \u m} = \1_{m \parts m}$.

\begin{prop} \label{prop:partitionmultiply}
Let $R$ be a partition ring and let $\lambda \vdash n$ and $\gamma \vdash m$ be two integer partitions so that $\1_{\lambda} \in \Div(R)(n)$ and $\1_{\gamma} \in \Div(R)(m)$. Then 
\[
\1_\lambda \1_\gamma = \frac{(\lambda+\gamma)!}{\lambda ! \gamma !} \1_{\lambda+\gamma}.
\]
\end{prop}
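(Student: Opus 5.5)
We compute the transfer product directly from the definitions. By definition,
\[
\1_\lambda\1_\gamma=\Div(R)_*(\u n\sqcup\u m\leq \u{n+m})\,\Div(m)_{n,m}(\1_\lambda\otimes\1_\gamma).
\]
The first step is to identify $\Div(m)_{n,m}(\1_\lambda\otimes \1_\gamma)\in\Div(R)(\u n\sqcup\u m)$. Unwinding \cref{prop:laxdiv} and \cref{eq:barmdefeq}, its component at a subpartition $\Delta\sqcup\Psi$ with $\Delta\leq\u n$ and $\Psi\leq\u m$ is $\bar m_{\Delta,\Psi}(1_\Delta\otimes 1_\Psi)=q_{\Delta\sqcup\Psi}m_{\Delta,\Psi}(1\otimes 1)=1_{\Delta\sqcup\Psi}$ when $\Delta$ has type $\lambda$ and $\Psi$ has type $\gamma$. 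The component is $0$ otherwise. Here we use that $m_{\Delta,\Psi}$ is a ring map, so it preserves units. Thus the product is the sum of $1_{\Delta\sqcup\Psi}$ over all pairs $(\Delta,\Psi)$ of the given types.

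Next we apply the transfer formula along $f=(\u n\sqcup\u m\leq\u{n+m})$. Fix $\Gamma\leq\u{n+m}$. The $\Gamma$-component of $\Div(R)_*(f)(x)$ is a sum over $[\sigma]\in\Sigma_\Gamma\backslash\Sigma_{n+m}/(\Sigma_n\times\Sigma_m)$ with $\Gamma\leq\sigma(\u n\sqcup\u m)$, and each term is $\bar R_*(\sigma)(x_{\sigma^{-1}\Gamma})$. Because $\bar R_*(\sigma)$ is a ring isomorphism, each nonzero term equals $1_\Gamma$. A term is nonzero exactly when $\sigma^{-1}\Gamma$ splits as $\Delta\sqcup\Psi$ of types $\lambda$ and $\gamma$. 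In particular, if $\Gamma$ does not have type $\lambda+\gamma$, every term vanishes.

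It remains to count, for $\Gamma$ of type $\lambda+\gamma$. The cosets $\Sigma_{n+m}/(\Sigma_n\times\Sigma_m)$ correspond to $n$-element subsets $S\subseteq[n+m]$, via $S=\sigma([n])$. The condition $\Gamma\leq\sigma(\u n\sqcup\u m)$ says that $S$ is a union of blocks of $\Gamma$. The nonvanishing condition says that the blocks inside $S$ have type $\lambda$, and then the complementary blocks automatically have type $\gamma$. The left action of $\Sigma_\Gamma$ fixes every block setwise, so it fixes each such $S$, and distinct $S$ give distinct double cosets. Hence the number of contributing double cosets equals the number of ways to choose, for each $i$, $\lambda_i$ of the $\lambda_i+\gamma_i$ blocks of size $i$. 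That number is $\prod_i\binom{\lambda_i+\gamma_i}{\lambda_i}=\frac{(\lambda+\gamma)!}{\lambda!\gamma!}$, independent of $\Gamma$.

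So the product has $\Gamma$-component $\frac{(\lambda+\gamma)!}{\lambda!\gamma!}1_\Gamma$ for every $\Gamma$ of type $\lambda+\gamma$ and $0$ elsewhere. By the description of $\1_{\lambda+\gamma}$ preceding \cref{eq:idempotentbasis}, this is exactly $\frac{(\lambda+\gamma)!}{\lambda!\gamma!}\1_{\lambda+\gamma}$. The main obstacle is the bookkeeping in the double coset step. One must verify that the subscript $(\sigma f)^{-1}\Gamma$ really corresponds to a type-$(\lambda,\gamma)$ splitting, and that the left $\Sigma_\Gamma$-quotient identifies no distinct valid subsets $S$.
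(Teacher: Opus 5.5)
Your proposal is correct and follows essentially the same route as the paper. Both arguments identify $\Div(m)_{n,m}(\1_\lambda\otimes\1_\gamma)$ with the idempotent supported on the $\Sigma_n\times\Sigma_m$-orbit of a split partition of type $(\lambda,\gamma)$, apply the transfer formula, and count the contributing double cosets as $\prod_i\binom{\lambda_i+\gamma_i}{\lambda_i}$, each term being the unit. Your check that $\Sigma_\Gamma$ fixes each valid subset $S$, so that distinct subsets give distinct double cosets, fills in a detail the paper leaves implicit.
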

\begin{proof}
Choose set partitions
\[
\Lambda\vdash [n]
\qquad\text{and}\qquad
\Gamma\vdash [m]
\]
with underlying integer partitions $\lambda$ and $\gamma$, respectively,
and put
\[
\Theta=\Lambda\sqcup\Gamma
\leq
\u n\sqcup\u m.
\]
By inspecting the coordinates in the definition of the lax symmetric
monoidal structure on $\Div(R)$, we have
\[
\Div(m)_{n,m}(\1_\lambda\otimes\1_\gamma)
=
\1_{\Theta\leq \u n\sqcup\u m}.
\]
Thus
\[
\1_\lambda\1_\gamma
=
\Div(R)_*(\u n \sqcup \u m \leq \u{n+m})
\left(
\1_{\Theta\leq\u n\sqcup\u m}
\right).
\]

Let $\theta=\lambda+\gamma$. We evaluate this transfer at a partition $\Phi \leq \u{n+m}$. The $\Phi$-coordinate is zero unless the underlying partition of $\Phi$ is $\theta$. Indeed, a nonzero summand in the transfer formula is indexed by a permutation $[\sigma]$ such that $\sigma^{-1}\Phi$ lies in the $\Sigma_n \times \Sigma_m$-orbit of $\Theta$. This forces $\Phi$ to have $\theta_i = \lambda_i+\gamma_i$ blocks of size $i$ for $i \geq 1$.


Assume now that the underlying partition of $\Phi$ is $\theta$. The double cosets contributing to the $\Phi$-coordinate are in bijection with choices, for each $i \geq 1$ of $\lambda_i$ blocks from the $\theta_i$ blocks of $\Phi$ of size $i$. The chosen blocks are sent to the $n$-component and the remaining $\gamma_i$ blocks are sent to the $m$-component.

Thus the number of contributing terms is
\[
\prod_{i\geq1}
\binom{\lambda_i+\gamma_i}{\lambda_i}.
\]
Each contributing term is the multiplicative identity in
$\bar R(\Phi)$, since it is obtained from the multiplicative identity
by an isomorphism of partitions. Therefore
\[
\left(
\1_\lambda\1_\gamma
\right)_\Phi
=
\Bigg(
\prod_{i\geq1}
\binom{\lambda_i+\gamma_i}{\lambda_i}
\Bigg)
1_{\Phi\leq\u{n+m}}.
\]
Since this holds for every partition $\Phi$ whose underlying integer partition is $\theta$, we obtain
\[
\1_\lambda\1_\gamma
=
\Bigg(
\prod_{i\geq1}
\binom{\lambda_i+\gamma_i}{\lambda_i}
\Bigg)
\1_{\lambda+\gamma}.
\]
\end{proof}

It follows that
\begin{equation*}
\1_{n} \1_{m} = \left\{
\begin{array}{rl}
\1_{\u{n} \sqcup \u{m} \leq \u{n+m}} & \text{if } n \neq m,\\
2 \1_{\u{n} \sqcup \u{m} \leq \u{n+m}} & \text{if } n = m,
\end{array} \right.
\text{ and } (\1_{\lambda})^k = \frac{(k \lambda)!}{(\lambda !)^k} \1_{k \lambda}.
\end{equation*}




From now on, we will denote an element of $R\powser{\Sigma}$ as a power series $\sum_{m \geq 0} r_m t^m$, where $r_m \in R(m)$ and $t$ is a dummy variable tracking the grading.


\begin{definition}
For $R$ a lax symmetric monoidal partition functor, we say that $\sum_{m \geq 0} r_m t^m \in R \powser{\Sigma}$ is exponential if $r_0 =1$ and, whenever $m=i+j$, we have
\[
R^*(\u{i} \sqcup \u{j} \leq \u{m})(r_m) = m_{i, j}(r_i \otimes r_j).
\]
\end{definition}

Note that if $R \to R'$ is a map of lax symmetric monoidal partition functors then the induced map of commutative rings $R\powser{\Sigma} \to R'\powser{\Sigma}$ sends exponential elements to exponential elements. \cref{def:partpower} ensures that if $R$ has the structure of a partition power functor, then for $r \in R(1)$, we have $\sum_{m \geq 0} P_m(r)t^m$ is an exponential element in $R \powser{\Sigma}$. 

\begin{theorem} \label{exponentialformula}
If $R$ is a partition ring and $\sum_{m \geq 0} r_mt^m \in R\divpowser{\Sigma} = \Div(R)\powser{\Sigma}$ is exponential, then we have the identity
\[
\sum_{m \geq 0} r_mt^m = \exp \Big (\sum_{i \geq 1} (r_i * \1_{i}) t^i \Big ).
\]
\end{theorem}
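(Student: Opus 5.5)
The plan is to compare both sides coordinatewise in $\Div(R)(m)$, using the idempotent decomposition \cref{eq:idempotentbasis}. The exponential is read integrally as
\[
\exp\Big(\sum_{i\ge1} a_i t^i\Big)=\sum_{m\ge0}\ \sum_{\lambda\vdash m}\ \prod_{i\ge1}\frac{a_i^{\lambda_i}}{\lambda_i!}\,t^m ,\qquad a_i=r_i*\1_i .
\]
Part of the argument is to show that each product $\prod_i a_i^{\lambda_i}$ is canonically $\lambda!$ times an explicit element. That element then serves as the definition of the quotient, so no actual division in $\Div(R)(m)$ is needed.

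\textbf{Step 1 (coordinates of $r_m$).} Write $r_m=\sum_{\lambda\vdash m} r_m*\1_\lambda$. Fix $\Phi\le\u m$ with underlying integer partition $\lambda$. Because restriction in $\Div(R)$ along an inclusion is projection onto coordinates, $(r_m)_\Phi$ equals the top coordinate of $\Div(R)^*(\Phi\le\u m)(r_m)\in\Div(R)(\Phi)$.
\begin{itemize}
\item Choose an isomorphism $\Phi\cong\bigsqcup_B \u{|B|}$, where $B$ runs over the blocks of $\Phi$. Then factor $\Phi\le\u m$ through iterated inclusions of the form $\u i\sqcup\u j\le\u{i+j}$, transported along isomorphisms.
\item Iterating the exponential condition, which is legitimate by naturality of $m$ in restrictions together with associativity, gives $\Div(R)^*(\Phi\le\u m)(r_m)=m_\Phi\big(\bigotimes_B r_{|B|}\big)$.
\item Taking the top coordinate and using the definition of $\Div(m)$ through $\bar m$ yields
\[
(r_m)_\Phi=\bar m\Big(\bigotimes_B (r_{|B|})_{\u{|B|}}\Big),
\]
where $(r_{|B|})_{\u{|B|}}$ is exactly the unique nonzero coordinate of $r_{|B|}*\1_{|B|}$.
\end{itemize}
Consequently $r_m*\1_\lambda$ is the $\Sigma_m$-invariant element whose $\Phi$-coordinate is this external product of top components. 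It depends only on the elements $a_i$.

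\textbf{Step 2 (products of the $a_i$).} Next, compute $\prod_i a_i^{\lambda_i}$ in $\Div(R)\powser{\Sigma}$ by the same transfer-formula count as in \cref{prop:partitionmultiply}, now carrying nonidentity coordinates. For each factor, $\Div(m)$ places the product of the top components on the partition $\bigsqcup \u{i}$. Transferring to $\u m$, the $\Phi$-coordinate vanishes unless $\Phi$ has type $\lambda$. When $\Phi$ has type $\lambda$, the contributing double cosets correspond to the ways of assigning the $\lambda_i$ blocks of size $i$ of $\Phi$ to the $\lambda_i$ factors, so there are $\lambda!$ of them. By symmetry of $\bar m$ and \cref{eq:permutetensors}-type permutation invariance, every contributing term equals the same element $\bar m(\bigotimes_B (r_{|B|})_{\u{|B|}})$.

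The result of Steps 1 and 2 together is
\[
\prod_i a_i^{\lambda_i}=\lambda!\,(r_m*\1_\lambda).
\]
This shows that the required divided powers $\prod_i a_i^{\lambda_i}/\lambda_i!$ exist canonically and equal $r_m*\1_\lambda$. Summing over $\lambda\vdash m$ and using \cref{eq:idempotentbasis} then gives the degree-$m$ part of the identity.

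\textbf{Main obstacle.} The hardest part is Step 2's bookkeeping. I need to check that the double-coset count really produces $\lambda!$ rather than $\prod_i\binom{\cdot}{\cdot}$-type factors when factors are multiplied one at a time, and that all contributing terms coincide. Equal-size blocks can be permuted, and this permutation must act compatibly on the tensor factors; that compatibility is where $\Sigma_{\u i}$-triviality on $\bar R(i)$ and the symmetry axiom of $m$ enter.

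To control this, I would first do the case of a single $i$, computing $a_i^k$ by induction on $k$. Here \cref{prop:partitionmultiply} supplies the count $\binom{k}{1}\binom{k-1}{1}\cdots=k!$, and I would follow the same coset description. I would then combine the different block sizes, which multiply without any extra coefficient since they live in disjoint orbits.
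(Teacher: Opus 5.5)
Your argument is correct, but it takes a different route from the paper's.

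\textbf{The paper's route.} The paper never computes coordinates of $r_m$. It uses three facts: $\Div(m)_{i,j}$ is multiplicative for $*$, the exponential condition holds, and $\Div(R)$ satisfies Frobenius reciprocity. Together these give $(r_i*\1_i)(r_j*\1_j)=r_{i+j}*\1_i\1_j$, and iterating gives $\prod_i(r_i*\1_i)^{\lambda_i}=r_{\|\lambda\|}*\lambda!\,\1_\lambda$. All the counting is therefore delegated to \cref{prop:partitionmultiply}. There every contributing term is a transported unit, so the question of whether the terms agree never arises.

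\textbf{Your route.} You first extract from the exponential condition a structural fact: an exponential element of $\Div(R)\powser{\Sigma}$ is determined by its top coordinates, via $(r_m)_\Phi=\bar{m}\bigl(\bigotimes_B (r_{|B|})_{\u{|B|}}\bigr)$. You then redo the count of \cref{prop:partitionmultiply} with these non-unit entries.

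\textbf{Your stated obstacle.} It disappears once you make two observations.
\begin{enumerate}
\item Write $a_{i_1}\cdots a_{i_k}$ as a single transfer along $\Theta=\u{i_1}\sqcup\cdots\sqcup\u{i_k}\leq\u m$. At a $\Phi$ of type $\lambda$, the contributing double cosets are the cosets $\sigma\Sigma_\Theta$ with $\sigma\Theta=\Phi$. These form a torsor under $W_{\Sigma_m}(\Sigma_\Theta)\cong\prod_i\Sigma_{\lambda_i}$, so there are exactly $\lambda!$ of them.
\item Your Step 1 identifies the external product sitting at $\Theta$ with $(r_m)_\Theta$. Hence each term is $\bar{R}_*(\sigma)(r_m)_\Theta=(r_m)_{\sigma\Theta}=(r_m)_\Phi$ by $\Sigma_m$-invariance of $r_m$. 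You do not need \eqref{eq:permutetensors}, which is only available for symmetric monoidal $R$.
\end{enumerate}

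\textbf{What each route buys.} The paper's route is shorter and reuses \cref{prop:partitionmultiply}. Yours uses neither Frobenius reciprocity nor multiplicativity of $\Div(m)$. Read $r_i*\1_i$ and $r_m*\1_\lambda$ as projections onto orbit summands via \cref{weylgroupfixed}. Then your argument proves the identity for exponential elements of $\Div(R)\powser{\Sigma}$ for any lax symmetric monoidal $R$. That is more general than what \cref{rem:genexp} records.
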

\begin{proof}
The exponential is interpreted coefficientwise using the usual power series for $\exp$; \cref{prop:partitionmultiply} will show that the factorials arising from products of the elements $\1_i$ cancel the denominators, so the right hand side is defined integrally in $R\divpowser{\Sigma}$.

First notice that
\begin{equation*}
\begin{split}
(r_i * \1_{i})(r_j * \1_{j}) &= \Div(R)_*(\u i \sqcup \u j \leq \u {i+j})(\Div(m)_{i,j}((r_i * \1_{i}) \otimes (r_j * \1_{j}))) \\
&= \Div(R)_*(\u i \sqcup \u j \leq \u {i+j})(\Div(m)_{i,j}(r_i \otimes r_j) * \1_{\u i \sqcup \u j}) \\
&= r_{i+j}*\Div(R)_*(\u i \sqcup \u j \leq \u {i+j})(\1_{\u i \sqcup \u j}) \\
&= r_{i+j} * \1_{i} \1_{j}.
\end{split}
\end{equation*}
The second equality follows from the fact that $\Div(m)_{i, j}$ is a ring map for the $*$ product. The third equality follows from Frobenius reciprocity, making use of the fact that $\sum_{m \geq 0} r_mt^m$ is exponential.

This implies that if $\ell(\lambda) = m$, then
\begin{equation} \label{eq:exppowers}
\prod_{i\geq 1} \big ( (r_i * \1_{i})t^i \big )^{\lambda_i} = \prod_{i \geq 1} (r_{i\lambda_i} * \lambda_i! \1_{\lambda_i \u i \leq \u{i \lambda_i}}) t^{i \lambda_i} = (r_{\| \lambda \|} * \lambda! \1_{\lambda}) t^{\| \lambda \|},
\end{equation}
where $\lambda_i \u i$ is the $\lambda_i$-fold coproduct of $\u i$ with itself.

We calculate that
\begin{equation*}
\begin{split}
\exp \Big (\sum_{i \geq 1} (r_i * \1_{i}) t^i \Big ) &= \sum_{m \geq 0} \frac{1}{m!} \Big (\sum_{i \geq 1} (r_i * \1_{i}) t^i \Big )^m \\
&= \sum_{m \geq 0} \frac{1}{m!} \sum_{\{\lambda \mid \ell(\lambda) = m\}} {m \choose \lambda} \prod_{i \geq 1} (r_i * \1_{i} t^i)^{\lambda_i} \\
&= \sum_{m \geq 0} \frac{1}{m!} \sum_{\{\lambda \mid \ell(\lambda) = m\}} {m \choose \lambda} r_{\| \lambda \|} * \lambda ! \1_{\lambda} t^{\| \lambda \|} \\
&= \sum_{\lambda} r_{\| \lambda \|} * \1_{\lambda} t^{\| \lambda \|} \\
&= \sum_{m \geq 0} r_m t^m.
\end{split}
\end{equation*}
The second equality is the multinomial expansion. The third equality makes use of \eqref{eq:exppowers}. The fourth equality indexes the sum over the set of all integer partitions. The final equality makes use of \eqref{eq:idempotentbasis}.
\end{proof}


\begin{remark}
\cref{exponentialformula} will primarily be applied to exponential elements in $R \powser{\Sigma}$, viewing them in $R \divpowser{\Sigma}$ through $\eta$.
\end{remark}

\begin{remark} \label{rem:genexp}
\cref{exponentialformula} applies somewhat more generally. It is only required that $\Div(R)$ is a partition ring, not $R$ itself. This more general situation actually does sometimes appear in practice as seen in \cref{constantexp} below.
\end{remark}


\begin{remark} \label{rem:grouplike}
When $R$ is a symmetric monoidal partition ring, \cref{cor:symmonhopf} applies. In this case, exponential elements correspond to the group-like elements in the completed component Hopf ring $R\powser{\Sigma}$, using the completed tensor product for the structure maps. The elements $r_m \ast \1_{m}$ satisfy
\[
\Div(R)^*(\u i \sqcup \u j \leq \u m)(r_m \ast \1_{m}) = 0
\]
for $i+j = m$ and $i,j>0$. Thus these elements are primitive whenever $\Div(R)$ is a symmetric monoidal partition functor (e.g., under the conditions of \cref{prop:flatdivsymmon}).
\end{remark}

The following proposition provides a sense in which the exponential relation associated to an exponential element over $R\divpowser{\Sigma}$ is universal:
\begin{prop}
Assume that $R$ is a partition ring and $R \to R'$ is a map of lax symmetric monoidal partition functors in which $R'$ is a $\Div$-algebra. If $\sum_{m \geq 0} r_mt^m$ is an exponential element in $R\powser{\Sigma}$, then the exponential relation 
\[
\sum_{m \geq 0} r_mt^m = \exp \Big (\sum_{i \geq 1} (r_i * \1_{i}) t^i \Big )
\]
in $R\divpowser{\Sigma}$ gives rise to an exponential relation in $R'\powser{\Sigma}$. 
\end{prop}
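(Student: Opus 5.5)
The plan is to factor the map $\phi\colon R\to R'$ through the free $\Div$-algebra and then push the relation of \cref{exponentialformula} forward along a ring map. Let $\alpha\colon \Div(R')\to R'$ be the structure map of the $\Div$-algebra $R'$; it is a map of lax symmetric monoidal partition functors since we work with $\Div$ as a monad on that category. Set
\[
\tilde\phi=\alpha\circ\Div(\phi)\colon \Div(R)\to R' .
\]
The unit axiom $\alpha\circ\eta_{R'}=\id$ and naturality of $\eta$ give $\tilde\phi\circ\eta_R=\alpha\circ\eta_{R'}\circ\phi=\phi$. Since $\tilde\phi$ is lax symmetric monoidal, $(-)\powser{\Sigma}$ sends it to a ring map
\[
\tilde\phi\powser{\Sigma}\colon R\divpowser{\Sigma}\to R'\powser{\Sigma}
\]
with respect to the transfer products. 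Its restriction along $\eta\powser{\Sigma}$ is $\phi\powser{\Sigma}$.

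Next I apply \cref{exponentialformula}. Because $\eta$ is a map of partition rings, the image of $\sum_m r_mt^m$ in $R\divpowser{\Sigma}$ is exponential, and the theorem gives the stated identity there. The identity involves an infinite sum and the exponential, with coefficients
\[
\frac{1}{m!}\binom{m}{\lambda}\lambda!\,\1_\lambda=\1_\lambda ,
\]
so to push it forward I should use the integral form established in the proof. In degree $m$ it reads
\[
\eta_m(r_m)=\sum_{\lambda\vdash m}\eta_m(r_m)*\1_\lambda ,
\]
while the right-hand side is the sum over $\lambda$ of the monomials $\prod_i (r_i*\1_i)^{\lambda_i}$ divided by $\lambda!$, which the proof identifies with $r_m*\1_\lambda$ by \eqref{eq:exppowers}. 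Applying the continuous, degree-preserving ring map $\tilde\phi\powser{\Sigma}$ coefficientwise gives
\[
\sum_m \phi(r_m)t^m=\sum_{\lambda}\tilde\phi\bigl(\eta(r_{\|\lambda\|})*\1_\lambda\bigr)t^{\|\lambda\|},
\]
and, setting $s_i=\tilde\phi(r_i*\1_i)\in R'(i)$,
\[
\lambda!\,\tilde\phi\bigl(r_{\|\lambda\|}*\1_\lambda\bigr)=\prod_i s_i^{\lambda_i}.
\]
The resulting relation in $R'\powser{\Sigma}$ is that $\sum_m\phi(r_m)t^m$ equals the divided-power exponential $\exp(\sum_i s_it^i)$. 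The quotients $\prod_i s_i^{\lambda_i}/\lambda!$ are made sense of as the images $\tilde\phi(r_{\|\lambda\|}*\1_\lambda)$, and they become literal quotients whenever $R'$ is torsion free or rational.

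The main obstacle is making precise what exponential relation in $R'\powser{\Sigma}$ means when $R'$ may have torsion. The denominators $\lambda!$ cannot be divided there, and $\tilde\phi$ need not preserve the $*$ product, since $R'$ is only lax symmetric monoidal. So I will not claim $s_i=\phi(r_i)*\tilde\phi(\1_i)$. Instead I will state the conclusion as the family of integral identities above, indexed by $\lambda$, which is exactly the image of the identity in $R\divpowser{\Sigma}$. The only other care needed is that $\tilde\phi$ is compatible with transfers and external products, which holds because both $\Div(\phi)$ and $\alpha$ are maps of lax symmetric monoidal partition functors.
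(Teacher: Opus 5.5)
Your proposal is correct and follows the paper's argument. The map $\alpha\circ\Div(\phi)$ is exactly the universal map out of the free $\Div$-algebra $\Div(R)$ that the paper invokes, and applying $(-)\powser{\Sigma}$ to it pushes the relation of \cref{exponentialformula} into $R'\powser{\Sigma}$. Your coefficientwise bookkeeping, reading $\prod_i s_i^{\lambda_i}/\lambda!$ as $\tilde\phi(r_{\|\lambda\|}*\1_\lambda)$, spells out what the paper leaves implicit: what the pushed-forward exponential relation means when $R'$ has torsion.
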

\begin{proof}
Since $\Div(R)$ is the free $\Div$-algebra on $R$ and $R'$ is a $\Div$-algebra, we obtain a universal map $\Div(R) \to R'$. This further induces a map of commutative rings $R\divpowser{\Sigma} \to R'\powser{\Sigma}$ which we may apply to the exponential relation to obtain an exponential relation in $R'\powser{\Sigma}$.
\end{proof}

\begin{example} \label{constantexp}
Let $K$ be a commutative ring and let $k \in K$. Consider $\sum_{m \geq 0} k^{*m}t^m \in C_K \powser{\Sigma}$. This is an exponential element as restriction maps are the identity and the multiplication for $C_K$ is given by multiplication in $K$. Under the isomorphism of commutative rings (see \cref{ex:constantring})
\[
C_K \powser{\Sigma} \cong K \divpowser{t},
\]
the exponential element $\sum_{m \geq 0} k^{*m}t^m$ is sent to $\sum_{m \geq 0} k^m \frac{t^m}{m!}$.

Since $C_K \cong \Div(C^K)$, \cref{exponentialformula} guarantees that 
\begin{equation*}
\begin{split}
\sum_{m \geq 0} k^{*m}t^m &= \exp \Big (\sum_{i \geq 1} (k^{* i} * \1_{i}) t^i \Big ) \\
&= \exp \Big (k*\1_{1}t \Big )
\end{split}
\end{equation*}
in $C_K\powser{\Sigma}$, where the second equality follows from the fact that $\1_{i} = 0 \in \Div(C^K)(i)$ for all $i> 1$. Applying the isomorphism to $K\divpowser{t}$, we just have
\[
\sum_{m \geq 0} k^m \frac{t^m}{m!} = \exp(kt).
\]
\end{example}

\begin{example} \label{ex:exppower}
Assume that $R$ is a partition power functor and let $P_k/I_k$ be the composite
\[
R(1) \to R(k) \to \bar{R}(k) \subseteq \Div(R)(k),
\]
as opposed to $\bar{P}_k$ which lands in $\bar{R}(k)$. Since $\sum_{m \geq 0} P_m t^m$ takes values in exponential elements, \cref{exponentialformula} provides us with an exponential relation
\[
\sum_{m \geq 0} P_m t^m = \exp\Big(\sum_{k \geq 1} P_k/I_k t^k\Big)
\]
between the total multiplicative and total additive power operations. Applying \cref{globalpowertopartition}, if $S$ is a global power functor, then the family of power operations on the partition power functors $R = S \circ (G \wr \Sigma_{(-)})$ or $R = S \circ (G \times \Sigma_{(-)})$ satisfy this exponential relation over $\Div(R)$. This provides us with an exponential relation between additive and multiplicative power operations that applies very generally. 


We find it interesting that the third axiom in the definition of a partition power functor is not needed for this exponential relation and thus we could drop the requirement that $\bar{P}_k$ is additive.
\end{example}

\begin{example}
Assume $S$ is a global power functor, $G$ is a fixed finite group, $K = S(G)$, and $R = S \circ (G \times \Sigma_{(-)})$ is the associated partition power functor. By \cref{ex:powerrestrict}, restriction along $G \to G \times \Sigma_{\Lambda}$ gives a map of partition power functors $R \to C_K$ and the induced map
\[
R\powser{\Sigma} \to C_K\powser{\Sigma} \cong K\divpowser{t}
\]
sends $\sum_{m \geq 0} P_m t^m$ to $\sum_{m \geq 0} (-)^{*m} t^m \mapsto \sum_{m \geq 0} (-)^{m} \frac{t^m}{m!}$.

Since $C_K$ is a $\Div$-algebra, we have an induced map $\Div(R) \to C_K$. \cref{constantexp} implies that the exponential relation
\[
\sum_{m \geq 0} P_m t^m = \exp\Big(\sum_{k \geq 1} P_k/I_k t^k\Big)
\]
is sent to the exponential relation
\[
\sum_{m \geq 0} (-)^m \frac{t^m}{m!} = \exp\Big((-)t\Big)
\]
in $K\divpowser{t}$.
\end{example}

\section{The divided power envelope} \label{sec:divpow}
In this section we consider the relationship between $\Div(R)[\Sigma]$ and the algebra of symmetric invariant tensors in the case that $R$ is a symmetric monoidal partition functor.

We begin with a review of the algebra of symmetric invariant tensors. Let $K$ be a commutative ring and let $M$ be a $K$-module and let 
\[
TS(M) = \bigoplus_{m \geq 0} (M^{\otimes m})^{\Sigma_m}
\]
and
\[
T(M) = \bigoplus_{m \geq 0} M^{\otimes m},
\]
where the tensor products are over $K$. Then $TS(M)$ is a submodule of $T(M)$, but more is true. There is a graded $K$-algebra structure on $T(M)$ with multiplication given by the shuffle product. If $x \in M^{\otimes i}$ and $y \in M^{\otimes j}$ are simple tensors, then the shuffle product of $x$ and $y$ is
\[
xy = \sum_{[\sigma_{i,j}] \in \Sigma_{i+j}/(\Sigma_i \times \Sigma_j)} \sigma_{i,j}(x\otimes y),
\]
where the $\sigma_{i,j}$'s are the $(i,j)$-shuffles. This multiplication induces a $K$-algebra structure on $TS(M)$ as well and we call the resulting algebra the symmetric invariant tensor algebra. This applies to $M = M_*$, an $\N$-graded $K$-module, as well. Note that the degree $1$ part of $TS(M_*)$ is $M_0$, the degree $2$ part is $M_1 \oplus ((M_{0})^{\otimes 2})^{\Sigma_2}$, and so on. 

In \cite[Chapter III and Chapter IV]{Roby-divided} (see also \cite[Section 4]{Cartan}), it was shown that if $M_*$ is an $\N$-graded free $K$-module, then $TS(M_*)$ is canonically isomorphic to the free graded divided polynomial algebra on a choice of homogeneous basis for $M_*$. This has consequences for partition functors for the following reason:

\begin{prop} \label{prop:symmetricinvariant}
Assume that $R$ is a symmetric monoidal partition functor taking values in $K$-modules and let $M_*$ be the $\N$-graded $K$-module with $M_i = \bar{R}(i+1)$. There is an isomorphism of commutative graded $K$-algebras
\[
TS(M_*) \cong \Div(R)[\Sigma].
\]
\end{prop}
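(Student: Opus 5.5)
Since $M_*$ is concentrated in the summands $M_{i}=\bar R(i+1)$, the degree-$(m-?)$ bookkeeping needs care: the grading on $TS(M_*)$ is tensor length, and a tensor $x_1\otimes\cdots\otimes x_k$ with $x_j\in M_{i_j}=\bar R(i_j+1)$ should correspond to a set partition with blocks of sizes $i_j+1$. So I would regrade and compare the two sides summand by summand. Put $w(x)=\sum_j (i_j+1)$ for such a tensor. The total degree of $TS(M_*)$ should then be matched with $\Div(R)(m)$, where $m=w$. I will read the statement as an isomorphism of $K$-algebras after this regrading; the grading convention in the statement must match it, and I will check that first.

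First I identify the modules. I will use \cref{eq:symmondecomp}, which gives
\[
\Div(R)(m)\cong\bigoplus_{\lambda\parts m}\Big(\bigotimes_i \bar R(i)^{\otimes\lambda_i}\Big)^{\prod_i\Sigma_{\lambda_i}}.
\]
On the other side, $(M_*^{\otimes k})^{\Sigma_k}$ decomposes as follows. Expand $M_*^{\otimes k}$ as a sum over sequences $(i_1,\dots,i_k)$. The $\Sigma_k$-orbits of these sequences correspond to multisets, i.e.\ integer partitions $\lambda$ with $\ell(\lambda)=k$. For a direct sum of permuted summands, the invariants of the induced module reduce to the invariants of the stabilizer. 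Since the stabilizer is $\prod_i\Sigma_{\lambda_i}$, this yields $\big(\bigotimes_i \bar R(i)^{\otimes\lambda_i}\big)^{\prod_i\Sigma_{\lambda_i}}$. This step is only the standard fact that invariants of an induced permutation module equal the invariants of the stabilizer, and no flatness is needed. Summing over $\lambda\parts m$ gives a $K$-module isomorphism $\Phi$ from the weight-$m$ part of $TS(M_*)$ to $\Div(R)(m)$.

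Concretely, I will describe $\Phi$ without choices via \cref{weylgroupfixed} and \cref{lem:symmon}. An invariant tensor is sent to the family indexed by $\Phi'\le \u m$ in which the $\Phi'$-coordinate, in $\bar R(\Phi')\cong\bigotimes_{B}\bar R(\u B)$, is obtained by reading off components. I will use \cref{eq:permutetensors} to see that the $\Sigma_m$-action on $\bigoplus_{\Gamma\le\u m}\bar R(\Gamma)$ permutes tensor factors, which is compatible with the $\Sigma_k$-action.

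Next I check multiplicativity, which I expect to be the main obstacle. In $\Div(R)[\Sigma]$ the product of $x\in\Div(R)(n)$ and $y\in\Div(R)(m)$ is $\Div(R)_*(\u n\sqcup\u m\le\u{n+m})\Div(m)_{n,m}(x\otimes y)$. Evaluated at $\Phi\le\u{n+m}$, the transfer formula sums over double cosets $[\sigma]\in\Sigma_\Phi\backslash\Sigma_{n+m}/(\Sigma_n\times\Sigma_m)$ with $\Phi\le\sigma(\u n\sqcup\u m)$. As in the proof of \cref{prop:partitionmultiply}, these are exactly the ways of splitting the blocks of $\Phi$ into a set of blocks of total size $n$ and a complementary set of total size $m$. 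Each term applies $\bar R_*$ of an isomorphism, which permutes tensor factors by \cref{eq:permutetensors}, to $x_{\Phi_1}\otimes y_{\Phi_2}$. The shuffle product of $\Phi^{-1}x$ and $\Phi^{-1}y$, projected to the summand for the block multiset of $\Phi$, sums over shuffles of the $k+k'$ tensor positions modulo $\Sigma_k\times\Sigma_{k'}$. Identifying block positions of $\Phi$ with tensor positions, these shuffles are precisely the choices of which blocks come from $x$. So the two sums agree term by term.

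The delicate part is the bookkeeping with invariant representatives: coordinates of $x$ at non-chosen representatives must be expressed via $\Sigma$-invariance. I will handle this by working entirely in the choice-free description of the previous step. Finally I check the unit, $1\in K=\Div(R)(0)$, which is immediate, and observe that $\Phi$ respects $K$-linear structure since all maps involved are $K$-linear.
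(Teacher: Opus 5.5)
Your proposal is correct and follows the paper's proof essentially step for step. The module isomorphism comes from \cref{eq:symmondecomp} together with the orbit--stabilizer description of $(M_*^{\otimes r})^{\Sigma_r}$, and multiplicativity comes from matching the contributing double cosets for the transfer along $\u k\sqcup\u\ell\leq\u{k+\ell}$ with $\prod_i\Sigma_{\lambda_i+\omega_i}/(\Sigma_{\lambda_i}\times\Sigma_{\omega_i})$, i.e.\ with the shuffles. One correction: the grading on $TS(M_*)$ is not tensor length, and your weight $w=\sum_j(i_j+1)$ is not a regrading but exactly the paper's convention, in which $M_i$ sits in degree $i+1$ (so the degree $1$ part is $M_0$).
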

\begin{proof}
Let $\lambda \parts m$ be an integer partition and choose a partition
$\Lambda \leq \u m$ with underlying integer partition $\lambda$. Since $R$ is symmetric monoidal, \cref{lem:symmon} gives an isomorphism
\[
\bar{R}(\Lambda)
\cong
\bigotimes_i \bar{R}(i)^{\otimes \lambda_i},
\]
where the tensor product is over $K$. Under this isomorphism, \cref{eq:permutetensors} identifies the action of
the Weyl group $W_{\Sigma_m}(\Sigma_{\Lambda}) \cong \prod_i \Sigma_{\lambda_i}$
with the action that permutes the tensor factors of equal degree. Thus
\cref{weylgroupfixed} gives an isomorphism
\[
\Div(R)(m)
\cong
\bigoplus_{\lambda \parts m}
\bigg(
\bigotimes_i \bar{R}(i)^{\otimes \lambda_i}
\bigg)^{\prod_i \Sigma_{\lambda_i}}.
\]
Consequently,
\[
\Div(R)[\Sigma]
\cong
\bigoplus_{m \geq 0}
\bigoplus_{\lambda \parts m}
\bigg(
\bigotimes_i \bar{R}(i)^{\otimes \lambda_i}
\bigg)^{\prod_i \Sigma_{\lambda_i}}
\]
as graded $K$-modules.

We compare this with $TS(M_*)$. Recall that $M_{i-1}=\bar{R}(i)$. For $r \geq 0$, the contribution of $M_*^{\otimes r}$ to degree $m$ is
\[
\bigoplus_{\substack{i_1+\cdots+i_r=m\\ i_j>0}}
\bar{R}(i_1)\otimes\cdots\otimes\bar{R}(i_r).
\]
The group $\Sigma_r$ acts by permuting these summands and the tensor factors.
The orbits of the summands are indexed by the integer partitions
$\lambda \parts m$ of length $\ell(\lambda)=r$. For the orbit associated to
$\lambda$, choose the summand
\[
\bigotimes_i \bar{R}(i)^{\otimes\lambda_i}.
\]
Its stabilizer in $\Sigma_r$ is $\prod_i\Sigma_{\lambda_i}$. An invariant
element in the direct sum over the orbit is uniquely determined by its
component in this chosen summand, and this component must be fixed by
$\prod_i\Sigma_{\lambda_i}$. It follows that
\[
\bigg (
\bigoplus_{\substack{i_1+\cdots+i_r=m\\ i_j>0}}
\bar{R}(i_1)\otimes\cdots\otimes\bar{R}(i_r)
\bigg)^{\Sigma_r}
\cong
\bigoplus_{\substack{\lambda \parts m\\ \ell(\lambda)=r}}
\bigg(
\bigotimes_i \bar{R}(i)^{\otimes\lambda_i}
\bigg)^{\prod_i\Sigma_{\lambda_i}}.
\]
Summing over $r$ gives an isomorphism of graded $K$-modules
\[
TS(M_*) \cong \Div(R)[\Sigma].
\]

It remains to identify the multiplication under this isomorphism. Let $\lambda\vdash k$ and $\omega\vdash\ell$, and choose
partitions $\Lambda\leq \u k$ and $\Omega\leq \u \ell$ with underlying
integer partitions $\lambda$ and $\omega$, respectively. Let
\[
s \in
\bigg(
\bigotimes_i \bar{R}(i)^{\otimes\lambda_i}
\bigg)^{\prod_i\Sigma_{\lambda_i}}
\]
and
\[
t \in
\bigg(
\bigotimes_i \bar{R}(i)^{\otimes \omega_i}
\bigg)^{\prod_i\Sigma_{\omega_i}}.
\]
Using the notation preceding \cref{weylgroupfixed}, these determine elements
\[
\Sigma_{k}s \in \Div(R)(k)
\qquad\text{and}\qquad
\Sigma_{\ell}t \in \Div(R)(\ell).
\]
The lax symmetric monoidal multiplication gives
\[
m_{k,\ell}
\left(
\Sigma_{k}s\otimes\Sigma_{\ell}t
\right)
=
\Sigma_{\u k\sqcup\u\ell}(s\otimes t)
\in
\Div(R)(\u k\sqcup\u\ell).
\]
This element is supported on the orbit of the partition
$\Lambda \sqcup \Omega$.

Let $m=k+\ell$. The product of $\Sigma_{k}s$ and
$\Sigma_{\ell}t$ in $\Div(R)[\Sigma]$ is obtained by applying the transfer
along $\u k\sqcup\u\ell\leq\u m$. Fix a partition
$\Gamma\leq\u m$ having $\lambda_i+\omega_i$ blocks of cardinality $i$.
The $\Gamma$-component of the product is
\[
\sum_{\substack{
[\sigma]\in
\Sigma_\Gamma\backslash\Sigma_m/
\Sigma_{\u k\sqcup\u\ell}\\
\sigma^{-1}\Gamma\leq\u k\sqcup\u\ell}}
\bar{R}_*(\sigma)
\left(
\Sigma_{\u k\sqcup\u\ell}(s\otimes t)_{\sigma^{-1}\Gamma}
\right).
\]
A summand is nonzero precisely when $\sigma^{-1}\Gamma$ has $\lambda_i$
blocks of cardinality $i$ in $\u k$ and $\omega_i$ blocks of cardinality $i$
in $\u\ell$, for every $i$. Thus the double cosets which contribute are in
bijection with
\[
\prod_i
\Sigma_{\lambda_i+\omega_i}/
(\Sigma_{\lambda_i}\times\Sigma_{\omega_i}).
\]
Indeed, such a double coset records, for every $i$, which $\lambda_i$ of the
$\lambda_i+\omega_i$ blocks of cardinality $i$ are sent to $\u k$.

For each contributing double coset we may choose a representative $\sigma$
for which
\[
\Sigma_{\u k\sqcup\u\ell}(s\otimes t)_{\sigma^{-1}\Gamma}
=
s\otimes t.
\]
By \cref{eq:permutetensors}, the map $\bar{R}_*(\sigma)$ simply permutes
the corresponding tensor factors. It follows that the $\Gamma$-component of
the product is
\[
\sum_{[\sigma]\in
\prod_i
\Sigma_{\lambda_i+\omega_i}/
(\Sigma_{\lambda_i}\times\Sigma_{\omega_i})}
\sigma(s\otimes t).
\]
Under the module isomorphism above, this is exactly the component of the
shuffle product of $s$ and $t$ in $TS(M_*)$.

Since an invariant element is determined by its component at the chosen
representative of each orbit, the multiplication on $\Div(R)[\Sigma]$
agrees with the shuffle multiplication on $TS(M_*)$. Thus the isomorphism
above is an isomorphism of commutative graded $K$-algebras.
\end{proof}

\begin{cor} \label{cor:divfree}
In the context of the previous proposition, if $\bar{R}(i)$ is a free $R(0)$-module for each $i$, then $\Div(R)[\Sigma]$ is the free graded divided polynomial algebra on a homogeneous basis for $\bigoplus_{i>0} \bar{R}(i)$.
\end{cor}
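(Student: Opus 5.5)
The plan is to combine \cref{prop:symmetricinvariant} with the theorem of Roby and Cartan quoted just before it. By \cref{prop:symmetricinvariant}, there is an isomorphism of commutative graded $K$-algebras $\Div(R)[\Sigma] \cong TS(M_*)$, where $K = R(0)$ and $M_i = \bar{R}(i+1)$. The hypothesis that each $\bar{R}(i)$ is a free $R(0)$-module makes $M_*$ an $\N$-graded free $K$-module. Choose a homogeneous basis $\{e_\alpha\}$ of $M_*$, taking a basis of each $\bar{R}(i)$ and placing it in the summand $M_{i-1}$. Roby's theorem \cite[Chapters III--IV]{Roby-divided} then identifies $TS(M_*)$ canonically with the free graded divided polynomial algebra on $\{e_\alpha\}$. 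Composing the two isomorphisms gives the claim.

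The one point needing care is gradings. In $TS(M_*)$ there are two gradings: tensor length, and the internal degree coming from $M_*$. The grading of $\Div(R)[\Sigma]$ by $m$ corresponds to the sum of these. The proof of \cref{prop:symmetricinvariant} shows this explicitly: an element of $M_{i_1-1}\otimes\cdots\otimes M_{i_r-1}$ contributes to degree $i_1+\cdots+i_r$. Accordingly, a basis element of $\bar{R}(i)$ should be regarded as a divided power generator of weight $i$. Its divided powers $e^{[k]}$ are the symmetric invariant tensors $e\otimes\cdots\otimes e$, which sit in $\bar{R}(i)^{\otimes k}$ in degree $ik$ and correspond to the $\1_{k\u i}$-type components.

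I would also check that Roby's isomorphism uses the sign-free convention, so that generators in odd internal degree still have divided powers rather than squaring to zero. This is consistent with $\Div(R)[\Sigma]$: the symmetric group acts on $\bar{R}(i)^{\otimes\lambda_i}$ by plain permutation via \cref{eq:permutetensors}, with no Koszul signs. I therefore read ``free graded divided polynomial algebra'' as the commutative, not graded-commutative, divided power algebra on generators with the weights above. This grading bookkeeping is the only step where I expect any friction. The remaining step is to confirm that the shuffle product in Roby's theorem is the same one identified in \cref{prop:symmetricinvariant}, and by construction it is.
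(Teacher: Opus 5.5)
Your proposal is correct and is the paper's argument: the corollary comes from composing the isomorphism $TS(M_*)\cong\Div(R)[\Sigma]$ of \cref{prop:symmetricinvariant} (with $M_{i-1}=\bar{R}(i)$) with the Roby--Cartan identification of $TS(M_*)$ for a free graded module, which the paper quotes just before that proposition. Your remarks on grading and signs match the paper's conventions. A basis element of $\bar{R}(i)$ has total weight $i$, and the permutation action carries no signs; compare $\Div(R_G)[\Sigma]\cong RU(G)\langle x_1,x_2,\ldots\rangle$ with $x_i$ in degree $i$.
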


With this in hand, we can give a universal property of $\Div(R)[\Sigma]$ as an $R[\Sigma]$-algebra when $R$ is a symmetric monoidal partition functor with $\bar{R}(i)$ free as an $R(0)$-module.

\begin{theorem} \label{thm:freediv}
Assume that $R$ is a symmetric monoidal partition functor such that $\bar{R}(m)$ is a free $R(0)$-module for all $m \geq 0$. The divided power algebra $\Div(R)[\Sigma]$ is the divided power envelope of $R[\Sigma]$ with respect to the irrelevant ideal $\oplus_{m \geq 1} R(m)$.
\end{theorem}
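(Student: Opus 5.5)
We verify the universal property of the divided power envelope directly: for $R[\Sigma]$ with ideal $I_+=\bigoplus_{m\geq1}R(m)$, the envelope $D$ is characterized by three things. It is a divided power algebra $(D,\bar I,\gamma)$ with a ring map $R[\Sigma]\to D$ sending $I_+$ into $\bar I$. Every ring map $R[\Sigma]\to B$ into a divided power algebra $(B,J,\delta)$ with $I_+\mapsto J$ extends uniquely to a PD map $D\to B$. We take $D=\Div(R)[\Sigma]$, $\bar I$ its positively graded part, and use the ring map $\eta\colon R[\Sigma]\to\Div(R)[\Sigma]$ induced by the unit.

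\textbf{Step 1 (PD structure).} By \cref{prop:symmetricinvariant} and \cref{cor:divfree}, $\Div(R)[\Sigma]\cong TS(M_*)$ is the free graded divided power algebra $R(0)\langle e_b\rangle$ on a homogeneous basis $\{e_b\}$ of $\bigoplus_{i>0}\bar R(i)$. The positively graded part is the augmentation ideal, which carries its canonical PD structure. This settles the first requirement.

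\textbf{Step 2 (generation).} We show that $\eta(I_+)$, together with its divided powers, generates $\Div(R)[\Sigma]$ as an $R(0)$-algebra. The generators $e_b$ live in the summands $\bar R(i)\subseteq\Div(R)(i)$. For $x\in R(i)$, the component of $\eta_i(x)$ in the summand $\bar R(i)$ is $q_i(x)$. Since $q_i$ is surjective, each $e_b$ is of the form $\eta_i(x)$ minus components supported on proper refinements, i.e.\ on orbits $\lambda\vdash i$ with $\ell(\lambda)\geq 2$. By \cref{weylgroupfixed} and the module identification of \cref{prop:symmetricinvariant}, those components lie in $\bigoplus_{r\geq2}(M_*^{\otimes r})^{\Sigma_r}$. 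This is the part of $TS$ spanned by products and divided powers of lower-degree generators, because a free TS algebra is generated as a PD algebra by $TS^1=M_*$. Induction on degree then shows that $\Div(R)[\Sigma]$ is generated by $\eta(I_+)$ under products and $\gamma_n$. This gives uniqueness of any PD extension.

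\textbf{Step 3 (existence, the main obstacle).} Given $\phi\colon R[\Sigma]\to B$ with $\phi(I_+)\subseteq J$, we must define $\Phi\colon\Div(R)[\Sigma]\to B$. By freeness it suffices to choose $\Phi(e_b)\in J$ and extend by $\Phi(e_b^{[n]})=\delta_n(\Phi(e_b))$. The natural choice is to proceed by induction on degree. Take a lift $x_b\in R(i)$ of $e_b$. Writing $\eta(x_b)=e_b+P_b$, where $P_b$ is a PD polynomial in lower generators by Step 2, set $\Phi(e_b)=\phi(x_b)-\Phi(P_b)$.

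The hard part is independence of the lift and the fact that $\Phi\circ\eta=\phi$ on all of $R[\Sigma]$, not just on the chosen lifts. For this we must show that for $y\in I_i$ the element $\eta(y)$ has vanishing $\bar R(i)$-component and equals a PD polynomial $Q$ in lower $\eta$'s with $\phi(y)=\Phi(Q)$. For $y=R_*(\u j\sqcup\u k\leq\u i)m_{j,k}(a\otimes b)$ this holds because $\eta(y)=\eta(a)\eta(b)$, since $\eta$ is a ring map for transfer products. Symmetric monoidality, via \cref{lem:symmon}, reduces general transfers from $\Lambda<\u i$ to iterated transfer products of this kind, and the claim follows by induction.

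Finally, $\Phi$ must be well defined when the same element of $TS$ is reached via $\eta$ of different elements of $R(m)$. Here we would use the rational comparison: $\eta\otimes\Q$ is an isomorphism by \cref{prop:divrational}, and freeness of $\bar R(m)$ makes $\Div(R)[\Sigma]$ torsion free. Any two candidate relations therefore differ by an identity that already holds in $R[\Sigma]$ up to the torsion-free structure, so they agree under $\phi$ once written in terms of divided powers.

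I expect the bookkeeping in Step 3 to be the delicate point, especially the case where $B$ has torsion. The fallback plan is to compare with Roby's presentation of the PD envelope of a polynomial-like algebra, $R[\Sigma]\otimes_{\mathrm{Sym}}\Gamma$, and exhibit both as the same quotient.
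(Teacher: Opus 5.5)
Your proposal is correct and is essentially the paper's proof. The paper also inducts on degree and uses symmetric monoidality to identify $I_m$ with the degree-$m$ part of the subalgebra generated in lower degrees; it encodes your formula $\Phi(e_b)=\phi(x_b)-\Phi(\eta(x_b)-e_b)$ as a pushout of $I_m\subseteq R(m)$ along $(\Div(R)[\Sigma]^{<m})_m\subseteq \Div(R)(m)$, and then extends freely over a basis of $\bar{R}(m)$.

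Your final paragraph is unnecessary, and its rational-comparison argument would fail anyway, e.g.\ when $R(0)=\F_2$ or when $B$ has torsion. Since $\Phi$ is defined on the free divided power algebra by its values on the $e_b$, it is automatically well defined. Your checks of $\Phi\circ\eta=\phi$ on the lifts $x_b$ and on $I_m$ already give it on all of $R(m)$: write $x=\sum c_b x_b+y$ with $y\in I_m$. That completes existence.
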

\begin{proof}
Let $f \colon R[\Sigma] \to D$ be a ring map to a divided power algebra $(D,J)$ with $f(\oplus_{m \geq 1} R(m)) \subseteq J$. We wish to show that $f$ extends uniquely to a map of divided power algebras $\Div(R)[\Sigma] \to D$. Note that $D$ is not assumed to be graded.

Let $R[\Sigma]^{\leq m}$ be the subalgebra of $R[\Sigma]$ generated by $R[\Sigma]_{\leq m}$, the elements of degree less than or equal to $m$. We will use induction along the subalgebras $R[\Sigma]^{\leq m}$ to extend $f$ in a unique way to a divided power algebra map from $\Div(R)[\Sigma]$ to $D$. 


Let $f_m \colon R[\Sigma]^{\leq m} \to D$ be the restriction of $f$ to $R[\Sigma]^{\leq m}$. Let $\Div(R)[\Sigma]^{\leq m}$ be the \emph{divided power} subalgebra of $\Div(R)[\Sigma]$ generated by $\Div(R)[\Sigma]_{\leq m}$. 

Since $\Div(R)(0) = R(0)$, $\Div(R)(1) = R(1)$, and $\Div(R)[\Sigma]^{\leq 1} = TS(R(1))$ is the free graded divided power algebra on a basis of $R(1)$, there is a unique way to extend $f_1$ to $\Div(R)[\Sigma]^{\leq 1}$ using the divided power algebra structure of the target $(D,J)$.


Now assume that we have constructed a unique extension of $f_{m-1}$ to $\Div(R)[\Sigma]^{< m}$.

Note that the inclusion $\Div(R)[\Sigma]^{< m} \to \Div(R)[\Sigma]^{\leq m}$ is an isomorphism in degrees less than $m$. In degree $m$, we have 
\[
(\Div(R)[\Sigma]^{< m})_m = \bigoplus_{\substack{\lambda \parts m \\ \lambda_m \neq 1}} \bigotimes_i (\bar{R}(i)^{\otimes \lambda_i})^{\Sigma_{\lambda_i}} \subseteq \Div(R)(m),
\]
using the fact that $\Div(R)[\Sigma]^{<m}$ is the divided power subalgebra of $\Div(R)[\Sigma]$ generated by $\Div(R)[\Sigma]_{<m}$. It follows that there is a short exact sequence of $R(0)$-modules
\[
(\Div(R)[\Sigma]^{< m})_m \to (\Div(R)[\Sigma]^{\leq m})_m \to \bar{R}(m).
\]

On the other hand, since $R$ is symmetric monoidal, we have
\[
(R[\Sigma]^{<m})_m = I_{m} \subseteq R(m).
\]

Further, note that the commutative square of $R(0)$-modules
\[
\xymatrix{I_{m} \ar@{^{(}->}[r] \ar[d] & R(m) \ar[d] \\ (\Div(R)[\Sigma]^{< m})_m \ar@{^{(}->}[r] & \Div(R)(m)}
\]
is a pushout square as the horizontal arrows are injections and the induced map of cokernels is an isomorphism (they are isomorphic to $\bar{R}(m)$). It follows that the restriction of $f_m$ to $R[\Sigma]_{\leq m}$ extends in a unique way to $\Div(R)[\Sigma]_{\leq m}$.

Finally, we have a commutative diagram
\[
\xymatrix{R[\Sigma]_{\leq m} \ar[r] \ar[d] & R[\Sigma]^{\leq m} \ar[d] \\ \Div(R)[\Sigma]^{< m} \oplus \bar{R}(m) \ar[r] & \Div(R)[\Sigma]^{\leq m}.}
\]
In the lower left corner, $\bar{R}(m)$ should be viewed as a graded $R(0)$-module concentrated in degree $m$. We have already extended $(f_m)|_{R[\Sigma]_{\leq m}}$ to $\Div(R)[\Sigma]^{< m} \oplus \bar{R}(m)$, the restriction to $\Div(R)[\Sigma]^{< m}$ is the extension of $f_{m-1}$ from our induction hypothesis. This extension extends further to $\Div(R)[\Sigma]^{\leq m}$ using the fact that $\Div(R)[\Sigma]^{\leq m}$ is the free divided polynomial algebra over $\Div(R)[\Sigma]^{< m}$ on a basis of $\bar{R}(m)$ by \cref{cor:divfree}. Since $f_m$ agrees with this extension on a set of generators of $R[\Sigma]^{\leq m}$, we obtain a unique extension of $f_m$ to $\Div(R)[\Sigma]^{\leq m}$ compatible with the map $R[\Sigma]^{\leq m} \to \Div(R)[\Sigma]^{\leq m}$.

Finally taking the colimit along $m$ gives the desired result.
\end{proof}

\begin{example} \label{ex:symmetriccoinvariants}
Let $K$ be a commutative ring and let $M$ be a $K$-module. We describe a symmetric monoidal partition functor generalizing $C^K$ whose associated ring is the symmetric algebra on $M$.

For a finite set $X$, let
\[
M^{\otimes X} = \bigotimes_{x \in X} M,
\]
where the tensor product is taken over $K$. If $\Lambda = (X,\sim_{\Lambda})$ is a partition, then $\Sigma_{\Lambda}$ acts on $M^{\otimes X}$ by permuting the tensor factors. Define
\[
S_M(\Lambda) = (M^{\otimes X})_{\Sigma_{\Lambda}},
\]
where the subscript denotes coinvariants.

We describe restriction and transfer maps making $S_M$ into a partition functor. First suppose that $\Gamma \leq \Lambda$. The transfer
\[
(S_M)_*(\Gamma \leq \Lambda) \colon
(M^{\otimes X})_{\Sigma_{\Gamma}}
\longrightarrow
(M^{\otimes X})_{\Sigma_{\Lambda}}
\]
is the canonical quotient map. The restriction
\[
S_M^*(\Gamma \leq \Lambda) \colon
(M^{\otimes X})_{\Sigma_{\Lambda}}
\longrightarrow
(M^{\otimes X})_{\Sigma_{\Gamma}}
\]
is the norm map
\[
[x]
\longmapsto
\sum_{[\sigma] \in \Sigma_{\Gamma}\backslash \Sigma_{\Lambda}}
[\sigma x].
\]
This is independent of the choice of representatives for the left cosets and is well-defined on $\Sigma_{\Lambda}$-coinvariants. For an isomorphism of partitions, the restriction and transfer maps are induced by the corresponding permutation of the tensor factors. The maps associated to arbitrary maps of partitions are obtained using the factorization of \cref{lemma:factorization}. The usual double coset decomposition shows that these restriction and transfer maps satisfy the double coset formula. Thus $S_M$ is a partition functor. Note that, if $M$ is free of rank $>1$, then $S_M$ is not restricted from a global functor as the values of $S_M$ on the discrete partitions have distinct ranks. In the case that $M=K$, we will see below that $S_M = C^K$.   

In fact, $S_M$ is a symmetric monoidal partition functor with values in $K$-modules. The symmetric monoidal structure is given by the canonical isomorphism
\[
S_M(\Lambda) \otimes_K S_M(\Omega)
\longrightarrow
S_M(\Lambda \sqcup \Omega)
\]
sending
\[
[x] \otimes [y]
\longmapsto
[x \otimes y].
\]
This is an isomorphism since
\[
\Sigma_{\Lambda \sqcup \Omega}
\cong
\Sigma_{\Lambda} \times \Sigma_{\Omega}.
\]

The associated graded ring has a familiar description:
\[
S_M[\Sigma]
=
\bigoplus_{m \geq 0}(M^{\otimes m})_{\Sigma_m}
\cong
\Sym_K(M).
\]
Under this isomorphism, the transfer product is the usual multiplication in the symmetric algebra.

The effect of $\Div$ on this partition functor is particularly simple. If $\Lambda$ is not discrete, then there is a proper refinement $\Gamma < \Lambda$, and the transfer $S_M(\Gamma) \longrightarrow S_M(\Lambda)$ is surjective. It follows that $\bar{S}_M(\Lambda)=0$ unless $\Lambda$ is discrete. If $\bar{X}$ denotes the discrete partition of $X$, then $\bar{S}_M(\bar{X}) = M^{\otimes X}$. Consequently, for any partition $\Lambda$ of $X$, $\Div(S_M)(\Lambda) \cong (M^{\otimes X})^{\Sigma_{\Lambda}}$. In particular,
\[
\Div(S_M)[\Sigma]
\cong
\bigoplus_{m \geq 0}(M^{\otimes m})^{\Sigma_m}
=
TS(M),
\]
where the multiplication on the right is the shuffle product described above.

Under these identifications, the unit
\[
\eta \colon S_M \longrightarrow \Div(S_M)
\]
is the norm from coinvariants to invariants. At the indiscrete partition $\u m$, it is the map
\[
(M^{\otimes m})_{\Sigma_m}
\longrightarrow
(M^{\otimes m})^{\Sigma_m}
\]
given by
\[
[x]
\longmapsto
\sum_{\sigma \in \Sigma_m} \sigma x.
\]
Thus after applying $(-)[\Sigma]$, the unit of the $\Div$ monad is the canonical symmetrization map
\[
\Sym_K(M) \longrightarrow TS(M).
\]

When $M=K$, the symmetric groups act trivially on $M^{\otimes X} \cong K$. The transfer maps are the identity and the restriction along $\Gamma \leq \Lambda$ is multiplication by $|\Sigma_{\Lambda}/\Sigma_{\Gamma}|$. Thus
\[
S_K \cong C^K,
\]
so this construction generalizes the dual constant partition functor and discussion of \cref{divconstant}.

Finally, if $M$ is a free $K$-module, then \cref{thm:freediv} identifies
\[
TS(M) \cong \Div(S_M)[\Sigma]
\]
as the divided power envelope of
\[
\Sym_K(M) \cong S_M[\Sigma]
\]
with respect to its ideal generated by elements of positive degree.
\end{example}

\begin{example} \label{ex:PDnotsymmetric}
The functor $\Div$ does not preserve symmetric monoidal partition functors in
general. We use a counterexample of Lundkvist
\cite[Example 5.3]{Lundkvist}.

Let $k$ be a field of characteristic $2$, let
\[
K=k[s,t],
\qquad
M=K^2/\langle se_1+te_2\rangle,
\]
and let
\[
K'=K[z]/(z(s+t)).
\]
Lundkvist shows that the base change map
\[
TS_2(M)\otimes_KK'
\longrightarrow
((M\otimes_KK')^{\otimes_{K'} 2})^{\Sigma_2}
\]
is not injective.

Set $N=M\oplus K'$, viewed as a $K$-module, and consider the symmetric monoidal partition functor
$S_N$ of \cref{ex:symmetriccoinvariants}. The lax symmetric monoidal structure gives a map
\[
\Div(S_N)(2)\otimes_K\Div(S_N)(1)
\longrightarrow
\Div(S_N)(\u 2\sqcup\u 1).
\]
Under the identification above, this is
\[
TS_2(N)\otimes_KN
\longrightarrow
(N^{\otimes 3})^{\Sigma_2}.
\]
Using the decomposition $N=M\oplus K'$, this map contains Lundkvist's
base change map as a direct summand. It is therefore not an isomorphism. Thus $S_N$ is a
symmetric monoidal partition functor for which $\Div(S_N)$ is not symmetric
monoidal.
\end{example}


\section{The initial partition ring} \label{sec:init}

Recall that the initial global Green functor is the Burnside ring global Green functor. In this section we will show that the initial partition ring is the representation ring functor. This has some concrete advantages: the representation ring partition functor is much smaller than the Burnside ring partition functor and it is a symmetric monoidal partition functor -- unlike the Burnside ring partition functor.

Let $\AA(\Lambda) \subseteq A(\Sigma_{\Lambda})$ be the subgroup of the Burnside ring of $\Sigma_{\Lambda}$ on the basis elements of the form $[\Sigma_{\Lambda}/\Sigma_{\Gamma}]$, for $\Gamma \leq \Lambda$. Note that $\Sigma_{\Lambda} / \Sigma_{\Gamma} \cong \Sigma_{\Lambda} / \Sigma_{\Gamma '}$ as $\Sigma_{\Lambda}$-sets if and only if there exists $\sigma \in \Sigma_{\Lambda}$ such that $\sigma \Gamma = \Gamma '$. The subgroup $\AA(\Lambda)$ is a subring of $A(\Sigma_{\Lambda})$ as Young subgroups are closed under conjugation and intersection. 

This construction extends to a partition ring. It admits restriction maps as, again, the collection of Young subgroups is closed under conjugation and intersection. It admits transfer maps as, given $\Lambda \leq \Omega$, we have an isomorphism of $\Sigma_{\Omega}$-sets $\Sigma_{\Omega} \times_{\Sigma_{\Lambda}} \Sigma_{\Lambda}/\Sigma_{\Gamma} \cong \Sigma_{\Omega}/\Sigma_{\Gamma}$. We see that the transfer sends basis elements to basis elements. The transfer maps satisfy Frobenius reciprocity because they are compatible with the transfer maps in the Burnside ring -- which satisfy Frobenius reciprocity. 

Further, $\AA$ is a symmetric monoidal partition ring. This follows from the fact that $\AA(\Lambda) \otimes \AA(\Omega)$ and $\AA(\Lambda \sqcup \Omega)$ are free of the same rank and the canonical map $\AA(\Lambda) \otimes \AA(\Omega) \to \AA(\Lambda \sqcup \Omega)$ is surjective.

\begin{prop}
The partition ring $\AA$ is the initial partition ring.
\end{prop}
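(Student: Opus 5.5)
Given a partition ring $R$, I will construct the unique map $\phi\colon \AA \to R$ by the formula forced on us:
\[
\phi_\Lambda\big([\Sigma_\Lambda/\Sigma_\Gamma]\big) = R_*(\Gamma \leq \Lambda)(1_\Gamma),
\]
where $1_\Gamma \in R(\Gamma)$ is the multiplicative identity. Uniqueness is immediate. A map of partition rings preserves units, so it sends $1 = [\Sigma_\Gamma/\Sigma_\Gamma] \in \AA(\Gamma)$ to $1_\Gamma$. It also commutes with transfers, and $[\Sigma_\Lambda/\Sigma_\Gamma] = \AA_*(\Gamma\leq\Lambda)(1)$. Since these elements form an additive basis of $\AA(\Lambda)$, $\phi$ is determined. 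For well-definedness I must check that $\Gamma$ and $\sigma\Gamma$ give the same value. This holds because $(\Gamma\leq\Lambda) = \sigma^{-1}\circ(\sigma\Gamma\leq\Lambda)\circ\sigma_\Gamma$ with $\sigma\in\Sigma_\Lambda$ $2$-isomorphic to the identity, and transfer along the isomorphism $\sigma_\Gamma$ (equal to restriction along its inverse) preserves $1$.

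For existence, I will check the compatibilities in the following order.

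Transfers. For $\Lambda\leq\Omega$, transitivity of $R_*$ gives $\phi_\Omega\AA_*(\Lambda\leq\Omega)[\Sigma_\Lambda/\Sigma_\Gamma] = R_*(\Gamma\leq\Omega)(1)$, as required. Isomorphisms are handled in the same way.

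Restrictions. I expand $R^*(\Psi\leq\Lambda)R_*(\Gamma\leq\Lambda)(1)$ by the double coset formula \eqref{doubleCoset}. Since restrictions and transfers along isomorphisms preserve $1$, this gives
\[
\sum_{[\sigma]}R_*(\Psi\cap\sigma\Gamma\leq\Psi)(1).
\]
This matches exactly the Mackey decomposition of $\Sigma_\Lambda/\Sigma_\Gamma$ restricted to $\Sigma_\Psi$ in the Burnside ring, which is $\sum_{[\sigma]}[\Sigma_\Psi/\Sigma_{\Psi\cap\sigma\Gamma}]$, using the lemma $\Sigma_\Psi\cap\Sigma_{\sigma\Gamma} = \Sigma_{\Psi\cap\sigma\Gamma}$.

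Multiplicativity in each $\AA(\Lambda)$. In the Burnside ring, $[\Sigma_\Lambda/\Sigma_\Gamma]\cdot[\Sigma_\Lambda/\Sigma_\Theta]$ is the transfer from $\Sigma_\Gamma$ of the restriction of $[\Sigma_\Lambda/\Sigma_\Theta]$. In $R$, Frobenius reciprocity gives
\[
R_*(\Gamma\leq\Lambda)(1)*R_*(\Theta\leq\Lambda)(1) = R_*(\Gamma\leq\Lambda)\big(R^*(\Gamma\leq\Lambda)R_*(\Theta\leq\Lambda)(1)\big).
\]
The previous two checks, namely compatibility with restrictions and transfers, then reduce this to the same double coset sum on both sides. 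Unitality is the case $\Gamma=\Lambda$.

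External products. Naturality of $m$ in pairs of transfers together with $m(1\otimes 1)=1$ (since $m$ is a ring map) gives
\[
m_{\Lambda,\Omega}\big(R_*(\Gamma\leq\Lambda)1\otimes R_*(\Theta\leq\Omega)1\big) = R_*(\Gamma\sqcup\Theta\leq\Lambda\sqcup\Omega)(1).
\]
This matches $[\Sigma_\Lambda/\Sigma_\Gamma]\times[\Sigma_\Omega/\Sigma_\Theta] = [\Sigma_{\Lambda\sqcup\Omega}/\Sigma_{\Gamma\sqcup\Theta}]$ in $\AA$. The unit $\Z\to R(0)$ is matched as well.

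The main obstacle is the restriction check: identifying the Burnside ring Mackey formula with \eqref{doubleCoset}, in particular matching the indexing double cosets and the conjugation isomorphisms, and confirming that the terms indexed by $\sigma$ and by other representatives of the same double coset give the same basis element. Once the restriction check is settled, multiplicativity follows formally from Frobenius reciprocity, so I will do that step carefully first.
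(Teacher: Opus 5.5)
Your proposal is correct and follows the same route as the paper. The paper's inductive construction also sends $[\Sigma_\Lambda/\Sigma_\Gamma]$ to $R_*(\Gamma\leq\Lambda)(1)$, and it likewise checks restrictions via the double coset formula and multiplicativity via Frobenius reciprocity; your closed formula removes the need for that induction and supplies the well-definedness and external-product checks that the paper leaves to the reader.
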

\begin{proof}
Let $R$ be a partition ring. Our goal is to produce a unique map of partition rings $\iota \colon \AA \to R$. Let $\Lambda = (X, \sim_{\Lambda})$ be a partition. We will first use induction to produce a unique map of abelian groups $\AA(\Lambda) \to R(\Lambda)$ natural in transfer maps. 

Let $\bar{X}$ be the discrete partition on $X$. Note that $\AA(\bar{X}) = \Z$ is the integers so there is a unique ring map $\AA(\bar{X}) \to R(\bar{X})$.

Now assume that we have constructed a unique map of abelian groups $\AA(\Theta) \to R(\Theta)$ for all $\Theta < \Lambda$ compatible with transfers along inclusions. Since $\iota$ must commute with transfer maps, the value of every basis element except for $[\Sigma_{\Lambda}/\Sigma_{\Lambda}]$ is determined by the transfer maps. Since $[\Sigma_{\Lambda}/\Sigma_{\Lambda}]$ is the multiplicative unit in $\AA(\Lambda)$, it must go to $1$ in $R(\Lambda)$.

To see that these maps commute with restrictions along inclusions we make use of the double coset formula. Assume that $\iota$ commutes with restrictions for all proper subpartitions of $\Lambda$. Let $x$ be a basis element in $\AA(\Lambda)$ and let $\Gamma \leq \Lambda$. If $x \neq 1$, then $x = \AA_*(\Theta \leq \Lambda)(y)$ for some $\Theta$ and $y$ a basis element in $\AA(\Theta)$. Now $\AA^*(\Gamma \leq \Lambda)(x)$ can be expressed in terms of transfers of restrictions of $y$. Thus the restriction of $x$ is determined by our induction hypothesis. Now if $x = 1$ in $\AA(\Lambda)$ then the restriction of $1$ is $1$ and $\iota(\Lambda)(1)=1$, taking care of that case. We conclude that $\iota$ commutes with restrictions.

To see that $\iota(\Lambda)$ is a ring map, assume that $\iota(\Gamma)$ is a ring map for all $\Gamma < \Lambda$. The base case is $\iota(\bar{X})$, which is a ring map. It suffices to prove that $\iota(\Lambda)$ is multiplicative on basis elements. Given basis elements $x,y \in \AA(\Lambda)$, we may assume without loss of generality that $x \neq 1$. Then $x = \AA_*(\Theta < \Lambda)(x')$ for some $\Theta < \Lambda$ and $x'$ a basis element in $\AA(\Theta)$. If $y' = \AA^*(\Theta < \Lambda)(y)$, then Frobenius reciprocity implies that
\[
\AA_*(\Theta < \Lambda)(y'x') = y\AA_*(\Theta < \Lambda)(x') = yx.
\]
Now $\iota(\Lambda)$ is multiplicative since $R$ also satisfies Frobenius reciprocity and $\iota(\Theta)$ is a ring map.

We leave any additional details to the reader.
\end{proof}

Let $RU(\Sigma_{\Lambda})$ be the complex representation ring of $\Sigma_{\Lambda}$. It is classical (see \cite[Sections 7.2 and 7.3]{Fulton}, for instance) that the composite
\[
\AA(\Lambda) \to A(\Sigma_{\Lambda}) \to RU(\Sigma_{\Lambda})
\]
is an isomorphism of commutative rings. Since both of these maps extend to maps of partition rings, the composite is an isomorphism of partition rings. Thus we may conclude that the initial partition ring is the representation ring partition ring sending a partition $\Lambda$ to $RU(\Sigma_{\Lambda})$. Since $RU(\Sigma_{(-)})$ is symmetric monoidal, \cref{cor:symmonhopf} implies that the component Hopf ring of symmetric functions $\bigoplus_{m\geq 0} RU(\Sigma_m)$ is the initial component Hopf ring. Two other categories in which symmetric functions are initial arise in the work of Hazewinkel \cite[Theorem 11.15]{HazWitt} on Hopf algebras equipped with a divided power sequence and the work of Baez, Moeller, and Trimble \cite{Baez} on $2$-plethories.

The power operations on Burnside rings $P_m \colon A(e) \to A(\Sigma_m)$ land inside $\AA(m)$. This is due to the fact that, if $X$ is a set, then the stabilizers of elements in $X^{m}$ equipped with the permutation action by $\Sigma_m$ are Young subgroups. This fact and equivariant generalizations of this fact are thoroughly explored in \cite{cornelius2024imagetotalpoweroperation}. It follows that $\AA$ is a partition power functor.

\begin{cor}
The partition power functor $\AA$ is the initial partition power functor.
\end{cor}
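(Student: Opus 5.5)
The plan is to reduce to the previous proposition, that $\AA$ is the initial partition ring. A map of partition power functors is a map of partition rings compatible with the families $P_m$. So for any partition power functor $R$ there is at most one candidate, the unique partition ring map $\iota \colon \AA \to R$. Uniqueness is then automatic, and the whole task is to show that $\iota$ commutes with power operations, i.e.\ $\iota(m)(P^{\AA}_m(a)) = P^R_m(\iota(1)(a))$ for all $a \in \AA(1) = \Z$.

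First I would note that $\AA(1) = A(\Sigma_1) = \Z$, generated additively by $1$, and $\iota(1)$ is the unit map $\Z \to R(1)$. The definition of a partition power functor determines $P_m$ on sums via axiom (c), with $P_m(1)=1$ by axiom (a). I would also want $P_m$ on negatives, which I handle below.

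For nonnegative integers I would induct on $n$ to show $\iota(P_m(n)) = P_m(n)$. The case $n=0$ uses $P_m(0)$: applying (c) to $0 = 0+0$ together with $P_0 = 1$ forces the value, and in any case $P_m(0)=0$ for $m>0$ follows by comparing both sides via (c) in each functor. For $n+1$, axiom (c) expresses $P_m(n+1)$ as $\sum_{i+j=m} R_*(\u i\sqcup\u j\leq\u m)\,m_{i,j}(P_i(n)\otimes P_j(1))$. The same formula holds in $\AA$, and $\iota$ commutes with transfers and with $m_{i,j}$, so the induction closes using $P_j(1)=1$ in both.

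For negative integers, I would use $0 = P_m(n + (-n))$ for $m>0$. Expanding by (c) and solving recursively in $m$, starting from $P_0=1$ and $P_1=\id$, uniquely determines $P_m(-n)$ in terms of lower $P_i(-n)$ and the $P_j(n)$. The key is that the $j=0$ term is $m_{m,0}(P_m(-n)\otimes 1)$, which isolates $P_m(-n)$. By induction on $m$, $\iota$ then preserves these values too. This is the step I expect to be the main obstacle: checking that the recursion truly isolates $P_m(-n)$ with coefficient one, and handling the degenerate $P_m(0)$ identity cleanly. Finally, compatibility of $\iota$ with the remaining structure is inherited from the partition ring statement, which completes the proof.
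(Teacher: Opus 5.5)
Your proposal is correct and is essentially the paper's argument. Uniqueness is inherited from initiality among partition rings. Compatibility with power operations holds because axiom (c) determines $P_m$ on nonnegative integers via transfers, external products and $P_j(1)=1$, while expanding $0=P_m(0)=P_m(n+(-n))$ determines $P_m(-n)$ recursively in $m$. One small slip: in that expansion the term containing $P_m(-n)$ is the $i=0$ term $R_*(\u 0\sqcup\u m\leq\u m)\,m_{0,m}(1\otimes P_m(-n))$, not a $j=0$ term. Since $\u 0\sqcup\u m\to\u m$ is an isomorphism, the unit axiom makes this term equal to $P_m(-n)$, so the coefficient is indeed one.
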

\begin{proof}
For any partition power functor $R$, the power operations applied to the image of the ring map $\Z \to R(1)$ are completely determined by transfers for the non-negative integers and determined inductively by expanding the right hand side of the formula $0 = P_m(0) = P_m(k+(-k))$ to find the value on negative integers. This follows from the third axiom in the definition of a partition power functor, which reduces the calculation of $P_m(k)$ for $k \in \N$ to transfers and the value of power operations on $1$.
\end{proof}

Since every basis element except for $1$ in $\AA(\Lambda)$ is in the image of a transfer map from a proper subpartition of $\Lambda$, we have
\[
\AA(\Lambda)/I_{\Lambda} \cong \Z.
\]
By construction, the quotient map
\[
\AA(\Lambda) \to \AA(\Lambda)/I_{\Lambda}
\]
takes a $\Sigma_{\Lambda}$-set $X$ representing a class in $\AA(\Lambda)$ to $|X^{\Sigma_{\Lambda}}|$. Since we have a canonical isomorphism of partition rings $\AA \cong RU(\Sigma_{(-)})$ and the $\Div$ monad is natural, it follows that
\[
\Div(\AA)(\Lambda) \cong \Cl(\Sigma_{\Lambda}, \Z)
\]
and the unit map $\AA(\Lambda) \to \Div(\AA)(\Lambda)$ is the character map
\[
RU(\Sigma_{\Lambda}) \to \Cl(\Sigma_{\Lambda},\Z).
\]
Further, \cref{prop:symmetricinvariant} and \cref{thm:freediv} imply that $\Cl(\Sigma_{(-)},\Z)[\Sigma]$ is a graded divided power polynomial algebra with one generator in each degree and also the divided power envelope of the ring of symmetric functions $RU(\Sigma_{(-)})[\Sigma]$ with respect to the ideal generated by positively graded elements. The divided power structure on integer-valued class functions was already observed in \cite{classfncdivpower}.

Note that, if $R$ is a $\Div$-algebra in partition rings, then there exists a canonical map of partition rings $\Cl(\Sigma_{(-)},\Z) \to R$.


\section{The universal exponential element} \label{sec:univexprel}

In this section we describe a symmetric monoidal partition ring $\cE$ that carries the universal exponential element. It follows that $\Div(\cE)$ carries the universal exponential relation in the category of $\Div$-algebras in partition rings. We also describe two automorphisms of $\cE$.

We will make use of Oda and Yoshida's construction of crossed Burnside rings \cite{Oda1,Oda2}, but applied to infinite (rather than finite) commutative monoids equipped with an action of $G$. Let $\Lambda = (X, \sim_{\Lambda})$ be a partition and let $M_{\Lambda}$ be the commutative monoid $\N^{X}$ equipped with the canonical $\Sigma_{\Lambda}$-action. A $\Sigma_{\Lambda}$-set over $M_{\Lambda}$ with Young stabilizers is a pair $(W, \alpha \colon W \to M_{\Lambda})$, where $W$ is a finite $\Sigma_{\Lambda}$-set with stabilizers all Young subgroups of $\Sigma_{\Lambda}$ and $\alpha$ is a  $\Sigma_{\Lambda}$-equivariant function. Two pairs $(W, \alpha)$ and $(V, \beta)$ are isomorphic if there is an isomorphism of $\Sigma_{\Lambda}$-sets $\sigma \colon W \to V$ such that $\alpha = \beta \sigma$. The disjoint union of $(W,\alpha)$ and $(V,\beta)$ is $(W \coprod V, \alpha \coprod \beta)$. The product of $(W,\alpha)$ and $(V,\beta)$ is $(W \times V, \alpha+\beta)$, where $(\alpha+\beta)(w,v) = \alpha(w)+\beta(v)$. Let $[W, \alpha]$ be the isomorphism class of $(W,\alpha)$. The disjoint union and product induce a well-defined sum and product of isomorphism classes. Let $\cE(\Lambda)$ be the Grothendieck ring of isomorphism classes of $\Sigma_{\Lambda}$-set over $M_{\Lambda}$ with Young stabilizers. Also note that $\cE(0) \cong \Z$ as $\N^{[0]} = *$. Note that $\AA(\Lambda)$ is the subring of $\cE(\Lambda)$ on isomorphism classes of the form $[W, c_0]$, where $c_0$ is the constant function at $0 \in \N^X$.

Next we will describe restriction and transfer maps giving $\cE$ the structure of a partition ring. For $[W,\alpha] \in \cE(\Lambda)$ and $\Gamma \leq \Lambda$, the restriction $\cE^*(\Gamma \leq \Lambda)([W,\alpha])$ is given by restricting all of the actions to $\Sigma_{\Gamma}$ (i.e., restricting $W$ to $\Sigma_{\Gamma}$ and viewing $\alpha$ as $\Sigma_{\Gamma}$-equivariant). For $[V,\beta] \in \cE(\Gamma)$ and $\Gamma \leq \Lambda$, the transfer $\cE_*(\Gamma \leq \Lambda)([V,\beta])$ is given by $[\Sigma_{\Lambda} \times_{\Sigma_{\Gamma}} V, [\sigma, v] \mapsto \sigma \beta(v)]$. Now assume that $\Omega = (Y, \sim_{\Omega})$. Given an isomorphism of partitions $f \colon \Omega \to \Lambda$ and a pair $(W,\alpha)$ as above, we may restrict $W$ along the induced map $\Sigma_{\Omega} \to \Sigma_{\Lambda}$ to obtain a $\Sigma_{\Omega}$-set and the underlying bijection $Y \xrightarrow{\cong} X$ gives an isomorphism $M_{\Lambda} \xrightarrow{\cong} M_{\Omega}$. These maps induce an isomorphism $\cE(\Lambda) \to \cE(\Omega)$. Restriction and transfer along arbitrary maps of partitions are obtained using the factorization of \cref{lemma:factorization}.

We define a symmetric monoidal structure on $\cE$ generalizing the symmetric monoidal structure on $\AA$. Given $[W,\alpha] \in \cE(\Lambda)$, where $\Lambda = (X, \sim_{\Lambda})$ and $[V,\beta] \in \cE(\Omega)$, where $\Omega = (Y,\sim_{\Omega})$, we set
\[
[W,\alpha] \boxtimes [V,\beta] = [W \times V, \alpha \times \beta \colon W \times V \to \N^X \times \N^Y \cong \N^{X \coprod Y}],
\]
where $W \times V$ is viewed as a $\Sigma_{\Lambda} \times \Sigma_{\Omega}$-set.

\begin{prop}
With the structure maps described above, $\cE$ has the structure of a symmetric monoidal partition ring.
\end{prop}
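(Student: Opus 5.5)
Reading $\cE$ as a functor of finite $\Sigma_\Lambda$-sets over the $\Sigma_\Lambda$-monoid $M_\Lambda=\N^X$, I would first establish two general structural facts, exactly as for the crossed Burnside rings of Oda--Yoshida and as for $\AA$. Then I would check the partition-ring axioms one at a time.

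\emph{Ring structure and restrictions.} Sum is disjoint union and product is $(W\times V,\alpha+\beta)$ with the diagonal action. The unit is $[*,c_0]$. The key point is that the diagonal $\Sigma_\Lambda$-set $W\times V$ again has Young stabilizers: the stabilizer of $(w,v)$ is $\Sigma_\Gamma\cap\Sigma_{\Gamma'}=\Sigma_{\Gamma\cap\Gamma'}$ by the intersection lemma of \cref{sec:parts}. Commutativity, associativity and distributivity hold already at the level of isomorphism classes, since $M_\Lambda$ is a commutative monoid. Restriction along $\Gamma\leq\Lambda$ preserves Young stabilizers because $\Sigma_\Gamma\cap\Sigma_\Theta$ is Young. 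It commutes with disjoint unions and products, so it is a ring map. Along isomorphisms, $\Sigma_f$ carries Young subgroups to Young subgroups. Using \cref{lemma:factorization}, both functoriality and the independence of $2$-isomorphic maps reduce to one observation: an element $\sigma\in\Sigma_\Lambda$ acts on $\cE(\Lambda)$ trivially, because $w\mapsto\sigma w$ is an isomorphism $(W,\alpha)\cong(W,\alpha)$ twisted by $c_\sigma$.

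\emph{Transfers, Frobenius reciprocity and the double coset formula.} The induced set $\Sigma_\Lambda\times_{\Sigma_\Gamma}V$ has stabilizers that are conjugates of stabilizers in $V$, hence Young. Transitivity of induction gives functoriality. Frobenius reciprocity is the usual isomorphism
\[
(W,\alpha)\times(\Sigma_\Lambda\times_{\Sigma_\Gamma}V,\tilde\beta)\cong(\Sigma_\Lambda\times_{\Sigma_\Gamma}(\operatorname{res}W\times V),\widetilde{\alpha+\beta}),
\qquad (w,[\sigma,v])\mapsto[\sigma,(\sigma^{-1}w,v)].
\]
This is compatible with the maps to $M_\Lambda$ because $\alpha$ is equivariant. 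The double coset formula \eqref{doubleCoset} comes from the Mackey decomposition of $\operatorname{res}^{\Sigma_\Omega}_{\Sigma_\Gamma}(\Sigma_\Omega\times_{\Sigma_\Lambda}V)$ into pieces indexed by $\Sigma_\Gamma\backslash\Sigma_\Omega/\Sigma_\Lambda$. I must check that each piece is $\Sigma_\Gamma\times_{\Sigma_{\Gamma\cap\sigma\Lambda}}c_\sigma(\operatorname{res}V)$ together with the map $[\gamma,v]\mapsto\gamma\sigma\beta(v)$. This matches the formula's term $M_*(\Gamma\cap\sigma\Lambda\leq\Gamma)M_*(\sigma)M^*(\sigma^{-1}\Gamma\cap\Lambda\leq\Lambda)$, where $\sigma$ acts on $M$ by relabeling coordinates of $\N^X$.

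\emph{Symmetric monoidal structure.} The map $\boxtimes$ is well defined on Grothendieck groups, and it is a ring map because products in $\cE(\Lambda)\otimes\cE(\Omega)$ go to products of pairs. It is natural in pairs of restrictions (restricting a product action) and pairs of transfers, via
\[
(\Sigma_\Lambda\times_{\Sigma_\Gamma}W)\times(\Sigma_\Omega\times_{\Sigma_\Theta}V)\cong\Sigma_{\Lambda\sqcup\Omega}\times_{\Sigma_{\Gamma\sqcup\Theta}}(W\times V).
\]
Associativity, unit and symmetry come from the corresponding isomorphisms of sets together with $\N^X\times\N^Y\cong\N^{X\amalg Y}$.

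\emph{Strong monoidality (the main obstacle).} I must show $\cE(\Lambda)\otimes\cE(\Omega)\to\cE(\Lambda\sqcup\Omega)$ is an isomorphism. Each $\cE(\Lambda)$ is free on the classes of transitive objects $(\Sigma_\Lambda/\Sigma_\Gamma,\alpha)$. Such an object is determined by a pair $(\Gamma,a)$ with $a\in(\N^X)^{\Sigma_\Gamma}$, where $a$ is the value $\alpha(e\Sigma_\Gamma)$ and pairs are taken up to the $\Sigma_\Lambda$-action. For $\Lambda\sqcup\Omega$, a Young subgroup is $\Sigma_{\Gamma\sqcup\Theta}=\Sigma_\Gamma\times\Sigma_\Theta$, and the fixed vectors satisfy $(\N^{X\amalg Y})^{\Sigma_\Gamma\times\Sigma_\Theta}=(\N^X)^{\Sigma_\Gamma}\times(\N^Y)^{\Sigma_\Theta}$. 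Orbits under $\Sigma_\Lambda\times\Sigma_\Omega$ are then products of orbits. Hence $\boxtimes$ sends the product basis bijectively onto the basis: a product of transitive objects for $\Sigma_\Lambda\times\Sigma_\Omega$ is transitive, unlike the diagonal case. This gives the isomorphism. The care needed here is in verifying that these transitive classes really form a basis, including stabilizer bookkeeping in the infinite-monoid setting; everything else is routine.
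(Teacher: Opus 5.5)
Your proposal is correct and follows essentially the same route as the paper. For the partition-ring axioms, the paper cites Oda--Yoshida, noting only that Young stabilizers are preserved and that finiteness of the monoid is never used; you verify those axioms directly instead. Your strong-monoidality argument is the same as the paper's: decompose into transitive crossed sets, use that every refinement of $\Lambda\sqcup\Omega$ is uniquely $\Gamma\sqcup\Theta$ and that $M_{\Lambda\sqcup\Omega}\cong M_\Lambda\times M_\Omega$, and conclude that $\boxtimes$ carries the product basis bijectively onto the transitive basis of $\cE(\Lambda\sqcup\Omega)$.
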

\begin{proof}
The partition ring structure follows from \cite[Sections 2 and 3]{Oda2}. Their proofs apply unchanged in this setting. Indeed, restriction and transfer preserve the condition on stabilizers since conjugates and intersections of Young subgroups are Young subgroups, and none of the constructions uses finiteness of the monoid.

It remains to consider the symmetric monoidal structure. Every refinement of
$\Lambda\sqcup\Omega$ is uniquely of the form $\Gamma\sqcup\Theta$, with
$\Gamma\leq\Lambda$ and $\Theta\leq\Omega$, and
\[
M_{\Lambda\sqcup\Omega}\cong M_\Lambda\times M_\Omega.
\]
It follows, by decomposing crossed sets into transitive crossed sets, that the
external product induces an isomorphism
\[
\cE(\Lambda)\otimes \cE(\Omega)\xrightarrow{\cong} \cE(\Lambda\sqcup\Omega).
\]
The compatibility of this product with restrictions and transfers is immediate
from the corresponding properties for Cartesian product and transfer.
\end{proof}

Let $e_{\Lambda} = [\Sigma_{\Lambda}/\Sigma_{\Lambda}, c_1 \colon \Sigma_{\Lambda}/\Sigma_{\Lambda} \to \N^X] \in \cE(\Lambda)$, where $c_1$ is the constant function at $1$ on $X$. Following \cref{not:indexing}, let $e_m = e_{\u m}$. We set $e_{0} = 1$. Note that 
\[
e_{\Lambda} \boxtimes e_{\Omega} = e_{\Lambda \sqcup \Omega}.
\] 
For $\Gamma \leq \Lambda$, consider the pair $(\Sigma_\Lambda/\Sigma_{\Gamma}, \alpha)$. The function $\alpha(e\Sigma_{\Gamma}) \in \N^{X}$ is constant on blocks. Let $a_B = \alpha(e\Sigma_{\Gamma})(B)$ for $B \in X/{\sim_\Gamma}$.

\begin{lemma} \label{lem:univelement}
Assume $\Lambda = (X,\sim_{\Lambda})$, $\Gamma \leq \Lambda$, and $[\Sigma_{\Lambda}/\Sigma_{\Gamma}, \alpha] \in \cE(\Lambda)$, then
\[
[\Sigma_{\Lambda}/\Sigma_{\Gamma}, \alpha] = \cE_*(\Gamma \leq \Lambda) \Big (\boxtimes_{B \in X/{\sim_\Gamma}} e_{\u B}^{a_B} \Big ).
\]
\end{lemma}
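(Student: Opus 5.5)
The plan is to compute the right-hand side directly from the definitions of the product, the external product $\boxtimes$, and the transfer in $\cE$. I will first identify the element $\boxtimes_{B} e_{\u B}^{a_B}\in\cE(\Gamma)$, and then transfer it.

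\emph{Step 1: the factor $e_{\u B}^{a_B}$.} Fix a block $B\in X/{\sim_\Gamma}$. The internal product in $\cE(\u B)$ of $[W,\alpha]$ and $[V,\beta]$ is $[W\times V,\alpha+\beta]$. Hence the $a$-fold product of $e_{\u B}=[\Sigma_B/\Sigma_B,c_1]$ is a one-point set mapping to $a\cdot c_1=c_a$, the constant function at $a$ on $B$. This includes $a=0$, where we get the unit $[*,c_0]$. So
\[
e_{\u B}^{a_B}=[\Sigma_{\u B}/\Sigma_{\u B},c_{a_B}].
\]

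\emph{Step 2: the external product.} Apply the definition of $\boxtimes$ iteratively, using the canonical identifications $\Sigma_\Gamma\cong\prod_B\Sigma_{\u B}$ and $\N^X\cong\prod_B\N^B$. This gives
\[
\boxtimes_B e_{\u B}^{a_B}=[\Sigma_\Gamma/\Sigma_\Gamma,\alpha_0],
\]
where $\alpha_0\in\N^X$ takes the value $a_B$ on each block $B$ of $\Gamma$. By the definition of $a_B$, we have $\alpha_0=\alpha(e\Sigma_\Gamma)$. This element is $\Sigma_\Gamma$-fixed, since $\alpha(e\Sigma_\Gamma)$ is constant on the blocks of $\Gamma$ by equivariance, which is the observation recorded just before the lemma.

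\emph{Step 3: the transfer.} By the definition of transfer,
\[
\cE_*(\Gamma\leq\Lambda)[\Sigma_\Gamma/\Sigma_\Gamma,\alpha_0]=[\Sigma_\Lambda\times_{\Sigma_\Gamma}*,\ [\sigma,*]\mapsto\sigma\alpha_0].
\]
Under the canonical isomorphism $\Sigma_\Lambda\times_{\Sigma_\Gamma}*\cong\Sigma_\Lambda/\Sigma_\Gamma$, the structure map sends $\sigma\Sigma_\Gamma$ to $\sigma\alpha(e\Sigma_\Gamma)=\alpha(\sigma\Sigma_\Gamma)$, by equivariance of $\alpha$. So the two crossed sets are isomorphic, which gives the claimed equality.

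The only point needing care, and the main obstacle, is bookkeeping rather than substance. One must check that the identification in Step 2 uses the same conventions for $\N^X\cong\prod\N^B$ and $\Sigma_\Gamma\cong\prod\Sigma_{\u B}$ as the symmetric monoidal structure on $\cE$. One must also check that the iterated $\boxtimes$ over the unordered set of blocks is well defined up to the symmetry isomorphisms. I would handle both by noting that these identifications are canonical bijections of underlying sets, so the permutation isomorphisms of \cref{lemma:factorization} act trivially on the class in question.
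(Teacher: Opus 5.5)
Your proposal is correct and follows essentially the same route as the paper. The paper identifies $\boxtimes_{B} e_{\underline{B}}^{a_B}$ as the one-point $\Sigma_\Gamma$-set labelled by the block-constant function $a=\alpha(e\Sigma_\Gamma)$, transfers it to obtain $\Sigma_\Lambda/\Sigma_\Gamma$ with labelling $\sigma\Sigma_\Gamma\mapsto\sigma a$, and concludes by equivariance of $\alpha$; these are exactly your Steps 2 and 3, and your Step 1 just spells out the computation of the internal powers $e_{\underline{B}}^{a_B}$ that the paper leaves implicit (the ordering bookkeeping you flag is really a matter of the symmetry of the monoidal structure on $\mathcal{E}$ rather than the factorization lemma, but it is harmless and the paper does not dwell on it either).
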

\begin{proof}
The element
\[
\boxtimes_{B\in X/{\sim_\Gamma}}e_{\u B}^{a_B}\in \cE(\Gamma)
\]
is represented by the one-point $\Sigma_\Gamma$-set $*$ and function $a \in \N^X$ with value $a_B$ on the block $B$.
The transfer of this crossed set from $\Sigma_\Gamma$ to $\Sigma_\Lambda$ gives
$\Sigma_\Lambda/\Sigma_\Gamma$ with function
\[
\sigma\Sigma_\Gamma\longmapsto \sigma a.
\]
Since $\alpha$ is equivariant and is determined by its value at
$e\Sigma_\Gamma$, this is precisely $\alpha$.
\end{proof}

Now consider the commutative ring $\cE\powser{\Sigma}$. For $i+j = m$, we have 
\[
\cE^*(\u i \sqcup \u j \leq \u m)(e_{m}) = e_{i} \boxtimes e_{j}
\]
so $\sum_{m \geq 0} e_{m} t^m$ is an exponential element in $\cE\powser{\Sigma}$. In fact, this is the universal exponential element:

\begin{theorem} \label{thm:univexpelt}
Let $R$ be a partition ring. There is a natural bijection between the set of maps of partition rings from $\cE$ to $R$ and the set of exponential elements in $R\powser{\Sigma}$ given by sending a map of partition rings
\[
f \colon \cE \to R
\]
to the exponential element $\sum_{m \geq 0} f_{m} (e_m) t^m$.
\end{theorem}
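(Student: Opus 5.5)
The proof has two halves. First I check that the assignment is well defined and injective, using that $\cE$ is generated by the elements $e_m$ under the partition ring operations. Then I construct, for each exponential element, a map $\cE \to R$ realizing it.

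\textbf{Well defined.} Since $f$ is a map of partition rings, it commutes with restrictions and with the external products $m_{i,j}$. So $f$ sends the exponential element $\sum_m e_m t^m \in \cE\powser{\Sigma}$ to an exponential element of $R\powser{\Sigma}$, and $f_0(e_0)=1$.

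\textbf{Injectivity.} Every $\Sigma_\Lambda$-set over $M_\Lambda$ with Young stabilizers is a disjoint union of transitive ones $[\Sigma_\Lambda/\Sigma_\Gamma,\alpha]$. By \cref{lem:univelement}, each of these is a transfer of an external product of internal powers $e_{\u B}^{a_B}$. Each $e_{\u B}$ is the image of $e_{|B|}$ under the isomorphism $\u{|B|}\cong \u B$. Hence $f$ is determined on all of $\cE(\Lambda)$ by the values $f_m(e_m)$, because $f$ commutes with transfers, isomorphisms, external products and internal products.

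\textbf{Existence: definition of $F$.} Let $\sum r_m t^m$ be exponential. I define $F_\Lambda\colon \cE(\Lambda)\to R(\Lambda)$ on basis elements by
\[
F_\Lambda[\Sigma_\Lambda/\Sigma_\Gamma,\alpha] \;=\; R_*(\Gamma\leq\Lambda)\Big(m\big(\textstyle\bigotimes_{B} r_{\u B}^{*a_B}\big)\Big),
\]
where $r_{\u B}$ denotes $r_{|B|}$ transported to $R(\u B)$ via \cref{lem:earlyiso}. This is well defined on isomorphism classes. Replacing $\Gamma$ by $\sigma\Gamma$ with the corresponding $\sigma a$ changes the input by $R_*(\sigma)$, and transfers are insensitive to $2$-isomorphisms. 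The unit $[*,0]$ goes to $1$ since $r^{*0}=1$.

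\textbf{Existence: compatibility.} I then check the following.
\begin{enumerate}[(i)]
\item Compatibility with transfers and isomorphisms. This is immediate from transitivity of transfers.
\item Compatibility with external products. This follows from naturality of $m$ in pairs of transfers, together with $\Sigma_\Lambda/\Sigma_\Gamma\times\Sigma_\Omega/\Sigma_\Theta\cong\Sigma_{\Lambda\sqcup\Omega}/\Sigma_{\Gamma\sqcup\Theta}$.
\item Compatibility with restrictions. The restriction of a transitive crossed set along $\Psi\leq\Lambda$ decomposes via double cosets $\Sigma_\Psi\backslash\Sigma_\Lambda/\Sigma_\Gamma$ exactly as in the double coset formula \eqref{doubleCoset}. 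The summand for $\sigma$ is a transfer from $\Psi\cap\sigma\Gamma$ of the function $\sigma a$ restricted there. On the $R$ side, applying the double coset formula for $R$ reduces the claim to
\[
R^*(\sigma^{-1}\Psi\cap\Gamma\leq\Gamma)\big(m(\textstyle\bigotimes_B r_{\u B}^{*a_B})\big)
=m\big(\textstyle\bigotimes_{C} r_{\u C}^{*a_C}\big).
\]
Here $C$ runs over the blocks of the refinement, and $a_C=a_B$ for $C\subseteq B$. Restriction is a ring map and is natural for $m$, so it suffices to treat a single block. That case is the iterated exponential identity $R^*(\u{B_1}\sqcup\cdots\sqcup\u{B_k}\leq\u B)(r_B)=m(r_{B_1}\otimes\cdots\otimes r_{B_k})$, which comes from the defining identity of an exponential element by induction on $k$.
\item Multiplicativity for the internal product. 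I prove this by induction on $\Lambda$, exactly as in the proof that $\AA$ is initial. If one factor is a transfer from $\Theta<\Lambda$, I use Frobenius reciprocity in both $\cE$ and $R$ together with (iii). If both factors live at $\Gamma=\Lambda$, they are $[*,a]$ and $[*,b]$ with product $[*,a+b]$. This case holds because $r^{*a}*r^{*b}=r^{*(a+b)}$ and $m$ is a ring map.
\end{enumerate}
Finally, $F_m(e_m)=r_m$ by construction, and naturality in $R$ is clear.

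\textbf{Main obstacle.} I expect the main obstacle to be step (iii). The work is in matching the crossed-set double coset decomposition of Oda--Yoshida, now decorated with functions to $\N^X$, against \eqref{doubleCoset}, and in tracking the transported elements $r_{\u B}$ through the isomorphisms $\sigma$. I would handle this by first proving the single-block restriction formula as a lemma, then invoking \cref{doublecosets}.
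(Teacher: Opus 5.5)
Your proposal is correct and is essentially the paper's proof: the same defining formula on transitive crossed sets (the transfer from $\Gamma$ of the external product of the $r_{|B|}^{*a_B}$), uniqueness via \cref{lem:univelement}, independence of representatives via relabelling, and compatibility with restrictions reduced through the double coset formula to the refinement identity that comes from the exponential condition. The only difference is multiplicativity: you prove it by induction on $\Lambda$, as in the initiality argument of \cref{sec:init}, using Frobenius reciprocity in $\cE$ and $R$ together with your step (iii), whereas the paper expands $x*y$ directly as a sum over $\Sigma_\Gamma\backslash\Sigma_\Lambda/\Sigma_\Omega$ with labels $a+\sigma b$; both arguments work.
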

\begin{proof}
A map of partition rings $f\colon\cE\to R$ sends the elements $e_m$
to elements $r_m=f_m(e_m)$ satisfying $r_0=1$ and
\[
R^*(\u i\sqcup\u j\leq\u{i+j})(r_{i+j})
=r_i\boxtimes r_j.
\]
Thus $\sum_{m\geq0}r_mt^m$ is exponential.

Conversely, suppose that $\sum_{m\geq0}r_mt^m\in R\powser{\Sigma}$
is exponential. For a nonempty finite set $B$, let
$r_{\u B}\in R(\u B)$ correspond to $r_{|B|}$ under the canonical
identification of \cref{lem:earlyiso}. If $\Gamma$ is a partition
of $X$ and $a\in\N^X$ is constant on its blocks, write
\[
z_\Gamma(a)=\boxtimes_{B\in X/{\sim_\Gamma}}r_{\u B}^{a_B}
\qquad\text{and}\qquad
\alpha_a(g\Sigma_\Gamma)=ga.
\]
By \cref{lem:univelement}, a map sending $e_m$ to $r_m$ must satisfy
\[
f_\Lambda([\Sigma_\Lambda/\Sigma_\Gamma,\alpha_a])
=R_*(\Gamma\leq\Lambda)z_\Gamma(a).
\]
We use this formula to define $f_\Lambda$ on the transitive crossed
sets and extend additively.

The construction of $z_\Gamma(a)$ is compatible with relabelling:
for an isomorphism $\sigma\colon\Gamma\to\sigma\Gamma$, we have
\[
R_*(\sigma)z_\Gamma(a)=z_{\sigma\Gamma}(\sigma a).
\]
Changing the representative of a transitive crossed set conjugates
$\Gamma$ and transports $a$ by an element of $\Sigma_\Lambda$.
Since such an element induces the identity on $R(\Lambda)$,
the formula for $f_\Lambda$ is independent of this choice. It also
shows that the maps $f_\Lambda$ commute with isomorphisms.

The identities satisfied by the $r_m$'s describe what happens when a block is
refined. If $\Delta\leq\Gamma$, repeated application of these
identities, using that restrictions preserve internal and external
multiplication, gives
\[
R^*(\Delta\leq\Gamma)z_\Gamma(a)=z_\Delta(a).
\]
Here each block of $\Delta$ inherits the label of the block of
$\Gamma$ containing it. Together with the double coset formula,
this identity shows that $f$ commutes with restrictions.
Compatibility with transfers follows from transitivity of transfers,
and compatibility with the external product follows from the
definition of $z_\Gamma(a)$ and the compatibility of transfers
with external products.

It remains to check that $f_\Lambda$ is multiplicative. We will use $*$ for the multiplication in $\cE(\Lambda)$ and $R(\Lambda)$. Let
\[
x=[\Sigma_\Lambda/\Sigma_\Gamma,\alpha_a],
\qquad
y=[\Sigma_\Lambda/\Sigma_\Omega,\alpha_b].
\]
Choose representatives $\sigma$ for
$\Sigma_\Gamma\backslash\Sigma_\Lambda/\Sigma_\Omega$ and set
$\Delta_\sigma=\Gamma\cap\sigma\Omega$.
The orbit of $(\Sigma_\Gamma,\sigma\Sigma_\Omega)$ in the product
of the two coset spaces has stabilizer $\Sigma_{\Delta_\sigma}$
and label $a+\sigma b$. Thus
\[
x*y=\sum_{[\sigma]}
[\Sigma_\Lambda/\Sigma_{\Delta_\sigma},\alpha_{a+\sigma b}].
\]
Frobenius reciprocity and the double coset formula express
$f_\Lambda(x)*f_\Lambda(y)$ as a sum over the same double cosets,
by restricting the two factors to $\Delta_\sigma$ and transferring
their product to $\Lambda$. The restriction and relabelling
identities above identify these two factors with
$z_{\Delta_\sigma}(a)$ and $z_{\Delta_\sigma}(\sigma b)$.
Since external multiplication is a ring map, their product is
$z_{\Delta_\sigma}(a+\sigma b)$. Consequently,
\[
\begin{aligned}
f_\Lambda(x)*f_\Lambda(y)
&=\sum_{[\sigma]}R_*(\Delta_\sigma\leq\Lambda)
  \bigl(z_{\Delta_\sigma}(a)*z_{\Delta_\sigma}(\sigma b)\bigr)\\
&=\sum_{[\sigma]}R_*(\Delta_\sigma\leq\Lambda)
  z_{\Delta_\sigma}(a+\sigma b)\\
&=f_\Lambda(x*y).
\end{aligned}
\]
The multiplicative identity is
$[\Sigma_\Lambda/\Sigma_\Lambda,\alpha_0]$, whose image is
$z_\Lambda(0)=1$. Hence $f\colon\cE\to R$ is a map of partition
rings.

By construction, $f_m(e_m)=r_m$, and \cref{lem:univelement} shows
that these values determine $f$ uniquely. The two constructions
are therefore inverse and natural in $R$.
\end{proof}

It follows that $\Div(\cE)$ carries the universal exponential relation in the category of $\Div$-algebras in partition rings. That is, if the partition ring $R$ is a $\Div$-algebra with structure map $\iota \colon \Div(R) \to R$ and $\sum_{m \geq 0} r_mt^m$ is an exponential element in $R\powser{\Sigma}$ with exponential relation
\[
\sum_{m \geq 0} r_mt^m = \exp\Big(\sum_{k \geq 1} \iota_k(r_k * \1_k) t^k\Big)
\]
then there exists a unique map of $\Div$-algebras $\Div(\cE) \to R$ sending the exponential relation carried by $\Div(\cE)\powser{\Sigma}$ to the exponential relation over $R$. 

We can also describe the universal exponential relation. Note that $\bar{\cE}(m) \cong \Z[x_m]$ for $m > 0$, where $x_m = q_m(e_m) = q_m([\Sigma_m/\Sigma_m, c_1])$. Viewing $\bar{\cE}(m) \subseteq \Div(\cE)(m)$, the universal exponential relation in $\cE\divpowser{\Sigma}$ is 
\[
\sum_{m \geq 0} e_mt^m = \exp\Big(\sum_{k \geq 1} x_k t^k\Big).
\]

The partition ring $\cE$ admits nontrivial automorphisms. We will describe two of them here and explore the full automorphism group as well as further algebraic structure possessed by the partition ring $\cE$ and the commutative ring $\cE[\Sigma]$ in future work. Let $u(t) = \sum_{m \geq 0} e_m t^m$ be the universal exponential element in $\cE\powser{\Sigma}$. Note that $u(-t)$ is also an exponential element. Since $u(t)$ is a group-like element of $\cE\powser{\Sigma}$ (see \cref{rem:grouplike}), so is $u(t)^{-1}$ (defined in the expected way) and this implies that $u(t)^{-1}$ is exponential as well. The exponential elements $u(-t)$ and $u(t)^{-1}$ correspond to involutions of $\cE$.

There is a useful description of the composite of these two involutions. By the preceding section, $RU(\Sigma_{(-)})$ is the initial partition ring. Thus, for every partition ring $R$, there are canonical elements
\[
s_m \in R(m)
\]
given by the images of the sign representations $[\mathrm{sgn}_m] \in RU(\Sigma_m)$ under the unique map of partition rings $RU(\Sigma_{(-)}) \to R$. We refer to $s_m$ as the sign element in degree $m$.

\begin{prop} \label{prop:signinvolution}
Let $R$ be a partition ring and let $r(t)=\sum_{m \geq 0} r_m t^m \in R\powser{\Sigma}$ be an exponential element. Then
\[
r(-t)^{-1}=\sum_{m \geq 0}(s_m*r_m)t^m.
\]
\end{prop}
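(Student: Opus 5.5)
The plan is to check directly that $r(-t)\cdot\sum_{m\geq 0}(s_m*r_m)t^m=1$ in $R\powser{\Sigma}$, with the product being the transfer product. Since $r(-t)$ has constant term $r_0=1$, it is a unit in the completed ring, so this identity is enough. The idea is to factor each degree-$m$ coefficient as $r_m$ times a universal element pulled back from $RU(\Sigma_m)$. That element vanishes by a classical symmetric function identity.

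\emph{Step 1: reduce to a universal coefficient.} Let $\iota\colon RU(\Sigma_{(-)})\to R$ be the unique map of partition rings from the initial partition ring. Write $1_i$ for the unit of $R(i)$, which is $\iota$ of the trivial representation. Fix $m\geq 1$. Its degree-$m$ coefficient is
\[
\sum_{i+j=m}(-1)^i R_*(\u i\sqcup\u j\leq \u m)\,m_{i,j}\big(r_i\otimes (s_j*r_j)\big).
\]
The external product $m_{i,j}$ is a ring map for $*$, so
\[
m_{i,j}(r_i\otimes (s_j*r_j))=m_{i,j}(r_i\otimes r_j)*m_{i,j}(1_i\otimes s_j).
\]
Exponentiality identifies the first factor with $R^*(\u i\sqcup\u j\leq\u m)(r_m)$. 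Frobenius reciprocity then rewrites the summand as
\[
(-1)^i\, r_m * R_*(\u i\sqcup\u j\leq\u m)\,m_{i,j}(1_i\otimes s_j).
\]
The map $\iota$ commutes with transfers and external products. Hence the whole coefficient equals
\[
r_m*\iota_m\Big(\sum_{i+j=m}(-1)^i\operatorname{Ind}_{\Sigma_i\times\Sigma_j}^{\Sigma_m}(\mathrm{triv}_i\boxtimes\mathrm{sgn}_j)\Big).
\]
In degree $0$ the coefficient is simply $r_0*s_0=1$.

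\emph{Step 2: the universal identity.} It remains to show that the alternating sum inside $\iota_m$ vanishes in $RU(\Sigma_m)$ for $m\geq 1$. Under the identification of $\bigoplus_m RU(\Sigma_m)$ with the ring of symmetric functions, $\mathrm{triv}_i$ and $\mathrm{sgn}_j$ correspond to $h_i$ and $e_j$, and induction from Young subgroups corresponds to the product. The sum is then $\sum_{i+j=m}(-1)^ih_ie_j$, which vanishes by the classical identity $H(-t)E(t)=1$. If I prefer to stay self-contained, I would instead compute characters. On a permutation of cycle type $\mu$, the character of $\operatorname{Ind}(\mathrm{triv}_i\boxtimes\mathrm{sgn}_j)$ counts the ways to split the cycles into sets of total sizes $i$ and $j$, weighted by the sign on the $j$-part. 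Summing with $(-1)^i$ weights each cycle of length $k$ by $(-1)^k+(-1)^{k-1}=0$. So the character is zero, and the element is zero because characters are injective on $RU$.

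\emph{Main obstacle.} The delicate point is Step 1. One must be careful that $m_{i,j}(1_i\otimes s_j)$ is really $\iota$ applied to $\mathrm{triv}_i\boxtimes\mathrm{sgn}_j$. This holds because $\iota$ is a map of lax symmetric monoidal partition functors, so it respects external products. One also needs Frobenius reciprocity in the form $R_*(f)(R^*(f)(a)*b)=a*R_*(f)(b)$ with $a=r_m$. Once this factorization is in place, the sign bookkeeping is routine, and the identity in $RU(\Sigma_m)$ is classical.
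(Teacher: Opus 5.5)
Your proposal is correct and is essentially the paper's proof: expand the degree-$m$ coefficient of $r(-t)\sum_j (s_j*r_j)t^j$, use multiplicativity of $m_{i,j}$, exponentiality and Frobenius reciprocity to pull out $r_m$, and reduce to the classical identity $\bigl(\sum_m (-1)^m 1_m t^m\bigr)\bigl(\sum_m [\mathrm{sgn}_m]t^m\bigr)=1$ pushed forward along the unique map $RU(\Sigma_{(-)})\to R$. The only addition is your cycle-by-cycle character check of that classical identity, where the paper simply cites Fulton.
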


\begin{proof}
Let $1_m \in RU(\Sigma_m)$ and $1_{m}^R \in R(m)$ be the multiplicative identities. We have the classical relation (see \cite[Section 6.1]{Fulton})
$$
\bigg(\sum_{m\geq0}(-1)^m1_mt^m\bigg)
\bigg(\sum_{m\geq0}[\mathrm{sgn}_m]t^m\bigg)=1
$$
in the completed ring of symmetric functions $RU(\Sigma_{(-)})\powser{\Sigma}$. Applying the unique map $RU(\Sigma_{(-)})\to R$ gives
$$
\bigg(\sum_{m\geq0}(-1)^m1_m^Rt^m\bigg)
\bigg(\sum_{m\geq0}s_mt^m\bigg)=1.
$$

The degree $m$ coefficient of
$$
r(-t)\bigg(\sum_{j\geq0}(s_j*r_j)t^j\bigg)
$$
is zero for $m>0$ and is $1$ for $m=0$.
Indeed, the degree $m$ coefficient is
\[
\sum_{i+j=m}(-1)^i
R_*(\u i\sqcup \u j\leq \u m)
\Bigl(m_{i,j}\bigl(r_i\otimes(s_j*r_j)\bigr)\Bigr).
\]
Since $m_{i,j}$ is multiplicative and $r(t)$ is exponential, we have
\[
m_{i,j}\bigl(r_i\otimes(s_j*r_j)\bigr)
=
R^*(\u i\sqcup\u j\leq\u m)(r_m)
*
m_{i,j}(1_i^R\otimes s_j).
\]
Thus Frobenius reciprocity gives
\begin{align*}
\sum_{i+j=m}(-1)^i
R_*(\u i\sqcup\u j\leq\u m)  \Bigl(
R^*(\u i\sqcup\u j\leq\u m)&(r_m)
*
m_{i,j}(1_i^R\otimes s_j)
\Bigr) \\
= \,
&r_m*
\Big ( \sum_{i+j=m}(-1)^i
R_*(\u i\sqcup\u j\leq\u m)
m_{i,j}(1_i^R\otimes s_j) \Big ).
\end{align*}
The final sum is the degree $m$ coefficient of
\[
\bigg(\sum_{i\geq0}(-1)^i1_i^Rt^i\bigg)
\bigg(\sum_{j\geq0}s_jt^j\bigg).
\]
By the preceding identity, this coefficient is $0$ for $m>0$ and $1$ for $m=0$.
\end{proof}

\begin{example}
Assume that $R$ is a partition power functor. In this case, the sign elements have an intrinsic description in terms of the power operations. Indeed, the canonical map from the initial partition power functor to $R$ preserves power operations, while in the initial partition power functor
\[
\sum_{m\geq0}P_m(-1)t^m
=
\Big (\sum_{m\geq0}P_m(1)t^m \Big )^{-1}
=
\sum_{m\geq0}(-1)^m[\mathrm{sgn}_m]t^m.
\]
Consequently,
\[
s_m=(-1)^mP_m(-1),
\]
and multiplicativity of $P_m$ gives
\[
s_m*P_m(x)=(-1)^mP_m(-x).
\]
Thus
\begin{equation}
\sum_{m\geq0}(s_m*P_m(x))t^m
=
\Big (\sum_{m\geq0}(-1)^mP_m(x)t^m\Big )^{-1}.
\end{equation}
\end{example}


\section{Obtaining classical exponential relations} \label{sec:classicalexp}

To obtain the exponential relations that we are interested in, we would liked weaker conditions on the map $R \to R'$ that allow us to push exponential relations from $R$ to $R'$. In this section, we provide weaker conditions on a collection of maps $R(\Lambda) \to C^K(\Lambda) = K$ that allow us to construct a map of commutative $\N$-graded rings $R\langle \Sigma \rangle \to K\langle t \rangle$. Completion then provides us with a map of commutative rings $R\divpowser{\Sigma} \to K\divpowser{t}$. Any exponential relation holding in the source then gives a classical exponential relation in the target.

First we notice that the conditions involving restriction maps are not necessary on a map between lax symmetric monoidal partition functors in order to get an induced map of commutative $\N$-graded rings.

\begin{prop} \label{prop:transferring}
Assume $R$ and $R'$ are lax symmetric monoidal partition functors and that, for each partition $\Lambda$, we have a map of abelian groups $f_{\Lambda} \colon R(\Lambda) \to R'(\Lambda)$ commuting with transfer maps and the lax symmetric monoidal structure maps, then there is an induced map of commutative $\N$-graded rings
\[
f[\Sigma] \colon R[\Sigma] \to R'[\Sigma].
\]
\end{prop}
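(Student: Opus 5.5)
We define $f[\Sigma]$ degreewise as $\bigoplus_{m\geq 0} f_{\u m}\colon \bigoplus_m R(m)\to \bigoplus_m R'(m)$. This is additive and preserves the grading by construction, so all of the work is in checking the unit and multiplicativity. The point to keep in mind throughout is that the ring structure on $R[\Sigma]$ involves only the transfer maps and the lax symmetric monoidal structure maps, never the restrictions.

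For the unit, the hypothesis that $f$ commutes with the lax symmetric monoidal structure includes compatibility with the unit maps $\Z\to R(0)$ and $\Z\to R'(0)$. Hence $f_{\u 0}(1)=1$.

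For multiplicativity, take homogeneous elements $x\in R(i)$ and $y\in R(j)$. We compute
\[
f_{\u{i+j}}(xy)=f_{\u{i+j}}R_*(\u i\sqcup\u j\leq\u{i+j})m_{i,j}(x\otimes y)=R'_*(\u i\sqcup\u j\leq\u{i+j})f_{\u i\sqcup\u j}m_{i,j}(x\otimes y).
\]
The second equality uses that $f$ commutes with transfers, applied to the refinement $\u i\sqcup\u j\leq\u{i+j}$. This is the one place where we need $f$ to be defined on all partitions and not just the indiscrete ones, since the intermediate value lives in $R(\u i\sqcup\u j)$. Next, compatibility with the monoidal structure gives
\[
f_{\u i\sqcup\u j}\,m_{i,j}=m'_{i,j}(f_{\u i}\otimes f_{\u j}),
\]
so the expression becomes $f_{\u i}(x)f_{\u j}(y)$ in $R'[\Sigma]$. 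Bilinearity then extends multiplicativity from homogeneous elements to all of $R[\Sigma]$.

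There is no real obstacle in this argument. The only step that deserves care is noting exactly which hypotheses are used: the transfer along $\u i\sqcup\u j\leq \u{i+j}$ and the maps $m_{i,j}$ and $\Z\to R(0)$. The double coset formula and the restriction maps never enter. If we want the completed statement as well, the same degreewise formula defines $R\powser{\Sigma}\to R'\powser{\Sigma}$. It is multiplicative because each coefficient of a product involves only finitely many of these homogeneous products.
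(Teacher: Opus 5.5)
Your proposal is correct and is the verification the paper intends. The paper records this proposition without a written proof, noting only that the transfer product on $R[\Sigma]$ uses nothing but the transfers along $\u i\sqcup\u j\leq\u{i+j}$ and the lax symmetric monoidal structure maps, so compatibility with restrictions is not needed. Your check of the unit, your use of $f_{\u{i+j}}R_*(\u i\sqcup\u j\leq\u{i+j})=R'_*(\u i\sqcup\u j\leq\u{i+j})f_{\u i\sqcup\u j}$ together with $f_{\u i\sqcup\u j}m_{i,j}=m'_{i,j}(f_{\u i}\otimes f_{\u j})$, and your remark about completions supply exactly that omitted argument.
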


We are particularly interested in obtaining classical exponential relations. Thus we are interested in conditions on maps $R(\Lambda) \to C^K(\Lambda) = K$, for each partition $\Lambda$, that give rise to a map of commutative $\N$-graded rings
\[
R\langle \Sigma \rangle \to K\langle t \rangle.
\]

\begin{prop}\label{prop:transfer-augmentation}
Let $R$ be a lax symmetric monoidal partition functor and let $K$ be a
commutative ring. Suppose that, for every partition
$\Lambda$, there is a map of abelian groups
\[
\epsilon_\Lambda \colon R(\Lambda)\longrightarrow C^K(\Lambda) = K
\]
such that the maps $\epsilon_\Lambda$ commute with transfers and with
the lax symmetric monoidal structure maps. In this case, we construct canonical maps of abelian groups
\[
\widetilde\epsilon_\Lambda \colon
\Div(R)(\Lambda)\longrightarrow \Div(C^K)(\Lambda) \cong C_K(\Lambda) = K
\]
compatible with transfer maps, the lax symmetric monoidal structure, and the unit for the monad $\Div$ so that $\tilde{\epsilon}_{\Lambda} \eta_{\Lambda} = |\Sigma_{\Lambda}|\epsilon_{\Lambda}$.



\end{prop}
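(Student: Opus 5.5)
The plan is to take $\widetilde\epsilon = \Div(\epsilon)$. The construction of $\Div(f)$ before \cref{thm:divendo} uses only that $f$ commutes with transfers; compatibility with restrictions is never used, and the same is true of the naturality of the lax symmetric monoidal structure in \cref{prop:laxdiv}. Since $\epsilon$ commutes with transfers along strict refinements, it carries $I_\Gamma$ into the transfer subgroup of $C^K(\Gamma)$ and so induces maps $\bar\epsilon_\Gamma \colon \bar R(\Gamma)\to \overline{C^K}(\Gamma)$. By \cref{divconstant}, the target is $0$ unless $\Gamma$ is discrete, and equals $K$ when it is. Since $\epsilon$ commutes with transfers along isomorphisms, and these are the identity in $C^K$, the direct sum of the $\bar\epsilon_\Gamma$ is $\Sigma_\Lambda$-equivariant. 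Passing to fixed points gives
\[
\widetilde\epsilon_\Lambda(x) = \epsilon_{\bar X}(x'_{\bar X}),
\]
where $\bar X$ is the discrete subpartition of $\Lambda$ and $x'_{\bar X}$ is any lift of $x_{\bar X}$; note $I_{\bar X}=0$, so the lift is unique.

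Compatibility with transfers can be checked directly from the transfer formula for $\Div$. For $f\colon\Psi\to\Lambda$, the $\bar X$-coordinate of $\Div(R)_*(f)(x)$ is a sum over all of $\Sigma_\Lambda/\Sigma_{f\Psi}$, because the condition $\bar X\leq\sigma f\Psi$ is vacuous. Each term is a transfer along an isomorphism applied to the discrete coordinate of $x$. After applying $\epsilon$, every term contributes $\epsilon_{\bar X}(x_{\bar X})$, so the total is $|\Sigma_\Lambda/\Sigma_{f\Psi}|\,\widetilde\epsilon(x)$. This is exactly the transfer in $\Div(C^K)=C_K$. Compatibility with the lax monoidal structure reduces to the discrete coordinate: the discrete subpartition of $\Lambda\sqcup\Omega$ is the disjoint union of the discrete subpartitions, so the claim follows from \eqref{eq:barmdefeq} and the hypothesis on $\epsilon$.

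For the unit, $\widetilde\epsilon_\Lambda\eta_\Lambda(z)=\epsilon_{\bar X}\bigl(R^*(\bar X\leq\Lambda)z\bigr)$, and this must be shown to equal $|\Sigma_\Lambda|\,\epsilon_\Lambda(z)$. I expect this to be the main obstacle, since $\epsilon$ is not assumed to commute with restrictions. The first thing I would try is the double coset formula \eqref{doubleCoset}. It gives
\[
R^*(\bar X\leq\Lambda)R_*(\bar X\leq\Lambda)=\sum_{\sigma\in\Sigma_\Lambda}R_*(\sigma),
\]
so the identity holds on the image of the transfer from $\bar X$. To cover the remaining elements I would induct on the refinement order, using intermediate $\Gamma$ with $\bar X\leq\Gamma\leq\Lambda$ and writing the restriction as the composite $R^*(\bar X\leq\Gamma)R^*(\Gamma\leq\Lambda)$. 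The hope is to compare $\epsilon$ on the two sides through the lax monoidal unit $\Z\to R(0)$, or through the multiplicative identity. If that fails, the statement must be read as asserting only the compatibility $\widetilde\epsilon\eta$ versus $\eta_{C^K}\epsilon$ on transfer-generated elements, and I would isolate exactly which elements require it for the later applications.
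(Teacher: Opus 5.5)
Your checks of transfer compatibility and lax monoidal compatibility are fine. The gap is the choice $\widetilde\epsilon=\Div(\epsilon)$ itself: the unit identity is false for this map, so no induction can establish it. The identity you need is $\Div(\epsilon)\eta_R=\eta_{C^K}\epsilon$, which is naturality of $\eta$ with respect to $\epsilon$. Naturality requires $\epsilon$ to commute with restrictions, and that is exactly the hypothesis that was dropped.

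Concretely, take $R=RU(\Sigma_{(-)})$ with $\epsilon_\Lambda$ the transfer along $\Sigma_\Lambda\to e$; this satisfies the hypotheses. At $\Lambda=\u 2$ your map gives $\widetilde\epsilon_2\eta_2(1)=\epsilon_{\bar 2}\bigl(R^*(\bar 2\leq\u 2)(1)\bigr)=1$, while $|\Sigma_2|\epsilon_2(1)=2$. For $z=[\mathrm{sgn}]$ you get $1$ against $0$. So comparing through the multiplicative identity cannot help.

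Your fallback of restricting the claim to transfer-generated elements would break the later applications. Those need $\widetilde\epsilon_m\eta_m(r_m)=m!\,\epsilon_m(r_m)$ for elements such as $P_m(x)$, which are not transfers. In fact your map only reads the discrete coordinate. For $R=RU(G\times\Sigma_{(-)})$ it therefore sends $\sum_m P_m(x)t^m$ to $\sum_m x^mt^m/m!=\exp(xt)$, the relation already obtained by restricting to $G$. It can never produce symmetric powers, Adams operations, or Hecke operators.

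The missing idea is that $\widetilde\epsilon_\Lambda$ must read every coordinate of $\Div(R)(\Lambda)$, distributing $|\Sigma_\Lambda|\epsilon_\Lambda$ over all $\Gamma\leq\Lambda$. The paper defines $\ell_\Gamma\colon\bar R(\Gamma)\to K$ by induction on refinement (equivalently, by M\"obius inversion on the lattice of subpartitions). It starts from $\ell_{\bar X}=\epsilon_{\bar X}$ and sets
\[
\ell_\Lambda(x)=|\Sigma_\Lambda|\epsilon_\Lambda(x)-\sum_{\Gamma<\Lambda}\ell_\Gamma\bigl(q_\Gamma R^*(\Gamma<\Lambda)(x)\bigr),
\]
then defines $\widetilde\epsilon_\Lambda((x_\Gamma)_\Gamma)=\sum_{\Gamma\leq\Lambda}\ell_\Gamma(x_\Gamma)$. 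Your map is exactly the discrete summand of this.

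With this definition the unit identity holds by construction. The real work is showing that $\ell_\Lambda$ kills $I_\Lambda$, and that is where your double coset computation belongs. It must be done for every $\Omega<\Lambda$, not only the discrete one. After applying $q_\Gamma$, only the double cosets with $\Gamma\leq\sigma\Omega$ survive, and these are indexed by $\Sigma_\Lambda/\Sigma_\Omega$. The induction hypothesis and transfer compatibility of $\epsilon$ then give
\[
\sum_{\Gamma<\Lambda}\ell_\Gamma\bigl(q_\Gamma R^*(\Gamma\leq\Lambda)R_*(\Omega<\Lambda)(y)\bigr)=|\Sigma_\Lambda/\Sigma_\Omega|\,|\Sigma_\Omega|\,\epsilon_\Omega(y)=|\Sigma_\Lambda|\,\epsilon_\Lambda\bigl(R_*(\Omega<\Lambda)(y)\bigr).
\]
Transfer compatibility of $\widetilde\epsilon$ follows from the same reindexing. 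The identity $\ell_{\Gamma\sqcup\Theta}\circ\bar m_{\Gamma,\Theta}=\ell_\Gamma\ell_\Theta$ follows by induction, comparing the defining identities term by term.
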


\begin{proof}
We construct $\tilde \epsilon_{\Lambda}$ out of maps $\ell_{\Lambda} \colon \bar{R}(\Lambda) \to K$ characterized by the identities
\[
|\Sigma_\Lambda|\epsilon_\Lambda(x)
=
\sum_{\Gamma\leq\Lambda}
\ell_\Gamma
\left(
q_\Gamma R^*(\Gamma\leq\Lambda)(x)
\right)
\]
for $x\in R(\Lambda)$. The maps $\ell_\Lambda$ will be built inductively over the poset of
partitions. 

If $\Lambda$ is discrete, then
$\bar{R}(\Lambda)=R(\Lambda)$ and we set
\[
\ell_\Lambda=\epsilon_\Lambda.
\]
Suppose that $\ell_\Gamma$ has been constructed for every proper
subpartition $\Gamma<\Lambda$. For $x\in R(\Lambda)$, set
\[
\ell_\Lambda(x)
=
|\Sigma_\Lambda|\epsilon_\Lambda(x)
-
\sum_{\Gamma<\Lambda}
\ell_\Gamma
\left(
q_\Gamma R^*(\Gamma < \Lambda)(x)
\right).
\]
Note that $\ell_\Lambda$ is compatible with isomorphisms. This is immediate for the discrete partition as $\epsilon_{\Lambda}$ is compatible with all transfers. Transporting the defining formula above along an isomorphism sends proper subpartitions to proper subpartitions, so the naturality of $\epsilon_{\Lambda}$ again gives that $\ell_{\Lambda}$ is compatible with isomorphisms.

We claim that $\ell_\Lambda$ vanishes on the transfer subgroup
$I_\Lambda$ and thus induces a map
\[
\ell_\Lambda \colon \bar{R}(\Lambda)\longrightarrow K.
\]
To check this, let $\Omega<\Lambda$ and $y\in R(\Omega)$. As in the proof of \cref{lemma:unitMapForDivMonad}, after applying the
double-coset formula to
\[
R^*(\Gamma\leq\Lambda)R_*(\Omega<\Lambda)(y)
\]
and then applying $q_\Gamma$, every term vanishes unless
$\Gamma\leq\sigma\Omega$. Consequently,
\begin{align} \label{eq:double}
q_\Gamma R^*(\Gamma\leq\Lambda)
R_*(\Omega<\Lambda)(y)
&=
\sum_{\substack{
[\sigma]\in
\Sigma_\Gamma\backslash\Sigma_\Lambda/\Sigma_\Omega\\
\Gamma\leq\sigma\Omega
}}
\bar{R}_*(\sigma)
q_{\sigma^{-1}\Gamma}
R^*(\sigma^{-1}\Gamma\leq\Omega)(y).
\end{align}
Using this, we have the equalities explained below:
\begin{align*}
\sum_{\Gamma<\Lambda}
\ell_\Gamma\Bigg(
q_\Gamma R^*(\Gamma\leq\Lambda)
R_*(\Omega<\Lambda)(y)
\Bigg) &= \sum_{\Gamma<\Lambda}
\ell_\Gamma\Bigg(\sum_{\substack{
[\sigma]\in
\Sigma_\Gamma\backslash\Sigma_\Lambda/\Sigma_\Omega\\
\Gamma\leq\sigma\Omega
}}
\bar{R}_*(\sigma)
q_{\sigma^{-1}\Gamma}
R^*(\sigma^{-1}\Gamma\leq\Omega)(y)
\Bigg) \\
&= \sum_{\Gamma<\Lambda} \sum_{\substack{
[\sigma]\in
\Sigma_\Gamma\backslash\Sigma_\Lambda/\Sigma_\Omega\\
\Gamma\leq\sigma\Omega
}} \ell_{\sigma^{-1}\Gamma} q_{\sigma^{-1}\Gamma}
R^*(\sigma^{-1}\Gamma\leq\Omega)(y) \\
&= \sum_{[\sigma] \in \Sigma_\Lambda/\Sigma_\Omega} \sum_{\Gamma \leq \sigma \Omega} \ell_{\sigma^{-1}\Gamma} q_{\sigma^{-1}\Gamma}
R^*(\sigma^{-1}\Gamma\leq\Omega)(y) \\
&= \sum_{[\sigma] \in \Sigma_\Lambda/\Sigma_\Omega} \sum_{\Theta \leq \Omega} \ell_{\Theta} q_{\Theta}
R^*(\Theta \leq\Omega)(y) \\
&= |\Sigma_\Lambda/\Sigma_\Omega| \sum_{\Theta \leq \Omega} \ell_{\Theta} q_{\Theta}
R^*(\Theta \leq\Omega)(y).
\end{align*}
The first equality follows from \cref{eq:double}. The second equality follows from the compatibility of $\ell_{\Gamma}$ with isomorphisms. The third equality follows from the fact that a double coset $[\sigma] \in \Sigma_\Gamma\backslash\Sigma_\Lambda/\Sigma_\Omega$ such that $\Gamma \leq \sigma \Omega$ has the property that $\Sigma_{\Gamma} \subseteq \Sigma_{\Omega}^{\sigma}$ and thus $\Sigma_{\Gamma} \sigma \Sigma_{\Omega} = \sigma \Sigma_{\Omega}$, providing a bijection with the set of cosets $\Sigma_\Lambda / \Sigma_\Omega$ such that $\Gamma \leq \sigma \Omega$. The fourth equality comes from setting $\Theta = \sigma^{-1} \Gamma$. The fifth equality follows from the fact that the internal sum is independent of $[\sigma]$.

The induction hypothesis identifies the right hand side with
\[
|\Sigma_\Lambda/\Sigma_\Omega|
|\Sigma_\Omega|\epsilon_\Omega(y)
=
|\Sigma_\Lambda|\epsilon_\Omega(y).
\]
Since the maps $\epsilon_\Lambda$ commute with transfers,
\[
\epsilon_\Lambda
R_*(\Omega\leq\Lambda)(y)
=
\epsilon_\Omega(y).
\]
It follows that
\[
\ell_\Lambda
R_*(\Omega<\Lambda)(y)=0,
\]
and hence $\ell_\Lambda$ descends to $\bar{R}(\Lambda)$.

M\"obius inversion gives the explicit formula
\[
\ell_\Lambda(q_\Lambda x)
=
\sum_{\Gamma\leq\Lambda}
\mu(\Gamma,\Lambda)
|\Sigma_\Gamma|
\epsilon_\Gamma
\left(
R^*(\Gamma\leq\Lambda)(x)
\right),
\]
where $\mu$ is the M\"obius function of the lattice of subpartitions of
$\Lambda$. Note that, if $\Gamma = ([m], \sim_{\Gamma})$ and $n = |[m]/{\sim_{\Gamma}}|$ and $m>0$, then $\mu(\Gamma, \u m) = (-1)^{n-1}(n-1)!$.

Now define
\[
\widetilde\epsilon_\Lambda
\bigl((x_\Gamma)_{\Gamma\leq\Lambda}\bigr)
=
\sum_{\Gamma\leq\Lambda}\ell_\Gamma(x_\Gamma).
\]
The compatibility of the maps $\ell_\Gamma$ with isomorphisms shows
that this is well-defined on the $\Sigma_\Lambda$-invariants. The same
double-coset reindexing used above gives, for
$\Omega\leq\Lambda$,
\[
\widetilde\epsilon_\Lambda
\Div(R)_*(\Omega\leq\Lambda)
=
|\Sigma_\Lambda/\Sigma_\Omega|
\widetilde\epsilon_\Omega.
\]
Thus $\widetilde{\epsilon}$ is compatible with the transfers in $C_K$.

Next we check the lax symmetric monoidal multiplication. We wish to show that
\[
\widetilde{\epsilon}_{\Lambda \sqcup \Omega} \Div(m)_{\Lambda, \Omega}((u_{\Gamma})_{\Gamma \leq \Lambda} \otimes (v_{\Theta})_{\Theta \leq \Omega}) = \widetilde{\epsilon}_{\Lambda}((u_{\Gamma})_{\Gamma \leq \Lambda}) \widetilde{\epsilon}_{\Omega}((v_{\Theta})_{\Theta \leq \Omega}).
\]
Recall that every
subpartition of $\Lambda\sqcup\Omega$ is uniquely of the form
$\Gamma\sqcup\Theta$, with $\Gamma\leq\Lambda$ and
$\Theta\leq\Omega$. Using that $\widetilde{\epsilon}_{\Lambda}((u_{\Gamma})) = \sum_{\Gamma \leq \Lambda} \ell_{\Gamma}(u_\Gamma)$ (and similarly for $\Omega$ and $(v_{\Theta})$), it suffices to show that
\[
\ell_{\Gamma\sqcup\Theta}
\bigl(\bar m_{\Gamma,\Theta}(u\otimes v)\bigr)
=
\ell_\Gamma(u)\ell_\Theta(v),
\]
for $u \in \bar{R}(\Gamma)$ and $v \in \bar{R}(\Theta)$.

Indeed, choose lifts $x\in R(\Gamma)$ and $y\in R(\Theta)$ of $u$ and
$v$. Applying the defining identity for $\ell_{\Gamma\sqcup\Theta}$ to
$m_{\Gamma,\Theta}(x\otimes y)$, and using the fact that every
subpartition of $\Gamma\sqcup\Theta$ is uniquely of the form
$\Phi\sqcup\Psi$, gives a sum indexed by $\Phi\leq\Gamma$ and
$\Psi\leq\Theta$. On the other hand, multiplying the defining identities for
$\ell_\Gamma$ and $\ell_\Theta$ gives a second sum indexed by the same
pairs $(\Phi,\Psi)$. The two sums have the same value, using
$|\Sigma_{\Gamma\sqcup\Theta}|=|\Sigma_\Gamma||\Sigma_\Theta|$ and the
compatibility of $\epsilon$ with the lax symmetric monoidal structure.
By induction, the corresponding terms agree whenever
$(\Phi,\Psi)\neq(\Gamma,\Theta)$, so the remaining terms agree as well.
Hence
\[
\ell_{\Gamma\sqcup\Theta}
\bigl(\bar m_{\Gamma,\Theta}(u\otimes v)\bigr)
=
\ell_\Gamma(u)\ell_\Theta(v).
\]


Finally, for $x\in R(\Lambda)$, the defining identity gives
\[
\widetilde\epsilon_\Lambda\eta_\Lambda(x)
=
\sum_{\Gamma\leq\Lambda}
\ell_\Gamma
\left(
q_\Gamma R^*(\Gamma\leq\Lambda)(x)
\right)
=
|\Sigma_\Lambda|\epsilon_\Lambda(x).
\]
By \cref{ex:monadconst}, the unit
\[
C^K(\Lambda)\longrightarrow\Div(C^K)(\Lambda) \cong C_K(\Lambda)
\]
is multiplication by $|\Sigma_\Lambda|$, so $\tilde \epsilon_{\Lambda}$ is compatible with the monadic units. 
\end{proof}

It follows that, under the conditions of \cref{prop:transfer-augmentation}, we get a map of commutative $\N$-graded rings
\[
\tilde{\epsilon} \colon R\langle \Sigma \rangle \to C_K[\Sigma] \cong K\langle t \rangle
\]
under $R[\Sigma]$. The isomorphism on the right is described in \cref{ex:constantring}; it takes $k \in C_K(m)=K$ to $k\frac{t^m}{m!}$. After completion, we get a map of commutative rings
\[
\tilde{\epsilon} \colon R\divpowser{\Sigma} \to K\divpowser{t}.
\]
If $\sum_{m \geq 0} r_m t^m \in R\powser{\Sigma}$, then
\[
\sum_{m \geq 0} \tilde{\epsilon}_{m} \eta_{m} r_m \frac{t^m}{m!} = \sum_{m \geq 0} m!\epsilon_{m}r_m \frac{t^m}{m!} = \sum_{m \geq 0} \epsilon_{m}r_m t^m
\]
in $K\divpowser{t}$.

Since the maps $\epsilon_{\Lambda}$ were not assumed to commute with restrictions, the map $\tilde{\epsilon}$ need not preserve exponential elements, however it will preserve exponential relations. That is, if $\sum_{m \geq 0} r_m t^m$ is an exponential element of $R\powser{\Sigma}$, then we get the exponential relation
\begin{equation} \label{eq:transferexp}
\sum_{m \geq 0} \epsilon_{m}r_m t^m = \exp \bigg (\sum_{k \geq 1} \ell_{k}(q_{k} r_k) \frac{t^k}{k!} \bigg )
\end{equation}
in $K\divpowser{t}$. Note that this holds assuming that $R$ is just lax symmetric monoidal: the multiplicativity of $\ell_{\Lambda}$ implies that 
\[
m! \epsilon_{m}r_m = \sum_{\Gamma \leq \u m} \prod_{B \in [m]/\sim_{\Gamma}} \ell_{|B|}q_{|B|} r_{|B|}
\]
and this is the coefficient of $\frac{t^m}{m!}$ on the right hand side of \cref{eq:transferexp}.

\begin{example} \label{ex:epsilonpower}
Assume that $R$ is a partition power functor. \cref{ex:exppower} furnishes us with the exponential relation
\[
\sum_{m \geq 0} P_m t^m = \exp \bigg (\sum_{k \geq 1} P_k/I_k t^k \bigg )
\]
in $R\divpowser{\Sigma}$. Now assume that the maps $\epsilon_{\Lambda} \colon R(\Lambda) \to C^K(\Lambda)$ satisfy the hypothesis of \cref{prop:transfer-augmentation}. Since $q_{k}P_k = \bar{P}_k$, we have $\ell_{k}q_{k}P_k = \tilde{\epsilon}_{k} P_k/I_k$. Thus, in this case, \cref{eq:transferexp} gives the exponential relation
\[
\sum_{m \geq 0} \epsilon_{m} P_m t^m = \exp \bigg ( \sum_{k \geq 1} \tilde{\epsilon}_{k}P_k/I_k\frac{t^k}{k!} \bigg )
\]
in $K\divpowser{t}$.
\end{example}


\section{Partition functors and ambidexterity} \label{sec:globGreen}

In this section we apply \cref{prop:transfer-augmentation} to partition rings restricted from global Green functors that admit transfers along surjections of finite groups as well as injections.



We say that a global Green functor is ambidextrous if it admits transfer maps along arbitrary maps of finite groups, rather than just injections. These transfers are required (as usual) to satisfy a double coset formula and Frobenius reciprocity (see \cite{Ganter-global} for more details).

Assume that $S$ is an ambidextrous global Green functor. Let $R = S \circ (G \times \Sigma_{(-)})$ for a fixed group $G$, then $R$ is a partition ring with extra structure. In particular there are ``transfer'' maps $\epsilon_{\Lambda}^2 \colon R(\Lambda) \to R(2)$ and $\epsilon_\Lambda \colon R(\Lambda) \to R(1)$ induced by the transfers in $S$ along the map $G \times \Sigma_{\Lambda} \to G \times \Sigma_2$, induced by the sign homomorphism, and $G \times \Sigma_{\Lambda} \to G \times \Sigma_1$, the projection onto $G$. 

Since $\Sigma_2$ is an abelian group, the multiplication map 
\[
\Sigma_2 \times \Sigma_2 \to \Sigma_2
\]
is a group homomorphism. This induces a group homomorphism $G \times \Sigma_2 \times \Sigma_2 \to G \times \Sigma_2$. Transfer along this homomorphism in turn induces a map
\[
R(2) \otimes_{R(0)} R(2) \to R(2)
\]
giving $R(2)$ an exotic ring structure that we will denote $R^{\tr}(2)$. We call the multiplication on this ring the convolution product. This ring is an $R(0)$-algebra via the transfer map in $S$ along $G \to G \times \Sigma_2$. Thus, the multiplicative unit in $R^{\tr}(2)$ is the image of $1 \in R(0)$ along this transfer.

\begin{remark}
Note that the same construction does not work for $G \wr \Sigma_{2}$ since the multiplication map on $\Sigma_2$ does not induce a group homomorphism $G \wr \Sigma_{2} \times G \wr \Sigma_{2} \to G \wr \Sigma_{2}$.
\end{remark}




\begin{prop} \label{prop:epsiloncompatible}
Assume $S$ is an ambidextrous global Green functor and fix a group $G$. Let $R = S \circ (G \times \Sigma_{(-)})$ and let $C^{R^{\tr}(2)}$ be the dual constant partition functor with value the commutative ring $R^{\tr}(2)$. The maps
\[
\epsilon^{2}_\Lambda \colon R(\Lambda) \to C^{R^{\tr}(2)}(\Lambda)
\]
commute with transfers and the lax symmetric monoidal multiplication.
\end{prop}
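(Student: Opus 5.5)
Both claims should follow from the functoriality of the generalized (ambidextrous) transfers in $S$ and a base-change property for the external product. Throughout, write $\mathrm{sgn}_\Lambda \colon \Sigma_\Lambda \to \Sigma_2$ for the restriction of the sign homomorphism of $\Sigma_X$ to $\Sigma_\Lambda$, so that
\[
\epsilon^2_\Lambda = \tr_{\id_G\times \mathrm{sgn}_\Lambda} \colon S(G\times\Sigma_\Lambda) \to S(G\times \Sigma_2).
\]

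\textbf{Compatibility with transfers.} Since the transfers in $C^{R^{\tr}(2)}$ are identities, we must show $\epsilon^2_\Omega\circ R_*(f) = \epsilon^2_\Lambda$ for every map $f\colon\Lambda\to\Omega$.

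By \cref{lemma:factorization} it suffices to treat refinements $\Lambda\le\Omega$ and isomorphisms separately.

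For a refinement, the composite $G\times\Sigma_\Lambda\hookrightarrow G\times\Sigma_\Omega\xrightarrow{\id\times\mathrm{sgn}_\Omega} G\times\Sigma_2$ equals $\id\times\mathrm{sgn}_\Lambda$, because the sign of a permutation does not depend on the ambient symmetric group. Transitivity of transfers in an ambidextrous global functor then gives the claim.

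For an isomorphism $f$, the map $\Sigma_f$ is conjugation by a bijection, which preserves sign. Hence $\mathrm{sgn}_\Omega\circ\Sigma_f=\mathrm{sgn}_\Lambda$, and the same functoriality applies.

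\textbf{Compatibility with the lax monoidal structure.} The multiplication in $C^{R^{\tr}(2)}$ is the convolution product, i.e.\ transfer along $\id_G\times\nabla\colon G\times\Sigma_2\times\Sigma_2\to G\times\Sigma_2$, followed by the identification $R(2)\otimes_{R(0)}R(2)\to S(G\times\Sigma_2\times\Sigma_2)$ given by the external product over $G$. We must show
\[
\epsilon^2_{\Lambda\sqcup\Omega}\, m_{\Lambda,\Omega}(x\otimes y)
= \tr_{\id\times\nabla}\big(m_{2,2}(\epsilon^2_\Lambda x\otimes\epsilon^2_\Omega y)\big).
\]
\begin{enumerate}
\item[(i)] The homomorphism $\id\times\mathrm{sgn}_{\Lambda\sqcup\Omega}$ factors as
\[
G\times\Sigma_\Lambda\times\Sigma_\Omega\xrightarrow{\id\times\mathrm{sgn}_\Lambda\times\mathrm{sgn}_\Omega} G\times\Sigma_2\times\Sigma_2\xrightarrow{\id\times\nabla} G\times\Sigma_2,
\]
because the sign of a block permutation is the product of the signs of its components. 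By functoriality, it therefore suffices to show that transfer along $\id\times\mathrm{sgn}_\Lambda\times\mathrm{sgn}_\Omega$ carries $m_{\Lambda,\Omega}(x\otimes y)$ to $m_{2,2}(\epsilon^2_\Lambda x\otimes \epsilon^2_\Omega y)$.
\item[(ii)] Recall from \cref{globalgreentopartition} that $m_{\Lambda,\Omega}$ factors through the outer product
\[
S(G\times\Sigma_\Lambda)\otimes S(G\times\Sigma_\Omega)\to S(G\times\Sigma_\Lambda\times G\times\Sigma_\Omega),
\]
followed by restriction along the diagonal of $G$. The same description holds for $m_{2,2}$.
\item[(iii)] The outer product is natural in pairs of generalized transfers. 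This is the argument given before \cref{globalgreentopartition} with the homotopy pullback squares \eqref{eq:homotopypbstrange}: products of groupoids along arbitrary maps still form homotopy pullbacks, and the double coset formula for ambidextrous functors applies to them.
\item[(iv)] Restriction along the diagonal $\Delta_G$ commutes with transfer along $\id\times\phi$ for any $\phi\colon H\to K$. This is because the square
\[
\begin{array}{ccc}
G\times H & \longrightarrow & G\times H\times G\\
\downarrow & & \downarrow\\
G\times K & \longrightarrow & G\times K\times G
\end{array}
\]
(horizontal maps the diagonal on $G$, vertical maps $\id\times\phi$ and $\id\times\phi\times\id$) is a homotopy pullback of groupoids: it is the product of the pullback of $\Delta_G$ along itself with the map $\phi$. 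The ambidextrous double coset formula then gives the claimed base change.
\end{enumerate}
Combining (i)--(iv) yields the desired equality. One also has to check unit compatibility: transfer along $G\to G\times\Sigma_2$ applied to $1\in R(0)$ is, by definition, the unit of $R^{\tr}(2)$.

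\textbf{Main obstacle.} The main difficulty is verifying steps (iii) and (iv), i.e.\ that the relevant squares really are homotopy pullbacks and that base change for non-injective transfers holds in the axioms of an ambidextrous global Green functor. I would first try to reduce every such square to a product of a pullback square with an identity, so that the ambidextrous double coset formula applies to each square with a single double coset.
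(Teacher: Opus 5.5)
Your proposal is correct and follows the paper's proof. For transfers, the composite $\Sigma_\Lambda\to\Sigma_\Omega\to\Sigma_2$ is the sign map of $\Sigma_\Lambda$, and transfers are transitive. For the monoidal structure, you factor the sign of $\Sigma_\Lambda\times\Sigma_\Omega$ through multiplication on $\Sigma_2\times\Sigma_2$ and use naturality of the external product in pairs of transfers; your steps (ii)--(iv) only spell out that naturality for non-injective transfers, where the paper cites it in one line.

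One small correction to (iv): the square is indeed a homotopy pullback, but not for the reason you state. It is the product of the square with $\Delta_G$ on both rows and identity columns with the square with $\phi$ on both columns and identity rows; equivalently, there is one double coset and its stabilizer is $G\times H$. The pullback of $\Delta_G$ along itself is instead the inertia groupoid of $G$.
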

\begin{proof}
Let $\Omega$ be a partition and let $\Sigma_{\Omega} \to \Sigma_2$ be the sign map. Given a map of partitions $\Lambda \to \Omega$, the induced map $\Sigma_{\Lambda} \to \Sigma_{\Omega} \to \Sigma_2$ is the sign map for $\Sigma_{\Lambda}$. The maps $\epsilon^{2}_\Lambda$ commute with transfers because transfer maps compose to give transfer maps. That is, there is a commutative diagram
\[
\xymatrix{S(G \times \Sigma_{\Lambda}) \ar[r] \ar[d] & S(G \times \Sigma_2) \ar[d]^{=} \\ S(G \times \Sigma_{\Omega}) \ar[r] & S(G \times \Sigma_2),}
\]
where all of the maps are transfer maps.

To see that $\epsilon^2$ respects the lax symmetric monoidal structure, note that we have a commutative diagram
\[
\xymatrix{S(G \times \Sigma_{\Lambda}) \otimes_{S(G)} S(G \times \Sigma_{\Omega}) \ar[r] \ar[d] & S(G \times \Sigma_2) \otimes_{S(G)} S(G \times \Sigma_2) \ar[d] \\ S(G \times \Sigma_{\Lambda} \times \Sigma_{\Omega}) \ar[r] \ar[rd] & S(G \times \Sigma_2\times \Sigma_2)  \ar[d] \\ & S(G \times \Sigma_2).}
\]
The top square commutes by the naturality of the lax symmetric monoidal structure in pairs of transfer maps. The commutativity of the bottom triangle, which consists of transfer maps, follows from the fact that the composite
\[
\Sigma_{\Lambda} \times \Sigma_{\Omega} \to \Sigma_2 \times \Sigma_2 \to \Sigma_2,
\]
where the second arrow is the multiplication map, is the sign map for $\Sigma_{\Lambda} \times \Sigma_{\Omega}$.

\end{proof}

The compatibilities satisfied by $\epsilon^2$ in \cref{prop:epsiloncompatible} allow us to apply \cref{prop:transfer-augmentation} to obtain:



\begin{prop} \label{prop:exptransferSigma2}
Assume that $S$ is an ambidextrous global Green functor and that $R$ is the partition ring $R = S(G \times \Sigma_{(-)})$. Let $\epsilon^2 \colon R[\Sigma] \to R^{\tr}(2)[t]$ be as described above. The composite
\[
R[\Sigma] \xrightarrow{\epsilon^2} R^{\tr}(2)[t] \to R^{\tr}(2)\langle t \rangle 
\]
extends canonically to a map of commutative $\N$-graded $R(0)$-algebras
\[
\tilde{\epsilon}^2 \colon R \langle \Sigma \rangle \to R^{\tr}(2) \langle t \rangle.
\]
\end{prop}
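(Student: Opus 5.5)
The plan is to apply \cref{prop:transfer-augmentation} with $K = R^{\tr}(2)$ and $\epsilon_\Lambda = \epsilon^2_\Lambda$. \cref{prop:epsiloncompatible} supplies exactly the hypotheses needed: the maps $\epsilon^2_\Lambda \colon R(\Lambda) \to C^{R^{\tr}(2)}(\Lambda)$ commute with transfers and with the lax symmetric monoidal structure. Here the monoidal structure on $C^{R^{\tr}(2)}$ is built from the convolution product, which is commutative because $\Sigma_2$ is abelian. \cref{prop:transfer-augmentation} then produces maps
\[
\tilde{\epsilon}^2_\Lambda \colon \Div(R)(\Lambda) \to C_{R^{\tr}(2)}(\Lambda) = R^{\tr}(2),
\]
compatible with transfers, with the lax symmetric monoidal structure, and with the units, in the sense that $\tilde{\epsilon}^2_\Lambda\eta_\Lambda = |\Sigma_\Lambda|\epsilon^2_\Lambda$.

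Next we pass to graded rings. Since $\tilde{\epsilon}^2$ commutes with transfers and with the monoidal structure, \cref{prop:transferring} gives a map of commutative $\N$-graded rings
\[
\Div(R)[\Sigma] = R\langle\Sigma\rangle \to C_{R^{\tr}(2)}[\Sigma].
\]
By \cref{ex:constantring}, the target is $R^{\tr}(2)\langle t\rangle$, with $k\in C_K(m)$ corresponding to $k\,t^m/m!$.

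Similarly, $\epsilon^2$ induces $R[\Sigma]\to C^{R^{\tr}(2)}[\Sigma]\cong R^{\tr}(2)[t]$. The unit relation $\tilde\epsilon^2_m\eta_m = m!\,\epsilon^2_m$, combined with the identification of the unit $C^K\to C_K$ as the map $K[t]\to K\langle t\rangle$ sending $t\mapsto t$ (\cref{ex:monadconst}, \cref{ex:constantring}), shows that $\tilde{\epsilon}^2\circ\eta[\Sigma]$ equals the composite $R[\Sigma]\to R^{\tr}(2)[t]\to R^{\tr}(2)\langle t\rangle$. This gives the extension property. Canonicity holds because the maps $\ell_\Lambda$ are uniquely determined by their defining identities.

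To check $R(0)$-linearity, note that $\epsilon^2_0$ is the transfer $S(G)\to S(G\times\Sigma_2)$, which is by definition the unit map of $R^{\tr}(2)$ as an $R(0)$-algebra. On degree $0$ we have $\Div(R)(0)=R(0)$ and $\tilde{\epsilon}^2_0 = \ell_0 = \epsilon^2_0$, so the structure maps agree. Linearity in higher degrees then follows from multiplicativity, since the $R(0)$-action is multiplication by degree-$0$ elements.

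The step I expect to be the main obstacle is verifying the hypotheses of \cref{prop:transfer-augmentation} in the presence of the exotic ring structure. Specifically, that proposition's compatibility with the lax structure should be read with $C^K$ carrying the multiplication of the ring $K=R^{\tr}(2)$, and one must confirm that its proof uses only commutativity and associativity of that product together with the integer scalings. The rest is bookkeeping.
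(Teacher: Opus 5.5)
Your proposal is correct and follows the paper's argument: the paper uses \cref{prop:epsiloncompatible} to verify the hypotheses of \cref{prop:transfer-augmentation} with $K = R^{\tr}(2)$, then passes to graded rings via $C_K[\Sigma]\cong K\langle t\rangle$ using \cref{prop:transferring} and the unit identity $\tilde\epsilon^2_m\eta_m = m!\,\epsilon^2_m$, exactly as you do. The obstacle you anticipate does not arise, because \cref{prop:transfer-augmentation} is stated for an arbitrary commutative ring $K$, with $C^K$ carrying $K$'s own multiplication, and $R^{\tr}(2)$ with the convolution product is such a ring.
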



Completing this map with respect to the system of irrelevant ideals, gives a map of commutative $R(0)$-algebras
\[
\tilde{\epsilon}^2 \colon R \divpowser{\Sigma} \to R^{\tr}(2)\divpowser{t}. 
\]
We will continue to call this map $\tilde{\epsilon}^2$.

With the same setup, let 
\[
\epsilon_{\Lambda} \colon R(\Lambda) \to R(1)
\]
be induced by the transfer map $S(G \times \Sigma_{\Lambda}) \to S(G)$ induced by the map to the trivial group $\Sigma_\Lambda \to e$. Since transfer maps compose to give transfer maps, $\epsilon_{\Lambda} = \epsilon_{2} \circ \epsilon^{2}_\Lambda$. It follows that the maps $\epsilon_\Lambda$ commute with transfers as a map $R(\Lambda) \to C^{R(1)}(\Lambda)$.

\begin{cor} \label{cor:ambitriv}
Assume that $S$ is an ambidextrous global Green functor and that $R$ is the partition ring $R = S(G \times \Sigma_{(-)})$. The composite
\[
R[\Sigma] \xrightarrow{\epsilon} R(1)[t] \to R(1)\langle t \rangle 
\]
extends canonically to a map of commutative $\N$-graded $R(0)$-algebras
\[
\tilde{\epsilon} \colon R \langle \Sigma \rangle \to R(1) \langle t \rangle
\]
that induces a map of $R(0)$-algebras
\[
\tilde{\epsilon} \colon R \divpowser{\Sigma} \to R(1)\divpowser{t}.
\]
\end{cor}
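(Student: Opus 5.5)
The plan is to apply \cref{prop:transfer-augmentation} directly to the maps $\epsilon_\Lambda \colon R(\Lambda) \to C^{R(1)}(\Lambda) = R(1)$. The argument then splits into three steps: verify its hypotheses, pass to $\N$-graded rings using \cref{prop:transferring}, and complete.

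\textbf{Hypotheses.} We need that the $\epsilon_\Lambda$ commute with transfers and with the lax symmetric monoidal structure maps. Compatibility with transfers was observed just before the statement, using the factorization $\epsilon_\Lambda = \epsilon_2 \circ \epsilon^2_\Lambda$. More directly, for $\Lambda \to \Omega$ the composite $G\times\Sigma_\Lambda \to G \times \Sigma_\Omega \to G$ is the projection, and transfers in the ambidextrous functor $S$ compose. For the monoidal structure, argue exactly as in \cref{prop:epsiloncompatible}, with the square
\[
S(G\times\Sigma_\Lambda)\otimes S(G\times\Sigma_\Omega) \to S(G)\otimes S(G),
\]
followed by the triangle $S(G\times\Sigma_\Lambda\times\Sigma_\Omega) \to S(G \times e \times e) = S(G)$. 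The square commutes by naturality of the external product in pairs of transfers; here the ambidextrous transfers are needed, and I would invoke the same homotopy pullback argument as in \eqref{eq:homotopypbstrange}. The triangle commutes because the composite $\Sigma_\Lambda\times\Sigma_\Omega \to e\times e \to e$ is the map to the trivial group. The target ring structure on $C^{R(1)}$ is the ordinary one on $R(1)=S(G)$, which matches the product $S(G)\otimes_{S(G)} S(G)\to S(G)$ obtained by restricting along the diagonal $G \to G\times G$. The $R(0)$-linearity should be checked alongside; it holds since $R(0)=S(G)=R(1)$ acts through the external product with $\u 0$.

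\textbf{Graded maps.} \cref{prop:transfer-augmentation} then produces $\tilde\epsilon_\Lambda \colon \Div(R)(\Lambda) \to C_{R(1)}(\Lambda)$ compatible with transfers and the monoidal structure. Applying \cref{prop:transferring} to $\tilde\epsilon$ gives a graded ring map $R\langle\Sigma\rangle \to C_{R(1)}[\Sigma] \cong R(1)\langle t\rangle$, by \cref{ex:constantring}. Applying it to $\epsilon$ gives $R[\Sigma]\to C^{R(1)}[\Sigma]=R(1)[t]$. The identity $\tilde\epsilon_\Lambda\eta_\Lambda = |\Sigma_\Lambda|\epsilon_\Lambda$, together with \cref{ex:monadconst}, which identifies the unit $C^{R(1)}\to C_{R(1)}$ with multiplication by $|\Sigma_\Lambda|$, shows that $\tilde\epsilon$ extends the composite $R[\Sigma]\to R(1)[t]\to R(1)\langle t\rangle$. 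Canonicity is inherited from the canonical inductive construction of the $\ell_\Lambda$.

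\textbf{Completion.} Both maps preserve degree, so they are continuous for the irrelevant-ideal topologies and complete to $R\divpowser{\Sigma}\to R(1)\divpowser{t}$.

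The main obstacle I expect is the monoidal compatibility square, specifically the naturality of the external product along transfers along surjections. The earlier discussion treated only injective maps, so the double coset argument has to be redone for ambidextrous transfers via the homotopy pullback of groupoids. Alternatively, one can deduce the whole statement from \cref{prop:exptransferSigma2} by composing with the ring map $R^{\tr}(2)\to R(1)$ induced by $\epsilon_2$, after checking that $\epsilon_2$ is multiplicative for the convolution product. That check is the same pullback computation for $\Sigma_2\times\Sigma_2\to\Sigma_2\to e$.
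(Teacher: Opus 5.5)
Your proposal is correct and is essentially the paper's argument. The paper also feeds $\epsilon_\Lambda$ into \cref{prop:transfer-augmentation}, then \cref{prop:transferring} and completion. It gets the compatibilities from the factorization $\epsilon_\Lambda=\epsilon_2\circ\epsilon^2_\Lambda$, \cref{prop:epsiloncompatible}, and the ring map $R^{\tr}(2)\to R(1)$ (equivalently, by composing \cref{prop:exptransferSigma2} with that ring map), which is your alternative route. Your direct check, with the trivial group in place of $\Sigma_2$, is the same computation. Your remark that naturality of the external product along surjective transfers needs the base-change and Frobenius argument identifies a step the paper also uses without comment.
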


\begin{remark}
There is no reason to expect the transfer maps $\epsilon_\Lambda$ to induce a map of partition functors $R \to C^{R(1)}$. They do not interact appropriately with restriction maps. This already goes wrong for $RU(\Sigma_{(-)})$ and $\Lambda = \u 2$ as the transfer map
\[
\epsilon_{2} \colon RU(\Sigma_2) \cong \Z[x]/(x^2-1) \to RU(e) \cong \Z = C^{\Z}(2)
\]
sends $x$ to $0$, where $x$ is the isomorphism class of the sign representation of $\Sigma_2$. On the other hand the restriction map $RU(\Sigma_2) \to RU(\Sigma_{1})$ sends $x$ to $1$, so the required square, induced by $\u 1 \sqcup \u 1 \to \u 2$, 
\[
\xymatrix{RU(\Sigma_{2}) \ar[r]^-{\epsilon_{2}} \ar[d]_-{RU^*(\u 1 \sqcup \u 1 \leq \u 2)} & \Z \ar[d]^{\times 2} \\ \Z \ar[r]_{\id} & \Z}
\]
does not commute.
\end{remark}





If $S$ is an ambidextrous global power functor and $R = S \circ (G \times \Sigma_{(-)})$, then we have the exponential relation of \cref{ex:exppower}
\[
\sum_{m \geq 0} P_m t^m = \exp \Big(\sum_{k \geq 1} P_k/I_k t^k\Big)
\]
in $R \divpowser{\Sigma}$. By \cref{ex:epsilonpower}, applying $\tilde \epsilon^2$ to this relation gives the exponential relation
\[
\sum_{m \geq 0} \epsilon_{m}^2 P_m t^m = \exp\Big(\sum_{k \geq 1}  \tilde \epsilon_{k}^2 P_k/I_k \frac{t^k}{k!}\Big)
\]
in $R^{\tr}(2)\divpowser{t}$. Recall that we are using the identity $\tilde \epsilon_{m}^2 \eta_{m} P_m = m! \epsilon_{m}^2 P_m$, where the unit $\eta$ is suppressed in the exponential relation. Using the ring map $R^{\tr}(2) \to R(1)$ (induced by the transfer), we obtain the exponential relation 
\begin{equation} \label{eq:epsilonpower}
\sum_{m \geq 0} \epsilon_{m} P_m t^m = \exp\Big(\sum_{k \geq 1} \tilde \epsilon_{k} P_k/I_k \frac{t^k}{k!}\Big)
\end{equation}
in $R(1)\divpowser{t}$.




\addtocounter{secnumdepth}{1}
\counterwithin{theorem}{subsection}
\section{Examples} \label{sec:examples}

In this section we illustrate the preceding theory through several examples that exhibit differing features of partition functors and the monad $\Div$. Structured ring spectra provide a large natural source of partition power functors: the power operations of an $H_\infty$-ring spectrum satisfy the universal exponential relation between total power operations and their additive quotients after applying $\Div$, while in the $K(n)$-local setting ambidexterity supplies additional transfers that turn this into a classical exponential relation over the coefficient ring. Representation rings give the basic algebraic example, in which the transfer quotients are all rank one and $\Div$ is a ring of integer-valued class functions; here the general construction recovers the classical exponential relations relating symmetric and exterior powers to Adams operations. Morava $E$-theory provides a richer chromatic analogue: Strickland's description of the $E$-cohomology of symmetric groups in terms of finite subgroup schemes of the universal deformation gives an algebro-geometric interpretation of $\Div$, and the $K(n)$-local transfer maps recover Ganter's Hecke operators and their exponential relation with symmetric powers. For mod $2$ singular cohomology, the transfer quotients vanish away from powers of $2$ and are Dickson algebras at powers of $2$, so that $\Div$ organizes the cohomology of symmetric groups into coordinates indexed by dyadic partitions and makes its connection with elementary abelian detection particularly explicit. Combinatorial species show that the same structures are not peculiar to representation theory or cohomology: assembling connected structures along the blocks of a partition produces a partition ring in which the exponential relation is precisely the classical exponential formula expressing arbitrary structures in terms of their connected components. Finally, we produce an example of an exponential element in the Burnside ring partition ring that does not live inside the initial partition ring. This shows that the collection of exponential elements in the Burnside ring partition ring is richer than one might think and allows us to apply \cref{prop:transfer-augmentation} to a lax symmetric monoidal partition functor.

\subsection{Structured ring spectra}

Let $E$ be a spectrum and let $X$ be a space. For a partition $\Lambda = (Z, \sim_{\Lambda})$, let $X \wr \Sigma_{\Lambda} = (X^Z)_{h\Sigma_{\Lambda}}$, the homotopy orbits for the $\Sigma_{\Lambda}$-action on $X^Z$. For each $X$, we get two partition functors
\[
E^0(X \wr \Sigma_{(-)}) \text{ and } E^0(X \times B\Sigma_{(-)}).
\]
Of course, the second of these is simply $(E^X)^0(B\Sigma_{(-)})$. The restriction and transfer maps in these partition functors are induced by the restriction and transfer maps for the cohomology theory $E^0(-)$. Further, if $E$ is a homotopy commutative ring spectrum then these partition functors are both partition rings.

Now assume that $E$ is an $H_{\infty}$-ring spectrum in the sense of \cite{BMMS}. Recall that every $E_{\infty}$-ring spectrum has an underlying $H_{\infty}$-ring spectrum. The $H_\infty$-ring structure on $E$ gives power operations
\[
\P_m \colon E^0(X) \to E^0(X \wr \Sigma_m)
\]
and
\[
P_m \colon E^0(X) \to E^0(X \times B\Sigma_m).
\]
By \cite[Section 1, Chapter VIII]{BMMS}, these operations give the associated partition rings the structure of partition power functors. 

Let $R = E^0(X \wr \Sigma_{(-)})$ or $E^0(X \times B\Sigma_{(-)})$ for an $H_{\infty}$-ring spectrum $E$ and let $P_m \colon R(1) \to R(m)$ be the associated power operation. In most cases, $\Div(R)$ has not been previously studied. \cref{ex:exppower} gives the relation
\[
\sum_{m \geq 0} P_m t^m = \exp\Big(\sum_{k > 0} P_k/I_{k} t^k\Big)
\]
in $\Div(R)\powser{\Sigma}$.

If $E$ is a $K(n)$-local homotopy commutative ring spectrum, then the partition ring $R = E^0(B\Sigma_{(-)})$ is restricted from an ambidextrous global Green functor. If $E$ is a $K(n)$-local $E_{\infty}$-ring spectrum, then $R$ is restricted from an ambidextrous global power functor. Thus the results of \cref{sec:globGreen} apply in these situations and the extra transfer maps can be used to produce exponential relations in $E^0\divpowser{t}$.

The constructions generalize to global equivariant spectra in the sense of \cite{Schwede} and $G_{\infty}$-ring global spectra, which are the global analogues of $H_{\infty}$-ring spectra.

\subsection{Representation rings} \label{sec:reprings}

Let $RU$ be the representation ring functor, let $G$ be a finite group, and let $R_G(\Lambda) = RU(G \times \Sigma_{\Lambda})$. Recall that $RU(G \times H) \cong RU(G) \otimes RU(H)$ for any finite $G$ and $H$. Since $RU(G \times \Sigma_{\Lambda}) \cong RU(G) \otimes RU(\Sigma_{\Lambda})$, we have
\[
R_G(\Lambda)/I_{\Lambda} \cong RU(G) \otimes R_e(\Lambda)/I_{\Lambda} \cong RU(G).
\]
Thus $R_G(\Lambda)/I_{\Lambda}$ is a free rank $1$ $R_G(0)$-module so $R_G$ is a symmetric monoidal partition functor satisfying the hypotheses of \cref{thm:freediv}. In this case we have
\[
\Div(R_G)(\Lambda) \cong \Cl(\Sigma_{\Lambda},RU(G)) 
\cong \Cl(\Sigma_{\Lambda},\Z) \otimes RU(G)
\]
and 
\[
\Div(R_G)[\Sigma] = \bigoplus_{m \geq 0} \Cl(\Sigma_{m},RU(G)) \cong RU(G) \otimes \bigoplus_{m \geq 0} \Cl(\Sigma_m,\Z) \cong RU(G)\langle x_1, x_2, \ldots \rangle,
\]
where $x_i$ is in degree $i$, is the free graded divided polynomial algebra over $RU(G)$ on a countable number of variables with one in each positive degree. Further, \cref{thm:freediv} implies that this is the divided power envelope of $R_G[\Sigma]$, the ring of symmetric functions with coefficients in the representation ring $RU(G)$. 

The power operations
\[
P_m \colon R_G(1) = RU(G) \to R_G(m) = RU(G \times \Sigma_m)
\]
are induced by the map sending a complex $G$-representation $V$ to the $G \times \Sigma_m$-representation $V^{\otimes m}$ with $G$ acting diagonally and $\Sigma_m$ permuting the tensor factors. In this case, $\bar{P}_m = \psi^m$ is the $m$th Adams operation. Thus the operation $P_m/I_m$ is $\psi^m$ concentrated in the summand of $\Div(R_G)(m) = \Cl(\Sigma_m,RU(G))$ corresponding to the conjugacy class of the long cycle $(1\ldots m)$.

Note that $RU$ has transfers along arbitrary maps of finite groups. Given $f \colon H \to G$ and an $H$-representation $V$, we set $RU_*(f)([V]) = [\C[G] \otimes_{\C[H]} V]$. This makes $R_G$ into an ambidextrous global power functor. Let $\pi \colon G \times \Sigma_m \to G$ be the projection. Since the composite
\[
RU(G) \xrightarrow{P_m} RU(G \times \Sigma_m) \xrightarrow{\tr_\pi} RU(G)
\]
is the $m$th symmetric power $\beta^m$, the composite
\[
R_G(1) \to R_G\powser{\Sigma} \xrightarrow{\epsilon} R_G(1)\powser{t} = RU(G)\powser{t}
\]
is $\sum_{m \geq 0} \beta^mt^m$. Applying \cref{cor:ambitriv} and using the fact that $RU(G)$ is torsion-free (so logarithms are unique), this gives the well known exponential relation over $RU(G) \divpowser{t}$:
\begin{equation} \label{eqn:RUexp}
\sum_{m \geq 0} \beta^mt^m = \exp\Big(\sum_{k>0} \frac{\psi^k}{k}t^k\Big).
\end{equation}
One could also apply the formula for $\tilde{\epsilon}_k$ in \cref{prop:transfer-augmentation} to see that $\tilde{\epsilon}_k P_k/I_k = (k-1)!\psi^k$ by following the proof in \cref{prop:ellHecke}.

However, \cref{prop:epsiloncompatible} implies that something more refined actually occurs. The convolution product on $RU(\Sigma_2)$ is induced by the transfer along the surjective multiplication map $\Sigma_2 \times \Sigma_2 \to \Sigma_2$. As in \cref{sec:globGreen}, we will write $RU^{\tr}(\Sigma_2)$ for $RU(\Sigma_2)$ with the convolution product, denoted $\star$. The multiplicative identity in $RU^{\tr}(\Sigma_2)$ is the isomorphism class of the regular representation $[\C\{\Sigma_2\}]$. There is an isomorphism of commutative rings
\[
RU^{\tr}(\Sigma_2) \cong \Z[\sigma]/(\sigma^2-\sigma),
\]
where the isomorphism class of the sign representation maps to $\sigma$. This can be verified using the formula for the convolution product on class functions:
\[
(f \star g)(a) = \frac{1}{2} \sum_{b+c =a} f(b)g(c)
\]
for $a,b,c \in \Sigma_2$ and $f,g \in \Cl(\Sigma_2, \C)$. 

The composite
\[
RU(G) \xrightarrow{P_m} RU(G \times \Sigma_m) \xrightarrow{\tr_{G\times \Sigma_m}^{G \times \Sigma_2}} RU(G \times \Sigma_2)
\]
sends $[V]$ to $[\C[\Sigma_2] \otimes_{\C[\Sigma_m]} V^{\otimes m}]$ and where $\tr_{G \times \Sigma_m}^{G \times \Sigma_2}$ is the transfer along the surjection $G \times \Sigma_m \to G \times \Sigma_2$ induced by the sign homomorphism. In the notation of \cref{prop:epsiloncompatible}, this is $\epsilon^2 P_m$. The exponential relation
\begin{equation} \label{eq:repthy}
\sum_{m \geq 0} \epsilon^2 P_m t^m = \exp\Big(\sum_{k \geq 1}  \tilde \epsilon^2 P_k/I_k \frac{t^k}{k!}\Big).
\end{equation}
holds in $R_{G}^{\tr}(2)\divpowser{t}$.

There are two ring maps $RU^{\tr}(\Sigma_2) \to \Z$ given by $\sigma \mapsto 1$ and $\sigma \mapsto 0$ and these induce ring maps $R_G^{\tr}(2) \to R_G(1)$. Since the ring map given by $\sigma \mapsto 0$ is transfer along $\Sigma_2 \to e$, applying this to \cref{eq:repthy} recovers the exponential relation of \cref{eqn:RUexp}. The ring map given by $\sigma \mapsto 1$ gives rise to another classical exponential relation. The composite
\[
RU(G) \xrightarrow{P_m} RU(G \times \Sigma_m) \cong RU(G) \otimes RU(\Sigma_m) \xrightarrow{\id \otimes \tr_{\Sigma_m}^{\Sigma_2}} RU(G) \otimes RU^{\tr}(\Sigma_2) \xrightarrow{\id \otimes (\sigma \mapsto 1)} RU(G)
\]
is induced by the map sending a $G$-representation $V$ to $\mathrm{sgn}_m \otimes_{\C[\Sigma_m]} V^{\otimes m}$, where $\mathrm{sgn}_m$ is the $1$-dimensional sign representation of $\Sigma_m$. This is the $m$th exterior power of $V$. Thus the exponential relation that we recover with the ring map $\sigma \mapsto 1$ is the classical relation between exterior powers and Adams operations
\[
\sum_{m \geq 0} \Lambda^mt^m = \exp\Big(\sum_{k \geq 1}  \frac{(-1)^{k+1}\psi^k}{k} t^k\Big).
\]
We see that if $1_\star$ denotes the multiplicative identity of $RU^{\tr}(\Sigma_2)$, then the ordinary trivial representation is $1_\star-\sigma$, and
\[
\epsilon^2P_m([V])
=
\beta^m([V])(1_\star-\sigma)+\Lambda^m([V])\sigma.
\]

This is also a concrete realization of the composite involution of \cref{prop:signinvolution}. Under the canonical map of partition rings $RU(\Sigma_{(-)})\to R_G$, the sign element $s_m$ is represented by $1_G \boxtimes [\mathrm{sgn}_m]$, where $1_G$ is the trivial representation of $G$. Hence the composite involution $u(t)\mapsto u(-t)^{-1}$ sends the total power operation to
\[
\sum_{m\geq0}\bigl(s_m*P_m(x)\bigr)t^m
=
\bigg(\sum_{m\geq0}(-1)^mP_m(x)t^m\bigg)^{-1}.
\]
For $x=[V]$, the coefficient of $t^m$ is represented by $\mathrm{sgn}_m\otimes V^{\otimes m}$. Transfer along $G\times\Sigma_m\to G$ therefore sends the original series to $\sum_m\beta^m(x)t^m$ and the transformed series to $\sum_m\Lambda^m(x)t^m$, recovering the classical identity
\[
\sum_{m\geq0}\Lambda^m(x)t^m
=
\bigg(\sum_{m\geq0}(-1)^m\beta^m(x)t^m\bigg)^{-1}.
\]

\subsection{Morava $E$-theory}

Let $\kappa$ be a perfect field of characteristic $p$ and let $\Gamma$ be a height $n$ formal group law over $\kappa$. Let $E = E(\Gamma/\kappa)$ be the Morava $E$-theory associated to $\Gamma / \kappa$. This is a $K(n)$-local $E_{\infty}$-ring spectrum. Recall that $\pi_0E$ is the Lubin--Tate ring and that there is a noncanonical isomorphism $\pi_0E \cong W_p(\kappa)\powser{u_1, \ldots, u_{n-1}}$, where $W_p(\kappa)$ is the ring of $p$-typical Witt vectors. Let $\G / \Spf(\pi_0E)$ be the universal deformation of $\Gamma$. See \cite{StapletonHandbook} for more details.

Let $I_m \subseteq E^0(B\Sigma_m)$ be the transfer ideal. In \cite{etheorysym}, Strickland proves that $E^0(B\Sigma_m)/I_m$ is a finitely generated free $E^0$-module and that there is a canonical isomorphism of formal schemes over Lubin--Tate space
\[
\Spf(E^0(B\Sigma_m)/I_m) \cong \Sub_m(\G),
\]
where $\Sub_m(\G)$ is the formal scheme classifying subgroup schemes of $\G$ of order $m$. Of course, $\Sub_m(\G) = \emptyset$ unless $m$ is a power of $p$.

It follows from \cite[Theorem 7.3]{hkr} that $E^0(B\Sigma_{\Lambda})$ is a free $E^0$-module, thus the partition ring $E^0(B\Sigma_{(-)})$ is symmetric monoidal. Our first goal is to understand $\Div(E^0(B\Sigma_{(-)}))(m)$. Let $\lambda \parts m$ be an integer partition of $m$. Recall that the length $\ell(\lambda) = \sum_{i=1}^{\infty} \lambda_i$ is the number of summands in the integer partition. Because an integer partition is an unordered list of positive natural numbers, an integer partition $\lambda$ of length $k$ is an element of the $k$th symmetric power
\[
((\N_{>0})^{\times k})/\Sigma_{k}.
\]
Let $\Sub(\G) = \coprod_{m > 0} \Sub_m(\G)$. Applying the $k$th symmetric power functor to the map
\[
\Sub(\G) \to \N_{>0}
\]
gives 
\[
(\Sub(\G)^{\times k})/\Sigma_{k} \to ((\N_{>0})^{\times k})/\Sigma_{k}.
\]
Let $\Sum_{\lambda}(\G)$ be the fiber over $\lambda$. If $\lambda \parts m$ and $\lambda_m =1$ (i.e. $\ell(\lambda) =1$), then $\Sum_{\lambda}(\G) = \Sub_m(\G)$. The scheme $\Sum_{\lambda}(\G)$ classifies ``formal sums" of subgroups of $\G$ with $\lambda_i$ summands that are subgroups of size $i$. There is an isomorphism of formal schemes 
\begin{equation*}
\Sum_{\lambda}(\G) \cong \Spf \Big (\bigotimes_{i > 0} ((E^0(B\Sigma_i)/I_i)^{\otimes \lambda_i})^{\Sigma_{\lambda_i}} \Big ).
\end{equation*}
Applying \cref{eq:symmondecomp} and the proof of \cref{prop:flatdivsymmon} (using Strickland's freeness result), we learn that
\begin{equation} \label{eq:sumG}
\Spf(\Div(E^0(B\Sigma_{(-)}))(m)) \cong \coprod_{\lambda \parts m} \Sum_{\lambda}(\G).
\end{equation}

To better understand this object, the best discrete approximation to this formal scheme is in terms of what we call height $n$ integer partitions. Let $\mathbb{T} = (S^1)^{\times n}$ be the $n$-fold product of the circle group and let $\mathbb{T}_p = (\Q_p/\Z_p)^{\times n} = \mathbb{T}[p^{\infty}]$. Let $\Sub(\mathbb{T})$ be the set of finite subgroups of $\mathbb{T}$. A height $n$ integer partition of $m$ is a function
\[
\tau \colon \Sub(\mathbb{T}) \to \N
\]
such that $\sum_{A \in \Sub(\mathbb{T})} \tau_{A} \cdot |A| = m$. Note that this specializes to the definition of an ordinary integer partition when $n=1$. Similarly, a height $n$ $p$-power integer partition of $m$ is a height $n$ integer partition of $m$ supported on $\Sub(\mathbb{T}_p) \subseteq \Sub(\mathbb{T})$ (i.e. for which $\tau_A = 0$ whenever $A$ is not a $p$-group). A height $n$ integer partition $\tau$ of $m$ has an underlying ordinary integer partition $u(\tau)$ of $m$ given by the formula
\[
u(\tau)_i = \sum_{\substack{A \in \Sub(\mathbb{T}) \\ |A| = i}} \tau_A.
\]

Let $C_0$ be the rationalization of the Drinfeld ring of infinite level structures on $\G$ and let $\lambda$ be an integer partition of $m$, then 
\[
\Spec(C_0) \times_{\Spec(E^0)} \Sum_{\lambda}(\G)
\]
is canonically isomorphic to (the constant scheme associated to) the set of height $n$ $p$-power integer partitions $\tau$ of $m$ with underlying partition $u(\tau) = \lambda$. Note that the base change makes sense as written since the ring of functions on $\Sum_{\lambda}(\G)$ is finitely generated and free as an $E^0$-module.

Next we turn our attention to exponential relations. Write $E_X$ for the partition functor with 
\[
E_X(\Lambda) = E^0(X \times B\Sigma_{\Lambda}).
\]
When $X$ is a point, we will denote this by $E$. Since Morava $E$-theory is $K(n)$-local and an $E_{\infty}$-ring spectrum, the partition functor $E_X$ is restricted from the ambidextrous global power functor with value at $G$ given by $E^0(X \times BG)$. Recall that $E^0(B\Sigma_\Lambda)$ is a finitely generated free $E^0$-module. This implies that 
\[
E^0(X \times B\Sigma_{\Lambda}) \cong E^0(X) \otimes_{E^0} E^0(B\Sigma_{\Lambda}),
\]
which implies that $E_X$ is a symmetric monoidal partition power functor.


Since $E^0(B\Sigma_m)/I_{m}$ is a free $E^0$-module, we have an isomorphism of $E^0$-algebras
\[
E_X(m)/I_{m} \cong E^0(X) \otimes_{E^0} E^0(B\Sigma_m)/I_{m}.
\]
Thus $E_X(m)/I_{m}$ is a free module over $E_X(0) = E^0(X)$ so \cref{thm:freediv} can be applied to $E_X$.


It also follows that there are isomorphisms of graded $E^0(X)$-algebras $E_X[\Sigma] \cong E^0(X) \otimes_{E^0} E[\Sigma]$ and $E_X\langle \Sigma \rangle \cong E^0(X) \otimes_{E^0} E\langle \Sigma \rangle$. Combining \cref{prop:symmetricinvariant} and \cref{eq:sumG} tells us that the $E^0$-algebra $E \langle \Sigma \rangle_m$ is the ring of functions on the ``formal'' scheme $\coprod_{\lambda \vdash m} \Sum_{\lambda}(\G)$. The base change of $E[\Sigma]$ to $C_0$ was studied by Strickland and Turner in \cite{StricklandTurner}.


In \cite{Ganter-orbifold}, Ganter produced an exponential relation between symmetric powers and Hecke operators on Morava $E$-theory. We describe these operations now. Let $\beta^m$ be the composite
\[
E^0(X) \xrightarrow{P_m} E^0(X \times B\Sigma_m) \cong E^0(X) \otimes_{E^0} E^0(B\Sigma_m) \xrightarrow{\id \otimes \tr_{\Sigma_m}^{e}} E^0(X) \otimes_{E^0} E^0(Be) \cong E^0(X),
\]
where $\tr_{\Sigma_m}^{e}$ is the transfer along $\Sigma_m \to e$. We call $\beta^m$ the $m$th symmetric power. 

Let $\Level((\Z/p^k)^n,\G)$ be the formal scheme of $(\Z/p^k)^n$-level structures on $\G$ and let $D_k$ be the ring of functions on $\Level((\Z/p^k)^n,\G)$ as described in \cite[Section 4]{Drinfeld}. For $H \subseteq (\Z/p^k)^n$ of order $p^k$, there is an $E^0$-algebra map
\[
\alpha_H \colon E^0(B\Sigma_{p^k})/I_{p^k} \to D_k
\]
classifying the subgroup of order $p^k$ that is the image of $H$ under the universal level structure. Ando's Adams operation \cite{Ando} associated to $H$ is the ring map
\[
\psi_H \colon E^0(X) \xrightarrow{\bar{P}_{p^k}} E^0(X) \otimes_{E^0} E^0(B\Sigma_{p^k})/I_{p^k} \xrightarrow{\id \otimes \alpha_H} E^0(X) \otimes_{E^0} D_k.
\]
The sum $\sum_{\substack{H \subseteq (\Z/p^k)^n\\|H|=p^k}} \alpha_H$ is $\GL_n(\Z/p^k)$-invariant, and thus lands in $(D_{k})^{\GL_n(\Z/p^k)} = E^0$. Thus the Hecke operator 
\[
T_{p^k} = \sum_{\substack{H \subseteq (\Z/p^k)^n\\|H|=p^k}} \psi_H = \bigg (\sum_{\substack{H \subseteq (\Z/p^k)^n\\|H|=p^k}} \id \otimes \alpha_H \bigg ) \circ \bar{P}_{p^k}
\]
lands in $E^0(X)$ and therefore gives rise to an additive operation
\[
T_{p^k} \colon E^0(X) \to E^0(X).
\]

With this normalization of $T_{p^k}$ (Ganter's notation includes the factor $\frac{1}{p^k}$), \cite[Proposition 9.1]{Ganter-orbifold} gives
\[
\sum_{m \geq 0} \beta^m t^m = \exp\Big(\sum_{k \geq 0} \frac{T_{p^k}}{p^k} t^{p^k}\Big).
\]
However, it is a bit unclear in \cite{Ganter-orbifold} where this relation holds, but it certainly holds in $(\Q \otimes E^0(X))\powser{t}$. 

\cref{ex:exppower} provides us with the exponential relation
\[
\sum_{m \geq 0} P_m t^m = \exp\Big(\sum_{k > 0} P_k/I_{k} t^k\Big)
\]
in $E_X\divpowser{\Sigma}$. Since $E^0(B\Sigma_m)/I_m = 0$ when $m$ is not a power of $p$, this simplifies to
\begin{equation} \label{eq:Ethyexp}
\sum_{m \geq 0} P_m t^m = \exp\Big(\sum_{k \geq 0} P_{p^k}/I_{p^k} t^{p^k}\Big).
\end{equation}
As in \cref{sec:globGreen}, the map $\tilde \epsilon \colon E_X\divpowser{\Sigma} \to E_X(1)\divpowser{t}$ can be applied to the exponential relation to get an exponential relation over 
\[
E_X(1)\divpowser{t} = E^0(X)\divpowser{t}.
\]
By definition, we have $\epsilon_m P_m = \beta^m$. To obtain Ganter's exponential relation, \cref{eq:epsilonpower} implies that we must show that $\tilde \epsilon_{p^k} P_{p^k}/I_{p^k} = (p^k-1)!T_{p^k}$. Since $\ell_{p^k} \bar{P}_{p^k} = \tilde \epsilon_{p^k} P_{p^k}/I_{p^k}$ and $T_{p^k} = (\id \otimes \sum_{H} \alpha_H) \circ \bar{P}_{p^k}$, it suffices to compare $\ell_{p^k}$ and $(p^k-1)!(\id \otimes \sum_{H} \alpha_H)$. Since 
\[
\epsilon_{p^k} = \id \otimes \tr_{\Sigma_{p^k}}^{e} \colon E^0(X) \otimes_{E^0} E^0(B\Sigma_{p^k}) \to E^0(X),
\]
we have $\ell_{p^k, \bar{E}_X(p^k)} = \id \otimes \ell_{p^k, \bar{E}(p^k)}$, where $\ell_{p^k, \bar{E}_X(p^k)}$ is associated to the partition power functor $E_X$ and $\ell_{p^k, \bar{E}(p^k)}$ is associated to the partition power functor $E$. Thus it suffices to prove the following:

\begin{prop}\label{prop:ellHecke}
Let
\[
\ell_{p^k}\colon E^0(B\Sigma_{p^k})/I_{p^k} \longrightarrow E^0
\]
be the map constructed in \cref{prop:transfer-augmentation}.
We have
\[
\ell_{p^k} = (p^k-1)! \Bigg ( \sum_{\substack{H \subseteq (\Z/p^k)^n\\|H|=p^k}} \alpha_H \Bigg).
\]
\end{prop}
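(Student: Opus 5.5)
We plan to compare both sides after base change to the rationalized Drinfeld ring, using character theory, where everything becomes a computation with finite sets. Since $E^0(B\Sigma_{p^k})/I_{p^k}$ is a finitely generated free $E^0$-module and $E^0$ is torsion-free and injects into $C_0$, it suffices to check the identity after applying $C_0 \otimes_{E^0} -$. By Strickland's theorem, $C_0 \otimes_{E^0} E^0(B\Sigma_{p^k})/I_{p^k}$ is the ring of $C_0$-valued functions on the finite set of subgroups $A \subseteq \mathbb{T}_p$ of order $p^k$. On this set, $\sum_H \alpha_H$ (base-changed along $D_k \to C_0$) becomes the function counting, for each $A$, the subgroups $H \subseteq (\Z/p^k)^n$ that the universal level structure sends onto $A$. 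Since a level structure $(\Z/p^k)^n \cong \mathbb{T}_p[p^k]$ is an isomorphism, exactly one $H$ corresponds to each $A$. So $\sum_H \alpha_H$ is evaluation summed over all points, i.e. the sum of all idempotent coordinates.

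Next we compute $\ell_{p^k}$ by the M\"obius formula of \cref{prop:transfer-augmentation}:
\[
\ell_{p^k}(q x) = \sum_{\Gamma \leq \u{p^k}} \mu(\Gamma,\u{p^k})\,|\Sigma_\Gamma|\,\epsilon_\Gamma\big(R^*(\Gamma\leq \u{p^k})(x)\big).
\]
We evaluate this with HKR character theory. After $C_0\otimes$, an element of $E^0(B\Sigma_\Gamma)$ is a class function on commuting $n$-tuples of $p$-power order elements of $\Sigma_\Gamma$, equivalently on $\Sigma_\Gamma$-sets with a $\mathbb{Z}_p^n$-action, up to conjugacy. The transfer $\epsilon_\Gamma$ to the trivial group is the HKR transfer formula: the sum over commuting tuples in $\Sigma_\Gamma$ divided by $|\Sigma_\Gamma|$. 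Hence $|\Sigma_\Gamma|\,\epsilon_\Gamma R^*(x) = \sum_{\alpha \in \Hom(\Z_p^n,\Sigma_\Gamma)} \chi_x(\alpha)$. Swapping sums, each $\alpha \colon \Z_p^n \to \Sigma_{p^k}$ appears once for each $\Gamma$ refining the orbit partition of $\alpha$. The standard M\"obius identity $\sum_{\Gamma \geq \pi(\alpha)} \mu(\Gamma,\u{p^k})$ is $1$ if the orbit partition $\pi(\alpha)$ is indiscrete and $0$ otherwise. Therefore
\[
\ell_{p^k}(qx) = \sum_{\alpha \text{ transitive}} \chi_x(\alpha).
\]

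We then identify this sum. Transitive actions of $\Z_p^n$ (through finite quotients) on $[p^k]$ correspond to subgroups $A \subseteq \mathbb{T}_p$ of order $p^k$, via the dual of the stabilizer quotient. The character of $x$ at such $\alpha$ factors through $q$ and is the value of $qx$ at the point $A$; this is exactly how Strickland's isomorphism is compatible with characters. For each $A$, the number of transitive $\alpha$ with that stabilizer is the number of ways to identify $[p^k]$ with the torsor $\Z_p^n/\mathrm{Stab}$, namely $p^k!/p^k = (p^k-1)!$. Summing gives $\ell_{p^k} = (p^k-1)!\sum_A \mathrm{ev}_A$, which matches the first paragraph.

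The main obstacle is the third step: making precise that the HKR character of an element of $E^0(B\Sigma_{p^k})$ at a transitive tuple equals the value of its image in $E^0(B\Sigma_{p^k})/I_{p^k}$ under Strickland's isomorphism at the corresponding subgroup. Equivalently, the identification $\Spf(E^0(B\Sigma_{p^k})/I_{p^k}) \cong \Sub_{p^k}(\G)$ must be compatible with level structures in the same way as $\alpha_H$. We would check this using the fact that the transfer ideal is killed exactly at non-transitive tuples, together with Strickland's construction via the restriction to $B A^*$.
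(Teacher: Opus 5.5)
Your proposal follows the paper's proof. Both base change to $C_0$, use the HKR transfer formula $|\Sigma_\Gamma|\epsilon_\Gamma(\res\tilde x)=\sum_{\gamma}\chi_\gamma(\tilde x)$, apply M\"obius inversion to isolate the transitive homomorphisms, and count $(p^k-1)!$ homomorphisms per conjugacy class. The compatibility you flag as the main obstacle is exactly the identity $\alpha_H q_{p^k}=\chi_{[\gamma]}$, for the transitive class $[\gamma]$ corresponding to $H$, which the paper simply takes as following from the construction of the character map. One wording slip: $\alpha$ lands in $\Sigma_\Gamma$ when $\Gamma$ is coarser than the orbit partition of $\alpha$, not a refinement of it; your sum over $\Gamma\geq\pi(\alpha)$ already uses the correct condition.
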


\begin{proof}
Since $E^0 \subseteq C_0$, it suffices to prove this viewing the target as $C_0$ and this allows us to use Hopkins--Kuhn--Ravenel character theory \cite{hkr}.

Let $m = p^k$. We will make use of several sets: Let $\hom(\Z_{p}^{n}, \Sigma_{m})$ be the set of continuous group homomorphisms from $\Z_{p}^{n}$ to $\Sigma_{m}$. Let $\hom^{\trans}(\Z_{p}^{n}, \Sigma_{m})$ denote the subset of continuous homomorphisms with transitive image. Let $\hom(\Z_{p}^{n}, \Sigma_{m})/{\conj}$ denote the set of conjugacy classes and $\hom^{\trans}(\Z_{p}^{n}, \Sigma_{m})/{\conj}$ the set of conjugacy classes of transitive homomorphisms. There is a canonical bijection of sets
\[
\Phi \colon \hom^{\trans}(\Z_{p}^{n}, \Sigma_{m})/{\conj} \cong \Sub_{m}((\Z/m)^{n}),
\]
where $\Sub_{m}$ denotes the set of subgroups of order $m$. A transitive map $\gamma \colon \Z_p^n\to\Sigma_{m}$ determines a transitive
$\Z_p^n$-set $\Z_p^n/L$ of order $m$. Its Pontryagin dual determines a subgroup
\[
H\leq(\Q_p/\Z_p)^n[m]\cong(\Z/m)^n
\]
of order $m = p^k$. This is independent of choice of map in $[\gamma]$. Also, note that the orbit-stabilizer lemma and the fact that $\im(\gamma) \subset \Sigma_m$ is abelian transitive, implies that the conjugacy class of $\gamma$ has
cardinality
\[
\frac{|\Sigma_{m}|}
{|C_{\Sigma_{m}}(\operatorname{im}(\gamma))|}
=
\frac{{m}!}{{m}}
=
({m}-1)!.
\]

The character map for $E^0(B\Sigma_{m})$ has the form
\[
\chi \colon E^0(B\Sigma_{m}) \to \Fun(\hom(\Z_{p}^{n}, \Sigma_{m})/{\conj}, C_0)
\]
and, for a conjugacy class $[\gamma]$, we write $\chi_{[\gamma]}$ for the map to the factor corresponding to $[\gamma]$. We will also write $\chi_{\gamma} = \chi_{[\gamma]}$ for any $\gamma \in \hom(\Z_{p}^{n}, \Sigma_{m})$.

Let $x\in E^0(B\Sigma_{m})/I_{m}$ and choose a lift
$\widetilde{x}\in E^0(B\Sigma_{m})$. The construction of the character map $\chi$ implies that $\alpha_{H} q_{m} = \chi_{\Phi^{-1}H}$. For $\Gamma \leq \u m$, we view $\hom(\Z_p^n, \Sigma_{\Gamma})$ as a subset $\hom(\Z_p^n, \Sigma_{m})$. The character formula for
the transfer along $\Sigma_\Gamma \to e$ \cite[Proposition 7.9]{Ganter-orbifold} together with the formula for restriction of characters, imply that
\[
|\Sigma_\Gamma|\epsilon_\Gamma(\res_{\Sigma_{\Gamma}}^{\Sigma_m}\tilde x)
=
\sum_{\gamma \in
\hom(\Z_p^n,\Sigma_\Gamma)}
\chi_\gamma(\tilde x).
\]
Note that the sum is over all maps (not conjugacy classes). Using the formula for $\ell_m$ from
Proposition~\ref{prop:transfer-augmentation}, we obtain
\[
\ell_{m}(x)
=
\sum_{\Gamma\leq\u {m}}
\mu(\Gamma,\u {m})
\Big ( \sum_{\gamma \in
\hom(\Z_p^n,\Sigma_\Gamma)}
\chi_\gamma(\widetilde{x}) \Big ).
\]
Given $\gamma \colon \Z_p^n\longrightarrow\Sigma_m$, let $\Omega_\gamma \leq \u m$ be the partition of $[m]$ according to the orbits of $\gamma$. The image of
$\gamma$ is contained in $\Sigma_\Gamma$ precisely when
$\Omega_\gamma\leq\Gamma$. Reordering the sum above gives
\[
\ell_{m}(x)
=
\sum_{\gamma\in
\hom(\Z_p^n,\Sigma_{m})}
\Big (
\sum_{\Omega_\gamma \leq\Gamma\leq\u {m}}
\mu(\Gamma,\u {m})
\Big )
\chi_\gamma(\widetilde{x}).
\]
By the defining identity for the Mobius function,
\[
\sum_{\Omega_\gamma \leq\Gamma\leq\u {m}}
\mu(\Gamma,\u {m})
=
\begin{cases}
1,&\Omega_\gamma=\u {m},\\
0,&\Omega_\gamma<\u {m}.
\end{cases}
\]
Consequently,
\[
\ell_{m}(x)
=
\sum_{\gamma \in \hom^{\trans}(\Z_{p}^{n}, \Sigma_{m})}
\chi_\gamma(\widetilde{x}).
\]
Grouping the transitive homomorphisms by conjugacy classes and applying $\Phi$ now gives
\[
\ell_{m}(x)
=
({m}-1)!
\Big(\sum_{\substack{H \in \Sub_{m}((\Z/m)^n)}}
\alpha_H(x)\Big),
\]
as desired.
\end{proof}

We learn that
\[
\frac{\tilde \epsilon P_{p^k}/I_{p^k}}{(p^k)!} = \frac{T_{p^k}}{p^k}.
\]
Thus applying $\tilde \epsilon$ to the exponential relation of \cref{eq:Ethyexp} gives Ganter's exponential relation. \cref{eq:Ethyexp} offers the possibility of finding more exponential relations related to Morava $E$-theory.

At the prime $2$ (i.e., if $\kappa$ has characteristic $2$) there is a refinement of this formula due to the fact that $E^0(B\Sigma_2)$ is not isomorphic to $E^0$. In the case of $RU$, described in \cref{sec:reprings}, this had to do with exterior powers. By \cref{prop:exptransferSigma2}, there is an $E^0(X)$-algebra map
\[
E_X\divpowser{\Sigma} \to E_{X}^{\tr}(2)\divpowser{t}.
\]
Let $V = \hom((\Z_{2})^{\times n}, \Sigma_2)$, the set of continuous group homomorphisms from $(\Z_{2})^{\times n}$ to $\Sigma_2$. Since $\Sigma_2$ is abelian, $V$ is an abelian group isomorphic to $(\Z/2)^{\times n}$. Recall that $C_0$ is the rationalized Drinfeld ring of infinite level structures. Hopkins--Kuhn--Ravenel character theory \cite[Theorem C]{hkr} provides an isomorphism of $E^0$-algebras
\[
C_0 \otimes_{E^0} E^0(B\Sigma_2) \cong \prod_{V} C_0.
\]

Recall that there is an isomorphism of $E^0$-algebras $E^0(B\Sigma_2) \cong E^0\powser{x}/([2](x))$ for a choice of coordinate $x$ on $\G$ and where $[2](x)$ is the $2$-series for the associated formal group law. The convolution product on $E^0(B\Sigma_2)$ has multiplicative unit $[2](x)/x$, which is the image of $1 \in E^0$ under the transfer map $E^0(Be) \to E^0(B\Sigma_2)$. 

By \cite{Stricklandtransfer} (see also \cite{Ganter-orbifold}), the convolution product on $\prod_{V} C_0$ is given by
\[
(f \star g)(v) = \frac{1}{2} \sum_{u+w = v} f(u)g(w)
\]
and the multiplicative unit is given by the function that is $2$ on $0 \in V$ and $0$ elsewhere. This is isomorphic to the group algebra $C_0[V]$ and this is isomorphic to $\prod_{V^{\vee}} C_0$, since $C_0$ is a $\Q$-algebra and where $V^{\vee}$ is the dual. Thus, each $v \in V^{\vee}$ determines a map of $E^0$-algebras
\[
E_{X}^{\tr}(2) \to C_0 \otimes_{E^0} E_X(1).
\]
In fact, the image of each of these maps lands in $D_1 \otimes_{E^0} E_X(1)$. These maps give rise to exotic exponential relations over
\[
(D_1 \otimes_{E^0} E_X(1))\divpowser{t} \cong (D_1 \otimes_{E^0} E^0(X))\divpowser{t}.
\]
We will explore this further in future work.



\subsection{Mod $2$ singular cohomology}

We now consider the example of singular cohomology with coefficients in
\(\F_2\).  This example is slightly different in flavor from the previous
one because we want to retain the graded structure. Since we are working with $\F_2$-coefficients, we do not need to worry about graded commutativity. Thus we will ignore the grading and set
\[
        \cH(\Lambda)=H^*(B\Sigma_{\Lambda};\F_2),
\]
viewing this as a commutative ring. The Kunneth
isomorphism for mod $2$ singular cohomology gives
\[
H^*(B\Sigma_{\Lambda};\F_2)\otimes H^*(B\Sigma_{\Omega};\F_2)
        \cong
H^*(B(\Sigma_{\Lambda}\times\Sigma_{\Omega});\F_2)
        \cong
H^*(B\Sigma_{\Lambda\sqcup\Omega};\F_2).
\]
Thus $\cH$ is a symmetric monoidal partition ring. 

Let
\[
        I_{\Lambda}\subseteq \cH(\Lambda) =  H^*(B\Sigma_{\Lambda};\F_2)
\]
be the transfer ideal and recall that $\bar{\cH}(\Lambda)=\cH(\Lambda)/I_{\Lambda}$.


We will use the following classical calculation, in the form appearing in
the work of Cohen--Lada--May \cite{CohenLadaMay} and in the Hopf ring calculation of
Giusti--Salvatore--Sinha \cite{sinhamod2sym}.  Let \(\cD_k\) denote the \(k\)th Dickson algebra
over \(\F_2\):
\[
        \cD_k
        =
        H^*(B(\Z/2)^k;\F_2)^{GL_k(\F_2)}
        \cong
        \F_2[d_{k,0},d_{k,1},\ldots,d_{k,k-1}],
\]
with cohomological degree $|d_{k,i}|=2^k-2^i$. Note that \(\cD_0=\F_2\).  Then
\[
\bar{\cH}(m)\cong
\begin{cases}
        \cD_k, & m=2^k,\\
        0,     & m\text{ is not a power of }2.
\end{cases}
\]

For $\Lambda = (X, \sim_{\Lambda})$, we have
\[
        \cH(\Lambda)\cong
        \bigotimes_{B \in X/{\sim_{\Lambda}}} H^*(B\Sigma_{|B|};\F_2)
\]
and, from \cref{lem:symmon},
\[
        \bar{\cH}(\Lambda)
        \cong
        \bigotimes_{B \in X/{\sim_{\Lambda}}} \bar{\cH}(|B|).
\]
In particular, \(\bar{\cH}(\Lambda)\) vanishes unless every block of
\(\Lambda\) has cardinality a power of two.  If \(\Lambda\) has \(a_k\)
blocks of cardinality \(2^k\), then
\[
        \bar{\cH}(\Lambda)
        \cong
        \bigotimes_{k\geq 0}\cD_k^{\otimes a_k}.
\]

Applying \cref{eq:symmondecomp} and the proof of \cref{prop:flatdivsymmon}, using the fact that $\cD_i$ is a free $\F_2$-modules, we obtain the following description of
\(\Div(\cH)(m)\).  Let \(\mathrm{Part}_2(m)\) denote the set of $2$-power integer partitions
of $m$. For $\lambda \in \mathrm{Part}_2(m)$, $\lambda_i$ counts the number of times $2^i$ appears in the integer partition, so that we have $m = \sum_{i \geq 0} \lambda_i 2^i$. 
Then $\Div(\cH)(0) = \F_2$ and, for $m > 0$, we have
\[
        \Div(\cH)(m)
        \cong
        \bigoplus_{\lambda\in \mathrm{Part}_2(m)}
        \bigg(
        \bigotimes_{i \geq 0} \Big ( \cD_i^{\otimes \lambda_i} \Big )^{\Sigma_{\lambda_i}}
        \bigg)
\]
and, using the notation of \cref{sec:divpow},
\[
        \cH \langle \Sigma \rangle
        =
        \Div(\cH)[\Sigma]
        \cong
        \bigoplus_{m\geq 0}
        \Div(\cH)(m) \cong TS(\bigoplus_{i \geq 0} D_i),
\]
where $\bigoplus_{i \geq 0} D_i$ is viewed as a graded $\F_2$-module with $D_i$ in degree $2^i-1$. Thus \(\Div(\cH)(m)\) is a form of class functions taking values in a dyadic Dickson algebra depending on the partition and \cref{thm:freediv} implies that $\cH\langle \Sigma \rangle$ is the divided power envelope of $\cH[\Sigma]$ with respect to the ideal generated by positive-degree homogeneous elements.


The unit of the monad $\Div$ gives a ring map
\[
        \eta_{m} \colon
        \cH(m)=H^*(B\Sigma_m;\F_2)
        \longrightarrow
        \Div(\cH)(m).
\]
The $\lambda$-coordinate of $\eta_{m}$ is induced by the composite
\[
H^*(B\Sigma_m;\F_2)
        \xrightarrow{\mathrm{res}_{\Sigma_{\Lambda}}^{\Sigma_{m}}}
H^*(B\Sigma_{\Lambda};\F_2)
        \xrightarrow{q_{\Lambda}}
\bar{\cH}(\Lambda) \cong \bigotimes_{i \geq 0}\cD_i^{\otimes \lambda_i},
\]
for any choice of partition $\Lambda \parts \u m$ with underlying integer partition $\lambda$. This map lands in the indicated fixed points as $W_{\Sigma_m}(\Sigma_\Lambda) \cong \prod_{i \geq 1} \Sigma_{\lambda_i}$.

Let $E_{2^k} \subseteq \Sigma_{2^k}$ be a transitive elementary abelian $2$-subgroup of $\Sigma_{2^k}$ (unique up to conjugacy). The restriction map
\[
\res_{E_{2^k}}^{\Sigma_{2^k}} \colon H^*(B\Sigma_{2^k};\F_2) \to H^*(BE_{2^k}; \F_2)
\]
lands in $D_k$, the $\Aut(E_{2^k}) \cong GL_k(\F_2)$-invariants, because each automorphism of $E_{2^k}$ extends to an inner automorphism of $\Sigma_{2^k}$. There is a commutative diagram of commutative rings
\[
\xymatrix{\cH(2^k) \ar[r]^{q_{2^k}} \ar[d]_{=} & \bar{\cH}(2^k) \ar[d]^{\cong} \\ H^*(B\Sigma_{2^k};\F_2) \ar[r]^-{\res_{E_{2^k}}^{\Sigma_{2^k}}} & D_k.}
\]
It follows that $q_{\Lambda}$, since it is a tensor product of quotient maps, admits a similar interpretation.

Returning to the ring homomorphism $\eta_{m} \colon \cH(m) \to \Div(\cH)(m)$, note that this map is not generally an isomorphism. Already for \(m=2\),
\[
        H^*(B\Sigma_2;\F_2)\cong \F_2[u],
\]
where the cohomological degree of $u$ is  $1$, whereas
\[
        \Div(\cH)(2)
        \cong
        \bar{\cH}(2)\oplus \bar{\cH}(\u 1 \sqcup \u 1)
        \cong
        \F_2[u]\oplus \F_2.
\]
Under this identification,
\[
        \eta_{2}(f(u))=(f(u),f(0)).
\]

However, work of \cite{GunawardenaLannesZarati} proves that the Quillen map
\[
        H^*(B\Sigma_m;\F_2)
        \longrightarrow
        \lim_{E\in \mathcal C(\Sigma_m)} H^*(BE;\F_2)
\]
is an isomorphism. Here $C(\Sigma_m)$ is the category with objects elementary abelian $2$-subgroups of $\Sigma_m$ and morphisms generated by inclusions and conjugations in $\Sigma_m$. Since each elementary abelian $2$-subgroup of $\Sigma_m$ is contained in the product of transitive elementary abelian $2$-subgroups of $\Sigma_{\Lambda}$ for some $\Lambda \leq \u m$, this implies that $\eta_{m}$ is injective for all $m$. This also can be used to give a description of the image of $\eta_{m}$ -- it consists of appropriately compatible families of elements in $\Div(\cH)(m)$.

In \cite{sinhamod2sym}, Giusti, Salvatore, and Sinha describe the component Hopf ring structure on $\cH[\Sigma]$ in terms of classes
\[
\gamma_{\ell,n}\in H^{n(2^\ell-1)}(B\Sigma_{n2^\ell};\F_2),
\]
with $\ell > 0$. Let $m = n2^{\ell}$. It is not hard to calculate 
\[
\eta_{m}(\gamma_{\ell,n})
\]
in terms of the Dickson classes. For $\lambda \in \mathrm{Part}_2(m)$, we have
\[
(\eta(\gamma_{\ell,n}))_\lambda
=
\begin{cases}
\displaystyle
\bigotimes_{i\geq \ell}
d_{i,i-\ell}^{\otimes \lambda_i},
&
\lambda_0=\cdots=\lambda_{\ell-1}=0, \\[1.2em]
0,
&
\text{otherwise.}
\end{cases}
\]
This follows from \cite[Corollary 7.6]{sinhamod2sym}, which describes the restriction of classes of the form $\gamma_{\ell, 2^k}$ to the Dickson algebra, and \cite[proof of Theorem 8.3]{sinhamod2sym}, which describes $\res_{\Sigma_{\Lambda}}^{\Sigma_m}(\gamma_{\ell, n})$ for $\Lambda \leq \u m$ in terms of external tensor products of lower $\gamma$'s.

For $X$ a space, let $\cH_X(\Lambda) = H^*(X \times B\Sigma_{\Lambda}; \F_2)$. Then $\cH_X$ is also a symmetric monoidal partition ring and we have an isomorphism of commutative rings
\[
\Div(\cH_X)(m) \cong H^*(X; \F_2) \otimes \Div(\cH)(m).
\]
These facts give rise to isomorphisms of commutative graded rings
\begin{equation*}
\cH_X[\Sigma] \cong H^*(X; \F_2) \otimes \cH[\Sigma] \, \text{ and } \, \cH_X\langle \Sigma \rangle \cong H^*(X; \F_2) \otimes \cH \langle \Sigma \rangle.
\end{equation*}
\cref{thm:freediv} implies that $\cH_X\langle \Sigma \rangle$ is the divided power envelope of $\cH_X[\Sigma]$ with respect to the irrelevant ideal. The unit $\cH_X \to \Div(\cH_X)$ is $H^*(X; \F_2) \otimes \eta_{m}$ and $H^*(X; \F_2)$ is a free $\F_2$-module, so the unit $\cH_X(m) \to \Div(\cH_X)(m)$ and thus the map of commutative graded rings $\cH_X[\Sigma] \to \cH_X\langle \Sigma \rangle$ are injective for any space $X$.

\subsection{Combinatorial species}
A common source of exponential relations is the theory of combinatorial species \cite{Combinatorialspecies}. In this section, we discuss the relation between combinatorial species and partition functors. This provides us with examples of partition functors without obvious connections to cohomology theories.

Let $\Set^{\mathrm{fin}}$ be the category of finite sets and let $\Set^{\mathrm{fin},\cong}$ be the groupoid of finite sets and bijections. Let
\[
F \colon \Set^{\mathrm{fin},\cong}\longrightarrow \Set^{\mathrm{fin}}
\]
be a combinatorial species with $F(\emptyset) = \emptyset$. We will regard the elements of $F(X)$ as connected structures on $X$ and refer to the elements of $F(X)$ as $F$-structures.

For a finite set $X$, define the set of $F$-assemblies on $X$ by
\[
\Asm_F(X)
=
\coprod_{\Gamma \parts X}
\prod_{B \in X/{\sim_\Gamma}}F(B).
\]
Thus an element of $\Asm_F(X)$ is a partition $\Gamma$ of $X$,
together with an $F$-structure on each block of $\Gamma$. The
partition $\Gamma$ will be called the component partition of the
assembly.

More generally, if $\Lambda$ is a partition of $X$, define
\[
\Asm_F(\Lambda)
=
\prod_{B\in X/{\sim_\Lambda}}\Asm_F(B).
\]
Equivalently,
\[
\Asm_F(\Lambda)
\cong
\coprod_{\Gamma\leq\Lambda}
\prod_{B\in X/{\sim_\Gamma}}F(B).
\]
The group $\Sigma_\Lambda$ acts on $\Asm_F(\Lambda)$ by transporting
the component partition and the $F$-structures on its blocks. That is, given $\sigma \in \Sigma_{\Lambda}$, the induced isomorphism of partitions $\sigma \colon \Gamma \to \sigma \Gamma$ restricts to an isomorphism $\sigma |_{B} \colon B \to \sigma B$ for $B \in X/{\sim_\Gamma}$ that gives an isomorphism $F(\sigma |_{B}) \colon F(B) \to F(\sigma B)$.

Fix a commutative ring $K$. Define
\[
K_F(\Lambda)
=
\Map(\Asm_F(\Lambda),K)^{\Sigma_\Lambda}.
\]
Thus $K_F(\Lambda)$ is the ring of invariant $K$-valued functions on
the set of $F$-assemblies compatible with the partition $\Lambda$.

We describe a symmetric monoidal partition ring structure on $K_F$. Let $\Omega\leq\Lambda$ be partitions of the same finite set. There
is a $\Sigma_\Omega$-equivariant inclusion
\[
j_\Omega^\Lambda \colon \Asm_F(\Omega)\injto\Asm_F(\Lambda).
\]
The restriction map
\[
K_F^*(\Omega\leq\Lambda) \colon K_F(\Lambda)\longrightarrow K_F(\Omega)
\]
is restriction of functions along $j_\Omega^\Lambda$.

The transfer along $\Omega \leq \Lambda$ is defined in the following way: If $\varphi\in K_F(\Omega)$, let $\widetilde{\varphi}$ be its
extension by zero to $\Asm_F(\Lambda)$. Then
\[
(K_F)_*(\Omega\leq\Lambda)(\varphi)
=
\sum_{[\sigma] \in\Sigma_\Lambda/\Sigma_\Omega}
\sigma\widetilde{\varphi},
\]
where $(\sigma\psi)(a)=\psi(\sigma^{-1}a)$. This is independent of
coset representatives and lands in the $\Sigma_\Lambda$-invariants.

These restriction and transfer maps make $K_F$ into a symmetric monoidal partition
ring. The double coset formula essentially follows from the double coset formula for the Mackey functor $H \mapsto M^H$ for $H \subseteq G$ and $M$ a $G$-module. Frobenius reciprocity follows
from the identity
\[
a\cdot
\Big ( \sum_{[\sigma]\in\Sigma_\Lambda/\Sigma_\Omega}
\sigma\widetilde{\varphi} \Big )
=
\sum_{[\sigma]\in\Sigma_\Lambda/\Sigma_\Omega}
\sigma\left(K_F^*(\Omega\leq\Lambda)(a)\cdot\varphi
\right),
\]
for $a\in K_F(\Lambda)$ and $\varphi\in K_F(\Omega)$ and where extension by $0$ is understood on the right.

The symmetric monoidal structure is induced by disjoint union of
assemblies. Since modules of functions on finite sets are free $K$-modules, and invariant functions on a finite $G$-set identify with functions on its orbit set, the flatness argument of \cref{prop:flatdivsymmon} shows that taking invariants commutes with the tensor product in this case. Thus the canonical bijection
\[
\Asm_F(\Lambda)\times\Asm_F(\Lambda')
\cong
\Asm_F(\Lambda\sqcup\Lambda')
\]
gives the symmetric monoidal multiplication
\[
m_{\Lambda,\Lambda'} \colon K_F(\Lambda)\otimes_K K_F(\Lambda')
\xrightarrow{\cong}
K_F(\Lambda\sqcup\Lambda').
\]

For $i \geq 1$, set
\[
M_{i-1} =\Map(F([i]),K)^{\Sigma_i}.
\]
Consider the $\N$-graded $K$-module $M_* = \oplus_{i \geq 0} M_i$ in which $M_i$ is in degree $i$. Note that
\[
K_F(m) = \Map(\coprod_{\Gamma \parts [m]}
\prod_{B \in \u m/{\sim_\Gamma}} F(B), K)^{\Sigma_m} \cong \bigoplus_{\lambda \parts m} \bigg(
\bigotimes_{i\geq 1}
M_{i-1}^{\otimes \lambda_i}
\bigg)^{
\prod_{i\geq 1}\Sigma_{\lambda_i}
}.
\]
This induces an isomorphism of $\N$-graded $K$-modules

\[
K_F[\Sigma]\cong TS(M_*).
\]
Under this identification, the transfer product on $K_F[\Sigma]$
agrees with the shuffle product on $TS(M_*)$. Hence
\[
K_F[\Sigma]\cong TS(M_*)
\]
as $\N$-graded commutative $K$-algebras.

For a partition $\Lambda$, let
\[
\epsilon_\Lambda \colon K_F(\Lambda)\longrightarrow C_K(\Lambda)=K
\]
be the $K$-linear map
\[
\epsilon_\Lambda(\phi)
=
\sum_{a\in\Asm_F(\Lambda)}\phi(a).
\]
These maps are not compatible with restrictions in general, and they
are not maps of rings for the pointwise multiplication on
$K_F(\Lambda)$. They do, however, commute with transfers and with the
symmetric monoidal structure maps.

Indeed, if $\Omega\leq\Lambda$, then
\[
\epsilon_\Lambda\bigl((K_F)_*(\Omega\leq\Lambda)(\phi)\bigr)
=
|\Sigma_\Lambda/\Sigma_\Omega|\,\epsilon_\Omega(\phi),
\]
which is exactly the transfer in the constant partition functor
$C_K$. Moreover, the canonical bijection
\[
\Asm_F(\Lambda)\times\Asm_F(\Lambda')
\cong
\Asm_F(\Lambda\sqcup\Lambda')
\]
implies that
\[
\epsilon_{\Lambda\sqcup\Lambda'}
\bigl(m_{\Lambda,\Lambda'}(\phi\otimes\psi)\bigr)
=
\epsilon_\Lambda(\phi)\epsilon_{\Lambda'}(\psi).
\]
\cref{prop:transferring} therefore gives a map
\[
\epsilon \colon K_F[\Sigma]\longrightarrow C_K[\Sigma]
\cong K\langle t\rangle.
\]
After completion, this gives
\[
\epsilon \colon K_F[[\Sigma]]
\longrightarrow K\langle\!\langle t\rangle\!\rangle,
\]
which sends $x\in K_F(m)$ to
\[
\epsilon_m(x)\frac{t^m}{m!}.
\]

Let $g_m\in K_F(m)$ be the constant function $1$ on $\Asm_F(m)$, and let $c_m \in K_F(m)$ be the characteristic function on
\[
F([m])\subseteq \Asm_F(m).
\]
Thus $\epsilon_{m}(g_m)$ counts all $F$-assemblies of size $m$, while $\epsilon_{m}(c_m)$ counts
the connected $F$-structures of size $m$.




\begin{prop} \label{prop:combexp}
In $K_F\powser{\Sigma}$, we have the exponential relation
\[
\sum_{m\geq 0} g_mt^m
=
\exp
\Big(
\sum_{i\geq 1}c_it^i
\Big).
\]
\end{prop}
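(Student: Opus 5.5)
The plan is to mimic the proof of \cref{exponentialformula} directly inside $K_F\powser{\Sigma}$, replacing the idempotents $\1_\lambda$ by characteristic functions of assembly types. For an integer partition $\lambda\vdash m$, let $\chi_\lambda\in K_F(m)$ be the characteristic function of the set of assemblies in $\Asm_F(m)$ whose component partition has underlying integer partition $\lambda$. This set is $\Sigma_m$-stable, so $\chi_\lambda$ is invariant. By construction $c_m=\chi_{(m)}$, the type with a single block of size $m$. Since every assembly has exactly one component type, $g_m=\sum_{\lambda\vdash m}\chi_\lambda$; this replaces \eqref{eq:idempotentbasis}.

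The key step is the analogue of \cref{prop:partitionmultiply}. For $\lambda\vdash n$ and $\gamma\vdash m$ I will show
\[
\chi_\lambda\chi_\gamma=\frac{(\lambda+\gamma)!}{\lambda!\,\gamma!}\,\chi_{\lambda+\gamma}
\]
for the transfer product.

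First, $m_{n,m}(\chi_\lambda\otimes\chi_\gamma)$ is the characteristic function on $\Asm_F(\u n\sqcup\u m)$ of pairs of assemblies of types $\lambda$ and $\gamma$. This follows because the symmetric monoidal structure is induced by the bijection $\Asm_F(\u n)\times\Asm_F(\u m)\cong\Asm_F(\u n\sqcup\u m)$.

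Next, the transfer along $\u n\sqcup\u m\leq\u{n+m}$ extends this function by zero and sums its $\Sigma_{n+m}/(\Sigma_n\times\Sigma_m)$-translates. Its value at an assembly $a$ of $[n+m]$ is therefore the number of $n$-element subsets $S\subseteq[n+m]$ satisfying three conditions: $S$ is a union of components of $a$, the components inside $S$ have type $\lambda$, and the remaining components have type $\gamma$. This count is zero unless $a$ has type $\lambda+\gamma$. When $a$ has type $\lambda+\gamma$, it equals $\prod_i\binom{\lambda_i+\gamma_i}{\lambda_i}$, since one chooses which $\lambda_i$ of the components of size $i$ go into $S$.

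Iterating gives $\prod_i c_i^{\lambda_i}=\lambda!\,\chi_\lambda$. This is the analogue of \eqref{eq:exppowers}, and it is where the main care is needed: one must confirm that the extension-by-zero description of transfer, together with the lift of $\u n\sqcup\u m$ into $\u{n+m}$, yields exactly this subset count with no extra multiplicities.

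The exponential is interpreted integrally, as in the proof of \cref{exponentialformula}. By the multinomial expansion,
\[
\frac{1}{r!}\Big(\sum_{i\ge1}c_it^i\Big)^r=\sum_{\ell(\lambda)=r}\frac{1}{\lambda!}\prod_i c_i^{\lambda_i}\,t^{\|\lambda\|}=\sum_{\ell(\lambda)=r}\chi_\lambda t^{\|\lambda\|}.
\]
The factorials cancel in this step, so no denominators survive.

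Summing over $r$ gives $\sum_{\lambda}\chi_\lambda t^{\|\lambda\|}$, a sum over all integer partitions. Grouping by $m=\|\lambda\|$ and using $g_m=\sum_{\lambda\vdash m}\chi_\lambda$ gives $\sum_m g_mt^m$, which is the stated relation.

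Alternatively, one could check that $\sum g_mt^m$ is exponential and apply \cref{exponentialformula}. That route needs a map $\Div(K_F)\to K_F$ sending $g_i*\1_i$ to $c_i$, so the direct computation above is preferable.
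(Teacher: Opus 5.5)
Your proposal is correct and is essentially the paper's first (direct) proof. You define the type indicators $\chi_\lambda$ (the paper's $g_\lambda$), prove $\prod_i c_i^{\lambda_i}=\lambda!\,\chi_\lambda$, and finish with the multinomial expansion together with $g_m=\sum_{\lambda\vdash m}\chi_\lambda$. The only difference is how you get the product identity: you use a binary product rule and iterate, while the paper transfers the external product of all the $c_i$ from $\Lambda$ to $\u{m}$ in one step. The paper also gives a second, conceptual proof: it identifies $K_F\cong\Div(K^F)$ with $\1_i\leftrightarrow c_i$ and applies \cref{exponentialformula} in the form of \cref{rem:genexp}. That is how the alternative route you mention goes through without needing a separate map $\Div(K_F)\to K_F$.
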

\begin{proof}
We give two proof sketches. First, we can verify the relation directly. Let
$g_\lambda\in K_F(m)$ denote the characteristic function of the
assemblies whose component partition has type $\lambda\vdash m$, and
choose a partition $\Lambda\leq \u m$ with underlying integer partition
$\lambda$. The external product of $\lambda_i$ copies of $c_i$, over all
$i$, is the characteristic function of the assemblies whose component
partition is exactly $\Lambda$. Transferring this function to $K_F(m)$
gives
\[
\prod_i c_i^{\lambda_i}
=
\left(\prod_i\lambda_i!\right)g_\lambda,
\]
since, for a fixed component partition of type $\lambda$, there are
$\prod_i\lambda_i!$ ways to match the blocks of equal size with those
of $\Lambda$. The same multinomial calculation as in the proof of
\cref{exponentialformula} now shows that the coefficient of $t^m$ in
$\exp\Big(\sum_{i\geq1}c_it^i \Big)$ is
\[
\sum_{\lambda\vdash m}g_\lambda=g_m,
\]
since every assembly has a unique component partition.

There is also a conceptual proof using $\Div$. The construction of
\cref{ex:symmetriccoinvariants} extends to graded $K$-modules. Apply it
to the graded module $M_*$ above, with $M_{i-1}$ assigned weight $i$,
and denote the resulting symmetric coinvariant partition functor by
$K^F$. Its transfer quotient in weight $i$ is $M_{i-1}$, and the
calculation of \cref{ex:symmetriccoinvariants} gives
\[
\Div(K^F)(m)
\cong
\bigoplus_{\lambda\vdash m}
\left(
\bigotimes_{i\geq1}M_{i-1}^{\otimes\lambda_i}
\right)^{\prod_{i\geq1}\Sigma_{\lambda_i}}
\cong
K_F(m).
\]
These identifications are compatible with restrictions, transfers, and
the symmetric monoidal structure, so that
\[
K_F\cong\Div(K^F).
\]
Under this isomorphism, the summand corresponding to the indiscrete
partition in degree $i$ consists of functions supported on
$F(\u i)\subseteq\Asm_F(\u i)$, and the distinguished element
$\1_i$ corresponds to $c_i$.

Finally, $\sum_{m\geq0}g_mt^m$ is exponential: restriction of the
constant function $g_{i+j}=1_{i+j}$ to
$\Asm_F(\u i\sqcup\u j)\cong\Asm_F(\u i)\times\Asm_F(\u j)$ is
$g_i\boxtimes g_j$. Since $\Div(K^F)\cong K_F$ is a partition ring, we may use the
more general form of \cref{exponentialformula} noted in \cref{rem:genexp}. As $g_i$ is the multiplicative identity of
$K_F(i)$, we obtain
\[
\sum_{m\geq0}g_mt^m
=
\exp\left(\sum_{i\geq1}(g_i*\1_i)t^i\right)
=
\exp\left(\sum_{i\geq1}c_it^i\right).
\]
\end{proof}

Applying the map
\[
\epsilon \colon K_F\powser{\Sigma} \longrightarrow
K\divpowser{t}
\]
to the exponential relation of \cref{prop:combexp} gives
\[
\sum_{m\geq 0}|\Asm_F(m)|\frac{t^m}{m!}
=
\exp\Big(
\sum_{i\geq 1}|F([i])|\frac{t^i}{i!}
\Big).
\]

\begin{example}
Let $F=\Conn$ be the species of connected finite simple graphs so that $\Conn(X)$ is the set of connected simple graphs with vertex set $X$. Let $\Graph(X)$ be the set of simple graphs with vertex set $X$. Then
\[
\Asm_{\Conn}(X)
=
\coprod_{\Gamma\parts X}
\prod_{B\in X/{\sim_\Gamma}}\Conn(B)
\]
is canonically identified with $\Graph(X)$: a simple graph determines its
partition into connected components, and conversely a partition
together with a connected simple graph on each block determines a simple graph.
Thus
\[
K_{\Conn}(m)
=
\Map(\Graph([m]),K)^{\Sigma_m}.
\]
Under the notation above, $g_m$ is the constant function $1$ on
$\Graph([m])$, and $c_m$ is the characteristic function of
\[
\Conn([m])\subseteq \Graph([m]).
\]
The exponential relation is
\[
\sum_{m\geq0}g_mt^m
=
\exp
\Big(
\sum_{i\geq1}c_it^i
\Big).
\]
Applying $\epsilon$ gives
\[
\sum_{m\geq0}|\Graph([m])|\frac{t^m}{m!}
=
\exp\Big(
\sum_{i\geq1}|\Conn([i])|\frac{t^i}{i!}
\Big).
\]
If $K=\Q$, then $K\divpowser{t}$ identifies with
the ordinary power series ring $\Q\powser{t}$, and this is the usual
labeled exponential formula for graphs.
\end{example}

\subsection{An exotic exponential element in Burnside rings}
\label{sec:exotic-burnside}

Let $A_e$ be the Burnside ring partition power functor
\[
A_e(\Lambda)=A(\Sigma_\Lambda).
\]
This partition ring is not symmetric monoidal (e.g., consider $\Sigma_2 \times \Sigma_2$). We construct an exponential element in $A_e\powser{\Sigma}$ which
does not lie in $\AA\powser{\Sigma}$. 

Recall that the marks homomorphism $\chi \colon A(\Sigma_m) \to \mathrm{Marks}(\Sigma_m,\Z)$ is an injective ring map that sends an isomorphism class of $\Sigma_m$-sets $[X]$ to the function $\chi([X])([H]) = |X^H|$ on the set of conjugacy classes of subgroups of $\Sigma_m$. Let
\[
\omega=[\Sigma_2/e]-1\in A(\Sigma_2).
\]
The marks of $\omega$ are
\[
\chi_e(\omega)=1
\qquad\text{and}\qquad
\chi_{\Sigma_2}(\omega)=-1.
\]
In particular, $\omega$ is a nontrivial unit satisfying $\omega^2=1$.

Let
\[
\cG_m=\setof{U\subseteq [m] \mid |U|=2}
\qquad\text{and}\qquad
N_m=|\cG_m|={m\choose 2}
\]
so $\cG_m$ is the set of edges in the complete graph on the set $[m]$. Note that $\Sigma_m$ acts on $\cG_m$ via its action on $[m]$. In fact, the action of $\Sigma_m$ on $\cG_m$, together with its
action on the two vertices of each edge, determines a homomorphism,
well-defined up to conjugacy,
\[
\rho_m\colon \Sigma_m\longrightarrow \Sigma_2\wr\Sigma_{N_m}.
\]
For $m\geq 2$, define
\[
w_m=\res_{\rho_m}\P_{N_m}(\omega)\in A(\Sigma_m),
\]
and set $w_0=w_1=1$. Here $\P_{N_m} \colon A(\Sigma_2) \to A(\Sigma_2\wr\Sigma_{N_m})$ is the total power
operation.

\begin{prop} \label{prop:burnsidecalc}
Let $H \subseteq \Sigma_m$ be a subgroup.  For $m \geq 2$, let $r_m(H)$ be the
number of $H$-orbits in $\cG_m$ with the property that the setwise
stabilizer of an edge contains an element that interchanges its two vertices. Then
\[
\chi_H(w_m)
=
(-1)^{r_m(H)}.
\]
\end{prop}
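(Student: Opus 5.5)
The plan is to compute the marks of $w_m$ via the standard formula for the marks of a total power operation in the Burnside ring. Marks commute with restriction: for $H\subseteq\Sigma_m$ we have $\chi_H(\res_{\rho_m}y)=\chi_{\rho_m(H)}(y)$ for $y\in A(\Sigma_2\wr\Sigma_{N_m})$. It therefore suffices to compute the marks of $\P_{N}(\omega)$ at subgroups of $\Sigma_2\wr\Sigma_N$ of the form $K=\rho_m(H)$.

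First I would handle honest $\Sigma_2$-sets. For a finite $\Sigma_2$-set $X$, the power operation $\P_N[X]$ is the class of $X^N$ with its $\Sigma_2\wr\Sigma_N$-action. A point of $X^N$ is fixed by $K$ exactly when it is constant along each $K$-orbit of $[N]$, in the twisted sense below. Fix a $K$-orbit $O\subseteq[N]$ and a point $i\in O$. Let $L_i\subseteq\Sigma_2$ be the image of the stabilizer $K_i$ under the projection to the $i$th wreath coordinate. Then the $K$-fixed points of $X^O$ are in bijection with $X^{L_i}$. This gives the multiplicativity formula
\[
\chi_K(\P_N[X])=\prod_{O\in [N]/K}\chi_{L_O}([X]),
\]
where $L_O\le\Sigma_2$ is either $e$ or $\Sigma_2$.

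Next I would extend this formula to virtual elements. The key point is that the formula is a polynomial identity in the marks $(\chi_e,\chi_{\Sigma_2})$ that holds on the cone of effective classes. Both sides are compatible with the additivity formula for $\P_N$, in the style of Definition~\ref{def:partpower}(c), so the formula extends to all of $A(\Sigma_2)$. Concretely, the standard argument is that marks of power operations are given by the same product formula for virtual sets, because $\P_N$ is determined by Tambara/norm structure and the mark formula is polynomial. This is the step I expect to be the main obstacle to make rigorous cleanly. I would cite the standard description of marks of norms (e.g.\ via Schwede's global power functor axioms) rather than reprove it.

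Finally I would specialize to $\omega$, whose marks are $\chi_e(\omega)=1$ and $\chi_{\Sigma_2}(\omega)=-1$. The mark $\chi_H(w_m)$ is then $(-1)$ raised to the number of $\rho_m(H)$-orbits $O$ with $L_O=\Sigma_2$. Now unwind $\rho_m$. The orbits of $\rho_m(H)$ on $[N_m]$ are the $H$-orbits on $\cG_m$. For an edge $U$, the stabilizer $H_U$ maps to $\Sigma_2$ through its action on the two vertices of $U$. Hence $L_O=\Sigma_2$ exactly when some element of $H_U$ swaps the two vertices. The number of such orbits is $r_m(H)$, which gives $\chi_H(w_m)=(-1)^{r_m(H)}$.

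The only care needed in this last step is that $L_O$ is independent of the choice of $i\in O$, since conjugate stabilizers have conjugate (equal, as $\Sigma_2$ is abelian) images. It must also be independent of the choice of $\rho_m$ within its conjugacy class, which holds because marks are conjugation invariant.
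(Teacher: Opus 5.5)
Your argument is correct and is essentially the paper's proof: use naturality of marks to reduce to $\chi_{\rho_m(H)}(\P_{N_m}(\omega))$, apply the product-over-orbits formula for marks of the total power operation, and identify the local subgroup at an edge $U$ with the image of $H_U$ in $\Sigma_U\cong\Sigma_2$. The paper does not rederive the product formula but cites \cite[Corollary 13.8]{cornelius2024imagetotalpoweroperation}, which already covers virtual classes such as $\omega$, so the step you flagged as the main obstacle is settled by citation, just as you proposed.
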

\begin{proof}
This is an application of \cite[Corollary 13.8]{cornelius2024imagetotalpoweroperation}. By naturality of marks under restriction,
\[
\chi_H(w_m)
=
\chi_{\rho_m(H)}(\P_{N_m}(\omega)).
\]
The character formula for the total power operation
\cite[Corollary 13.8]{cornelius2024imagetotalpoweroperation} gives
\[
\chi_{\rho_m(H)}(\P_{N_m}(\omega))
=
\prod_{O\in H \backslash \cG_m}
\chi_{(\rho_m(H))_{UU}}(\omega),
\]
where $U$ is a representative of the orbit $O$. Let $H_U$ be the stabilizer of the edge $U$ and let $\alpha_U \colon H_U \to \Sigma_U$ be the action of $H_U$ on the vertices of $U$. In the notation of
\cite[Section 9]{cornelius2024imagetotalpoweroperation}, $(\rho_m(H))_{UU}\leq \Sigma_2$ is the projection to the
$U$-coordinate of the stabilizer of $U$. By construction of $\rho_m$, this
subgroup is precisely
\[
(\rho_m(H))_{UU}=\alpha_U(H_U)\subseteq \Sigma_U\cong\Sigma_2.
\]
Consequently,
\[
\chi_H(w_m)
=
\prod_{O\in H\backslash \cG_m}\chi_{\alpha_U(H_U)}(\omega)
=
(-1)^{r_m(H)},
\]
since $\alpha_U(H_U)=\Sigma_2$ precisely when the setwise stabilizer of $U$
interchanges its two vertices.
\end{proof}

We claim that
\[
\sum_{m\geq 0}w_mt^m\in A_e\powser{\Sigma}
\]
is exponential. By \cref{prop:burnsidecalc}, $\chi_H(w_m)=(-1)^{r_m(H)}$. Now consider $\u i\sqcup\u j \leq \u{i+j}$. There is a
$\Sigma_i\times\Sigma_j$-equivariant decomposition
\[
\cG_{i+j}
\cong
\cG_i\sqcup(\u i\times\u j)\sqcup\cG_j.
\]
The stabilizer of a mixed edge in $\u i\times\u j$ cannot interchange
its vertices. It follows that, for every
$H\leq\Sigma_i\times\Sigma_j$,
\[
r_{i+j}(H)
=
r_i(\operatorname{pr}_{\Sigma_i}H)+r_j(\operatorname{pr}_{\Sigma_j}H).
\]
Consequently,
\[
\chi_H\left(
\res^{\Sigma_{i+j}}_{\Sigma_i\times\Sigma_j}(w_{i+j})
\right)
=
\chi_H(w_i\boxtimes w_j) = \chi_{\operatorname{pr}_{\Sigma_i}H}(w_i)\chi_{\operatorname{pr}_{\Sigma_j}H}(w_j).
\]
Since the marks homomorphism is injective, we have
\[
\res^{\Sigma_{i+j}}_{\Sigma_i\times\Sigma_j}(w_{i+j})
=
w_i\boxtimes w_j.
\]
Thus $\sum_{m\geq 0}w_mt^m$ is exponential.

This exponential element is not produced by the usual power operations
on $A(e)$ which land in $\AA(m) \subseteq A(\Sigma_m)$. The first terms are
\[
w_0=w_1=1,
\qquad
w_2=[\Sigma_2/e]-1,
\qquad
w_3=[\Sigma_3/C_3]-1,
\]
and $C_3 \subset \Sigma_3$ is not a Young subgroup. For the last identity, the marks of $w_3$ are
\[
\begin{array}{c|cccc}
H&e&C_2&C_3&\Sigma_3\\ \hline
\chi_H(w_3)&1&-1&1&-1.
\end{array}
\]



Recall that the Burnside ring global functor $A$ is an ambidextrous global power functor. The transfer $\epsilon_\Lambda\colon A(\Sigma_\Lambda)\longrightarrow\Z$ is given by $\epsilon_\Lambda([X])=|X/\Sigma_\Lambda|$. 
Thus
\cref{cor:ambitriv} gives a map
\[
\widetilde\epsilon\colon
A_e\divpowser{\Sigma}
\longrightarrow
\Z\divpowser{t}.
\]

Burnside's orbit-counting formula gives
\[
\epsilon_{m}(w_m)
=
\frac{1}{m!}
\sum_{\sigma\in\Sigma_m}
\chi_{\langle\sigma\rangle}(w_m).
\]
If $c_{\mathrm{ev}}(\sigma)$ denotes the number of even cycles of
$\sigma$, then
\[
\chi_{\langle\sigma\rangle}(w_m)
=
(-1)^{c_{\mathrm{ev}}(\sigma)}.
\]
Indeed, let $U=\{a,b\}$ be an edge. An element of $\langle \sigma\rangle$ can stabilize $U$ setwise while interchanging $a$ and $b$ only if $a$ and $b$ lie in the same cycle of $\sigma$. If that cycle has length $d$, this occurs precisely when $d$ is even and $a$ and $b$ are separated by $d/2$ steps around the cycle (are ``antipodal''). Thus each even cycle contributes exactly one orbit of such edges, namely the orbit of the pairs of opposite points in that cycle.


The cycle-index identity for symmetric groups is the equation
\[
\sum_{m \geq 0} \Big (\sum_{\sigma \in \Sigma_m} \prod_{k \geq 1} x_{k}^{c_k(\sigma)} \Big ) \frac{t^m}{m!} = \exp \Big (\sum_{k \geq 1}x_k \frac{t^k}{k} \Big ),
\]
where $c_k(\sigma)$ is the number of $k$-cycles in $\sigma$. If we set
\[
x_k = 
\begin{cases}
        1, & k \text{ odd}\\
        -1,     & k \text{ even},
\end{cases}
\]
then $\prod_{k \geq 1} x_{k}^{c_k(\sigma)} = (-1)^{c_{\mathrm{ev}}(\sigma)}$ and we find that
\[
\begin{split}
\sum_{m\geq 0}\epsilon_{m}(w_m)t^m
&=
\exp\bigg (
\sum_{\substack{k\geq 1\\ k\text{ odd}}}\frac{t^k}{k}
-
\sum_{\substack{k\geq 1\\ k\text{ even}}}\frac{t^k}{k}
\bigg)\\
&=
\exp\Big(
\sum_{k\geq 1}(-1)^{k-1}\frac{t^k}{k}
\Big)\\
&=1+t.
\end{split}
\]
Thus $\epsilon_{0}(w_0)=\epsilon_{1}(w_1)=1$ and $\epsilon_{m}(w_m)=0$ for $m\geq 2$. Since $\Z\divpowser{t}$ is torsion-free, the logarithm is unique and we learn that $\ell_k(q_k(w_k)) = (-1)^{k-1}(k-1)!$.

\bibliographystyle{amsalpha}
\bibliography{bib}

\end{document}